\newtheorem{theorem}{Theorem}[section]
\newtheorem{lemma}{Lemma}[section]
\newtheorem{assumption}{Assumption}
\newtheorem{definition}{Definition}
\newtheorem{remark}{Remark}
\def\rmA{{\mathbf{A}}}
\def\rmC{{\mathbf{C}}}
\def\rmF{{\mathbf{F}}}
\def\rmG{{\mathbf{G}}}
\def\rmI{{\mathbf{I}}}
\def\rmJ{{\mathbf{J}}}
\def\rmU{{\mathbf{U}}}
\def\rmW{{\mathbf{W}}}
\def\rmX{{\mathbf{X}}}
\def\rmZ{{\mathbf{Z}}}
\def\va{{\bm{a}}}
\def\vc{{\bm{c}}}
\def\ve{{\bm{e}}}
\def\vg{{\bm{g}}}
\def\vq{{\bm{q}}}
\def\vs{{\bm{s}}}
\def\vu{{\bm{u}}}
\def\vv{{\bm{v}}}
\def\vw{{\bm{w}}}
\def\vx{{\bm{x}}}
\def\vz{{\bm{z}}}
\DeclareMathAlphabet{\mathsfit}{\encodingdefault}{\sfdefault}{m}{sl}
\SetMathAlphabet{\mathsfit}{bold}{\encodingdefault}{\sfdefault}{bx}{n}
\def\gB{{\mathcal{B}}}
\def\gD{{\mathcal{D}}}
\def\gE{{\mathcal{E}}}
\def\gF{{\mathcal{F}}}
\def\gH{{\mathcal{H}}}
\def\gJ{{\mathcal{J}}}
\def\gK{{\mathcal{K}}}
\def\gL{{\mathcal{L}}}
\def\gM{{\mathcal{M}}}
\def\gO{{\mathcal{O}}}
\def\gS{{\mathcal{S}}}
\def\gT{{\mathcal{T}}}
\def\gX{{\mathcal{X}}}
\def\0{{\bf 0}}
\def\1{{\bf 1}}
\def\EB{{\mathbb E}}
\def\PB{{\mathbb P}}
\def\RB{{\mathbb R}}
\def\ZB{{\mathbb Z}}
\def\argmin{\mathop{\rm argmin}}
\def\sgn{\mathrm{sgn}}
\newcommand{\E}{\mathbb{E}}
\newcommand{\ssum}[3]{\sum\limits_{{#1}={#2}}^{#3}}
\def\inner#1#2{\left\langle #1, #2 \right\rangle}
\newcommand{\norm}[1]{\left\|{#1}\right\|}
\newcommand{\abs}[1]{\left|{#1}\right|}
\title{Gradient Tracking for High Dimensional Federated Optimization}
\author{
  Jiadong Liang \\
  School of Mathematical Sciences \\
  Peking University \\
  Beijing\\
  \texttt{jdliang@pku.edu.cn} \\
   \And
  Yang Peng \\
  School of Mathematical Sciences \\
  Peking University \\
  Beijing\\
  \texttt{pengyang@pku.edu.cn} \\
  \AND
  Zhihua Zhang \\
  School of Mathematical Sciences \\
  Peking University\\
  Beijing \\
  \texttt{zhzhang@math.pku.edu.cn} \\
}
\begin{document}
\maketitle

\begin{abstract}
In this paper, we study the (decentralized) distributed optimization problem with high-dimensional sparse structure. Building upon the FedDA algorithm, we propose a (Decentralized) FedDA-GT algorithm, which combines the {\bf gradient tracking} technique. It is able to eliminate the heterogeneity among different clients' objective functions while ensuring a dimension-free convergence rate.
Compared to the vanilla FedDA approach, (D)FedDA-GT can significantly reduce the communication complexity, from $\gO(s^2\log d/\varepsilon^{3/2})$ to a more efficient $\gO(s^2\log d/\varepsilon)$.
In cases where strong convexity is applicable, we introduce a multistep mechanism resulting in the Multistep ReFedDA-GT algorithm, a minor modified version of FedDA-GT. This approach achieves an impressive communication complexity of $\gO\left(s\log d \log \frac{1}{\varepsilon}\right)$ through repeated calls to the ReFedDA-GT algorithm.
Finally, we conduct numerical experiments,  illustrating that our proposed algorithms enjoy the dual advantage of being dimension-free and heterogeneity-free. 
\end{abstract}

\keywords{Distributed Optimization \and High-Dimensional Optimization \and Gradient Tracking \and Dual Averaging}

\section{Introduction}
In this era of highly developed communication and anyone has mobile electronic devices, federated learning has rapidly become a popular and important branch of machine learning in recent years. It represents a philosophy of decentralizing the training process locally, with the server only performing the communication and integration of information between all local devices \citep{mcdonald2010distributed, mcmahan2017communication, stich2019local}.
In distributed learning, we face two critical challenges: \textbf{communication complexity}, which deals with how often devices exchange information, and \textbf{data heterogeneity}, which relates to variations in data distribution among devices.

Unlike traditional stochastic algorithms like Parallel SGD, Federated Learning offers a distinct advantage by achieving the same accuracy with significantly reduced communication costs. One of the most widely adopted federated learning algorithms is Federated Averaging (FedAvg). It works by independently performing SGD on different clients in parallel, aggregating their local updates on a central server, and then utilizing the averaged information to update the global parameters.
There are plenty of works that analyzed the convergence efficiency, the communication complexity, and some statistical nature of FedAvg \citep{li2019convergence, bayoumi2020tighter, koloskova2020unified, woodworth2020local, li2021statistical}. 

However, these studies have revealed that data heterogeneity poses a barrier to FedAvg's ability to make breakthroughs in communication complexity. We must find a way to mitigate the impact of this heterogeneity originating from each local device. This is where gradient tracking comes into play. The concept of gradient tracking is not a recent innovation; it has its roots in the field of distributed optimization \citep{di2016next,qu2017harnessing,nedic2017achieving}. Following its introduction, a series of subsequent studies harnessed the power of gradient tracking, proposing diverse algorithms tailored to address various challenges in distributed optimization scenarios \citep{pu2021distributed,karimireddy2019scaffold,zhang2019decentralized,koloskova2021improved,liu2023decentralized}.

As hardware capabilities continue to advance, machine learning models' parameter sizes grow correspondingly. However, a high parameter dimension often demands an extensive dataset to learn effectively, which can be impractical \citep{bronstein2021geometric}. In many scenarios, we encounter high-dimensional and sparse problems where the ground truth $\vw^*$ resides in $\RB^d$ with a large $d$, yet the number of non-zero entries in $\vw^*$ is significantly smaller. This formulation is commonly found in genetic data analysis \citep{cawley2006gene} and portfolio selection \citep{chen2013sparse}. Nevertheless, our conventional methods face challenges due to limited sample sizes when addressing such optimization problems.

To tackle this kind of difficulty, a series of evolved manuscripts have been proposed in the past decade~\citep{daubechies2004iterative, beck2009fast, langford2009sparse, salim2019performance, shalev2009stochastic, duchi2010composite, agarwal2012stochastic, juditsky2023sparse}. It is worth emphasizing that \citet{shalev2009stochastic} introduced a mirror descent approach into high dimensional optimization problems and substantially alleviated the sample complexity. And \citet{agarwal2012stochastic} combined the dual average algorithm (which is known as `lazy' mirror descent) with an annealing technique and refined the sample complexity from $\gO\left(\frac{s\sigma^2\log d}{\epsilon^2}\right)$ to $\gO\left(\frac{s\sigma^2 \log d}{\epsilon}\right)$.

Recently, there's been a growing trend in solving high-dimensional stochastic optimization problems through federated learning due to the increasing number of model parameters and data distribution. Several recent studies have explored distributed optimization methods tailored for high-dimensional scenarios \citep{duchi2011dual, yuan2020federated, tran2021feddr,bao2022fast, zhu2023federated}. 
Notably, \citep{duchi2011dual} was among the first to address high-dimensional optimization in a distributed context, but they didn't consider communication complexity. More recently, \citet{yuan2020federated} analyzed two algorithms: federated mirror descent (FedMiD) and federated dual averaging (FedDA). Their analysis revealed that FedDA outperforms FedMiD in terms of convergence performance due to its `primal averaging free' approach. 
Additionally, \citet{bao2022fast} introduced the Fast-FedDA method, an improvement over vanilla FedDA. Fast-FedDA improves sample and communication complexity, reducing them from $\gO(\epsilon^{2})$ and $\gO(\epsilon^{3/2})$ to $\gO(\epsilon)$ and $\gO(\epsilon^{1/2})$, respectively, assuming that each device's loss function is strongly convex.

Despite the vitality of this research direction, in most of these works, two key aspects were largely overlooked. Firstly, they neglected the impact of data heterogeneity on algorithm efficiency, resulting in suboptimal communication complexity for these algorithms. Secondly, their focus remained primarily on centralized scenarios, whereas in reality, decentralized situations are more commonly encountered. This leads us to the following questions, which constitute the core issues addressed in this paper:

\textit{\textbf{For high-dimensional (decentralized) distributed optimization problems, what is the best achievable communication complexity?}}

\subsection{Contribution}
This paper makes two main contributions, addressing the questions raised earlier under the conditions of convex and strongly convex objective functions.

\begin{enumerate}
    \item When each device deals with a generally convex objective function, we introduce the Decentralized Federated Dual Averaging with Gradient Tracking algorithm (DFedDA-GT). By incorporating gradient tracking techniques for the first time in high-dimensional scenarios, DFedDA-GT outperforms DFedDA in terms of communication complexity, improving it from $\gO\left( \frac{\gE_M^*}{\varepsilon^{3/2}} \right)$ to $\gO\left(\frac{s^2}{\varepsilon}\right)$, respectively. Notably, before our work, there has been no discussion on high-dimensional federated optimization in decentralized settings.

    \item Furthermore, when the global objective function is locally strongly convex with parameter $\mu$, we propose the Multistep ReFedDA-GT algorithm. During its execution, this algorithm iteratively calls the ReFedDA-GT algorithm, leveraging the terminal state of the previous step as the starting point for the current step. This enables us to further enhance efficiency, achieving a sample complexity of $\gO(\frac{s^2(\log d)^4 \sigma^2}{\mu^2 \varepsilon^2})$ and a communication complexity of $\gO\left(s\kappa \log d \log \frac{1}{\varepsilon}\right)$.
\end{enumerate}

\begin{table}[ht]
    \centering
    \fontsize{7}{9}\selectfont
    \begin{tabular}
    {>{\centering\arraybackslash}p{1.7cm}|>{\centering\arraybackslash}p{1.6cm}|>{\centering\arraybackslash}p{1.3cm}|>{\centering\arraybackslash}p{2.2cm}|>{\centering\arraybackslash}p{2.6cm}|>{\centering\arraybackslash}p{1.7cm}}
    \toprule
    \textbf{Algorithm} & \textbf{Condition} & \textbf{Gradient tracking} & \textbf{Sample complexity} & \textbf{Communication complexity} & \textbf{Decentralized} \\
    \hline
    FedDA\citep{yuan2021federated} & Convex, $\gL_1$-smooth & \vspace{.4pt}\makecell{\centering\XSolidBrush} & $\gO\left( \frac{Ls^4\sigma^2}{\varepsilon^{3/2} } \vee \frac{s^{2}\sigma^2}{M\varepsilon^2}\right)$ & $\gO\left( \frac{L^{\frac{1}{2}}s^2 \gE_M^*}{\varepsilon^{3/2}} \right)$ &\vspace{.4pt} \makecell{\centering\XSolidBrush} \\
    \hline
    DFedDA & Convex, $\gL_1$-smooth &\vspace{.4pt} \makecell{\centering\XSolidBrush} & $\gO\left(\frac{\tau s^2\sigma^2 (\log d)^4}{\varepsilon^2}\right)$ & $\gO\left(\frac{\tau s^2 L^{\frac{1}{2}}{\gE_M^*}^{\frac{1}{2}}}{\varepsilon^{3/2}}\right)$ & \vspace{.4pt} \makecell{\centering\Checkmark} \\
    \hline
    Fast-FedDA\citep{bao2022fast} & Strongly Convex, $\gL_2$-smooth &\vspace{.4pt} \makecell{\centering\XSolidBrush} & $\gO \left( \frac{Ls^2 + \sigma^2/(\mu M)}{\varepsilon} \right)$ &  $\gO\left( \frac{sL^{\frac{3}{2}} + (L\gE_M^*)^{\frac{1}{2}}}{\mu \varepsilon^{1/2}} \right) $ &\vspace{.4pt} \makecell{\centering\XSolidBrush} \\
    \hline
    Multistep C-FedDA\citep{bao2022fast} & Strongly convex, $\gL_2$-smooth & \vspace{.4pt} \makecell{\centering\XSolidBrush} & $\gO\left(\frac{s\sigma^2}{M\mu^2 \varepsilon}\right)$ & $\gO\left(\frac{(sL\gE_{M}^*)^{\frac{1}{2}} + (sL)^{\frac{3}{2}})}{\mu^{3/2}\varepsilon^{1/2}}\right)$ &\vspace{.4pt} \makecell{\centering\XSolidBrush} \\
    \hline
    DFedDA-GT & Convex, $\gL_1$-smooth &\vspace{.4pt} \makecell{\centering\Checkmark} & $\gO\left(\frac{\tau^2s^2\sigma^2\log^2 d}{\varepsilon^2}\right)$ & $\gO\left(\frac{\tau^2Ls^2}{\varepsilon}\right)$ &\vspace{.4pt} \makecell{\centering\Checkmark} \\
    \hline
    Multistep ReFedDA-GT & Strongly convex, $\gL_1$-smooth &\vspace{.4pt} \makecell{\centering\Checkmark} & $\gO\left(\frac{s^2\sigma^2}{\varepsilon}\right)$ & $\gO\left(\frac{sL}{\mu}\log \frac{s}{\varepsilon}\right)$ &\vspace{.4pt} \makecell{\centering\XSolidBrush} \\
    \hline
    DASM\citep{duchi2011dual} & Convex, $\gL_1$-Lipschitz &\vspace{.4pt} \makecell{\centering\XSolidBrush} & $\gO\left( \frac{\tau s^2\Tilde{L}^2 \log^2\frac{M}{\varepsilon}}{\varepsilon^2} \right)$ & $\gO\left( \frac{\tau s^2\Tilde{L}^2 \log^2\frac{M}{\varepsilon}}{\varepsilon^2} \right)$ &\vspace{.4pt} \makecell{\centering\Checkmark} \\
    \bottomrule
    \end{tabular}
    \caption{
    Comparison of the sample complexity and communication complexity between our result and prior works. Here, $s$ denotes the sparsity of the optima, while $\Tilde{L}$, $L$, and $\mu$ represent the Lipschitz continuity, smoothness, and strong convexity of the objective function, respectively. Additionally, $\gE_M^*$ can be interpreted as the heterogeneity of this optimization problem. In the context of decentralized optimization scenarios, $\tau$ refers to the mixing time of a communication graph.
    }
    \label{tab:my_label}
\end{table}

\section{Problem Formulation} We will introduce the basic problem formulation considered in this work. In Section~\ref{sec:asp}, we give the assumptions which will be used in our theoretical analysis.

Given a set of clients $\gM = \{1,2,\cdots, M\}$, they exchange information with each other according to an undirected and connected communication graph $G=(V,E)$, where the nodes $V = \gM$ and the edges $E \in \gM \times \gM$. Every client has their local objective function $f_m(\vw)$ with the form of $\EB_{\xi_m \sim \gD_m} F(\vw; \xi_m)$, where $\gD_m$ is the local data distribution of the $m$-th client. Our goal is to find the optima $\vw^*$ which satisfies that,
\[
\vw^* = \argmin\limits_{\vw \in \RB^d} f(\vw) \left(:= \frac{1}{M}\ssum{m}{1}{M}f_m(\vw)\right).
\]
Based on this, we are primarily interested in situations where the dimension of the problem itself is very large. Additionally, we assume that $\vw^*$ is $s$-sparse ($s \ll d$), i.e., $\norm{\vw^*}_0 \le s$ with $\norm{\cdot}_0$ representing the number of non-zero elements of the corresponding vector. This assumption is common in high-dimensional statistics and optimization problems \citep{agarwal2012stochastic,juditsky2014unified,juditsky2023sparse}.
It is worth noting that the sparsity of $\vw^*$ does not imply that the minimum points of each local objective function are also sparse. In many cases, due to the biases of individual entities, solving a single problem is often more complex than solving a global problem. We will illustrate this point through numerical experiments in Section~\ref{sec:experiment}.

For each client $m ~(= 1, \cdots, M)$ and any parameter $\vw^m$, it can interact with the environment to obtain a random oracle denoted as $\nabla F(\vw^m, \xi^m)$, which serves as an unbiased estimation of $\nabla f_m(\vw^m)$.

In the aforementioned problem setting, our focus is to design an algorithm that can approximate $\vw^*$ with as low sample complexity (i.e., the number of interactions each client makes with the environment to request random oracles) and communication complexity (i.e., the number of communications between different clients) as possible.


\textbf{Notation}

Throughout the remainder of this paper, we utilize the notation $[n] = \{1, \ldots, n\}$ for brevity and employ $\inner{\cdot}{\cdot}$ to denote the inner product between two vectors of the same dimension. Given a vector $\vx\in \RB^d$ and $i\in [d]$, $[\vx]_i$ denotes the $i$-th coordinate of $\vx$. 
We define $\mathbf{1}$ as the vector of all ones with an appropriate dimension, i.e., $\mathbf{1} := (1, \cdots, 1)^\top$. Additionally, for $M\in \ZB_+$, $\rmJ_M \in \RB^{M\times M}$ is defined as $\rmJ_M = \frac{1}{M}\mathbf{1}\mathbf{1}^\top$. 

For any $p \ge 1$ and any $\vx\in \RB^d$, we express $\norm{\vx}_p := \left(\ssum{i}{1}{d} |[\vx]_i|^p\right)^{1/p}$. In particular, $\norm{\vx}_\infty = \max\limits_{i\in [d]} |[\vx]_i|$. Given a matrix $\rmA = [\va_1, \ldots, \va_{d_2}] \in \RB^{d_1 \times d_2}$, we define $\norm{\rmA}_{p,q}$ as $\left(\ssum{i}{1}{d_2} \norm{\va_i}_p^q\right)^{1/q}$. 

For any two sequences of positive numbers $\{a_n\},\{b_n\}$, both notations $a_n = \gO(b_n)$ and $a_n \precsim b_n$ convey that there exists a universal constant $C>0$ such that $a_n \le Cb_n$ for all $n \in \ZB_+$. And the notation $a_n = \tilde{\gO}(b_n)$ represents that there is a universal constant $C>0$ such that $a_n \le Cb_n \log b_n$.

\subsection{Basic Assumptions}\label{sec:asp}

Next, we introduce and discuss the assumptions that underlie our analysis.
\begin{assumption}[Convexity]\label{asp:convex}
For each $m\in [M]$, the local objective function $f_m$ is convex, that is,
\[
f_m(\vw) + \inner{\nabla f_m(\vw)}{\vv - \vw} \le f_m(\vv), \quad \forall \vw,\vv \in \RB^d.
\]
\end{assumption}

\begin{assumption}[Smoothness]\label{asp:smooth}
For each $m\in [M]$, the local objective function $f_m$ satisfies
\[
\norm{\nabla f_m(\vw) - \nabla f_m(\vv)}_\infty \le L \norm{\vw - \vv}_1, \quad \forall \vw, \vv \in \RB^d.
\]
\end{assumption}

\begin{assumption}[Sub-Gaussian]\label{asp:grad_subG}
For each $m\in [M]$ and $i\in [d]$, and an arbitrary vector $\vw \in \RB^d$, we use $\zeta_i^{m}(\vw)$ to denote the $i$-th coordinate of $\nabla F(\vw, \xi_m) - \nabla f_m(\vw)$. And we assume that the following holds,
\[
\EB_{\xi_m \sim \gD_m} \exp \{\lambda \zeta_i^{m}(\vw)\} \le \exp \left\{\frac{\lambda^2 \sigma^2}{2}\right\}, \; \forall \lambda \in \RB.
\]
\end{assumption}

Assumption~\ref{asp:smooth} and Assumption~\ref{asp:grad_subG} are common in sparse optimization literatures \citep{agarwal2012stochastic,juditsky2014unified,juditsky2023sparse}. And \citet{yuan2021federated} proposed the same smooth assumption when the primal norm in their work is assigned as $\norm{\cdot}_1$. However, in \citep{bao2022fast}, the authors proposed the smoothness assumption in $\gL_2$-norm and the light-tailed assumption of the whole noise vector, both of which are relatively more restricted than the above assumptions.

In addition to the aforementioned assumptions, this paper examines distributed optimization problems associated with an objective function under the conditions specified in Assumption~\ref{asp:lsc}. In this context, we adopt the assumption of local strong convexity to encompass situations where the convexity of the objective function decreases as the examination location moves away from the origin. This is particularly relevant to objective functions such as logistic regression.

In some statistical literature, researchers also consider restricted strong convexity as an alternative to local strong convexity \citep{buhlmann2011statistics,raskutti2011minimax,agarwal2012stochastic}. 
This adaptation is specifically designed to accommodate scenarios where the empirical loss function, being a finite sum, is considered with limited datasets rather than making assumptions about the population risk, which is the focus of our consideration.

While it is possible to extend our algorithm by incorporating regularization terms to make it compatible with more general assumptions, such extensions might deviate from the central theme of this paper. Therefore, we do not provide a detailed discussion on this matter here.

\begin{assumption}[Locally Strong Convexity]\label{asp:lsc}
    For each $m\in [M]$, the local objective function $f_m$ satisfies a $Q$-local form of strong convexity, i.e., there is a constant $\mu(Q)>0$ such that
    \[
    f_m(\vv) \ge f_m(\vw) + \inner{\nabla f_m(\vw)}{\vv - \vw} + \frac{\mu(Q)}{2}\norm{\vv - \vw}_2^2,
    \]
    for any $\vw, \vv \in \RB^d$ with $\norm{\vw}_1^2 \le Q$ and $\norm{\vv}_1^2 \le Q$.
\end{assumption}






To facilitate a more convenient characterization of the entire decentralized communication system, we can equivalently represent the communication graph $G$ as a gossip matrix $\rmU \in \RB^{M\times M}$ \citep{nedic2009distributed}. The definition of the gossip matrix is presented as follows.

\begin{definition}[Gossip Matrix]
    We define $\rmU = [u_{ij}] \in \RB^{M\times M}$ as a gossip matrix if it satisfies the following properties:
    \begin{itemize}
        \item (Symmetry) $\rmU^\top = \rmU$,
        \item (Non-negative entries) $u_{ij} \ge 0,~\forall i\neq j$,
        \item (Doubly stochastic) $\rmU\mathbf{1} = \mathbf{1}$ and $\mathbf{1}^\top \rmU = \mathbf{1}^\top$.
    \end{itemize}
\end{definition}

Building upon the above definition and the Perron-Frobenius theorem, it is evident that $\lim_{n\to \infty}\pi^\top \rmU^n = \mathbf{1}^\top,~\forall \pi \ge 0 \text{ and }\pi^\top \mathbf{1} = 1$. The subsequent assumption provides a quantitative description of the limitations and plays a crucial role in analyzing the communication complexity.

\begin{assumption}[Mixing Gossip]\label{asp:mx_gossip}
    Consider the second-largest eigenvalue of a matrix $\rmU$ in terms of absolute value (denoted $\sigma_2(\rmU)$). For ease of exposition, we assume here that $0 \leq \sigma_2(\rmU) < 1$. We refer to $1-\sigma_2(\rmU)$ as the spectral gap of $\rmU$.
\end{assumption}

It is noteworthy that when the gossip matrix $\rmU = \rmJ_M$, the communication graph becomes a fully connected graph, and it can be regarded as the centralized scenario. In this case, the spectral gap $1 - \sigma_2(\rmU) = 1$. The following result is celebrated, and it can help us to deal with high-dimensional noise.

\subsection{Mirror Map}
To address stochastic high-dimensional noise, the mirror map serves as a frequently employed tool in optimization. It establishes a connection between two distinct metric spaces. Specifically, given a strictly convex differentiable function $h: \RB^d \to \RB$, if $\nabla h$ diverges at infinity and its range is $\RB^d$, then $\nabla h$ can be viewed as the mirror map from the primal space to the dual space. Correspondingly, the $\nabla^{-1} h = \nabla h^*$ represents the inverse map from the dual space to the primal space, where $h^*$ is the Fenchel conjugate of $h$. Shifting the process of gradient updates to the dual space gives rise to the well-known mirror descent algorithm.

Let $2 \leq p$, $1< q\leq 2$ and $1/p+1/q=1$. Throughout this paper, we define $h(\vw)=\frac{1}{2(q-1)}\|\vw\|_q^2$ with its conjugate $h^*(\vz)=\frac{1}{2(p-1)}\|\vz\|_p^2$. The mirror map is $\nabla h(\vw) = \frac{1}{q-1}\|\vw\|_q^{2-q} \vw^{q-1}$ with its inverse map $\nabla^{-1}h(\vz) = \frac{1}{p-1}\|\vz\|_p^{2-p}\vz^{p-1}$. For a vector $\vx\in \RB^d$ and a positive number $\alpha > 0$, $\vx^\alpha$ is defined as 
$\left(\sgn([\vx]_i)\abs{[\vx]_i}^\alpha\right)_{i=1}^d$.

\begin{lemma}
\label{lem:strong_cvx_mirror_map}
    Let $2 \leq p$, $1< q\leq 2$, and $1/p+1/q=1$. Then $h$ is 1-strongly convex with respect to $\|\cdot\|_q$, and $h^*$ is 1-smooth with respect to $\|\cdot\|_p$.
\end{lemma}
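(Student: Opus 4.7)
My approach is to deduce both claims from a single statement about strong convexity, using the classical Fenchel duality between strong convexity and smoothness: if a closed proper convex function is $\alpha$-strongly convex with respect to a norm $\|\cdot\|$, then its Fenchel conjugate is $(1/\alpha)$-smooth with respect to the dual norm $\|\cdot\|_*$, and conversely. One verifies directly that $h$ and $h^*$ as defined in the lemma form a Fenchel-conjugate pair, either by solving the maximization $\sup_{\vw}\{\langle \vz,\vw\rangle - \tfrac{1}{2(q-1)}\|\vw\|_q^2\}$ via the first-order condition $\vz = \nabla h(\vw)$ together with the formula for $\nabla h$ provided in the excerpt, or by invoking the general rule that the conjugate of $\tfrac{\beta}{2}\|\cdot\|^2$ is $\tfrac{1}{2\beta}\|\cdot\|_*^2$ combined with the identity $(q-1)(p-1)=1$. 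Consequently, it suffices to establish the $1$-strong convexity of $h$ with respect to $\|\cdot\|_q$; the $1$-smoothness of $h^*$ with respect to $\|\cdot\|_p$ will then follow automatically.

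For the strong convexity, the case $q=2$ is immediate since $h$ reduces to $\tfrac{1}{2}\|\cdot\|_2^2$. For $q \in (1,2)$, I would proceed via a second-order argument on the open dense set $\Omega := \{\vw \in \RB^d : [\vw]_i \neq 0 \text{ for all } i\}$, on which $h$ is $C^\infty$. Starting from the closed form $\nabla h(\vw) = \tfrac{1}{q-1}\|\vw\|_q^{2-q}\vw^{q-1}$ given in the excerpt, I would differentiate once more and express $\nabla^2 h(\vw)$ as the sum of a diagonal matrix with entries proportional to $\|\vw\|_q^{2-q}|[\vw]_i|^{q-2}$ plus a rank-one correction arising from differentiating the factor $\|\vw\|_q^{2-q}$. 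The goal is then to prove the pointwise quadratic-form bound
\[
\langle \vu, \nabla^2 h(\vw)\,\vu\rangle \;\ge\; \|\vu\|_q^2 \quad \forall\, \vu \in \RB^d,
\]
which, after expansion, reduces to a weighted inequality handled by H\"older with conjugate exponents $2/q$ and $2/(2-q)$; the prefactor $1/(q-1)$ in the definition of $h$ is chosen precisely so that the resulting constant is $1$.

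I expect two main obstacles. First, the algebraic bookkeeping inside the Hessian expansion is delicate: the negative contribution from the rank-one correction has to cancel part of the diagonal term so that what remains is exactly what H\"older bounds below by $\|\vu\|_q^2$. Second, when $q<2$ the function $h$ fails to be $C^2$ on the coordinate hyperplanes, so the pointwise Hessian inequality only holds on $\Omega$. To upgrade to the global strong-convexity inequality
\[
h(\vv) \;\ge\; h(\vw) + \langle \nabla h(\vw),\,\vv-\vw\rangle + \tfrac{1}{2}\|\vv-\vw\|_q^2,
\]
I would integrate the Hessian bound twice along line segments lying in $\Omega$ (which is generic after an arbitrarily small coordinate perturbation) and then pass to the limit using the global continuity of $h$ and of $\nabla h$ on all of $\RB^d$; this continuity holds because $q-1 > 0$ makes both $\|\vw\|_q^{2-q}$ and the componentwise map $w_i \mapsto \sgn(w_i)|w_i|^{q-1}$ continuous at the origin.
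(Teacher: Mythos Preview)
Your proposal is correct and gives a genuinely different, self-contained route from the paper. The paper does not argue from scratch at all: it simply cites Lemma~17 and Lemma~15(3) of Shalev-Shwartz (2007) for the strong convexity of $h$ and the smoothness of $h^*$, respectively. By contrast, you (i) reduce to a single statement via the standard Fenchel duality between strong convexity and smoothness, and (ii) verify the strong convexity of $h$ directly by a Hessian computation on the open dense set $\Omega$ followed by H\"older with exponents $2/q$ and $2/(2-q)$, then extend by continuity of $\nabla h$. This buys self-containedness and makes explicit why the normalization $1/(2(q-1))$ is the right one; the paper's citation is of course shorter but opaque. One small correction to your anticipated obstacle: for $q\in(1,2]$ the rank-one term in $\nabla^2 h$ carries the coefficient $(2-q)/(q-1)\ge 0$ and is therefore \emph{nonnegative}, so no cancellation is needed---you may simply drop it and apply H\"older to the diagonal part $\|\vw\|_q^{2-q}\sum_i |[\vw]_i|^{q-2}[\vu]_i^2$ directly. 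The density/continuity closure you describe is standard and valid since $\nabla h$ is globally continuous for $q>1$.
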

\begin{proof}[Proof of Lemma \ref{lem:strong_cvx_mirror_map}]
    The strong convexity of $h$ follows directly from Lemma 17 of \cite{shalev2007online}, and the smoothness of $h^*$ follows from Lemma 15 (3) of \cite{shalev2007online}.
\end{proof}





\section{Algorithm}
\label{sec:algorithm}

In this section, we present our algorithm in Algorithm~\ref{alg:dscaffoldda} that we call ``Decentralized Federated Dual Averaging with Gradient Tracking.''  We will also provide a brief discussion on how the novel algorithm utilizes gradient tracking techniques to achieve lower communication complexity. 

Some readers might have already noticed that in Algorithm~\ref{alg:dscaffoldda}, we have introduced a Boolean input variable called ``\texttt{gradient tracking}.'' When this parameter is set to \textbf{False}, Algorithm~\ref{alg:dscaffoldda} reduces to a standard decentralized federated dual-averaging algorithm. Intuitively, this implies that the effect of gradient tracking and other aspects of the algorithm (such as the update mechanism of dual averaging) on the final complexity optimization are independent of each other. Given this observation, we prioritize analyzing the decentralized federated averaging algorithm first and then explore how the addition of the gradient tracking term can further improve its performance.
\begin{algorithm}
  \caption{\textsc{Decentralized FedDA with Gradient Tracking}}
  \label{alg:dscaffoldda}
  \begin{algorithmic}[1]
  \STATE {\textbf{procedure}} \textsc{DFedDA-gt} ($\vw_0,~ \eta_{c},~ \eta_{s},~ \rmU, K, R$, \texttt{gradient tracking})
  \STATE \textbf{Input:} Initial point $\vw_0$, client step size $\eta_c$, server step size $\eta_s$, gossip matrix $\rmU$, local iteration number $K$, global round $R$, $\texttt{gradient tracking}$ (a boolean variable, default: \textbf{True})
  \STATE \textbf{Initialization:}
  \STATE $\vz_{0} \gets \nabla h(\vw_0)$ \hfill \textit{// parameter initialization}
  \STATE $\vc_{0}^m \gets -\nabla F(\vw_0, \xi_{0}^m) + \ssum{j}{1}{M}u_{jm} \nabla F(\vw_0, \xi_{0}^j)$
  \hfill \textit{// tracker initialization}
  \FOR {$r=0, \ldots, R-1$}
      \FORALL { $m \in [M]$ {\bf in parallel}}
        \FOR {$k = 0, \ldots, K-1$}
          \STATE $\vw_{r,k}^m \gets \nabla^{-1}h(\vz_{r,k}^m)$
            \hfill \textit{// retrieve primal}
          \STATE $\vg_{r,k}^m \gets \nabla F (\vw_{r,k}^m; \xi_{r,k}^m) + \vc_{r}^m \mathbbm{1}_{\{\texttt{gradient tracking}\}}$  \hfill \textit{// query gradient}  
          \STATE $\vz_{r,k+1}^m \gets \vz_{r,k}^m - \eta_{c} \vg_{r,k}^m$
            \hfill \textit{// client \emph{dual} update}
        \ENDFOR
    \STATE Compute $\Delta_r^m,~\vc_{r+1}^m$ according to \eqref{eq:gradient tracking step}
    \STATE $\vz_{r+1,0}^m \gets \ssum{j}{1}{M}u_{jm}(\vz_{r,0}^j + K\eta_s\eta_c \Delta_r^j)$
    \hfill \textit{// communicate and enter the next round}
    \ENDFOR
    \ENDFOR
    \end{algorithmic}
\end{algorithm}
\subsection{Decentralized Federated Dual Averaging}
DFedDA can be viewed as a direct extension of FedDA, with the main difference being that DFedDA eliminates the need for a central server to aggregate information from each node. Specifically, at each client node $m$, two sequences, $\vw^m$ and $\vz^m$, are maintained. The former resides in the original space and approximates $\vw^*$, while the latter exists in the dual space and can be considered as an accumulation of gradient information. The relationship between them is established through the provided mirror map $\nabla h: \text{primal} \to \text{dual}$. 

During the $r$-th round and the $k$-th local update process, $\vz_{r,k}^m$ is mapped back to the original space using $\nabla^{-1}h$, generating an unbiased estimation of the gradient $\nabla f(\vw_{r,k}^m)$ denoted as $\nabla F(\vw_{r,k}^m, \xi_{r,k}^m)$. This estimation is used for updating $\vz_{r,k}^m$. Following the completion of the local updates, each client receives parameter information from its neighboring clients based on the gossip matrix and computes the average to obtain the initial parameters $\vz_{r+1,0}^m$ for the next round. It is obvious that when the gossip matrix corresponds to a complete communication graph, DFedDA is equivalent to FedDA.

\subsection{Participation of Gradient Tracking} \label{sec:parti of GT}

Similar to methods like FedAvg \citep{karimireddy2019scaffold} or Local SGD \citep{li2019communication}, for DFedDA, the greatest hindrance to its communication efficiency improvement lies in the inherent heterogeneity of the distributed optimization problem itself. The essence of the gradient tracking approach is to add a corresponding correction term to each stochastic gradient to counteract the impact of heterogeneity. The specific procedure is depicted in the 10th line of Algorithm~\ref{alg:dscaffoldda}. The most crucial tracking term $\vc_{r}^m$ is defined recursively as follows.
\begin{equation}\label{eq:gradient tracking step}
\begin{aligned}
    &\Delta_r^m = \frac{1}{K\eta_c} (\vz_{r,K}^m - \vz_{r,0}^m)\\
    &\vc_{r+1}^m = \vc_r^m + \Delta_r^m - \ssum{j}{1}{M}u_{jm}\Delta_r^j.
\end{aligned}
\end{equation}

To better comprehend the role of gradient tracking, we contemplate the centralized and noiseless scenario. In this setting, we consistently obtain accurate gradient information on each occasion (there is no randomness involved) and $\rmU = \frac{1}{M} \mathbf{1}\mathbf{1}^\top$. Accordingly, $c_r^m$ can be displayed more precisely,
\begin{align*}
    &\Delta_r^m = \frac{1}{K\eta_c} (\vz_{r,K}^m - \vz_{r,0}^m)
    = - \frac{1}{K}\ssum{k}{0}{K-1}\left(\nabla f_m(\vw_{r,k}^m) + \vc_r^m\right) = - \frac{1}{K}\ssum{k}{0}{K-1}\nabla f_m(\vw_{r,k}^m) - \vc_r^m;\\
    &\vc_{r+1}^m = \vc_r^m + \Delta_r^m - \frac{1}{M}\ssum{j}{1}{M}\Delta_r^j
    \overset{(a)}{=} - \frac{1}{K}\ssum{k}{0}{K-1}\nabla f_m(\vw_{r,k}^m) + \frac{1}{MK}\ssum{m}{1}{M}\ssum{k}{0}{K-1}\nabla f_m(\vw_{r,k}^m).
\end{align*}
We leverage the fact $\ssum{m}{1}{M} \vc_r^m = 0$ to establish the validity of equation $(a)$.

At this juncture, it becomes apparent that $\vc_r^m$ shares the same form as the correction term for gradients in variance reduction methods used in finite-sum optimization problems \citep{johnson2013accelerating,defazio2014saga}. With this in view, we can perceive gradient tracking as a form of variance reduction employed to address heterogeneity.

\section{Rate of Convergence under Convexity}\label{sec: conv rate under convexity}
In the following section, we delve into the convergence analysis of the DFedDA and DFedDA-GT algorithms under the assumption of a convex objective function. We aim to showcase the convergence rates achieved by both algorithms in this scenario.

\subsection{Convergence Results on Decentralized FedDA (with Gradient Tracking)}
The following is an unofficial statement for the convergence rate of DFedDA(-GT).
\begin{theorem}\label{thm:cvrt_decentralized_alg}
    Suppose Assumption~\ref{asp:convex} - \ref{asp:grad_subG} and Assumption~\ref{asp:mx_gossip} hold. Let $p = 2\log d$, $\tau = \frac{\log 4M}{2\log (1/\sigma_2(\rmU))}\vee 1$, and $\hat{\vw}^m := \frac{1}{R}\ssum{r}{0}{R-1}\vw_{r,0}^m$, $\eta_s = 1$.
    
    \textbf{\textsc{DFedDA:}} If the input \texttt{gradient tracking} in Algorithm~\ref{alg:dscaffoldda} takes \textbf{False}, then
    there exists a choice of $\eta_c$ such that
    \[
    \frac{1}{M} \ssum{m}{1}{M} \EB\{f(\hat{\vw}^m) - f(\vw^*)\} \lesssim \frac{\tau L h(\vw^*)}{R} + \frac{(\tau^2 L \gE_M^* h(\vw^*)^2)^{1/3}}{R^{2/3}} + \frac{(\tau h(\vw^*))^{1/2}(\log d)^2 \sigma}{(RK)^{1/2}},
    \]
    where $\gE_M^*$ is defined by $\gE_M^* := \frac{1}{M}\ssum{m}{1}{M}\norm{\nabla f_m(\vw^*)}_p^2$;
    
    \textbf{\textsc{DFedDA-GT:}} If the input \texttt{gradient tracking} in Algorithm~\ref{alg:dscaffoldda} takes \textbf{True}, then
    there exists a choice of $\eta_c$ such that
    \[
    \frac{1}{M} \ssum{m}{1}{M} \EB\{f(\hat{\vw}^m) - f(\vw^*)\} \lesssim \frac{\tau^2 L h(\vw^*)}{R} + \frac{\tau h(\vw^*)^{1/2}(\log d)^2 \sigma}{(RK)^{1/2}}.
    \]
\end{theorem}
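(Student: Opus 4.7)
My plan is to use a virtual-average analysis in the dual space, following the dual averaging framework adapted to the decentralized setting. I introduce the averaged dual iterate $\bar{\vz}_{r,k} := \frac{1}{M}\sum_{m=1}^M \vz_{r,k}^m$ with corresponding primal $\bar{\vw}_{r,k} := \nabla^{-1}h(\bar{\vz}_{r,k})$. The key starting point is the $1$-smoothness of $h^*$ with respect to $\norm{\cdot}_p$ (Lemma \ref{lem:strong_cvx_mirror_map}), which yields a descent-type inequality for $h^*(\bar{\vz}_{r+1,0} - \nabla h(\vw^*))$ whenever the aggregated update step $\frac{1}{M}\sum_m (\vz_{r+1,0}^m - \vz_{r,0}^m)$ is controlled. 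Convexity of each $f_m$ then gives a bound on $f(\bar{\vw}_{r,0}) - f(\vw^*)$ via $\langle \nabla f(\bar{\vw}_{r,0}), \bar{\vw}_{r,0} - \vw^*\rangle$, and a final telescoping-plus-averaging over $r$ produces the stated rates.

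The error decomposition will isolate four sources: (i) the standard dual-averaging optimization term yielding $Lh(\vw^*)/R$-type contributions; (ii) a client-drift term controlling $\norm{\vw_{r,k}^m - \vw_{r,0}^m}_q$ during the $K$ inner steps, bounded using the $\eta_c$-small step plus smoothness; (iii) a consensus error $\frac{1}{M}\sum_m \norm{\vz_{r,0}^m - \bar{\vz}_{r,0}}_p^2$, which contracts under the gossip matrix; and (iv) the stochastic noise term. For (iii), I would iterate the gossip mixing: after $t$ rounds, the consensus error contracts by $\sigma_2(\rmU)^{2t}$, so choosing $t \sim \tau = \log(4M)/(2\log(1/\sigma_2(\rmU)))$ achieves a constant-factor shrinkage. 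This is what makes $\tau$ enter the bound. For (iv), the choice $p = 2\log d$ is critical: under Assumption \ref{asp:grad_subG}, standard sub-Gaussian MGF arguments combined with $\norm{\cdot}_p$-duality give $\EB\norm{\text{noise}}_p^2 \lesssim \sigma^2 p \lesssim \sigma^2 \log d$, and the extra $(\log d)^2$ in the final bound arises from summing per-coordinate sub-Gaussian tails across $K$ local steps together with the smoothness-induced coupling.

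The distinction between \textsc{DFedDA} and \textsc{DFedDA-GT} arises entirely in how (ii) couples with the per-client gradient bias $\nabla f_m(\vw_{r,k}^m) - \nabla f(\bar{\vw}_{r,0})$. Without gradient tracking, this bias splits as a drift piece plus a pure heterogeneity piece $\nabla f_m(\vw^*) - \nabla f(\vw^*)$, the latter summing to $\gE_M^*$ and, after balancing step sizes, producing the cubic-root rate $(\tau^2 L\gE_M^* h(\vw^*)^2)^{1/3}/R^{2/3}$. With gradient tracking, the recursion \eqref{eq:gradient tracking step} preserves the invariant $\sum_m \vc_r^m = 0$, and an inductive argument shows that $\vc_r^m + \nabla f_m(\bar{\vw}_{r,0})$ tracks $\frac{1}{M}\sum_j \nabla f_j(\bar{\vw}_{r,0}) = \nabla f(\bar{\vw}_{r,0})$ up to a consensus-style residual of the same order as (iii), scaled by a further factor of $\tau$. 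Substituting this corrected bias into the drift analysis kills the $\gE_M^*$ contribution completely and inflates the optimization-error prefactor from $\tau L h(\vw^*)/R$ to $\tau^2 L h(\vw^*)/R$.

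The main obstacle will be step (iii) coupled with gradient tracking: I need a joint Lyapunov quantity combining the dual consensus error $\sum_m \norm{\vz_{r,0}^m - \bar{\vz}_{r,0}}_p^2$ and the tracker consensus error $\sum_m \norm{\vc_r^m - \bar{\vc}_r}_p^2$ (where $\bar{\vc}_r = 0$), showing that both contract by the gossip spectral gap while being perturbed by $K\eta_c$-scaled gradient-difference terms. Carefully choosing $\eta_c$ small enough (of order $1/(L\tau^2 K)$) so that the perturbation is dominated by the contraction, then substituting back, is the delicate technical step. Once this Lyapunov recursion is established, optimizing $\eta_c$ over the final bound and invoking convexity of $f$ at the Jensen average $\hat{\vw}^m$ delivers both stated inequalities.
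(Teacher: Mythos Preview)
Your proposal follows the paper's architecture: a virtual-average analysis on $\bar{\vz}_{r,0}$, decomposition into optimization/drift/consensus/noise terms, and (for the GT case) a joint recursion coupling the consensus error with the tracker residual (the paper names these $\gE_r,\gH_r,\gL_r$). Two differences are worth flagging. First, the paper starts from the dual-averaging lemma (Lemma~\ref{lem:innr_to_breg-norm}) rather than raw $h^*$-smoothness; the former directly produces a \emph{negative} term $-\tfrac{1}{2}\|\bar{\vw}_{r,0}-\bar{\vw}_{r+1,0}\|_q^2$ that is later used to absorb drift and tracker-coupled terms, whereas your $h^*$-smoothness descent yields a \emph{positive} $\tfrac{\tilde\eta^2}{2}\|\bar{\vg}_r^\xi\|_p^2$ that includes the full gradient and must itself be bounded via $f(\bar{\vw}_{r,0})-f(\vw^*)$ and reabsorbed. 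Second, for the noise the paper invokes Burkholder--Davis--Gundy (Lemma~\ref{lem:bdg_ineq}), which contributes a $p^4\sim(\log d)^4$ prefactor and hence the $(\log d)^2$ after optimizing $\eta_c$; your ``standard sub-Gaussian MGF'' route (tower property on conditionally sub-Gaussian increments) is also valid and would in fact yield only a factor $p\sim\log d$, i.e.\ $(\log d)^{1/2}$ in the final rate---so your own method is tighter than the statement, and your accounting of the $(\log d)^2$ factor does not match your method.

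One genuine technical point you underplay: your claim that the consensus error ``contracts by $\sigma_2(\rmU)^{2t}$ after $t$ rounds'' is the $\ell_2$ statement. In the $\|\cdot\|_{p,2}$ geometry the analysis lives in, the mixing bound picks up an extra factor of $M$ (Lemma~\ref{lem:mixing_rt_under_l_p2}), which is precisely why $\tau$ carries $\log(4M)$. More critically, the accumulated gradient perturbations over the most recent $\tau$ rounds involve $\rmU^{r-i}-\rmJ_M$ with $r-i<\tau$, for which $M\sigma_2(\rmU)^{2(r-i)}$ may exceed $1$ and the mixing bound is vacuous; the paper handles those near-term contributions instead through the non-expansiveness of doubly stochastic matrices under $\|\cdot\|_{p,2}$ (Lemma~\ref{lem:db_stoc_under_l_p2}). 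Without this two-regime treatment the consensus recursion does not close, so you should build it in explicitly.
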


\begin{remark}
    Here, $\tau$ represents the mixing time of the gossip matrix, which reflects the single-round communication efficiency of the communication network (with the highest efficiency achieved in the complete graph). Interestingly, in high-dimensional optimization problems, the dependence on $\tau$ in the convergence rate we obtain is consistent with algorithms proposed by \citet{liu2023decentralized} in non-high-dimensional scenarios. This alignment is not trivial to prove, and we will provide further explanations and clarifications on this matter later on.
\end{remark}
\begin{remark}
    It is important to emphasize that our algorithm cannot guarantee that the estimate $\hat{\vw}$ is sparse. This limitation arises because, under the sole assumption of convexity, existing optimization methods struggle to accurately characterize the distance between $\hat{\vw}$ and $\vw^*$. Consequently, if we forcibly sparsify $\hat{\vw}$ into $\hat{\vw}_s$, the control over $f(\hat{\vw}_s) - f(\vw^*)$ becomes challenging. We recognize this issue as one of the directions for future research.
\end{remark}

\subsection{Comparison with Related Convergence Results}
Theorem~\ref{thm:cvrt_decentralized_alg} in our paper examines the convergence rates of two decentralized algorithms: DFedDA and DFedDA-GT. When disregarding the dependence on $\log d$, both algorithms exhibit convergence rates similar to those of non-high-dimensional distributed algorithms that do not utilize gradient tracking techniques, such as Theorem 1 in \citep{karimireddy2019scaffold} and Corollary 1 in \citep{li2019communication}.

Furthermore, in the context of high-dimensional optimization, we compare our work with two relevant papers. Firstly, we consider FedDA proposed by \citet{yuan2020federated}. Our DFedDA achieves convergence rates that entirely cover FedDA's convergence rates for large step sizes, as shown in Theorem 4.2 of \citep{yuan2020federated}. Additionally, when FedDA adopts small step sizes, its convergence rate, described in Theorem 4.1 of \citep{yuan2020federated}, coincides with that of our DFedDA-GT. However, we note that their proofs raise certain concerns, which we will address in Section~\ref{sec:related_work}.

Secondly, we compare our approach with Fast-FedDA introduced by \citet{bao2022fast}, which focuses on strong convexity in the loss functions at each client. 
Although Fast-FedDA employs a more sophisticated mirror update technique for strong convexity, its convergence rate (Theorem 2.1 in \citep{bao2022fast}) aligns with our DFedDA-GT's convergence rate, limited to the convex settings only.
\section{An Overview of the Proof Procedure}\label{sec:proof sketch}
To underscore the differences from previous research, in this section, we briefly outline the proof strategies employed and enumerate some of the challenging aspects encountered during the analysis.

\subsection{How Does Mirror Map Works}
In Algorithm~\ref{alg:dscaffoldda}, the mirror map $\nabla h$ has the primitive function $h(\vw)=\frac{1}{2(q-1)}\|\vw\|_q^2$ with $q = \frac{2\log d}{2\log d - 1}$. The selection of this mirror map was originally proposed by \citet{shalev2009stochastic} and later became one of the fundamental techniques in online high-dimensional optimization, continuously used and extended in subsequent works such as \citep{agarwal2012stochastic,juditsky2023sparse,juditsky2014unified}. The motivation behind this approach is as follows: when using vanilla SGD, the convergence analysis is typically conducted in Euclidean space. However, when analyzing the variance of the stochastic noise $\EB\norm{\nabla F(\vw, \xi) - \nabla f(\vw)}_2^2$, its magnitude scales linearly with the dimension $d$, which affects the final convergence rate. On the contrary, by applying the mirror map $\nabla^{-1} h$, we can control the variance of the gradient noise (denoted as $\zeta$) to be of the order of a constant, without losing the original gradient information. Specifically, we can control the size of the gradient noise as $\EB \norm{\zeta}_{2\log d}^2 = \EB \left(\sum_{i=1}^{d}\abs{\zeta_i}^{2\log d}\right)^{\frac{1}{\log d}} \precsim d^{\frac{1}{\log d}} = e$, which is of a constant magnitude.

\subsection{One-Step Recursion Analysis}


The following lemma can be considered as a convergence guarantee for algorithms similar to dual averaging. There has been extensive research analyzing this result \citep{duchi2011dual, liu2020accelerated}, and it has been widely applied to complexity analyses of related algorithms \citep{duchi2012randomized,agarwal2012stochastic,juditsky2023sparse}. Here, we provide a brief restatement of it.

\begin{lemma}[Lemma 6 of \citet{liu2020accelerated}]\label{lem:innr_to_breg-norm}
Given a convex set $\gB$, a sequence of variables $\{\zeta_r\}$ and a positive sequence $\{a_r\}$. The vector sequence $\{\vv_r\}$ is defined by $\vv_r := \argmin\limits_{\vv \in \gB}\left\{ \ssum{i}{0}{r-1}a_i\inner{\zeta_i}{\vv} + h(\vv) \right\}$. And suppose $\vw^* \in \gB$. Then
\begin{align*}
    \ssum{i}{0}{r-1}a_i\inner{\zeta_i}{\vv_{i+1} - \vw^*} \le h(\vw^*) - \frac{1}{2}\ssum{i}{0}{r-1}\norm{\vv_i - \vv_{i+1}}_q^2. 
\end{align*}
\end{lemma}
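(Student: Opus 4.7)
The plan is the classical FTRL / be-the-leader argument specialized to the regularizer $h$ of Lemma~\ref{lem:strong_cvx_mirror_map}. First I would define the running potential
\[
\Phi_r(\vv) := \ssum{i}{0}{r-1} a_i \inner{\zeta_i}{\vv} + h(\vv),
\]
so that by construction $\vv_r = \argmin_{\vv \in \gB} \Phi_r(\vv)$. Since $h$ is $1$-strongly convex with respect to $\|\cdot\|_q$ and an affine-in-$\vv$ perturbation preserves the strong convexity modulus, every $\Phi_r$ is also $1$-strongly convex in $\|\cdot\|_q$.

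Next I would upgrade the optimality of $\vv_r$ to a quantitative gap via strong convexity. For any feasible $\vv \in \gB$, and in particular for $\vv = \vv_{r+1}$, one has $\Phi_r(\vv_{r+1}) \ge \Phi_r(\vv_r) + \tfrac{1}{2}\|\vv_{r+1} - \vv_r\|_q^2$. The core step is to introduce the auxiliary quantity $T_r := \ssum{i}{0}{r-1} a_i \inner{\zeta_i}{\vv_{i+1}} - \Phi_r(\vv_r)$ and show that it is nonincreasing: using the recursion $\Phi_{r+1}(\vv) = \Phi_r(\vv) + a_r\inner{\zeta_r}{\vv}$, the added and subtracted linear-in-$\vv_{r+1}$ contributions cancel, leaving
$T_{r+1} - T_r = \Phi_r(\vv_r) - \Phi_r(\vv_{r+1}) \le -\tfrac{1}{2}\|\vv_{r+1} - \vv_r\|_q^2$.

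Telescoping from $T_0 = -h(\vv_0)$ would then yield $\ssum{i}{0}{r-1} a_i \inner{\zeta_i}{\vv_{i+1}} \le \Phi_r(\vv_r) - h(\vv_0) - \tfrac{1}{2}\ssum{i}{0}{r-1}\|\vv_i - \vv_{i+1}\|_q^2$. Bounding $\Phi_r(\vv_r) \le \Phi_r(\vw^*)$ by optimality, expanding the right-hand side, and subtracting $\ssum{i}{0}{r-1} a_i\inner{\zeta_i}{\vw^*}$ from both sides gives the stated inequality up to an additional $-h(\vv_0)$ term on the right. Because $h(\vw) = \tfrac{1}{2(q-1)}\|\vw\|_q^2 \ge 0$, dropping $-h(\vv_0)$ only loosens the bound and produces exactly the form of Lemma~\ref{lem:innr_to_breg-norm}.

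The main thing requiring care, rather than a real obstacle, is the strong-convexity inequality in the case when the constrained minimizer $\vv_r$ sits on the boundary of $\gB$. This is handled by the standard first-order optimality condition on a convex feasible set: any subgradient of $\Phi_r$ at $\vv_r$ lies in the negative normal cone of $\gB$ at $\vv_r$, so its inner product with $\vv_{r+1} - \vv_r$ is nonnegative, and only the quadratic remainder from the $1$-strong-convexity expansion survives. Beyond this classical point, the argument is mechanical algebra.
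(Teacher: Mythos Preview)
Your proposal is correct and follows essentially the same route as the paper: both define the running potential $\Phi_r$ (the paper calls it $m_r$), use $1$-strong convexity at the constrained minimizer to get $\Phi_r(\vv_{r+1}) - \Phi_r(\vv_r) \ge \tfrac12\|\vv_{r+1}-\vv_r\|_q^2$, telescope to bound $\sum a_i\inner{\zeta_i}{\vv_{i+1}}$ by $\Phi_r(\vv_r)$ minus the quadratic terms, and then compare with $\vw^*$ via $\Phi_r(\vv_r)\le\Phi_r(\vw^*)$. Your FTRL packaging with $T_r$ and your explicit normal-cone remark for the boundary case are cosmetic reorganizations of exactly the same argument.
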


Our main intuition of proof is as follows:
Consider a surrogate sequence $\bar{\vz}_{r,k}:= \frac{1}{M} \rmZ_{r,k}\mathbf{1}$ with $\rmZ_{r,k} := [\vz_{r,k}^1, \cdots, \vz_{r,k}^M] \in \RB^{d\times M}$, and its corresponding image in the primal space $\bar{\vw}_{r,k} = \nabla^{-1} h(\bar{\vz}_{r,k})$.
By using Lemma~\ref{lem:innr_to_breg-norm}, the one-step convergence results can be obtained

\begin{align*}
    &\frac{1}{MR}\ssum{r}{1}{R}\ssum{m}{1}{M} \EB [f_m(\bar{\vw}_{r,0}) - f_m(\vw^*)] \le \underbrace{\frac{h(\vw^*)}{RK\eta}}_{\gT_1} - \frac{1}{8RK\eta}\ssum{r}{1}{R} \EB \norm{\bar{\vw}_{r-1,0} - \bar{\vw}_{r,0}}_q^2\\
    &+ \underbrace{\frac{K\eta}{R}\ssum{r}{0}{R-1} \EB \norm{\bar{\vg}_r^\xi - \bar{\vg}_r}_p^2}_{\gT_2}
    + \underbrace{\frac{4L}{RMK}\ssum{r}{1}{R}\ssum{m}{1}{M}\ssum{k}{0}{K-1}\EB \norm{\bar{\vw}_{r-1,0} - \vw_{r-1,k}^m}_q^2}_{\gT_3}.
\end{align*}

The symbol $\bar{\vg}_r^{\xi}$ and $\bar{\vg}_r$ in the above formula stand for $\frac{1}{MK}\ssum{m}{1}{M}\ssum{k}{0}{K-1}\nabla F(\vw_{r,k}^m, \xi_{r,k}^m)$ and $\frac{1}{MK}\ssum{m}{1}{M}\ssum{k}{0}{K-1} \nabla f_m(\vw_{r,k}^m)$ respectively. Here and hereafter, when $\eta_s = 1$, we will denote $\eta_c$ as $\eta$. Note that except for the second term of the above formula, all the rest three terms are positive. We denote them as $\gT_1, \gT_2$ and $\gT_3$ respectively. The rest of this section is focused on how to bound these three terms.

\subsection{Bounds for Bias ($\gT_1$) and Variance ($\gT_2$)}
Concerning the initial term $\gT_1$, it emerges due to the disparity between $\vw^*$ and the initialization $\vw_0$, which can be reduced by increasing the number of global rounds.

As for the subsequent term $\gT_2$, it originates from the accumulation of noise during a single local update round. While FedAvg's analysis can readily handle this term, credit to the independence of distinct terms at separate time points within the same martingale difference sequence, the situation is different in the case of DFedDA. Here, it becomes necessary to bound the $\gL_p$ norm of the stochastic noise which is not amenable to being directly represented as a sum of inner products.

To address this challenge, we leverage the Burkholder-Davis-Gundy inequality,

\begin{lemma}[Burkholder-Davis-Gundy inequality]\label{lem:bdg_ineq}
Let $\{\iota_n\}_{n=1}^\infty$ be an $\{\gF_n\}_{n=1}^\infty$-martingale difference sequence in $\RB$. Suppose $\sup\limits_{i\ge 1}\EB |\iota_i|^p < \infty$. Then there is a constant $C$ (not depending on $p$) such that for each $N$,
\[
\EB \sup\limits_{n\le N} \left| \ssum{i}{1}{n}\iota_i \right|^p \le \frac{C^p p^{5p/2}}{(p-1)^p} \EB \left( 
\ssum{n}{1}{N} \EB[|\iota_n|^2| \gF_{n-1}] \right)^{p/2},
\]
where $\gF_0 := \{ \emptyset, \RB \}$.
\end{lemma}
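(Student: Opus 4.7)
This is the classical Burkholder-Davis-Gundy inequality for discrete martingales with the predictable (conditional) quadratic variation on the right-hand side, so my plan is to assemble it from three standard ingredients while carefully tracking the $p$-dependence of the constants. Writing $M_n := \ssum{i}{1}{n}\iota_i$, the first move is to reduce from $\sup_{n\le N}|M_n|$ to the terminal value $|M_N|$ via Doob's $L^p$ maximal inequality,
\[
\EB \sup_{n \le N}|M_n|^p \;\le\; \left(\frac{p}{p-1}\right)^p \EB |M_N|^p,
\]
which already accounts for the $(p-1)^{-p}$ factor in the stated constant.

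Next I would invoke the Burkholder square-function inequality for discrete martingales to bound $\EB|M_N|^p$ by the unconditional quadratic variation $[M,M]_N := \ssum{i}{1}{N}\iota_i^2$, obtaining $\EB |M_N|^p \le (C_1 p)^p \EB [M,M]_N^{p/2}$; the $O(p)$ growth of the Burkholder constant for $p\ge 2$ is classical. The third and main step is to pass from $[M,M]_N$ to the predictable quadratic variation $\langle M\rangle_N := \ssum{i}{1}{N} \EB[\iota_i^2\mid \gF_{i-1}]$. For this I would decompose $\iota_i^2 = \EB[\iota_i^2\mid \gF_{i-1}] + X_i$, where $\{X_i\}$ is itself an $\{\gF_i\}$-martingale difference sequence, apply the Burkholder inequality a second time to $\ssum{i}{1}{N} X_i$, and absorb the resulting fourth-power sum via $\iota_i^4 \le (\sup_i \iota_i^2)\sum_j \iota_j^2$ together with a Young-type splitting. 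This produces an estimate of the form $\EB [M,M]_N^{p/2} \le (C_2 p^{3/2})^p \EB \langle M\rangle_N^{p/2}$.

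Multiplying the three constants then yields the advertised $C^p p^{5p/2}/(p-1)^p$, and the lemma follows. The hardest part is the bootstrap embedded in the third step: the comparison between $[M,M]$ and $\langle M\rangle$ reintroduces fourth powers of the increments, and controlling them without letting a factor of $\sup_i |\iota_i|^p$ leak into the right-hand side requires iterating the square-function estimate once more and absorbing the surviving terms back into $\langle M\rangle_N^{p/2}$. Managing that iteration so that the final constant remains of order $p^{5p/2}$ rather than blowing up further is where the analysis is most delicate; once it is in hand, the composition of the three inequalities is immediate.
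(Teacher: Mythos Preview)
The paper does not prove this lemma; it is stated as the classical Burkholder--Davis--Gundy inequality and invoked as a black box (in particular inside the proof of Lemma~\ref{lem:bd_of_p_norm_2nd_mmt}). There is therefore no in-paper argument to compare your proposal against.

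On your plan itself: the three-step architecture (Doob for the $(p/(p-1))^p$ factor, Burkholder's square-function inequality to reach $[M,M]_N=\sum_i\iota_i^2$, then a comparison of $[M,M]$ with $\langle M\rangle$) is the standard route, and your bookkeeping for the first two steps is fine. The genuine obstruction is step three. For a general discrete martingale difference sequence the estimate $\EB[M,M]_N^{p/2}\le c_p\,\EB\langle M\rangle_N^{p/2}$ is \emph{false} without an additional maximal-increment term, and no amount of iterating the square-function bound can remove it. Take $N=1$ and $\iota_1=\pm K$ with probability $1/(2K^2)$ each (and $0$ otherwise): then $\langle M\rangle_1=\EB[\iota_1^2\mid\gF_0]=1$ is deterministic, while $\EB|\iota_1|^p=K^{p-2}$ is unbounded for $p>2$. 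So the very inequality you are trying to establish fails already at $N=1$, and the ``absorption'' you describe cannot succeed. The textbook discrete-time statement with the predictable bracket carries an extra $\EB\max_i|\iota_i|^p$ on the right; in the paper's actual application this is harmless because the very next line (Jensen) replaces $\EB\langle M\rangle_N^{p/2}$ by $n^{p/2-1}\sum_i\EB|\iota_i|^p$, which already dominates the jump term. But a self-contained proof of the lemma exactly as written, for arbitrary $\{\iota_n\}$, cannot go through.
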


Next, we can leverage Lemma~\ref{lem:bdg_ineq} to formulate Lemma~\ref{lem:bd_of_p_norm_2nd_mmt}, which proves instrumental in deriving a bound for $\gT_2$.

\begin{lemma} \label{lem:bd_of_p_norm_2nd_mmt}
    Suppose $\{\vv_i\}$ is an $\{\gF_i\}$-martingale in $\RB^d$. Then for any $p \ge 2$, we have
     \[\EB {\sup\limits_{1\le m \le n}}\norm{\ssum{i}{1}{m}\vv_i}_p^2 \le C^2 d^{2/p}p^3 n \sup\limits_{i\in [n], j\in [d]} \left\{ \EB \left| [\vv_i]_j \right|^p \right\}. \]
\end{lemma}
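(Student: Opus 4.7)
My plan is to reduce the vector-valued statement to a coordinate-wise application of the scalar Burkholder--Davis--Gundy inequality (Lemma~\ref{lem:bdg_ineq}). I read ``$\{\vv_i\}$ is an $\{\gF_i\}$-martingale'' as saying that $\{\vv_i\}$ is the associated difference sequence, so that the partial sums $\ssum{i}{1}{m}\vv_i$ form the martingale whose running maximum we wish to control. The first step is the pointwise coordinate bound on the $\ell_p$ norm,
\begin{equation*}
\sup_{1\le m\le n}\norm{\ssum{i}{1}{m}\vv_i}_p^p \;\le\; \ssum{j}{1}{d}\sup_{1\le m\le n}\abs{\ssum{i}{1}{m}[\vv_i]_j}^p,
\end{equation*}
which separates the $d$ coordinates and casts each one into the scalar setting required by the BDG inequality.

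For each fixed $j\in[d]$, the scalar sequence $\{[\vv_i]_j\}$ is itself an $\{\gF_i\}$-martingale difference sequence, so Lemma~\ref{lem:bdg_ineq} gives
\begin{equation*}
\EB \sup_{1\le m\le n}\abs{\ssum{i}{1}{m}[\vv_i]_j}^p \;\le\; \frac{C^p p^{5p/2}}{(p-1)^p}\,\EB\left(\ssum{i}{1}{n}\EB\bigl[\abs{[\vv_i]_j}^2 \,\big|\, \gF_{i-1}\bigr]\right)^{p/2}.
\end{equation*}
To dispatch the predictable quadratic variation I would apply the power-mean inequality $(\sum_{i=1}^{n} a_i)^{p/2}\le n^{p/2-1}\sum_{i=1}^{n}a_i^{p/2}$ (which requires $p\ge 2$), followed by conditional Jensen $\EB[\abs{X}^2\mid\gF]^{p/2} \le \EB[\abs{X}^p\mid \gF]$. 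Taking outer expectations and bounding each summand by $\sup_{i,j}\EB\abs{[\vv_i]_j}^p$ then contributes the factor $n^{p/2}$.

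Summing the coordinate estimates over $j$ yields an extra factor $d$; it remains to pass from the $p$-th moment of the norm back to its square. Since $p\ge 2$, the map $x\mapsto x^{2/p}$ is concave on $[0,\infty)$, so Jensen's inequality gives
\begin{equation*}
\EB\sup_{1\le m\le n}\norm{\ssum{i}{1}{m}\vv_i}_p^2 \;=\; \EB\Bigl(\sup_m \norm{\ssum{i}{1}{m}\vv_i}_p^p\Bigr)^{2/p} \;\le\; \Bigl(\EB \sup_m \norm{\ssum{i}{1}{m}\vv_i}_p^p\Bigr)^{2/p}.
\end{equation*}
The BDG constant collapses to $(p^{5p/2}/(p-1)^p)^{2/p} = p^5/(p-1)^2 \lesssim p^3$ for $p\ge 2$, and the $d$ becomes $d^{2/p}$, matching the shape of the stated bound; the paper's right-hand side appears to be missing a $2/p$ exponent on $\sup_{i,j}\EB\abs{[\vv_i]_j}^p$, which I will flag.

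The main obstacle is bookkeeping of the $p$-dependent constants: BDG carries the unwieldy $p^{5p/2}/(p-1)^p$, the final linear-in-$n$ factor must be assembled jointly from the power-mean reduction ($n^{p/2-1}$) and the sum over $i$, and conditional Jensen has to be invoked at exactly the right moment so that the right-hand side ends up depending only on $\sup_{i,j}\EB\abs{[\vv_i]_j}^p$ rather than on higher conditional moments. None of these steps is conceptually difficult, but a careless ordering forfeits either the correct power of $p$ or the correct power of $n$.
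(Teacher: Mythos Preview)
Your proposal is correct and follows essentially the same route as the paper: apply Jensen for the concave map $x\mapsto x^{2/p}$ to pull the expectation outside, bound coordinate-wise, invoke the scalar BDG inequality (Lemma~\ref{lem:bdg_ineq}) on each coordinate, and then reduce the predictable quadratic variation via the power-mean inequality and conditional Jensen to obtain the factor $n^{p/2}\sup_{i}\EB|[\vv_i]_j|^p$. The only cosmetic difference is that the paper's written proof works with $m=n$ rather than tracking the $\sup_m$ explicitly, whereas you use the pointwise bound $\sup_m\|\cdot\|_p^p\le\sum_j\sup_m|\cdot|^p$; both are fine since the BDG lemma already controls the running maximum. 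Your observation about the missing $2/p$ exponent on $\sup_{i,j}\EB|[\vv_i]_j|^p$ is also correct: the paper's own proof ends with $\{\,\cdot\,\}^{2/p}$ and every subsequent application of the lemma inserts the $2/p$, so the lemma statement is simply a typo.
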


It's worth mentioning that in Lemma~\ref{lem:bdg_ineq}, we only require the noise introduced by the random oracle to have a bounded $p$-th moment, without assuming sub-Gaussianity. This implies that we effectively establish the validity of the bound for Theorem~\ref{thm:cvrt_decentralized_alg} under a more relaxed set of conditions.

\subsection{Bound for Deviation ($\gT_3$)}
Lastly, to conduct a proper analysis of $\gT_3$, a viable approach is to initiate the examination from each individual term within the summation. To put it more concretely, for any $r = 0, \cdots, R-1$ and $k = 0, \cdots, K-1$, we can get the following decomposition
\[
\frac{1}{M}\ssum{m}{1}{M} \EB \norm{\bar{\vw}_{r,0} - \vw_{r,k}^m}_q^2 \le \underbrace{\frac{2}{M}\ssum{m}{1}{M} \EB \norm{\vw_{r,k}^m - \vw_{r,0}^m}_q^2}_{\gT_3^{(1)}(r,k)} + \underbrace{\frac{2}{M}\ssum{m}{1}{M} \EB \norm{\bar{\vw}_{r,0} - \vw_{r,0}^m}_q^2}_{\gT_3^{(2)}(r,k)}.
\]
To put it candidly, we break down $\mathcal{T}_3$ into two distinct components: $\frac{4L}{RK}\sum_{r=0}^{R-1}\sum_{k=0}^{K-1} \mathcal{T}_3^{(1)}(r,k)$ and $\frac{4L}{RK}\sum_{r=0}^{R-1}\sum_{k=0}^{K-1} \mathcal{T}_3^{(2)}(r,k)$. And we define two quantities that play key roles in our analysis.
\begin{equation}\label{eq:define Hr&Er}
    \gH_r=\sum_{m=1}^M \EB\norm{\vz_{r,0}^m - \Bar{\vz}_{r,0}}_p^2, \quad \gE_r=\sum_{m=1}^M\sum_{k=0}^{K-1} \EB \norm{\vz_{r,k}^m - \vz_{r,0}^m}_p^2.
\end{equation}

The initial component accounts for the deviations resulting from the heterogeneity among the local devices during the local update process. To grasp this, we can scrutinize each $\mathbb{E}\|\vw_{r,k}^m - \vw_{r,0}^m\|_q^2$. By employing Lemma~\ref{lem:strong_cvx_mirror_map}, we can establish an upper bound using $\mathbb{E}\|\vz_{r,k}^m - \vz_{r,0}^m\|_p^2 = \eta^2\mathbb{E}\|\sum_{\tau=0}^{k-1}\nabla F(\vw_{r,\tau}^m, \xi_{r,\tau}^m)\|_p^2$. 
This kind of dependency also frequently appears in other federated learning works, 
and we establish the following lemma, whose proof is deferred to Section~\ref{sec:prf_for_local_dev_dfed}.

\begin{lemma}[One-step deviation of DFedDA No.1]\label{lem:local_dev_dfedda_1}
Under the assumptions of Theorem~\ref{thm:conv_rt_defedda}, for any $k= 0, \cdots, K-1;~ m\in [M]$, it is true that
    \begin{align*}
\gE_r &\le 4eC^2d^{2/p}p^4 \sigma^2 MK^2\eta^2 + 4ed^{4/p}L^2 K^3 \eta^2 \gH_r\\
&+ 16ed^{2/p}LMK^3\eta^2 \EB \{f(\Bar{\vw}_{r,0}) - f(\vw^*)\} + 8eK^3\eta^2 \ssum{m}{1}{M}\norm{\nabla f_m(\vw^*)}_p^2.
\end{align*}
\end{lemma}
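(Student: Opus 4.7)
The plan is to unroll the dual-space recursion, split the result into a stochastic piece and a deterministic piece, and handle them with Lemma~\ref{lem:bd_of_p_norm_2nd_mmt} and the mirror-map smoothness from Lemma~\ref{lem:strong_cvx_mirror_map} respectively. Since we are in the DFedDA case (no tracker), we write $\vz_{r,k}^m - \vz_{r,0}^m = -\eta\sum_{\tau=0}^{k-1}\nabla F(\vw_{r,\tau}^m;\xi_{r,\tau}^m)$ and split $\nabla F = \nabla f_m + \zeta_{r,\tau}^m$. Applying $\|\va+\vb\|_p^2\le 2\|\va\|_p^2+2\|\vb\|_p^2$ and summing over $m$ and $k$, the target $\gE_r$ is bounded by twice the $\|\cdot\|_p^2$ of the accumulated noises plus twice the $\|\cdot\|_p^2$ of the accumulated deterministic gradients (times $\eta^2$).

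For the stochastic piece, $\{\zeta_{r,\tau}^m\}_\tau$ is a martingale difference sequence in $\RB^d$, so Lemma~\ref{lem:bd_of_p_norm_2nd_mmt} gives $\EB\sup_{k\le K}\|\sum_{\tau=0}^{k-1}\zeta_{r,\tau}^m\|_p^2$ in terms of $d^{2/p}p^3 K$ times a $p$-th moment of the coordinates. Assumption~\ref{asp:grad_subG} (sub-Gaussianity with parameter $\sigma$) then supplies the coordinate moment bound, translating into $\sigma^2 p$ after appropriate renormalization, which together produces the $4eC^2 d^{2/p}p^4\sigma^2 MK^2\eta^2$ term.

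For the deterministic piece, I apply $\|\sum_{\tau=0}^{k-1}\vx_\tau\|_p^2\le k\sum_\tau\|\vx_\tau\|_p^2$ and reduce to controlling $\sum_m\sum_\tau\|\nabla f_m(\vw_{r,\tau}^m)\|_p^2$. I split
\begin{align*}
\nabla f_m(\vw_{r,\tau}^m)=\bigl[\nabla f_m(\vw_{r,\tau}^m)-\nabla f_m(\bar{\vw}_{r,0})\bigr]+\bigl[\nabla f_m(\bar{\vw}_{r,0})-\nabla f_m(\vw^*)\bigr]+\nabla f_m(\vw^*),
\end{align*}
and apply $\|\va+\vb+\vc\|_p^2\le 3(\|\va\|_p^2+\|\vb\|_p^2+\|\vc\|_p^2)$. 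The first summand is controlled by Assumption~\ref{asp:smooth} combined with the two norm conversions $\|\vy\|_p\le d^{1/p}\|\vy\|_\infty$ and $\|\vy\|_1\le d^{1/p}\|\vy\|_q$, giving a bound of $Ld^{2/p}\|\vw_{r,\tau}^m-\bar{\vw}_{r,0}\|_q$; I then triangulate via $\vw_{r,0}^m$ and use that $\nabla^{-1}h$ is $1$-Lipschitz from $\|\cdot\|_p$ to $\|\cdot\|_q$ (Lemma~\ref{lem:strong_cvx_mirror_map}), so that summing over $(m,\tau)$ replaces the primal deviations with the dual quantities $\gE_r$ and $K\gH_r$. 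The second summand is handled by the standard smoothness-plus-convexity co-coercivity $\|\nabla f_m(\vw)-\nabla f_m(\vw^*)\|_\infty^2\le 2L(f_m(\vw)-f_m(\vw^*)-\inner{\nabla f_m(\vw^*)}{\vw-\vw^*})$, converted to $\|\cdot\|_p$ at cost $d^{2/p}$; summing over $m$ annihilates the inner-product term via $\sum_m \nabla f_m(\vw^*)=M\nabla f(\vw^*)=0$, leaving the $d^{2/p}LMK^3(f(\bar{\vw}_{r,0})-f(\vw^*))$ contribution. The third summand directly yields $K^3\sum_m\|\nabla f_m(\vw^*)\|_p^2$.

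Assembling everything produces an inequality of the form $\gE_r \le c\,L^2 d^{4/p}K^2\eta^2\,\gE_r + (\text{the four terms on the right-hand side of the lemma})$. Under the step-size condition standing in Theorem~\ref{thm:conv_rt_defedda}, $cL^2 d^{4/p}K^2\eta^2\le 1/2$, allowing me to absorb the self-referential $\gE_r$ on the right and conclude. The main obstacle is the careful bookkeeping of the norm conversions in the mixed $(\|\cdot\|_1,\|\cdot\|_\infty)$-smoothness / $(\|\cdot\|_q,\|\cdot\|_p)$-mirror framework: each switch costs a $d^{1/p}$ factor and it is essential that no extra power of $d$ slips in (recalling $d^{2/p}=e$ when $p=2\log d$). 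A secondary subtlety is extracting the correct $p$-dependence from the sub-Gaussian moment so that the final bound scales as $p^4\sigma^2$ rather than $\sigma^p$.
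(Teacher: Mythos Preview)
Your approach is correct and uses the same ingredients as the paper---the martingale bound Lemma~\ref{lem:bd_of_p_norm_2nd_mmt} for the noise, the $(\|\cdot\|_1,\|\cdot\|_\infty)$-smoothness plus Lemma~\ref{lem:conj_sc} for the centered gradient, and the $1$-Lipschitzness of $\nabla h^*$ to pass from primal to dual deviations. The one structural difference is in how the self-referential $\gE_r$ term is closed. You bound $\gE_r$ directly by a constant multiple of $d^{4/p}L^2K^2\eta^2\,\gE_r$ plus the four right-hand-side terms and then absorb under the step-size constraint. The paper instead routes the drift through $\vw_{r,0}^m$ first (writing $\vartheta_{r,i}^m=\nabla f_m(\vw_{r,i}^m)-\nabla f_m(\vw_{r,0}^m)$), obtains a one-step recursion for the partial sums $\gE_{r,k}=\sum_{m}\sum_{i<k}\EB\|\vz_{r,i}^m-\vz_{r,0}^m\|_p^2$ of the form $\gE_{r,k+1}\le (1+4d^{4/p}L^2k\eta^2)\gE_{r,k}+b$, and unrolls it with Lemma~\ref{lem:rcs_to_etr_fedda}; this is where the factor $e$ in the stated constants comes from. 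Your direct absorption yields slightly different (and in some places tighter) numerical constants but the same orders in $K,\eta,d,\sigma$, so either route proves the lemma up to constants.
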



The second term involved with $\gT_3^{(2)}(r,k)$ reflects the average deviation between each local parameter $\vw_{r,0}^m$ after the $r$-th communication round and the virtual parameter $\bar{\vw}_{r,0}$.
In the context of general decentralization problem, it's worth noting that the bound on $\gT_3^{(2)}$ is intricately connected to the network's spectral gap, $1 - \sigma_2(\rmU)$, as discussed in previous works \citep{yuan2016convergence,duchi2011dual,li2019communication,liu2023decentralized}. To extend similar conclusions in high-dimensional settings, we establish the following recursive inequality,

\begin{lemma}[One-step deviation of DFedDA No.2]\label{lem:local_dev_dfedda_2}
Under the assumptions of Theorem~\ref{thm:conv_rt_defedda}, if we take $\tau = \frac{\log 4M}{2\log (1/\sigma_2(\rmU))}\vee 1$, then for any $ r = 0, \cdots, R-1$,
    \begin{align*}
        \gH_r &\le \frac{1}{2}\gH_{r-\tau} + 40d^{4/p}\tau L^2K^2 \eta^2 \ssum{i}{r-\tau}{r-1}\gH_i + 40d^{4/p}\tau L^2K\eta^2 \ssum{i}{r-\tau}{r-1} \gE_i\\
        &+ 80d^{2/p}\tau LM K^2\eta^2 \ssum{i}{r-\tau}{r-1} \EB \{f(\Bar{\vw}_{i,0}) - f(\vw^*)\}\\
        &+ 40C^4 p^4 d^{2/p}\tau \sigma^2 MK\eta^2 + 40\tau^2 K^2\eta^2 \ssum{m}{1}{M} \norm{\nabla f_m(\vw^*)}_p^2.
    \end{align*}
\end{lemma}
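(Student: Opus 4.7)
The plan is to unroll the consensus-error recursion over $\tau$ communication rounds and to estimate the resulting drift using the spectral properties of $\rmU$. Packaging the dual iterates into the matrix $\rmZ_{r,0}\in\RB^{d\times M}$ with $m$-th column $\vz_{r,0}^m$, the update (when \texttt{gradient tracking} is \textbf{False}) reads $\rmZ_{r+1,0} = (\rmZ_{r,0} - \eta \rmG_r)\rmU$, where $\rmG_r$ has $m$-th column $\sum_{k=0}^{K-1}\nabla F(\vw_{r,k}^m,\xi_{r,k}^m)$. Because $\rmU$ is doubly stochastic, $\bar{\vz}_{r+1,0}\vone^\top = \bar{\vz}_{r,0}\vone^\top - \eta \rmG_r \rmJ_M$, and subtracting yields the consensus-error recursion $\rmZ_{r+1,0} - \bar{\vz}_{r+1,0}\vone^\top = (\rmZ_{r,0} - \bar{\vz}_{r,0}\vone^\top)\rmU - \eta \rmG_r(\rmU - \rmJ_M)$. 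Iterating $\tau$ times yields
\[
\rmZ_{r,0} - \bar{\vz}_{r,0}\vone^\top = (\rmZ_{r-\tau,0} - \bar{\vz}_{r-\tau,0}\vone^\top)\rmU^\tau - \eta \sum_{i=r-\tau}^{r-1} \rmG_i (\rmU^{r-i} - \rmJ_M),
\]
to which I would apply $\|\vA+\vB\|_{p,2}^2 \le 2\|\vA\|_{p,2}^2 + 2\|\vB\|_{p,2}^2$ to split the bound into a contraction piece and a drift piece.

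For the contraction piece I use that $\rmZ_{r-\tau,0} - \bar{\vz}_{r-\tau,0}\vone^\top$ has zero row sums, so right-multiplication by $\rmU^\tau$ and by $\rmU^\tau - \rmJ_M$ coincide. Then Minkowski followed by a columnwise Cauchy--Schwarz delivers
\[
\|(\rmZ_{r-\tau,0} - \bar{\vz}_{r-\tau,0}\vone^\top)(\rmU^\tau - \rmJ_M)\|_{p,2}^2 \le \|\rmU^\tau - \rmJ_M\|_F^2 \cdot \gH_{r-\tau} \le M\sigma_2(\rmU)^{2\tau} \cdot \gH_{r-\tau},
\]
and the choice $\tau = \tfrac{\log 4M}{2\log(1/\sigma_2(\rmU))}\vee 1$ forces $M\sigma_2(\rmU)^{2\tau} \le \tfrac14$, producing the $\tfrac12\gH_{r-\tau}$ contribution after the outer factor of $2$.

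The drift piece $2\eta^2 \EB \|\sum_i \rmG_i(\rmU^{r-i} - \rmJ_M)\|_{p,2}^2$ I would decompose via $\rmG_i = \rmG_i^{\mathrm{det}} + \rmG_i^{\mathrm{noise}}$, where $\rmG_i^{\mathrm{det}}$ has $m$-th column $\sum_k \nabla f_m(\vw_{i,k}^m)$. On the deterministic side, Cauchy--Schwarz across the $\tau$ summands contributes one factor of $\tau$; applying the columnwise Jensen argument to the centred matrix $\tilde{\rmG}_i^{\mathrm{det}} = \rmG_i^{\mathrm{det}} - \bar{\vg}_i^{\mathrm{det}}\vone^\top$ (via the identity $\rmG_i^{\mathrm{det}}(\rmU^{r-i} - \rmJ_M) = \tilde{\rmG}_i^{\mathrm{det}}\rmU^{r-i}$) bounds $\|\rmG_i^{\mathrm{det}}(\rmU^{r-i} - \rmJ_M)\|_{p,2}^2$ by an absolute constant times $\|\rmG_i^{\mathrm{det}}\|_{p,2}^2$. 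I then expand each column of $\rmG_i^{\mathrm{det}}$ into the four pieces $K\nabla f_m(\vw^*)$, $K[\nabla f_m(\bar{\vw}_{i,0}) - \nabla f_m(\vw^*)]$, $K[\nabla f_m(\vw_{i,0}^m) - \nabla f_m(\bar{\vw}_{i,0})]$, and $\sum_k[\nabla f_m(\vw_{i,k}^m) - \nabla f_m(\vw_{i,0}^m)]$. Assumption~\ref{asp:smooth} together with the norm conversions $\|\cdot\|_p \le d^{1/p}\|\cdot\|_\infty$ and $\|\cdot\|_1 \le d^{1/p}\|\cdot\|_q$ gives the effective $(\ell_q,\ell_p)$-smoothness constant $d^{2/p}L$, and Lemma~\ref{lem:strong_cvx_mirror_map} then converts primal deviations into the dual-space deviations that define $\gH_i$ and $\gE_i$, producing the $d^{4/p}L^2$ coefficients on those two terms. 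For the $\nabla f_m(\bar{\vw}_{i,0}) - \nabla f_m(\vw^*)$ piece I would invoke the standard co-coercivity inequality for smooth convex functions, which, using $\sum_m \nabla f_m(\vw^*) = 0$, gives $\sum_m \|\nabla f_m(\bar{\vw}_{i,0}) - \nabla f_m(\vw^*)\|_p^2 \le 2 d^{2/p} L M\,\EB[f(\bar{\vw}_{i,0}) - f(\vw^*)]$. The remaining $\sum_m \|\nabla f_m(\vw^*)\|_p^2$ piece is kept as is and, combined with the outer $\tau$ from Cauchy--Schwarz, picks up the $\tau^2 K^2$ coefficient that appears in the target.

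The main obstacle, I expect, is the noise contribution: a blunt Cauchy--Schwarz would turn its $\sigma^2$ coefficient into $\tau^2 \sigma^2$ rather than the required $\tau \sigma^2$. To sidestep this I would exploit the mutual independence of $\{\rmG_i^{\mathrm{noise}}\}_{i=r-\tau}^{r-1}$ across $i$ and the martingale-difference structure of each $\rmG_i^{\mathrm{noise}}[:,m] = \sum_k (\nabla F(\vw_{i,k}^m,\xi_{i,k}^m) - \nabla f_m(\vw_{i,k}^m))$ across $k$. Writing each scalar coordinate of $\sum_i \rmG_i^{\mathrm{noise}}(\rmU^{r-i} - \rmJ_M)$ as a single martingale sum indexed by $(i,k,m')$ with coefficients $(\rmU^{r-i} - \rmJ_M)_{m',m}$, Lemma~\ref{lem:bd_of_p_norm_2nd_mmt} together with the sub-Gaussianity of Assumption~\ref{asp:grad_subG} yields a second-moment bound proportional to $C^4 p^4 d^{2/p} \sigma^2 M K \sum_{i=r-\tau}^{r-1} \|(\rmU^{r-i} - \rmJ_M)_{:,m}\|_2^2 \lesssim C^4 p^4 d^{2/p} \sigma^2 MK\,\tau$, which matches the noise term in the statement. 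Collecting the contraction, the deterministic-drift, and the noise pieces then closes the recursion for $\gH_r$.
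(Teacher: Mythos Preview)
Your proposal is correct and follows the paper's proof closely: unroll $\tau$ steps, use $M\sigma_2(\rmU)^{2\tau}\le \tfrac14$ for the contraction piece, decompose the drift into the same five ingredients (noise $\rmG_i^\zeta$, intra-round drift $\rmG_i^\vartheta$, client-vs-average $\rmG_i-\bar\rmG_i$, average-vs-optimum $\bar\rmG_i-\rmG^*$, and heterogeneity $\rmG^*$), and control the noise term via a coordinate-wise BDG argument to obtain the linear-in-$\tau$ scaling---the paper does this by splitting $\rmU^{r-i}-\rmJ_M$ into the two doubly-stochastic pieces $\rmU^{r-i}$ and $\rmJ_M$ and applying Jensen with the stochastic weights, whereas you bound the $\ell_2$ column norms directly, but both routes land on the same estimate. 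One small caveat: the noise bound you describe requires the weighted form of BDG (Lemma~\ref{lem:bdg_ineq} applied per coordinate, tracking $\sum_n a_n^2$) rather than the cruder $n\cdot\sup$ conclusion of Lemma~\ref{lem:bd_of_p_norm_2nd_mmt} you cite, but that is precisely the calculation the paper carries out.
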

The proof of Lemma~\ref{lem:local_dev_dfedda_2} is delayed to Section~\ref{sec:prf_for_local_dev_dfed}.

Let's briefly elaborate on the proof outline of this lemma. By iterating on $\gH_r$, we eventually arrive at an expression involving \(\rmU^t - \rmJ_M\), and it becomes crucial to control the specific operator norm of this matrix. For larger values of \(t\), we leverage the mixing properties of \(\rmU\), while for smaller \(t\), the aforementioned bounding technique becomes less refined. In such cases, we rely on the doubly stochastic properties of \(\rmU\) to establish the non-expansiveness of the operator. The diverse treatment strategy for different scenarios leads to the conclusions stated above.

An interesting observation is that, from Lemma~\ref{lem:local_dev_dfedda_1} and Lemma~\ref{lem:local_dev_dfedda_2}, it becomes apparent that $\gT_3^{(1)}$ and $\gT_3^{(2)}$ mutually serve as partial upper bounds for each other, exhibiting a \textbf{"spiraling"} recursive relationship. This intricate connection enables us, under specific weighted linear combinations, to simultaneously establish bounds for both through recursion, 
and we can obtain an upper bound for $\gT_3$.

By this point, we have derived upper-bound conclusions specific to each $\gT_i,~ i \in [3]$. Note that in practice, the surrogate sequence $\bar{\vw}_{r,0}$ is intractable caused by a decentralized mode of communication. To complete the convergence analysis of DFedDA, the last stage involves managing $\frac{1}{M}\sum_{m=1}^{M}\left\{f_m(\vw_{r,0}^m) - f_m(\bar{\vw}_{r,0})\right\}$. Given the $\gL_1$-smoothness of $f_m$, it's crucial to control $\gT_3^{(2)}$, a task that has already been accomplished in our preceding analysis step.

\subsection{The Role Played by Gradient Tracking}
In Section~\ref{sec:parti of GT}, 
we had a preliminary understanding about how gradient tracking works through a simple example.
In fact, there exists a substantial body of literature that has conducted in-depth analyses on the theoretical role of gradient tracking \citep{nedic2017achieving,karimireddy2019scaffold,koloskova2021improved,liu2023decentralized}. In this section, our focus lies more in conjunction with our earlier convergence analysis of DFedDA. We aim to explore how gradient tracking, in the context of higher dimensions, can yield substantial improvements in the algorithm's convergence rate.

Continuing with the notation from the preceding subsections, in simple terms, the incorporation of gradient tracking primarily leads to a significant reduction in the order of all $\EB \norm{\vz_{r,k}^m - \vz_{r,0}^m}_p^2$. To discern this effect, we can draw a comparison between the following three lemmas based on DFedDA-GT and Lemma~\ref{lem:local_dev_dfedda_1}, \ref{lem:local_dev_dfedda_2}.

Recall that $\gH_r$ and $\gE_r$ is defined by \eqref{eq:define Hr&Er} separately. Further, we define
\[\gL_r = \ssum{m}{1}{M}\EB \norm{\nabla f_m(\Bar{\vw}_{r,0}) + \vc_r^m}_p^2.
\]
Then we introduce three recursive consequences that bound $\gE_r,~ \gH_r$ and $\gL_r$ respectively.

\begin{lemma}[One-step deviation of DFedDA-GT No.1]\label{lem:local_dev_dfedda-gt_1}
    Under the assumption of Theorem~\ref{thm:conv_rt_defedda_gt}, for any $k=0,\cdots, K-1$, it is true that
    \begin{align*}
        \gE_r \le 4eC^2 d^{2/p} p^4 MK^2 \eta_c^2 \sigma^2 + 4ed^{4/p}L^2 K K^2\eta_c^2 \gH_r + 4eK^3 \eta_c^2 \gL_r,
    \end{align*}
    as long as $2d^{2/p}LK\eta \le 1$.
\end{lemma}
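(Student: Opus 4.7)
The plan is to unroll the dual update to express $\vz_{r,k}^m-\vz_{r,0}^m$ as a sum, split that sum into three algebraically distinct contributions, bound each in $\|\cdot\|_p^2$, and close the resulting self-recursion in $\gE_r$ using the step-size hypothesis. By line 11 of Algorithm~\ref{alg:dscaffoldda} with the tracker switched on,
\begin{align*}
\vz_{r,k}^m-\vz_{r,0}^m=-\eta_c\sum_{\tau=0}^{k-1}\bigl(\nabla F(\vw_{r,\tau}^m,\xi_{r,\tau}^m)+\vc_r^m\bigr).
\end{align*}
Setting $\zeta_{r,\tau}^m:=\nabla F(\vw_{r,\tau}^m,\xi_{r,\tau}^m)-\nabla f_m(\vw_{r,\tau}^m)$, I decompose the summand as
\begin{align*}
\nabla F(\vw_{r,\tau}^m,\xi_{r,\tau}^m)+\vc_r^m=\zeta_{r,\tau}^m+\bigl[\nabla f_m(\vw_{r,\tau}^m)-\nabla f_m(\vw_{r,0}^m)\bigr]+\bigl[\nabla f_m(\vw_{r,0}^m)+\vc_r^m\bigr],
\end{align*}
and apply $\|a+b+c\|_p^2\le 3(\|a\|_p^2+\|b\|_p^2+\|c\|_p^2)$. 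These three pieces are respectively the per-step \emph{noise}, the per-step \emph{client drift} from $\vw_{r,0}^m$, and the \emph{local tracker mismatch}. The essential reason gradient tracking beats Lemma~\ref{lem:local_dev_dfedda_1} is already visible here: the third piece will be controlled by $\gL_r$ (up to a small $\gH_r$-error arising from moving $\vw_{r,0}^m$ to $\bar\vw_{r,0}$), rather than by the raw heterogeneity $\sum_m\|\nabla f_m(\vw^*)\|_p^2$ that contaminates the DFedDA bound.

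The three pieces are bounded as follows. For the noise, $\{\zeta_{r,\tau}^m\}_\tau$ forms a martingale-difference sequence for the natural filtration of round $r$ and each coordinate is $\sigma$-sub-Gaussian by Assumption~\ref{asp:grad_subG}; the standard moment estimate $(\EB|[\zeta_{r,\tau}^m]_j|^p)^{2/p}\lesssim p\sigma^2$ applied inside Lemma~\ref{lem:bd_of_p_norm_2nd_mmt} yields $\EB\|\sum_{\tau<k}\zeta_{r,\tau}^m\|_p^2\lesssim C^2 d^{2/p}p^4 k\sigma^2$, and $\sum_{m,k}$ produces the first term of the statement. For the client drift I combine Cauchy--Schwarz in $\tau$ with Assumption~\ref{asp:smooth} and the H\"older embeddings $\|\cdot\|_p\le d^{1/p}\|\cdot\|_\infty$, $\|\cdot\|_1\le d^{1/p}\|\cdot\|_q$ to lift smoothness from the $(\|\cdot\|_\infty,\|\cdot\|_1)$-pairing to the $(\|\cdot\|_p,\|\cdot\|_q)$-pairing, i.e.\ $\|\nabla f_m(\vw)-\nabla f_m(\vv)\|_p\le d^{2/p}L\|\vw-\vv\|_q$; Lemma~\ref{lem:strong_cvx_mirror_map} then gives $\|\vw_{r,\tau}^m-\vw_{r,0}^m\|_q\le\|\vz_{r,\tau}^m-\vz_{r,0}^m\|_p$, so this piece feeds back into $\gE_r$ itself. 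For the local tracker mismatch, one further triangle step $\|\nabla f_m(\vw_{r,0}^m)+\vc_r^m\|_p^2\le 2d^{4/p}L^2\|\vz_{r,0}^m-\bar\vz_{r,0}\|_p^2+2\|\nabla f_m(\bar\vw_{r,0})+\vc_r^m\|_p^2$ (using smoothness and non-expansiveness of $\nabla h^*$) produces exactly the $\gH_r$- and $\gL_r$-contributions.

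Summing over $m\in[M]$ and $k=0,\dots,K-1$ produces an inequality of the schematic shape $\gE_r\le A\,d^{4/p}L^2 K^2\eta_c^2\,\gE_r+(\text{three desired terms})$, with $A$ a fixed universal constant. The main obstacle is this self-referential $\gE_r$ on the right-hand side; the hypothesis $2d^{2/p}LK\eta_c\le 1$ is tailored precisely to force $A\,d^{4/p}L^2K^2\eta_c^2$ strictly below $1$, so the term can be moved to the left and the whole inequality rescaled by a fixed multiplicative constant. Bookkeeping this multiplier together with the $e$-factor absorbed from the sub-Gaussian moment estimate (via $p^{2/p}\le e^{2/e}\le e$) yields the uniform prefactor $4e$ in front of each of the three surviving summands, completing the proof.
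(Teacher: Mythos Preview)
Your proof is correct and uses the same four-piece decomposition as the paper (the paper splits directly into four terms with uniform factor $4$ rather than your $3$-then-$2$ split, but this is cosmetic). The only substantive difference is how you close the self-recursion: the paper builds a one-step-in-$k$ recursion $\gE_{r,k+1}\le(1+4d^{4/p}L^2k\eta^2)\gE_{r,k}+b$ and appeals to Lemma~\ref{lem:rcs_to_etr_fedda} to get $\gE_r\le eKb$, whereas you sum over $k$ once, land on $\gE_r\le 3d^{4/p}L^2K^2\eta_c^2\,\gE_r+(\cdots)$, and absorb using $2d^{2/p}LK\eta_c\le 1$. Your route is arguably simpler and actually produces constants strictly smaller than $4e$, so the stated bound is still valid; what buys the paper's route is only that it cleanly explains the prefactor $4e$ as literally $4\cdot e$. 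Note, however, that your attribution of the $e$ to ``the sub-Gaussian moment estimate via $p^{2/p}\le e$'' is not right in either argument: in the paper the $e$ comes from the recursive Lemma~\ref{lem:rcs_to_etr_fedda}, and in your direct-absorption argument no factor of $e$ appears at all.
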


\begin{lemma}[One-step deviation of DFedDA-GT No.2]\label{lem:local_dev_dfedda-gt_2}
    Under the assumptions of Theorem~\ref{thm:conv_rt_defedda_gt}, if we take $\tau = \frac{\log 8M}{2\log (1/\sigma_2(\rmU))}\vee 1$, then for any $r= 0, \cdots, R$,
    \begin{align*}
        \gH_r &\le \frac{1}{2} \gH_{r-\tau} + 4d^{4/p} \tau L^2 K\eta^2 \ssum{i}{r-\tau}{r-1} \gE_i 
        +32 d^{4/p}\tau L^2 K^2 \eta^2 \ssum{i}{r-\tau}{r-1}\gH_i\\
        &+ 32 \tau K^2 \eta^2 \ssum{i}{r-\tau}{r-1}\gL_i + 32C^4 d^{2/p} p^4 \tau \sigma^2 MK\eta^2.
    \end{align*}
\end{lemma}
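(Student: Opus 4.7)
The plan is to mimic Lemma~\ref{lem:local_dev_dfedda_2} and exploit the key feature of gradient tracking: the quantity $\nabla f_m(\bar{\vw}_{i,0}) + \vc_i^m$, whose squared $\|\cdot\|_p$-norms aggregate into $\gL_i$, now absorbs what used to be the bare gradient $\nabla f_m(\bar{\vw}_{i,0})$---thereby eliminating the $\tau^2 \sum_m \|\nabla f_m(\vw^*)\|_p^2$ heterogeneity term. With $A_r := \rmZ_{r,0} - \bar{\vz}_{r,0}\mathbf{1}^\top$ (so that $\gH_r = \EB\|A_r\|_{p,2}^2$), $\Delta_r := [\Delta_r^1,\ldots,\Delta_r^M]$, and $\eta := \eta_c$ (since $\eta_s = 1$), the server update and the double-stochasticity of $\rmU$ give $A_{r+1} = (A_r + K\eta\Delta_r)(\rmU-\rmJ_M)$; unrolling $\tau$ rounds and using the commuting identity $(\rmU-\rmJ_M)^k = \rmU^k - \rmJ_M$ yields
\[
A_r = A_{r-\tau}(\rmU-\rmJ_M)^\tau + K\eta \sum_{i=r-\tau}^{r-1}\Delta_i(\rmU-\rmJ_M)^{r-i},
\]
so $(a+b)^2 \le 2a^2 + 2b^2$ splits $\gH_r$ into a contractive and a sum-of-increments piece. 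A columnwise Minkowski + Cauchy--Schwarz estimate using $\|[(\rmU-\rmJ_M)^\tau]_m\|_2 \le \sigma_2(\rmU)^\tau$ bounds the first by $M\sigma_2(\rmU)^{2\tau}\|A_{r-\tau}\|_{p,2}^2 \le \tfrac{1}{8}\gH_{r-\tau}$ (by the definition of $\tau$), which after the factor $2$ produces the advertised $\tfrac{1}{2}\gH_{r-\tau}$ head term.

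Next, decompose $\Delta_i^m = S_i^m + N_i^m$ with signal $S_i^m := -\tfrac{1}{K}\sum_k \nabla f_m(\vw_{i,k}^m) - \vc_i^m$ and noise $N_i^m := -\tfrac{1}{K}\sum_k \zeta_{i,k}^m$. Because both $\rmU$ and $\rmJ_M$ are doubly stochastic, Jensen's inequality applied columnwise gives the non-expansion $\|B\rmU^k\|_{p,2},\|B\rmJ_M\|_{p,2} \le \|B\|_{p,2}$, hence $\|B(\rmU-\rmJ_M)^{r-i}\|_{p,2}^2 \le 4\|B\|_{p,2}^2$ for $r-i \ge 1$. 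Combined with Cauchy--Schwarz over the $\tau$ summands, the signal contribution is at most $4\tau K^2\eta^2 \sum_i \EB\|S_i\|_{p,2}^2$. Writing $S_i^m = -(\nabla f_m(\bar{\vw}_{i,0}) + \vc_i^m) - \tfrac{1}{K}\sum_k (\nabla f_m(\vw_{i,k}^m) - \nabla f_m(\bar{\vw}_{i,0}))$ and invoking Assumption~\ref{asp:smooth} (promoted to $\|\nabla f_m(\vw)-\nabla f_m(\vv)\|_p \le d^{2/p}L\|\vw-\vv\|_q$ via $\|\cdot\|_\infty \le d^{1/p}\|\cdot\|_p$ and $\|\cdot\|_1 \le d^{1/p}\|\cdot\|_q$), the $1$-smoothness of $h^*$ from Lemma~\ref{lem:strong_cvx_mirror_map}, and $\|\vz_{i,k}^m - \bar{\vz}_{i,0}\|_p^2 \le 2\|\vz_{i,k}^m - \vz_{i,0}^m\|_p^2 + 2\|\vz_{i,0}^m - \bar{\vz}_{i,0}\|_p^2$, one obtains $\EB\|S_i\|_{p,2}^2 \precsim \gL_i + d^{4/p}L^2 \gH_i + d^{4/p}L^2 \gE_i/K$, which accounts for the three deterministic sums in the lemma.

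The main obstacle is controlling the noise with only a factor $\tau$ rather than $\tau^2$: the same Cauchy--Schwarz bound applied to $N_i$ would cost an extra $\tau$. To recover it I would view each column $m$ of $\sum_i N_i(\rmU-\rmJ_M)^{r-i}$ as a martingale in the lexicographic triple $(i,k,j)$ driven by the conditionally mean-zero, sub-Gaussian increments $\zeta_{i,k}^j$; the deterministic factor $[(\rmU-\rmJ_M)^{r-i}]_{jm}/K$ is predictable. Applying the BDG inequality (Lemma~\ref{lem:bdg_ineq}) coordinate-by-coordinate, the predictable quadratic variation is at most $\tfrac{\sigma^2}{K^2}\sum_{i,k,j}|[(\rmU-\rmJ_M)^{r-i}]_{jm}|^2 = \tfrac{\sigma^2}{K}\sum_i \|[(\rmU-\rmJ_M)^{r-i}]_m\|_2^2$. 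Summing over $m$ and over the $d$ output coordinates (absorbing the $d^{2/p}$ and $p^3$ factors produced by BDG as in the proof of Lemma~\ref{lem:bd_of_p_norm_2nd_mmt}) gives $\precsim C^2 d^{2/p} p^3 \cdot \tfrac{\sigma^2}{K}\sum_i \|(\rmU-\rmJ_M)^{r-i}\|_F^2 \le C^2 d^{2/p}p^3 \cdot \tfrac{M\sigma^2}{K}\sum_{i=r-\tau}^{r-1}\sigma_2(\rmU)^{2(r-i)} \le C^2 d^{2/p}p^3 \cdot \tfrac{M\tau\sigma^2}{K}$. Multiplying by $K^2\eta^2$ produces exactly the $\tau MK\eta^2 \sigma^2$ scaling of the last term of the lemma. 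Combining the contractive, signal, and noise contributions and absorbing universal constants then completes the proof.
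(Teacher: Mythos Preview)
Your proposal is correct and follows essentially the same route as the paper: unroll $\tau$ steps, use the mixing bound $M\sigma_2(\rmU)^{2\tau}\le \tfrac18$ for the contractive piece, and decompose the increments into a noise part (handled by BDG to get the $\tau MK\eta^2\sigma^2$ term) and signal parts that produce the $\gL_i$, $\gH_i$, $\gE_i$ sums via smoothness and the non-expansiveness Lemma~\ref{lem:db_stoc_under_l_p2}. The only differences are organizational---you split signal/noise first and then further decompose the signal, whereas the paper splits directly into the four pieces $\rmG_i^\zeta,\ \rmG_i^\vartheta,\ \rmG_i-\bar{\rmG}_i,\ \bar{\rmG}_i+K\rmC_i$---and your Frobenius-norm treatment of the noise is slightly cleaner than (but equivalent in outcome to) the paper's doubly-stochastic-plus-Jensen argument carried over from the $\gT_1$ bound in Lemma~\ref{lem:local_dev_dfedda_2}.
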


\begin{lemma}[One-step deviation of DFedDA-GT No.3]\label{lem:local_dev_dfedda-gt_3}
    Under the assumptions of Theorem~\ref{thm:conv_rt_defedda_gt}, if we take $\tau = \frac{\log 8M}{2\log (1/\sigma_2(\rmU))}\vee 1$, then for any $r= 0, \cdots, R$, we have
    \begin{align*}
        \gL_r &\le \frac{1}{4}\gL_{r-\tau} + 8d^{2/p}\tau ML^2 \ssum{i}{r-\tau}{r-1} \EB \norm{\Bar{\vw}_{i+1,0} - \Bar{\vw}_{i,0}}_1^2 + \frac{32C^4p^4 d^{2/p}\tau \sigma^2 M}{K}\\
        &+ \frac{16d^{4/p}\tau L^2}{K} \ssum{i}{r-\tau}{r-1} \gE_i + 
        16d^{4/p}\tau L^2 \ssum{i}{r-\tau}{r-1} \gH_i.
    \end{align*}
\end{lemma}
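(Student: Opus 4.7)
The plan is to derive a matrix-form recursion for $\Phi_r := \nabla\mathbf{f}(\bar{\vw}_{r,0})+\rmC_r$ (so that $\gL_r=\EB\|\Phi_r\|_{p,2}^2$), unroll it over $\tau$ rounds, and bound the resulting pieces. Writing $\Delta_r^m=-\vg_r^m-\vc_r^m$ with $\vg_r^m:=\frac{1}{K}\sum_{k=0}^{K-1}\nabla F(\vw_{r,k}^m,\xi_{r,k}^m)$, the tracker update \eqref{eq:gradient tracking step} rearranges into $\rmC_{r+1}=\rmC_r\rmU-\rmG_r(\rmI-\rmU)$. A short induction using $\sum_m u_{jm}=1$ and the initialization in Algorithm~\ref{alg:dscaffoldda} gives $\sum_m\vc_r^m=\mathbf{0}$ for every $r$, so the column-average $\bar\Phi_r:=\frac{1}{M}\Phi_r\mathbf{1}$ equals $\nabla f(\bar{\vw}_{r,0})$ and
\[
\Phi_{r+1}=\Phi_r\rmU+\bigl(\nabla\mathbf{f}(\bar{\vw}_{r+1,0})-\nabla\mathbf{f}(\bar{\vw}_{r,0})\bigr)-\widetilde{\rmG}_r(\rmI-\rmU),\qquad \widetilde{\rmG}_r:=\rmG_r-\nabla\mathbf{f}(\bar{\vw}_{r,0}).
\]
Iterating yields $\Phi_r=\Phi_{r-\tau}\rmU^\tau+\sum_{i=r-\tau}^{r-1}A_i\rmU^{r-1-i}$, where $A_i$ is the innovation at round $i$.

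Squaring the $(p,2)$-norm and applying Young's inequality together with Cauchy--Schwarz in $i$ splits $\gL_r$ into a \emph{mixing} piece $\sum_m\EB\|(\Phi_{r-\tau}\rmU^\tau)_{\cdot m}\|_p^2$ and an \emph{error} piece $\tau\sum_{i=r-\tau}^{r-1}\EB\|A_i\|_{p,2}^2$, where I have also used Jensen applied to the doubly-stochastic $\rmU^{r-1-i}$ to make the latter non-expansive in the $(p,2)$-norm. Each $A_i$ breaks into three contributions that are bounded separately: (a) the gradient-change column $\nabla f_m(\bar{\vw}_{i+1,0})-\nabla f_m(\bar{\vw}_{i,0})$, handled by Assumption~\ref{asp:smooth} (giving the $8d^{2/p}\tau ML^2\sum_i\EB\|\bar{\vw}_{i+1,0}-\bar{\vw}_{i,0}\|_1^2$ term); (b) the martingale-noise portion of $\widetilde{\rmG}_i(\rmI-\rmU)$, controlled by Assumption~\ref{asp:grad_subG} combined with Lemma~\ref{lem:bd_of_p_norm_2nd_mmt} (giving the $32C^4p^4d^{2/p}\tau\sigma^2 M/K$ constant); and (c) the smooth-drift portion of $\widetilde{\rmG}_i(\rmI-\rmU)$, handled by Assumption~\ref{asp:smooth} together with the primal--dual bound $\|\vw_{i,k}^m-\bar{\vw}_{i,0}\|_q\le\|\vz_{i,k}^m-\bar{\vz}_{i,0}\|_p$ from Lemma~\ref{lem:strong_cvx_mirror_map} and $\|\cdot\|_1\le d^{1/p}\|\cdot\|_q$ (producing the $d^{4/p}L^2(\gE_i/K+\gH_i)$-type terms).

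For the mixing piece, I split $\Phi_{r-\tau}\rmU^\tau=\bar\Phi_{r-\tau}\mathbf{1}^\top+\Phi_{r-\tau}(\rmU^\tau-\rmJ_M)$. The symmetry of $\rmU$ gives $\|(\rmU^\tau-\rmJ_M)_{\cdot m}\|_2\le\sigma_2(\rmU)^\tau$, and a column-wise Cauchy--Schwarz then yields $\sum_m\|(\Phi_{r-\tau}(\rmU^\tau-\rmJ_M))_{\cdot m}\|_p^2\le M\sigma_2(\rmU)^{2\tau}\gL_{r-\tau}$, while the choice of $\tau$ makes $M\sigma_2(\rmU)^{2\tau}\le 1/8$. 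The residual consensus term $M\|\bar\Phi_{r-\tau}\|_p^2=M\|\nabla f(\bar{\vw}_{r-\tau,0})\|_p^2$ is shifted from round $r-\tau$ to the current round by Assumption~\ref{asp:smooth} (folding into the iterate-movement error already on the right-hand side), and the $M\|\bar\Phi_r\|_p^2\le\gL_r$ that this produces is recaptured inside the $\gE_i/K$ terms via the gradient-tracking identity $\vz_{i,K}^m-\vz_{i,0}^m=-K\eta_c\bigl((\Phi_i)_{\cdot m}+\widetilde{\vg}_i^m\bigr)$, which forces $\gE_i$ to contain a $K^2\eta_c^2$-multiple of $\gL_i$. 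Choosing the Young constants to balance these contributions closes the contraction at $\tfrac14$.

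The main obstacle is precisely this $\tfrac14$ contraction. Jensen applied directly to $\rmU^\tau$ alone yields only non-expansiveness, because $\Phi_{r-\tau}$ is not zero-mean: its column-average equals $\nabla f(\bar{\vw}_{r-\tau,0})$, which can be as large as $\sqrt{\gL_{r-\tau}/M}$. Obtaining the $\tfrac14$ therefore requires combining three ingredients simultaneously --- spectral contraction of $\Phi_{r-\tau}(\rmU^\tau-\rmJ_M)$ via $\sigma_2(\rmU)^{2\tau}$, a smoothness-based time shift of the consensus term from round $r-\tau$ to round $r$ (at the cost of the iterate-movement error), and re-absorption of the resulting $M\|\bar\Phi_r\|_p^2\le\gL_r$ into the $\gE_i$ terms via the gradient-tracking identity. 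The bulk of the bookkeeping in the proof is to keep the numerical constants consistent across these three moves.
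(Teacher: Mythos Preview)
Your unrolled recursion for $\Phi_r:=\tfrac{1}{K}\bar{\rmG}_r+\rmC_r$ and your treatment of the innovation pieces (a)--(c) coincide with the paper's $\gT_2$--$\gT_5$. The divergence is only at the mixing term. The paper does not split, shift, or reabsorb: it simply invokes Lemma~\ref{lem:mixing_rt_under_l_p2} on $\Phi_{r-\tau}\rmU^\tau$ to get $2\EB\|\Phi_{r-\tau}\rmU^\tau\|_{p,2}^2\le 2M\sigma_2(\rmU)^{2\tau}\gL_{r-\tau}\le\tfrac14\gL_{r-\tau}$ in one line.

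Your reabsorption step is where the argument breaks. After splitting off $\bar\Phi_{r-\tau}\mathbf{1}^\top$ and shifting by smoothness you place a contribution $M\|\bar\Phi_r\|_p^2\le\gL_r$ on the \emph{right-hand} side of an inequality whose left-hand side is $\gL_r$. The plan to ``recapture it inside the $\gE_i/K$ terms'' cannot close: (i) the identity $\vz_{i,K}^m-\vz_{i,0}^m=-K\eta_c\bigl((\Phi_i)_{\cdot m}+\widetilde{\vg}_i^m\bigr)$ gives at best a \emph{lower} bound on $\gE_i$ in terms of $\gL_i$, which is the wrong direction; (ii) even appealing to the upper bound in Lemma~\ref{lem:local_dev_dfedda-gt_1}, the coefficient of $\gL_i$ filtered through $\tfrac{16d^{4/p}\tau L^2}{K}\,\gE_i$ is $O(d^{4/p}\tau L^2K^2\eta_c^2)\ll 1$ under the stepsize restriction, far too small to swallow an order-one $\gL_r$; (iii) the indices $i<r$ do not match $r$. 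You have correctly noticed that Lemma~\ref{lem:mixing_rt_under_l_p2} literally needs $\Phi_{r-\tau}\rmJ_M=0$. The clean route to a rigorous $\tfrac14$ contraction is to run the whole argument with the \emph{centered} matrix $\widetilde\Phi_r:=\Phi_r-\nabla f(\bar{\vw}_{r,0})\mathbf{1}^\top$, for which $\widetilde\Phi_r\rmJ_M=0$ identically (since $\sum_m\vc_r^m=0$); the recursion keeps the same shape with $\nabla f_m$ replaced by $\nabla f_m-\nabla f$, and the leftover $M\|\nabla f(\bar{\vw}_{r,0})\|_p^2\le 2d^{2/p}LM\{f(\bar{\vw}_{r,0})-f(\vw^*)\}$ (Lemma~\ref{lem:conj_sc}) is absorbed \emph{outside} this lemma together with the suboptimality terms already in the master recursion --- not by your circular reabsorption.
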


One can refer to Section~\ref{sec:prf_for_local_dev_dfed-gt} to check the proof of the above three one-step deviation lemmas.

In contrast to Lemma~\ref{lem:local_dev_dfedda_1}, the final term of Lemma~\ref{lem:local_dev_dfedda-gt_1}—most indicative of the influence of heterogeneity—transitions from $\mathcal{O}\left(K^3 \eta^2 \ssum{m}{1}{M}\norm{\nabla f(\vw^*)}_p^2\right)$ to $\mathcal{O}\left(K^3 \eta^2 \gL_r\right)$. A similar transformation in outcomes is observed from Lemma~\ref{lem:local_dev_dfedda_2} to Lemma~\ref{lem:local_dev_dfedda-gt_2}. Furthermore, by means of Lemma~\ref{lem:local_dev_dfedda-gt_3}, we can establish that $\gL_r = o\left( \frac{1}{M}\ssum{m}{1}{M}\norm{\nabla f(\vw^*)}_p^2\right)$. This underscores that, even in high-dimensional scenarios, gradient tracking techniques remain effective in mitigating the impact of heterogeneity.

\section{Enhancing Algorithm Performance under Local Strong Convexity}

Until now, our algorithm design and theoretical analysis have centered around the assumption of a convex population objective function. In this section, we leverage the more stringent assumption of local strong convexity. By harnessing the ReFedDA-GT algorithm, a minor modified version of FedDA-GT, across multiple stages under this heightened assumption, we achieve several notable enhancements compared to algorithms operating under convexity assumptions:

\begin{enumerate}
    \item Reduced sample complexity while maintaining the same accuracy requirements.
    \item Substantial reduction in communication complexity with the same accuracy requirements.
    \item Ability to estimate $\vw^*$ with an estimate $\hat{\vw}$ that possesses sparsity, while still achieving the desired accuracy.
\end{enumerate}

In Section~\ref{sec:alg for str cvx}, we introduce the modifications to our algorithm, and in Section~\ref{sec:conv rt of str cvx}, we present our theoretical findings. We conclude in Section~\ref{sec:prf skt of str cvx} with a concise overview of our proof strategy. Notably, for the sake of clarity and brevity, our focus in this section primarily centers around the centralized scenario. This entails the assumption that $\rmU = \rmJ_M$. For the results in the decentralized scenario, we can draw upon the analogous algorithms discussed in Section~\ref{sec: conv rate under convexity} and Section~\ref{sec:proof sketch}.

\subsection{Improve the Convergence Performance by Restarting}\label{sec:alg for str cvx}

Our algorithm can be divided into two primary layers. The first layer encompasses the Restricted FedDA-GT (ReFedDA-GT) algorithm \ref{alg:recfdagtr}, where we introduce minor modifications to the FedDA-GT algorithm. The second layer consists of the Multistep ReFedDA-GT algorithm \ref{alg:multirecfda}, which involves iterative utilization of ReFedDA-GT, contributing to enhanced efficiency. In the following, we delve into the specifics of these algorithms.

\subsubsection{Restricted FedDA-GT}
Based on FedDA-GT, we leverage the assumption of local strong convexity to refine our algorithm.

\paragraph{Dual-to-Primal Mapping}
In comparison to FedDA-GT, we replace the mirror map $\nabla^{-1} h$ with the proximal function \eqref{eq:prox function}, which involves solving a sub-optimization problem within a norm ball centered at $\vw_0$ with radius $\sqrt{Q}$. 
\begin{equation}\label{eq:prox function}
\begin{aligned}
    \textbf{Prox}_{h}(\vw_0, \vg; Q, \norm{\cdot}) := \argmin\limits_{\norm{\vw - \vw_0}^2 \le Q} \left\{\inner{\vw - \vw_0}{\vg} + h(\vw - \vw_0)\right\}.
\end{aligned}
\end{equation}
This change is motivated by the local validity of the strong convexity assumption for the population objective function.

Moreover, we draw a connection to the mirror map: When the constraint radius $\sqrt{Q} = \infty$, employing the first-order conditions and considering that $\vg$ is frequently chosen as $\vz_0 - \vz$ in Algorithm~\ref{alg:recfdagtr}, we deduce $\vw  = \vw_0 + \nabla^{-1} h (\vz - \vz_0)$. This configuration resembles the form of the mirror map $\nabla^{-1} h $, with the distinction that it is shifted from $(0,0)$ to $(\vz_0, \vw_0)$.

It's worth noting the computational efficiency of the proximal function, as thoroughly demonstrated in Appendix A of \citep{agarwal2012stochastic}.

\paragraph{Gradient Tracking}
ReFedDA-GT employs $\vc_{r}^m,~ \vc_{r}$ as tracking terms, governed by the updating relationship in Equation~\eqref{eq:str gradient tracking}. 
\begin{equation}\label{eq:str gradient tracking}
\begin{aligned}
\vc_{r+1}^m &= \vc_r^m - \vc_r - \frac{1}{K\eta_{c}}(\vz_{r,K}^m - \vz_{r,0}^m);\\
\vc_{r+1} &= \vc_r + \frac{1}{|\gS_r|}\sum\limits_{m \in \gS_r}(\vc_{r+1}^m - \vc_r^m);\\
\Delta_r &= \frac{1}{|\gS_r|} \sum\limits_{m\in \gS_r} (\vc_{r+1}^m - \vc_r^m).
\end{aligned}
\end{equation}
If $\gS_r = [M]$ for all $r = 0, \cdots, R-1$, this equation is equivalent to Equation~\eqref{eq:gradient tracking step}.

\paragraph{Sparsification}
The ReFedDA-GT algorithm concludes with a sparsification operation. Here, $\textbf{Sparse}(\vx; s)$ selects the $s$ largest components in absolute value in vector $\vx \in \mathbb{R}^d$, setting the rest to zero. We will see in Section~\ref{sec:prf skt of str cvx} to figure out why we do this.

\begin{algorithm}
  \caption{\textsc{Restricted Federated Dual Averaging with Gradient Tracking}}
  \label{alg:recfdagtr}
  \begin{algorithmic}[1]
  \STATE {\textbf{procedure}} \textsc{ReFedDA-GT} ($\vw_0, \eta_{c}, \eta_{s}, Q, R, K, c_0$)
  \STATE $\vz_{0,0} \gets 0$ \hfill \textit{/// server dual initialization}
  \FOR {$r=0, \ldots, R-1$}
    \STATE sample a subset of clients $\mathcal{S}_r \subseteq [M]$
      \FORALL {$m \in \gS_r$ {\bf in parallel}}
        \FOR {$k = 0, \ldots, K-1$}
          \STATE $\vw_{r,k}^m \gets \textbf{Prox}_h (\vw_0, \vz_{r,k}^m; Q, \norm{\cdot}_q)$
          \STATE $\vg_{r,k}^m \gets \nabla F(\vw_{r,k}^m , \xi_{r,k}^m) - \vc_r^m + \vc_r$
          \STATE $\vz_{r,k+1}^m \gets \vz_{r,k}^m - \eta_{c} \vg_{r,k}^m$
        \ENDFOR
      \STATE Compute $\vc_{r+1}^m, \vc_{r+1}$ and $\Delta_r$ according to \eqref{eq:str gradient tracking}  
      \STATE $\vz_{r+1,0} \gets \vz_{r,0} - K\eta_{s} \eta_{c}\Delta_r$
      \STATE $\vw_{r+1,0} \gets \textbf{Prox}_h (\vw_0, \vz_{r+1,0}; Q, \norm{\cdot}_q)$
      \ENDFOR
  \ENDFOR
  \STATE $\hat{\vw} \gets \textbf{Sparse}\left( \frac{1}{R}\ssum{r}{0}{R-1} \vw_{r,0};s \right)$
\end{algorithmic}
\end{algorithm}

\subsubsection{Multistep ReFedDA-GT}
Multistep ReFedDA-GT is a valuable tool for addressing high-dimensional stochastic optimization under strong convexity assumptions \citep{agarwal2012stochastic,juditsky2023sparse,bao2022fast}.

This algorithm follows a clear intuition: At the end of the $n$-th step, we obtain $\vw_{n+1}$. By appropriately selecting $R_n$ and $K_n$, we achieve high-probability bounds like $\norm{\vw_{n+1} - \vw^*}_q^2 \le \frac{1}{2}\norm{\vw_n - \vw^*}_q^2 \le \frac{1}{2}Q_n = Q_{n+1}$. These pairs $(\vw_n , Q_n)$ and $(\vw_{n+1}, Q_{n+1})$ correspond precisely to the initial point and retrieval radius in the $n$-th and $(n+1)$-th steps of ReFedDA-GT. The search for $\vw^*$ narrows from $\{\vw\colon \norm{\vw - \vw_n}_q^2 \le Q_n\}$ to a smaller range $\{\vw\colon \norm{\vw - \vw_{n+1}}_q^2 \le Q_{n+1}\}$. With accuracy $\varepsilon$, this approach yields $N = \mathcal{O}(\log_2 \frac{1}{\varepsilon})$ steps, resulting in $\norm{\vw_N - \vw^*}_1^2 \precsim \frac{1}{2^N}\norm{\vw_0 - \vw^*}_q^2 = \mathcal{O}(\varepsilon)$.

\begin{algorithm}
  \caption{\textsc{Multistep Restricted FedDA-GT}}
  \label{alg:multirecfda}
  \begin{algorithmic}[1]
  \STATE {\textbf{procedure}} \textsc{Multi-ReFedDA-GT} ($\vw_0, \{\eta_{c}^n, \eta_{s}^n, Q_0, R_n,K_n\}_{n=0}^\infty$)
  \FOR{$n= 0, \ldots, N-1$}
  \STATE ${\vw}_{n+1} \gets$ \textsc{ReFedDA-GT} ($\vw_n, \eta_{c}^n, \eta_{s}^n, Q_n, R_n, K_n, 0$)
  \STATE $Q_{n+1} = Q_n / 2$
  \ENDFOR
  
  \RETURN ${\vw}_N$
\end{algorithmic}
\end{algorithm}

\subsection{Convergence Rate} \label{sec:conv rt of str cvx}
Now, we are going to provide a concise version of the convergence rate of Multistep ReFedDA-GT algorithm, Theorem~\ref{thm:conv_rt_multi_recfda}, which is then followed by its more intricate form in Appendix~\ref{sec:conv_rt & prf when str cvx}.
\begin{theorem}[Simplified Version of Theorem~\ref{thm:conv_rt_multi_recfda}]\label{thm:simple conv_rt_multi_recfda}
    Suppose Assumption~\ref{asp:convex} - \ref{asp:lsc} hold. For any given accuracy $\varepsilon>0$, there exists a scenario of the setting of the Algorithm~\ref{alg:multirecfda}'s hyperparameter such that $N = \tilde{\gO}(\log_2 (1/\varepsilon)),~ R_n = \tilde{\gO}(s\kappa)$ and $K_n = \tilde{\gO}(\kappa^{-1}2^n)$ with $\kappa := \frac{L}{\mu(Q_0)}$ and $\norm{\vw_0 - \vw^*}_1^2 \le Q_0$. Then $\norm{\hat{\vw}_N - \vw^*}_1^2 \le \varepsilon$ holds with high probability. Here we omit the explicit dependency on $\log d$ within the notation $\tilde{\mathcal{O}}(\cdot)$ for brevity.
\end{theorem}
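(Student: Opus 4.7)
The plan is to argue inductively on the stage index $n$. Assuming $\|\vw_n - \vw^*\|_1^2 \le Q_n$ at the start of stage $n$, I will show that with probability at least $1-\delta/N$ the output of the $n$-th call to \textsc{ReFedDA-GT} satisfies $\|\vw_{n+1} - \vw^*\|_1^2 \le Q_n/2 = Q_{n+1}$. After $N = \mathcal{O}(\log_2(Q_0/\varepsilon)) = \tilde{\mathcal{O}}(\log(1/\varepsilon))$ stages, a union bound yields the claim. The base case $n=0$ is just the hypothesis $\|\vw_0 - \vw^*\|_1^2 \le Q_0$.

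\textbf{Per-stage convergence.} Within stage $n$, \textsc{ReFedDA-GT} with $\rmU = \rmJ_M$ is the centralized FedDA-GT applied to the shifted objective $\tilde f(\vw) := f(\vw + \vw_n)$, but with the proximal map restricting iterates to $B_n := \{\vw : \|\vw - \vw_n\|_q^2 \le Q_n\}$. This restriction guarantees that all iterates, and (by the inductive hypothesis) the optimum $\vw^*$, lie in $B_n$, so Assumption~\ref{asp:lsc} with $\mu := \mu(Q_0)$ applies along the whole trajectory. Running the centralized ($\tau = 1$) version of the one-step recursion and deviation bounds developed in Sections~\ref{sec: conv rate under convexity}--\ref{sec:proof sketch} (in particular the DFedDA-GT analysis built on Lemmas~\ref{lem:local_dev_dfedda-gt_1}--\ref{lem:local_dev_dfedda-gt_3}), I would obtain for the un-sparsified average $\bar{\vw} := \frac{1}{R_n}\sum_{r=0}^{R_n-1}\vw_{r,0}$,
\[
\mathbb{E}[f(\bar{\vw}) - f(\vw^*)] \lesssim \frac{L\, h(\vw^* - \vw_n)}{R_n} + \frac{\sqrt{h(\vw^* - \vw_n)}\,(\log d)^{2}\sigma}{\sqrt{R_n K_n}},
\]
and $h(\vw^* - \vw_n)\lesssim (\log d)\,Q_n$ by the choice $q = \frac{2\log d}{2\log d - 1}$ and the inductive hypothesis.

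\textbf{From function gap to $\ell_1$ distance via sparsification.} Local strong convexity converts the function-value bound into $\|\bar{\vw} - \vw^*\|_2^2 \le \frac{2}{\mu}[f(\bar{\vw}) - f(\vw^*)]$. The final operator $\hat{\vw} = \textbf{Sparse}(\bar{\vw};s)$ is best-$s$-term approximation, so since $\vw^*$ is $s$-sparse we have $\|\hat{\vw} - \vw^*\|_2 \le 2\|\bar{\vw} - \vw^*\|_2$, and $\hat{\vw} - \vw^*$ is $2s$-sparse. Cauchy--Schwarz on the support then gives $\|\hat{\vw} - \vw^*\|_1^2 \le 2s\,\|\hat{\vw} - \vw^*\|_2^2 \le \frac{16 s}{\mu}\,[f(\bar{\vw}) - f(\vw^*)]$. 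Requiring the right-hand side to be $\le Q_n/2$ yields
\[
R_n \gtrsim \frac{sL\log d}{\mu} = \tilde{\mathcal{O}}(s\kappa), \qquad R_n K_n \gtrsim \frac{s^{2}(\log d)^{5}\sigma^{2}}{\mu^{2}\,Q_n},
\]
so with $Q_n = Q_0 / 2^n$ and $\kappa = L/\mu(Q_0)$ we get $K_n = \tilde{\mathcal{O}}(\kappa^{-1}\,2^{n})$, matching the stated schedule. Multiplying through and summing over $n$ then produces the claimed total sample complexity $\tilde{\mathcal{O}}(s^2\sigma^2/\varepsilon)$ and communication complexity $\tilde{\mathcal{O}}(s\kappa \log(1/\varepsilon))$.

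\textbf{From expectation to high probability; main obstacle.} The genuinely delicate step is upgrading the expectation bound to a per-stage tail bound strong enough to survive a union bound over $N = \tilde{\mathcal{O}}(\log(1/\varepsilon))$ stages: a naive Markov argument is far too lossy. The plan is to re-run the one-step recursions tracking $\mathcal{H}_r, \mathcal{E}_r, \mathcal{L}_r$ under the $L^p$ norm rather than in expectation, taking $p = \Theta(\log(N/\delta))$, and invoking Lemma~\ref{lem:bdg_ineq} and Lemma~\ref{lem:bd_of_p_norm_2nd_mmt} on the martingale noise terms; Assumption~\ref{asp:grad_subG} provides enough moments to make this tight up to $\mathrm{polylog}$ factors, which is exactly what the $\tilde{\mathcal{O}}(\cdot)$ in the theorem absorbs. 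A secondary subtlety to verify is that the proximal constraint keeps \emph{every} iterate of stage $n$ inside $B_n$, so local strong convexity is legitimately applicable along the entire trajectory rather than only for the averaged iterate; this is automatic from the definition~\eqref{eq:prox function}, but I would want to state it carefully to avoid a circularity between the induction hypothesis (on $\vw_n$) and the use of $\mu(Q_0)$ inside the run.
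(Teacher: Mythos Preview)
Your inductive scaffold, the sparsification-plus-strong-convexity chain, and the parameter calculations all match the paper's argument (Section~\ref{sec:prf skt of str cvx} and Theorem~\ref{thm:conv_rt_multi_recfda}); the only substantive divergence is how the per-stage high-probability bound is obtained. You propose to re-run the $\mathcal{H}_r,\mathcal{E}_r,\mathcal{L}_r$ recursions in high $L^{p'}$-moments with $p' = \Theta(\log(N/\delta))$ and conclude via Markov. The paper takes a more direct route: because the proximal map \eqref{eq:prox function} confines every server iterate to $\{\vw:\|\vw-\vw_n\|_q^2\le Q_n\}$, the martingale $\sum_r \langle \bar{\vg}_{r-1}-\bar{\vg}_{r-1}^\xi,\vw_{r-1,0}-\vw^*\rangle$ has increments that are sub-Gaussian with variance proxy $\propto Q_n\sigma^2$, so an Azuma-type bound (Lemma~\ref{lem:conc_noise_2}) applies immediately; the residual noise sum $\sum_{r,m}\sup_k\|\sum_i\zeta_{r,i}^m\|_p^2$ is handled by a Doob-plus-Bernstein sub-exponential argument (Lemma~\ref{lem:conc_noise_1}). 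In short, the bounded domain you invoke only for local strong convexity is in the paper also the key to concentration, and it removes any need to track recursions in high moments. Your approach would work, but it is heavier than necessary.

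Two incidental simplifications you are over-engineering: in the centralized case $\rmU=\rmJ_M$ one has $\mathcal{H}_r\equiv 0$, and the ReFedDA-GT analysis in Appendix~\ref{sec:conv_rt & prf when str cvx} dispenses with $\mathcal{L}_r$ as well, so the recursion machinery is far lighter than in the decentralized proof. Also, the paper takes per-stage failure probability $6\delta/(\pi^2 n^2)$ rather than $\delta/N$, so that $N$ need not be fixed in advance; either choice suffices for the union bound.
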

\begin{remark}
    It can be immediately obtained that the sample complexity and the communication complexity are $\tilde{\gO}\left( \frac{s^2 \sigma^2}{\varepsilon} \right)$ and $\tilde{\gO}\left(s\kappa \log \frac{Q_0}{\varepsilon}\right)$ respectively for the purpose of achieving $\norm{\hat{\vw} - \vw^*}_1^2 \le \varepsilon$. This is a huge improvement compared to the vanilla FedDA-GT algorithm in the convex situation.

    Comparing the comparable results \citep{karimireddy2019scaffold,liu2023decentralized} and communication complexity lower bounds of distributed optimization \citep{arjevani2015communication} in non-high-dimensional scenarios, we note an additional dependence on the optima's sparsity $s$ in communication complexity. This deviates from the expected behavior, which should primarily correlate with the function $f$'s condition number – the ratio of its smoothness coefficient to its strong convexity coefficient. This divergence arises from our assumption of $\gL_1$-norm smoothness for the objective function, in contrast to the usual $\gL_2$-norm smoothness assumed in non-high-dimensional cases. By focusing on the smoothness of $f$ within the set of s-sparse vectors, we establish the relation: $f(\vw) - f(\vw^*) \le L \norm{\vw - \vw^*}_1^2 \le sL \norm{\vw - \vw^*}_2^2$, resulting in a condition number of $s\kappa$ for $f$. This aligns with the observed dependency in our complexity analysis.
    
    An intriguing research direction involves exploring lower bounds on communication complexity for high-dimensional distributed optimization, as accurate outcomes in this realm remain uncharted. This prospect will be a key focus of our future work.
\end{remark}

\subsection{Proof Sketch} \label{sec:prf skt of str cvx}
Leveraging some concentration analysis for martingale difference sequence helps us to obtain the following high-probability version of the ReFedDA-GT algorithm's convergence result. Here we only display a concise version. The complete statement of the conclusion can be referred to as Theorem~\ref{thm:conv_rt_recfda-gt}.

\begin{theorem}[Simplified Version of Theorem~\ref{thm:conv_rt_recfda-gt}]\label{thm:simple conv_rt_recfda-gt}
    Suppose Assumption~\ref{asp:convex} - \ref{asp:grad_subG} hold. Let $p=2\log d$. For any given $R, K \in \ZB_+$, we can find an appropriate $(\eta_c, \eta_s)$ pair, such that
    \begin{align*}
    f(\bar{\vw}_R) - f(\vw^*) = \tilde{\gO}\left( \frac{1}{R} + \frac{1}{\sqrt{RK}} + \frac{1}{R\sqrt{MK}} \right)
    \end{align*}
    holds for $\bar{\vw}_R$ defined as $\frac{1}{R}\ssum{r}{0}{R-1} \vw_{r,0}$ with high probability.
\end{theorem}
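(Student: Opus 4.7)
In the centralized regime $\rmU = \rmJ_M$ the end-of-round averaging perfectly synchronizes every client's dual variable, so $\vz_{r,0}^m = \vz_{r,0}$ for all $m$ and the consensus quantity $\gH_r$ vanishes identically; the ``spiraling'' two-way recursion of Section~\ref{sec:proof sketch} then collapses to a one-way recursion involving only $\gE_r$ and $\gL_r$. My plan is to first reproduce the one-step analysis of Section~\ref{sec:proof sketch} after identifying the constraint set $\gB = \{\vw : \norm{\vw - \vw_0}_q^2 \le Q\}$ so that $\textbf{Prox}_h(\vw_0, \cdot; Q, \norm{\cdot}_q)$ plays the role that $\nabla^{-1} h$ played in Algorithm~\ref{alg:dscaffoldda}; since the multistage design guarantees $\vw^* \in \gB$, Lemma~\ref{lem:innr_to_breg-norm} applied to the synchronized dual sequence $\{\vz_{r,0}\}$ yields the familiar decomposition
\[
\frac{1}{MR}\ssum{r}{0}{R-1}\ssum{m}{1}{M}\bigl( f_m(\bar{\vw}_{r,0}) - f_m(\vw^*) \bigr) \precsim \gT_1 + \gT_2 + \gT_3,
\]
with $\gT_1 = h(\vw^* - \vw_0)/(RK\eta_c)$ (the initialization bias, giving the $1/R$ term after optimizing $\eta_c$), $\gT_3$ the local-drift term, and $\gT_2$ the noise term whose \emph{high-probability} control is the main novelty here.

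Next, I would dispose of $\gT_3$ by specializing Lemmas~\ref{lem:local_dev_dfedda-gt_1}--\ref{lem:local_dev_dfedda-gt_3} to $\gH_r \equiv 0$ and $\tau = 1$: this reduces Lemma~\ref{lem:local_dev_dfedda-gt_1} to $\gE_r \precsim d^{2/p} p^4 MK^2 \eta_c^2 \sigma^2 + K^3\eta_c^2 \gL_r$ and Lemma~\ref{lem:local_dev_dfedda-gt_3} to a one-step recursion on $\gL_r$ driven only by $\norm{\bar{\vw}_{r+1,0} - \bar{\vw}_{r,0}}_1^2$. Summing these over $r$ and substituting back into $\gT_3$ produces an aggregated contribution of order $L\eta_c (\sigma^2 + \cdots)/M$ that is absorbed into $\gT_1 + \gT_2$ once $\eta_c$ is chosen small enough. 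Crucially, the gradient-tracking structure ensures that the stationary heterogeneity term $\frac{1}{M}\ssum{m}{1}{M}\norm{\nabla f_m(\vw^*)}_p^2$ appearing as a floor in Lemma~\ref{lem:local_dev_dfedda_1} is \emph{absent} here, which is what removes the $\gE_M^\ast$ dependence from the final rate.

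The main obstacle is upgrading the expectation bound on $\gT_2$ to a high-probability statement in the dual $\ell_p$ norm. I plan to split the per-round gradient error $\bar{\vg}_r^{\xi} - \bar{\vg}_r$ into a genuine stochastic-oracle martingale-difference part (with respect to the filtration generated by $\{\xi_{r',k}^m\}$) and a deterministic residual already handled by the drift analysis. For the martingale part I apply the Burkholder--Davis--Gundy inequality (Lemma~\ref{lem:bdg_ineq}) coordinate-wise and combine with Lemma~\ref{lem:bd_of_p_norm_2nd_mmt} to bound $\EB \bigl\| \sum_{r,k,m} \eta_c(\nabla F(\vw_{r,k}^m, \xi_{r,k}^m) - \nabla f_m(\vw_{r,k}^m))\bigr\|_p^p$, then invoke Markov's inequality at $p = 2\log d$ so that the dimension dependence collapses to $d^{1/\log d} = e$ up to polylog factors. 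The per-round noise then splits into a cross-client average of $M$ independent sub-Gaussian noises (whose $\ell_p$-variance shrinks by $1/M$ and delivers the $1/(R\sqrt{MK})$ term) and a within-round accumulation across $K$ local steps (delivering the $1/\sqrt{RK}$ term). A union bound over the $RK$ time indices, combined with an optimization of $\eta_c$ that balances $\gT_1$ against the aggregated noise, yields the three-term $\tilde{\gO}$ rate after absorbing the polylog-in-$d$ factors.
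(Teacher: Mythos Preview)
Your high-level architecture (set $\gH_r \equiv 0$ in the centralized case, apply Lemma~\ref{lem:innr_to_breg-norm} on the restricted ball so that the prox replaces $\nabla^{-1}h$, then collapse the drift recursion to one involving only $\gE_r$ and $\gL_r$) is sound and matches what the paper does. The gap is in the high-probability step. The decomposition $\gT_1+\gT_2+\gT_3$ you borrow from Section~\ref{sec:proof sketch} is an \emph{expectation} identity: the cross term $\frac{1}{R}\sum_r \inner{\bar{\vg}_{r-1}-\bar{\vg}_{r-1}^{\xi}}{\vw_{r-1,0}-\vw^*}$ disappears there only because one takes $\EB$ and $\vw_{r-1,0}$ is $\gF_{r-1}$-measurable. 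Pathwise this scalar martingale is still present, and it is \emph{not} controlled by any bound on $\bigl\|\sum_{r,k,m}\zeta_{r,k}^m\bigr\|_p$, since the $r$-dependent vector $\vw_{r-1,0}-\vw^*$ cannot be pulled outside the sum. The paper handles this term separately by an Azuma--Hoeffding bound on the scalar martingale (Lemma~\ref{lem:conc_noise_2}), crucially using that the restricted prox forces $\norm{\vw_{r-1,0}-\vw^*}_q^2\le 4Q$ so each increment is $O(\sqrt{Q}\,\sigma/\sqrt{MK})$-sub-Gaussian; this is precisely what produces the $1/\sqrt{RMK}$-type contribution. Your plan treats $\gT_2$ as the only stochastic piece to upgrade and thereby misses this term entirely.

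A secondary issue: even for the piece you do address (controlling $\sum_{r,m}\sup_k\norm{\sum_i\zeta_{r,i}^m}_p^2$ pathwise), BDG followed by Markov at $p=2\log d$ yields a deviation scaling as $\delta^{-1/(2\log d)}$, not $\sqrt{\log(d/\delta)}$. For the simplified statement this might be tolerated under ``high probability,'' but the full Theorem~\ref{thm:conv_rt_recfda-gt} needs the polylogarithmic $\delta$-dependence so that the multistep union bound in Theorem~\ref{thm:conv_rt_multi_recfda} goes through. The paper obtains it by showing each $\sup_k\norm{\sum_i\zeta_{r,i}^m}_p^2$ is sub-exponential (via Doob's inequality on the sub-Gaussian coordinates) and then applying a Bernstein-type concentration to the sum over $(r,m)$ (Lemma~\ref{lem:conc_noise_1}); BDG plus Markov does not deliver this.
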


Furthermore, it's important to highlight that when integrating the aforementioned results with the strong convexity assumption, our conclusions are limited to bounds regarding $\norm{\bar{\vw}_R - \vw^*}_2^2$. To extend these bounds to the $\gL_1$-norm context, we also rely on the following crucial property pertaining to the operation $\textbf{Sparse}(\cdot, s)$.

\begin{lemma}[Lemma A.1 in \citep{juditsky2023sparse}]\label{lem:l1<sl2}
    Let $\vw^*$ be $s$-sparse, and let $\vw_s = \mathbf{Sparse}(\vw)$, then we have
    \[
    \norm{\vw_s - \vw^*}_1^2 \le {2s}\norm{\vw_s - \vw^*}_2^2 \le 8s\norm{\vw - \vw^*}_2^2.
    \]
\end{lemma}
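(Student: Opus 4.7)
The plan is to split the chain into two inequalities and handle each independently, since both are classical facts about best $s$-term approximation. The first inequality is a direct consequence of Cauchy--Schwarz on a vector with controlled support size, while the second relies on the optimality of $\textbf{Sparse}(\cdot;s)$ in $\ell_2$ and the triangle inequality.

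For the first inequality $\norm{\vw_s - \vw^*}_1^2 \le 2s\,\norm{\vw_s-\vw^*}_2^2$, I would argue that $\vw_s$ is $s$-sparse by definition and $\vw^*$ is $s$-sparse by hypothesis, so $\vw_s - \vw^*$ is supported on at most $2s$ coordinates. Letting $T := \mathrm{supp}(\vw_s)\cup\mathrm{supp}(\vw^*)$, Cauchy--Schwarz gives
\[
\norm{\vw_s - \vw^*}_1 \;=\; \sum_{i\in T}\bigl|[\vw_s - \vw^*]_i\bigr| \;\le\; \sqrt{|T|}\,\norm{\vw_s-\vw^*}_2 \;\le\; \sqrt{2s}\,\norm{\vw_s-\vw^*}_2,
\]
and squaring yields the claim.

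For the second inequality, which is equivalent to $\norm{\vw_s - \vw^*}_2 \le 2\,\norm{\vw - \vw^*}_2$, I would apply the triangle inequality $\norm{\vw_s - \vw^*}_2 \le \norm{\vw_s - \vw}_2 + \norm{\vw - \vw^*}_2$ and then invoke the extremal characterization of $\textbf{Sparse}(\vw;s)$: it is the minimizer of $\norm{\vz-\vw}_2^2$ over all $s$-sparse $\vz$, obtained by keeping the $s$ largest-magnitude entries of $\vw$. Since $\vw^*$ is itself $s$-sparse and therefore a feasible competitor, $\norm{\vw_s - \vw}_2 \le \norm{\vw^* - \vw}_2$, which combined with the triangle inequality gives the factor $2$, and squaring gives the factor $4$.

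There is essentially no hard step here; the only piece worth double-checking is the optimality of $\textbf{Sparse}(\vw;s)$ in the $\ell_2$ norm, but this is immediate because the problem decouples coordinatewise and any $s$-sparse vector minimizing $\sum_i (z_i - w_i)^2$ must take $z_i = w_i$ on its support, forcing the support to consist of the $s$ indices with the largest $|w_i|$. Combining the two inequalities establishes the full chain as stated.
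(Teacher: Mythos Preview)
Your proof is correct. The paper does not actually supply its own proof of this lemma; it merely cites it as Lemma~A.1 in \citep{juditsky2023sparse}, so there is nothing to compare against beyond noting that your argument is the standard one.
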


With these preparations in place, we are now poised to demonstrate how the techniques of multistep can be leveraged to accelerate the existing algorithm.

First, given the assignment of the hyperparameters, Theorem~\ref{thm:conv_rt_recfda-gt} states that, at the $n$-th stage, when $\norm{\hat{\vw}_n - \vw^*}_1^2 \le Q_n$, there is a probability of at least $1-\frac{6\delta}{\pi n^2}$ for $f(\bar{\vw}_{n+1}) - f(\vw^*)$ to be less than or equal to $\frac{\mu(Q_0)}{32s}Q_n$. Additionally, by combining Lemma~\ref{lem:l1<sl2} with the strong convexity of $f$, we obtain
\[
\frac{\mu(Q_0)}{16s}\norm{\hat{\vw}_{n+1} - \vw^*}_1^2 \le \frac{\mu(Q_0)}{2}\norm{\bar{\vw}_{n+1} - \vw^*}_2^2 \le f(\bar{\vw}_{n+1}) - f(\vw^*) \le \frac{\mu(Q_0)}{32s}Q_n,
\]
implying that $\norm{\hat{\vw}_{n+1} - \vw^*}_1^2 \le \frac{Q_n}{2} = Q_{n+1}$. Thus, we can recursively deduce $\norm{\hat{\vw}_N - \vw^*}_1^2 \le Q_N \le \varepsilon$. The corresponding probability of failure is less than $\frac{6\delta}{\pi^2}(1^{-2} + 2^{-2} + 3^{-2} + \cdots) = \delta$.

\section{Experiment}\label{sec:experiment}
In this section, we verify the empirical efficacy of the proposed algorithms with two numerical experiments: decentralized sparse linear and logistic regression models, in which Assumptions~\ref{asp:convex}, \ref{asp:smooth}, \ref{asp:grad_subG}, and \ref{asp:lsc} are all satisfied, see Appendix~\ref{appendix:assumptions} for details.
To make the heterogeneous gradient term $\gE_M^*$ and the mixing time term $\tau$ as large as possible, in both experiments, we design heterogeneous local objective functions $\left\{f_m\right\}_{m=1}^M$ and use a chain communication graph (and its corresponding chain gossip matrix). 
In a chain communication graph, except for the two nodes at the edges that have only one neighbor, all other nodes have exactly two neighbors.
As a result, the graph has a small spectral gap, leading to low communication efficiency between clients.
We use $\tilde{\vw}^m$ to denote the minimum point of $f_m$ in client $m$, i.e. $\tilde{\vw}^m=\argmin_{\vw \in {\cal X} \subseteq     \RB^{d}}f_m(\vw)$. 
We design $\left\{\tilde{\vw}^m\right\}_{m=1}^M$ to be distinct and non-sparse, indicating there are non-zero and heterogeneous client-specific effects. 
And the global optima $\vw^*$ is designed to be sparse, implying there exist a few non-zero overall effects, while other client-specific effects vanish at the population level.
This setting is common in real-world applications, especially in the personalized learning literatures \citep{t2020personalized,li2021ditto,collins2021exploiting,lin2022personalized}.

We compare the proposed methods DFedDA, DFedDA-GT, DReFedDA-GT (decentralized version of ReFedDA-GT) and MReFedDA-GT (Multistep
ReFedDA-GT), 
with three baselines,
DFedMiD (decentralized version of FedMid in \citep{yuan2021federated}), Fast-FedDA and C-FedDA \citep{bao2022fast}. 
We evaluate the algorithms on these models with $M=16$ ($\sigma_2(\rmU)\approx 0.987$), $d=1024$, $s=16$, $p=12$, $K=10$, and batch size of 10.
We record average optimality gap, $\gL_2$ error and $\gL_1$ error.
We do not record support recovery since each method can recover the support perfectly. 
We tune other hyperparameters (e.g. client and server step size) by selecting the values that yield the minimum $\gL_1$ error, which can best reflect the performance in high-dimensional sparse recovery problems.
See more details of experiments in Appendix~\ref{appendix:exp}.


\paragraph{Decentralized Sparse Linear Regression}
$\xi=(\vx^\top,y)^\top\in \RB^{d}\times \RB$ and $F(\vw; \xi)=\frac{1}{2}\left(y-\inner{\vx}{\vw}\right)^2$. 
In client $m$, the covariate vector $\vx$ (with the bias term) is sampled from a common distribution with mean $\mu=(1, \mathbf{0}_{d-1}^\top)^\top$, covariance matrix $\Sigma=\text{diag}\left\{0, \sigma_1^2 \mathbf{I}_{d-1}\right\}$ and bounded support $\norm{\vx}_\infty\leq C$, we assume each component is sampled from a truncated Gaussian distribution.
The variate $y=\inner{\vx}{\tilde{\vw}^m}+e$ depends on the specific client, where $e$ is an independent $N(0, \sigma_2^2)$ noise. 
It can be shown in Lemma~\ref{lem:linear_optimal_solution} that $\vw^*$ is exactly the average of $\left\{\tilde{\vw}^m\right\}_{m=1}^M$.
Therefore, we can design $\left\{\tilde{\vw}^m\right\}_{m=1}^M$ to be distinct and non-sparse, but their average $\vw^*$ is sparse.
The global optima and loss for evaluation can be obtained exactly based on theoretical formulas.
The results are reported in Figure~\ref{Fig:Linear}.


\begin{figure}[htbp]
\begin{minipage}[t]{0.32\linewidth}
    \centering
    \includegraphics[width=\linewidth]{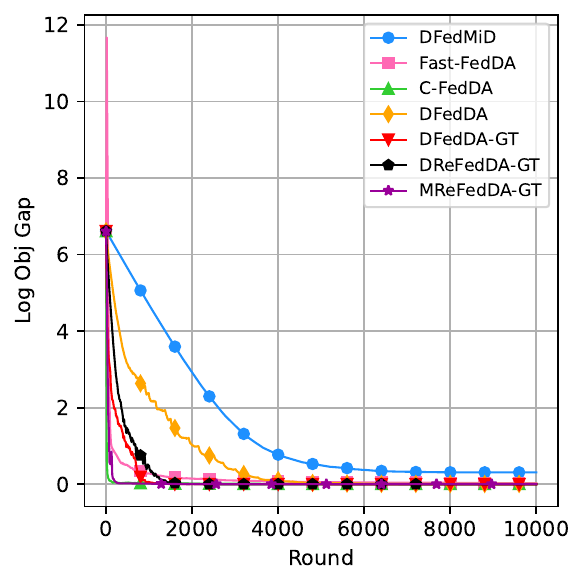}
\end{minipage}
\begin{minipage}[t]{0.32\linewidth}
    \centering
    \includegraphics[width=\linewidth]{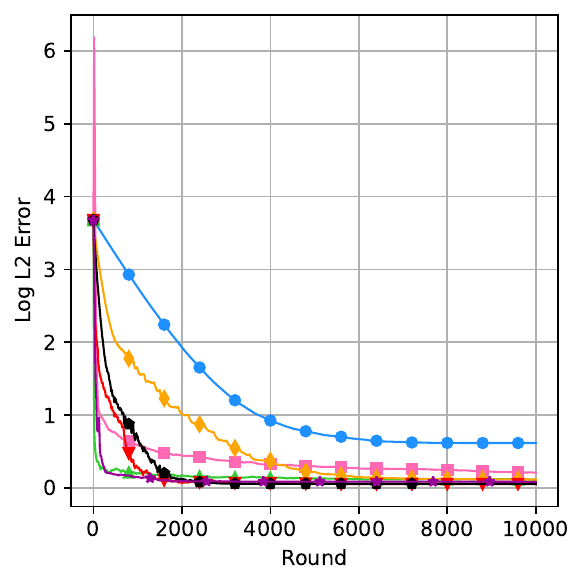}
\end{minipage}
\begin{minipage}[t]{0.32\linewidth}
    \centering
    \includegraphics[width=\linewidth]{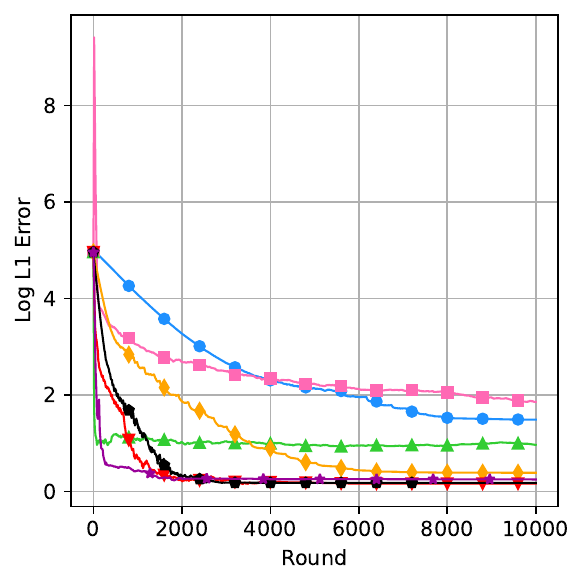}
\end{minipage}
    \caption{Results for decentralized sparse linear regression, the x-axis represents the communication rounds.
    Except for DFedMiD and DFedDA, all other methods exhibit rapid convergence in terms of optimality gap and $\gL_2$ error. 
    However, our methods DFedDA-GT, DReFedDA-GT and MReFedDA-GT demonstrate significant advantages in terms of $\gL_1$ error.}
    \label{Fig:Linear}    
\end{figure}

\paragraph{Decentralized Sparse Logistic Regression}
$\xi=(\vx^\top,y)^\top\in \RB^{d}\times \{0,1\}$ and $F(\vw; \xi)=-y\log\sigma(\inner{\vx}{\vw})-(1-y)\log\sigma(-\inner{\vx}{\vw})$, where $\sigma(z)=\frac{1}{1+\exp\{-z\}}$ is the sigmoid function. 
In client $m$, the covariate vector $\vx$ (without the bias term) is sampled from a common isotropic distribution with mean $\mu=\mathbf{0}_{d}$, covariance matrix $\Sigma=\sigma_1^2 \mathbf{I}_{d}$ and bounded support $\norm{\vx}_\infty\leq C$, we assume $\norm{\vx}_2$ and the absolute value of a truncated Gaussian are identically distributed.
The variate $y\sim\text{Ber}\left(\sigma(\inner{\vx}{\tilde{\vw}^m}) \right)$ depends on the specific client. 
Unlike decentralized linear regression, due to the nonlinearity of logistic regression, $\vw^*$ is no longer a simple average of $\left\{\tilde{\vw}^m\right\}_{m=1}^M$. 
However, for even client number $M=2I$, we can construct pairwise non-sparse and distinct local optimal solutions 
in Lemma~\ref{lem:logit_general_optimal_solution}, such that $\vw^*$ is sparse. 
Similarly, we can construct more general scenarios, while we will only consider this example with paired parameters for simplicity.
The global optimal solution used for evaluation is obtained by centralized SGD, and optimality gap is evaluated approximately using a fixed validation set.
The results are reported in Figure~\ref{Fig:Logistic}.
\begin{figure}[htbp]
\begin{minipage}[t]{0.32\linewidth}
    \centering
    \includegraphics[width=\linewidth]{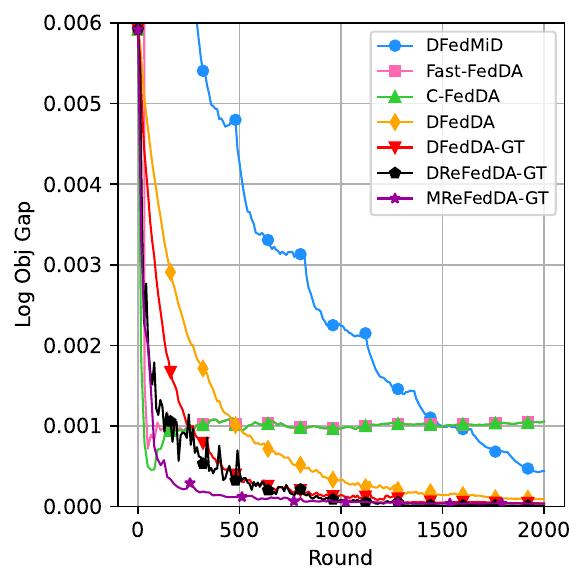}
\end{minipage}
\begin{minipage}[t]{0.32\linewidth}
    \centering
    \includegraphics[width=\linewidth]{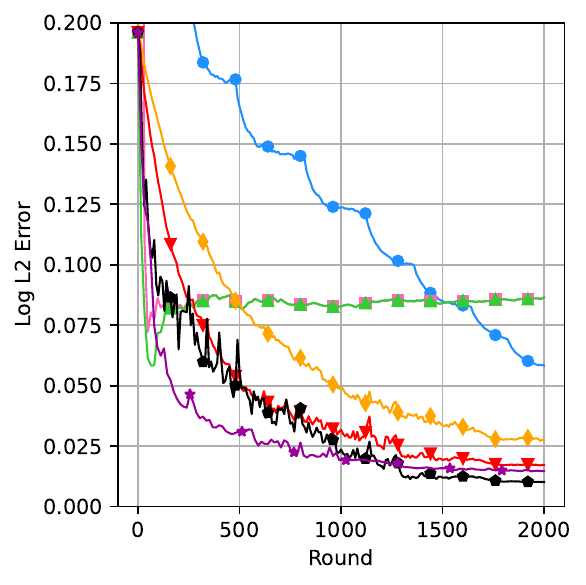}
\end{minipage}
\begin{minipage}[t]{0.32\linewidth}
    \centering
    \includegraphics[width=\linewidth]{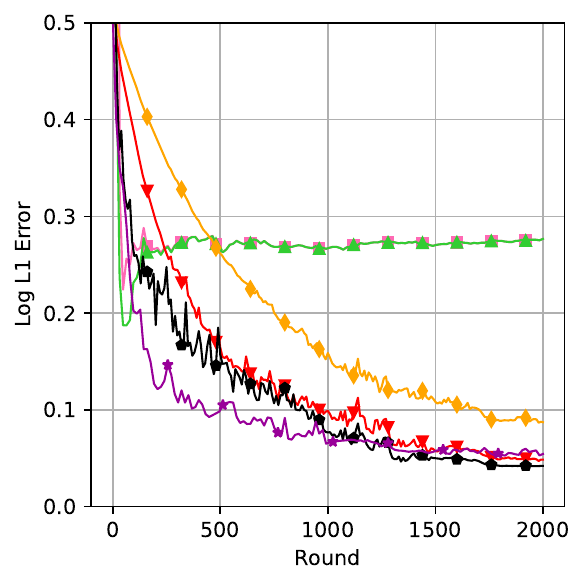}
\end{minipage}
    \caption{Results for decentralized sparse logistic regression, the x-axis represents the communication rounds. Our methods DFedDA-GT, DReFedDA-GT, and MReFedDA-GT outperform the baselines in all metrics, especially in terms of $\gL_1$ error.}
    \label{Fig:Logistic}    
\end{figure}

From Figure~\ref{Fig:Linear} and Figure~\ref{Fig:Logistic}, it can be observed that DFedDA-GT, DReFedDA-GT, and MReFedDA-GT consistently perform better than or comparably to baselines in all metrics, which reveals their dual advantage of being dimension-free and heterogeneity-free. 
It is worth noting that our methods demonstrate significant advantages in terms of $\gL_1$ error, highlighting their capability in addressing decentralized high-dimensional sparse recovery problems with low communication efficiency. 
These experimental findings are consistent with our theoretical results.

\section{Related Work}\label{sec:related_work}
The primary objective of this section is to conduct a comparative analysis between our work and two other papers that are most closely related to ours, namely, \citep{yuan2021federated} and \citep{bao2022fast}, to the best of our knowledge.

In \citep{yuan2021federated}, the authors introduced two novel algorithms, Federated Mirror Descent (FedMiD) and FedDA, and provided corresponding convergence rates based on different step size selections. Notably, under the scenario of larger step sizes, the convergence rate matches that of our DFedDA algorithm.

However, both of these step-size settings come with their respective issues. For the case of large step sizes, the authors employed the estimator $\hat{\vw}:= \frac{1}{RK}\sum_{r=0}^{R-1}\sum_{k=0}^{K-1} \nabla^{-1}h \left( \frac{1}{M}\sum_{m=1}^{M}\vz_{r,k}^m \right)$. When the mirror map $\nabla^{-1}h$ is nonlinear (as in the high-dimensional optimization scenario we consider), it necessitates knowledge of the average values of parameters $\vz_{r,k}^m$ across all clients at each iteration within every round. This is evidently impractical in a federated setting.

Conversely, for small step size settings, the issue arises in the proof itself. Frankly speaking, the proof in the literature becomes entangled in a form of circular reasoning, leading to an unattainable convergence order as claimed by the authors.

Nevertheless, regardless of these issues, our designed DFedDA-GT algorithm consistently achieves superior convergence performance compared to \citep{yuan2021federated}.

\citet{yuan2021federated} consider general convexity assumptions. While \citet{bao2022fast} derive the Fast-FedDA algorithm under the strong convexity assumption by appropriately adjusting the form of the proximal operator.

On the one hand, \citet{bao2022fast}'s approach fundamentally does not address the challenges of high-dimensional federated optimization. This is because their assumptions regarding noise and the smoothness of the loss function are still dependent on the full space dimensionality $d$.

On the other hand, from a communication complexity perspective, both Fast-FedDA and DFedDA-GT have the same complexity, which is $\gO(\varepsilon^{-\frac{1}{2}})$. However, the complexity result of DFedDA-GT does not require assuming strong convexity of the objective function. It's also worth noting that \citet{bao2022fast} has designed a multistep version of FedDA. Unlike multistep RFedDA-GT, which reduces communication complexity from $\gO(\varepsilon^{-\frac{1}{2}})$ to $\gO(\log \frac{1}{\varepsilon})$, \citet{bao2022fast}'s method does not achieve significant gains in communication efficiency through the use of multistep techniques. This is primarily because they overlook the key point that gradient tracking techniques can optimize communication efficiency by mitigating heterogeneity.
\section{Conclusion and Future Work}
In this paper, we investigate high-dimensional distributed optimization problems. Under the assumption of convexity, we apply gradient tracking techniques to the high-dimensional setting, proposing the decentralized FedDA-GT algorithm, which exhibits improved sample complexity and communication complexity. In the case of a strongly convex objective function, we introduce the multistep restricted FedDA-GT algorithm. Leveraging the doubling trick, we reduce the communication complexity to $\gO\left(\frac{sL}{\mu}\right)$, representing the best-known communication complexity achievable in high-dimensional scenarios to the best of our knowledge. Despite this achievement, there remains a gap compared to the lower bound of $\Omega\left(\sqrt{\frac{L}{\mu}}\right)$ established by \citet{shamir2014communication}. We attribute this gap to two main factors. Firstly, our algorithm does not incorporate acceleration techniques, indicating the need for the design of a novel algorithm. Secondly, the discrepancy between our results and the assumptions used in the lower bound suggests the necessity of re-establishing a communication complexity lower bound specific to high-dimensional scenarios. We leave these considerations for future work.

\newpage

\appendix
\section{Auxiliary Lemmas}
In this subsection, we present some existing results and auxiliary lemmas useful for our later
analysis.

\begin{lemma}[Theorem 2.1.5 of \citep{nesterov2018lectures}]\label{lem:conj_sc}
Under Assumptions~\ref{asp:convex}, \ref{asp:smooth}, the following result holds for all $m\in [M]$ and $\vw,\vv \in \RB^d$,
\[
f_m(\vw) + \inner{\nabla f_m(\vw)}{\vv - \vw} + \frac{1}{2L}\norm{\nabla f_m(\vw) - \nabla f_m (\vv)}_{\infty}^2
\le f_m(\vv).
\]
\end{lemma}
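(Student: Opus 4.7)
The plan is to adapt the standard trick from Theorem 2.1.5 of \citet{nesterov2018lectures} to the non-Euclidean setting where smoothness is measured with respect to $\|\cdot\|_1$ (with dual $\|\cdot\|_\infty$). The idea is to linearly perturb $f_m$ so that $\vw$ becomes a minimizer of the perturbation, and then apply the smooth upper bound at $\vv$ followed by an explicit minimization over the displacement.

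First, define the auxiliary function $\phi(\vx) := f_m(\vx) - \inner{\nabla f_m(\vw)}{\vx}$. Since $f_m$ is convex by Assumption~\ref{asp:convex}, so is $\phi$; and since $\phi$ differs from $f_m$ by a linear term, Assumption~\ref{asp:smooth} gives the same smoothness,
\[
\norm{\nabla \phi(\vx) - \nabla \phi(\vy)}_\infty \le L\norm{\vx - \vy}_1,\qquad \forall \vx,\vy\in \RB^d.
\]
A standard integration then upgrades this Lipschitz gradient bound to the descent inequality
\[
\phi(\vu) \le \phi(\vx) + \inner{\nabla \phi(\vx)}{\vu - \vx} + \frac{L}{2}\norm{\vu - \vx}_1^2,\qquad \forall \vu,\vx\in \RB^d,
\]
which I would either invoke as a standard fact or rederive by integrating along the segment $\vx + t(\vu - \vx)$.

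Next, I would minimize the right-hand side over $\vu$ with $\vx$ held fixed. Writing $\vd := \vu - \vx$ and $\vg := \nabla \phi(\vx)$, the problem reduces to computing $\inf_{\vd \in \RB^d}\bigl\{\inner{\vg}{\vd} + \tfrac{L}{2}\norm{\vd}_1^2\bigr\}$. By the $\ell_1/\ell_\infty$ duality one has $\inner{\vg}{\vd} \ge -\norm{\vg}_\infty \norm{\vd}_1$, with equality attainable by choosing $\vd$ supported on a maximizing coordinate of $|\vg|$ with the appropriate sign. Parameterizing by $r = \norm{\vd}_1 \ge 0$ and optimizing the scalar expression $-\norm{\vg}_\infty r + \tfrac{L}{2}r^2$ gives $r = \norm{\vg}_\infty/L$ and infimum $-\norm{\vg}_\infty^2/(2L)$. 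Therefore
\[
\inf_{\vu \in \RB^d} \phi(\vu) \le \phi(\vx) - \frac{1}{2L}\norm{\nabla \phi(\vx)}_\infty^2.
\]

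Finally, observe that $\nabla \phi(\vw) = 0$, so $\vw$ is a global minimizer of the convex function $\phi$, i.e.\ $\phi(\vw) = \inf_\vu \phi(\vu)$. Specializing the previous display at $\vx = \vv$ yields
\[
\phi(\vw) \le \phi(\vv) - \frac{1}{2L}\norm{\nabla f_m(\vv) - \nabla f_m(\vw)}_\infty^2,
\]
and unpacking the definition of $\phi$ and rearranging gives the claim. I expect the only nontrivial step to be the explicit minimization in the $\ell_1/\ell_\infty$-dual geometry; all other steps are routine translations of the Euclidean argument.
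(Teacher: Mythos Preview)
Your proof is correct and follows exactly the standard argument from Theorem~2.1.5 of \citet{nesterov2018lectures}, adapted to the $\ell_1/\ell_\infty$ dual pair. The paper itself does not supply a proof for this lemma---it simply cites Nesterov---so there is nothing to compare against beyond the reference, and your write-up is the natural non-Euclidean version of that reference's argument.
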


\begin{lemma}\label{lem:multi_subG}
Let $M$ independent zero-mean random variables $\rmX_1, \cdots, \rmX_M$ satisfy $\EB \exp \{\lambda \rmX_i \} \le \exp \{\frac{\lambda^2 \sigma^2}{2}\},~ \forall \lambda \in \RB, i\in [M]$. Then,
\[
\EB \exp \left\{ \lambda \ssum{i}{1}{M} \rmX_i\right\} \le \exp \left\{\frac{\lambda^2 \sigma_M^2}{2} \right\}, ~\forall \lambda \in \RB.
\]
Here $\sigma_M = \sqrt{M}\sigma$.
\end{lemma}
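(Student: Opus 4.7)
The statement is the standard fact that an independent sum of sub-Gaussians is sub-Gaussian with variance proxy equal to the sum of the individual variance proxies. The plan is to reduce the moment generating function (MGF) of the sum to a product of individual MGFs by invoking independence, and then apply the per-variable sub-Gaussian bound term by term.

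More precisely, first I would fix an arbitrary $\lambda \in \RB$ and write
\[
\EB \exp\left\{\lambda \ssum{i}{1}{M} \rmX_i\right\} = \EB \prod_{i=1}^M \exp\{\lambda \rmX_i\}.
\]
Since $\rmX_1,\dots,\rmX_M$ are independent, the functions $\exp\{\lambda \rmX_i\}$ are independent as well, so expectation and product commute, giving $\prod_{i=1}^M \EB \exp\{\lambda \rmX_i\}$. Next I would use the hypothesis $\EB \exp\{\lambda \rmX_i\} \le \exp\{\lambda^2 \sigma^2/2\}$ coordinate by coordinate to upper bound the product by $\exp\{M \lambda^2 \sigma^2/2\}$. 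Finally, I would identify $M \sigma^2$ with $\sigma_M^2$ by the definition $\sigma_M = \sqrt{M}\sigma$, yielding the claimed bound.

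There is no real obstacle in this proof: the only ingredients are independence (to factor the MGF) and the per-variable assumption (to bound each factor). Since $\lambda$ was arbitrary, the conclusion holds for all $\lambda \in \RB$. No additional tools such as Cauchy--Schwarz, Jensen's inequality, or truncation are needed, and the bound is tight in the sense that equality is attained when each $\rmX_i$ is Gaussian with variance exactly $\sigma^2$.
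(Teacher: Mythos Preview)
Your proposal is correct and is exactly the standard argument. The paper itself states this lemma as an auxiliary fact without providing a proof, so there is nothing to compare against; your factorization of the MGF via independence followed by the per-coordinate sub-Gaussian bound is the canonical (and essentially only) proof of this statement.
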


\begin{lemma}\label{lem:3_pnt_smooth}
Under Assumptions~\ref{asp:convex}, \ref{asp:smooth}, the following result holds for all $m\in [M]$ and $\vw,\vv,\vu \in \RB^d$,
\[
\inner{\nabla f_m(\vw)}{\vv - \vu} \le f_m(\vv) - f_m(\vu) + \frac{L}{2}\norm{\vu - \vw}_1^2.
\]
\end{lemma}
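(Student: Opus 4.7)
The plan is to split the inner product $\inner{\nabla f_m(\vw)}{\vv - \vu}$ into two pieces by inserting the anchor point $\vw$:
\[
\inner{\nabla f_m(\vw)}{\vv - \vu} = \inner{\nabla f_m(\vw)}{\vv - \vw} + \inner{\nabla f_m(\vw)}{\vw - \vu},
\]
and then handle each piece with one of the two standard assumptions.

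For the first piece, I would invoke Assumption~\ref{asp:convex} (convexity of $f_m$), which directly yields $\inner{\nabla f_m(\vw)}{\vv - \vw} \le f_m(\vv) - f_m(\vw)$. This consumes no quadratic overhead and supplies the $f_m(\vv)$ term that appears on the right-hand side of the target inequality.

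For the second piece, the main obstacle (though not a heavy one) is to derive the descent lemma tailored to the norm-pair appearing in Assumption~\ref{asp:smooth}: namely, $f_m(\vu) \le f_m(\vw) + \inner{\nabla f_m(\vw)}{\vu - \vw} + \frac{L}{2}\norm{\vu - \vw}_1^2$. This is the standard consequence of integrating the gradient along the segment from $\vw$ to $\vu$ and applying the $\norm{\cdot}_\infty / \norm{\cdot}_1$ H\"older pairing with the Lipschitz bound $\norm{\nabla f_m(\vu) - \nabla f_m(\vw)}_\infty \le L \norm{\vu - \vw}_1$. Rearranged, it gives $\inner{\nabla f_m(\vw)}{\vw - \vu} \le f_m(\vw) - f_m(\vu) + \frac{L}{2}\norm{\vu - \vw}_1^2$.

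Adding the two bounds, the $\pm f_m(\vw)$ terms cancel and I obtain $\inner{\nabla f_m(\vw)}{\vv - \vu} \le f_m(\vv) - f_m(\vu) + \frac{L}{2}\norm{\vu - \vw}_1^2$, which is exactly the claimed three-point smoothness inequality. The only subtlety worth flagging is that the descent lemma is traditionally stated in the Euclidean setting, so one should be explicit that the same integration argument carries through verbatim under the $\norm{\cdot}_1$--$\norm{\cdot}_\infty$ duality specified by Assumption~\ref{asp:smooth}.
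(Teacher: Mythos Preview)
Your argument is correct and is the standard proof of this three-point inequality: split at the anchor $\vw$, apply convexity to the $\vv$-side, apply the descent lemma (obtained by integrating the gradient and using the $\norm{\cdot}_\infty/\norm{\cdot}_1$ H\"older pairing from Assumption~\ref{asp:smooth}) to the $\vu$-side, and add. The paper itself states Lemma~\ref{lem:3_pnt_smooth} as an auxiliary result without supplying a proof, so there is nothing further to compare; your write-up is exactly the canonical derivation one would expect.
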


\begin{lemma}[Theorem 2.6 in \citet{wainwright2019high}] \label{lem:mmt_bd_subG}
Given any zero-mean random variable $\rmX$, suppose that $\EB e^{\lambda \rmX} \le e^{\frac{\lambda^2\sigma^2}{2}}$ for all $\lambda \in \RB$. Then there exists a constant $C$ such that
\[
\EB \rmX^{2k} \le \frac{(2k)!}{2^k k!}C^{2k} \sigma^{2k}, \quad \forall k = 1,2, \cdots.
\]
\end{lemma}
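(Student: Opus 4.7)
\textbf{Proof plan for Lemma~\ref{lem:mmt_bd_subG}.} The strategy is the standard three-step route from a moment generating function bound to moment bounds: (i) convert the sub-Gaussian MGF hypothesis into a two-sided tail estimate via a Chernoff argument, (ii) use the layer-cake representation to express $\EB \rmX^{2k}$ as an integral of the tail probability, and (iii) evaluate the resulting Gaussian-type integral via a change of variables producing a $\Gamma$-function, which will give the ``Gaussian moment'' $(2k)!/(2^k k!) \sigma^{2k}$ up to a polynomially growing prefactor in $k$ that can be absorbed into $C^{2k}$.

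For step (i), for any $\lambda, t > 0$, Markov's inequality combined with the hypothesis yields $\PB(\rmX \ge t) \le e^{-\lambda t} \EB e^{\lambda \rmX} \le \exp\{-\lambda t + \lambda^2 \sigma^2/2\}$; optimizing at $\lambda = t/\sigma^2$ gives $\PB(\rmX \ge t) \le e^{-t^2/(2\sigma^2)}$. Applying the same argument to $-\rmX$ (which is sub-Gaussian with the same parameter, since the hypothesis holds for all $\lambda \in \RB$) and taking a union bound delivers $\PB(|\rmX| \ge t) \le 2e^{-t^2/(2\sigma^2)}$.

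For step (ii), the layer-cake formula together with the substitution $u = t^{2k}$ gives
\[
\EB \rmX^{2k} = \int_0^\infty \PB(|\rmX|^{2k} \ge u)\, du = \int_0^\infty 2k t^{2k-1} \PB(|\rmX| \ge t)\, dt \le 4k \int_0^\infty t^{2k-1} e^{-t^2/(2\sigma^2)}\, dt.
\]
Then the substitution $s = t^2/(2\sigma^2)$ reduces the remaining integral to $\tfrac{1}{2}(2\sigma^2)^k \Gamma(k) = \tfrac{1}{2}(k-1)!\, 2^k \sigma^{2k}$, so $\EB \rmX^{2k} \le 2^{k+1} k!\, \sigma^{2k}$.

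For step (iii), it remains to check that there is a universal $C$ with $2^{k+1} k! \le \tfrac{(2k)!}{2^k k!} C^{2k}$ for every $k \ge 1$, equivalently $C^{2k} \ge \tfrac{2 \cdot 4^k}{\binom{2k}{k}}$. By Stirling $\binom{2k}{k} \sim 4^k/\sqrt{\pi k}$, so the right-hand side is $O(\sqrt{k})$ and hence is dominated by $C^{2k}$ for any $C>1$ once $k$ is large. The main (and essentially only) obstacle is guaranteeing uniformity in $k$, including the small $k$ regime: a direct computation shows the worst case is $k=1$, which forces $C^2 \ge 4$, and then $C = 2$ (or any larger constant) works for all $k \ge 1$. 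This completes the plan.
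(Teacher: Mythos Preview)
Your proof is correct. The paper itself does not prove this lemma; it merely cites it as Theorem~2.6 in \citet{wainwright2019high}, and your three-step argument (Chernoff tail bound, layer-cake representation, $\Gamma$-function evaluation) is precisely the standard route used in that reference, so there is nothing further to compare.
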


\begin{lemma}[H\"older's inequality] \label{lem:holder_ineq}
For any vector $\vx\in \RB^d$, $p\geq 1$ and $1/p+1/q=1$, we have
\[
\|\vx\|_p^2 \leq d^{2/p} \|\vx\|_\infty^2 \text{ and } \|\vx\|_1^2 \leq d^{2/p} \|\vx\|_q^2.
\]
\end{lemma}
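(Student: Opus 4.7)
The plan is to prove each inequality by a direct application of classical norm-comparison bounds; no ingredient from the rest of the paper is needed. For the first bound, I would start from the coordinate-wise estimate $|[\vx]_i|^p \le \|\vx\|_\infty^p$ for every $i\in [d]$. Summing over $i$ gives $\|\vx\|_p^p = \sum_{i=1}^d |[\vx]_i|^p \le d\,\|\vx\|_\infty^p$, and raising both sides to the power $2/p$ immediately yields $\|\vx\|_p^2 \le d^{2/p}\|\vx\|_\infty^2$, which is the first claim.

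For the second inequality, the natural tool is the classical H\"older inequality itself: for conjugate exponents $p,q$ with $1/p+1/q=1$ and vectors $\va, \vb\in \RB^d$, one has $\sum_i |a_i b_i| \le \|\va\|_q\,\|\vb\|_p$. I would apply this with $a_i = [\vx]_i$ and $b_i = 1$ for every $i\in [d]$. The left-hand side is then exactly $\|\vx\|_1$, while the right-hand side becomes $\|\vx\|_q \cdot d^{1/p}$. Squaring both sides delivers $\|\vx\|_1^2 \le d^{2/p}\|\vx\|_q^2$, as claimed.

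There is essentially no obstacle here: both inequalities are textbook consequences of H\"older (the first being the degenerate case in which one vector is constant in $\ell^\infty$, the second being the standard $\ell^1$--$\ell^q$ duality bound). The only minor point is to verify the boundary cases $p=1$ (so $q=\infty$ and $d^{2/p}=d^2$) and $p=\infty$ (so $q=1$ and $d^{2/p}=1$), both of which collapse to trivial coordinate-wise estimates. Accordingly, I would present the argument as a two-line proof, invoking H\"older once for the second inequality and a direct coordinate bound for the first.
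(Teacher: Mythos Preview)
Your proposal is correct and complete. The paper itself states this lemma without proof, treating it as a standard auxiliary fact, so your two-line argument---a coordinate-wise bound for the first inequality and H\"older with the all-ones vector for the second---is exactly the intended justification and there is nothing further to compare.
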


\begin{proof}[Proof of Lemma~\ref{lem:bd_of_p_norm_2nd_mmt}]
    By leveraging Lemma~\ref{lem:bdg_ineq} and Jensen's inequality,
    \begin{align}
        &\EB \norm{\ssum{i}{1}{n}\vv_i}_p^2 = \EB \left( \ssum{j}{1}{d}\left| \ssum{i}{1}{n} [\vv_i]_j \right|^p \right)^{2/p}
        \le \left(
        \ssum{j}{1}{d}\EB \left| \ssum{i}{1}{n} [\vv_i]_j \right|^p
        \right)^{2/p}\\
        \le&
        \left\{
        \ssum{j}{1}{d}C^p p^{3p/2}\EB \left(
        \ssum{i}{1}{n}\EB \left[ [\vv_i]_j^2 | \gF_{i-1} \right]
        \right)^{p/2}
        \right\}^{2/p}\\
        \le&
        C^2 d^{2/p}p^{3} \sup\limits_{j\in [d]} \left\{
        \EB \left(
        \ssum{i}{1}{n}\EB \left[ [\vv_i]_j^2 | \gF_{i-1} \right]
        \right)^{p/2}
        \right\}^{2/p}.
    \end{align}
    For any $j\in [d]$, using Jensen's inequality twice yields,
    \begin{align}
        &\EB \left(
        \ssum{i}{1}{n}\EB \left[ [\vv_i]_j^2 | \gF_{i-1} \right]
        \right)^{p/2} 
        = n^{p/2} \EB \left(
        \frac{1}{n}\ssum{i}{1}{n}\EB \left[ [\vv_i]_j^2 | \gF_{i-1} \right]
        \right)^{p/2}\\
        \le&
        n^{p/2-1} \EB \ssum{i}{1}{n}\left(
        \EB \left[ [\vv_i]_j^2 | \gF_{i-1} \right]
        \right)^{p/2}
        \le
        n^{p/2 - 1} \ssum{i}{1}{n}\EB \left| [\vv_i]_j \right|^p\\
        \le&
        n^{p/2} \sup\limits_{i\in [n]} \EB \left| [\vv_i]_j \right|^p.
    \end{align}
    So finally,
    \begin{align}
        &\EB \norm{\ssum{i}{1}{n}\vv_i}_p^2 \le C^2d^{2/p}p^3 \sup\limits_{j\in [d]}\left\{
        n^{p/2}\sup\limits_{i\in [n]} \EB \left| [\vv_i]_j \right|^p
        \right\}^{2/p}\\
        \le&
        C^2d^{2/p}p^3n \sup\limits_{i\in[n], j\in [d]} \EB \left| [\vv_i]_j \right|^p.
    \end{align}
\end{proof}

\begin{lemma}\label{lem:mixing_rt_under_l_p2}
    Suppose $\rmA \in \RB^{d\times M}$, then for any $t \ge 0$,
    \[
    \norm{\rmA(\rmU^{t} - \rmJ_M)}_{p,2}^2 \le M(\sigma_2(\rmU))^{2t} \norm{\rmA}_{p,2}^2.
    \]
\end{lemma}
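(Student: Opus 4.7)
The plan is to factor the bound into two independent pieces: a ``columnwise Cauchy--Schwarz'' that converts the mixed $(p,2)$ norm into a product of $\|\rmA\|_{p,2}$ and the Frobenius norm of $\rmU^t-\rmJ_M$, and then a spectral bound on the latter. The reason this approach works is that the $\|\cdot\|_{p,2}$ norm is asymmetric in its two indices: the $\ell_p$ norm acts on the row index (the $d$ side), while the $\ell_2$ norm acts on the column index (the $M$ side), and the multiplication by $\rmU^t-\rmJ_M$ on the right only touches the latter.

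\textbf{Step 1: reduce to Frobenius norm of $\rmU^t-\rmJ_M$.} Write $\rmA=[\va_1,\ldots,\va_M]$ and $\rmB:=\rmU^t-\rmJ_M$, so that the $j$-th column of $\rmA\rmB$ is $(\rmA\rmB)_{\cdot,j}=\sum_{k=1}^M \rmB_{kj}\va_k$. By the triangle inequality for $\|\cdot\|_p$ followed by Cauchy--Schwarz in $\RB^M$,
\[
\norm{(\rmA\rmB)_{\cdot,j}}_p \le \sum_{k=1}^M |\rmB_{kj}|\,\norm{\va_k}_p \le \left(\sum_{k=1}^M \rmB_{kj}^2\right)^{1/2}\left(\sum_{k=1}^M \norm{\va_k}_p^2\right)^{1/2} = \norm{\rmB_{\cdot,j}}_2\,\norm{\rmA}_{p,2}.
\]
Squaring and summing over $j\in[M]$ gives the key inequality
\[
\norm{\rmA\rmB}_{p,2}^2 = \sum_{j=1}^M \norm{(\rmA\rmB)_{\cdot,j}}_p^2 \le \norm{\rmA}_{p,2}^2 \sum_{j=1}^M \norm{\rmB_{\cdot,j}}_2^2 = \norm{\rmA}_{p,2}^2\,\norm{\rmB}_F^2.
\]

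\textbf{Step 2: bound $\|\rmU^t-\rmJ_M\|_F^2$ via spectral decomposition.} Since $\rmU$ is symmetric and doubly stochastic, it admits an orthonormal eigendecomposition $\rmU=\sum_{i=1}^M \lambda_i \vu_i\vu_i^\top$ with $\lambda_1=1$ and eigenvector $\vu_1=\mathbf{1}/\sqrt{M}$, while $|\lambda_i|\le \sigma_2(\rmU)$ for $i\ge 2$. Note that $\rmJ_M=\vu_1\vu_1^\top$, so $\rmU^t-\rmJ_M=\sum_{i\ge 2}\lambda_i^{t}\vu_i\vu_i^\top$. By orthonormality of $\{\vu_i\}$,
\[
\norm{\rmU^t-\rmJ_M}_F^2 = \sum_{i=2}^M \lambda_i^{2t} \le (M-1)\,\sigma_2(\rmU)^{2t} \le M\,\sigma_2(\rmU)^{2t}.
\]

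\textbf{Step 3: combine.} Substituting Step~2 into Step~1 yields the claim. There is no serious obstacle; the only conceptual point is to exploit the asymmetry of the $(p,2)$ norm so that the non-Hilbertian $\ell_p$ part remains attached to $\rmA$ while $\rmU^t-\rmJ_M$ is handled purely in an $\ell_2$/Frobenius sense, where the eigendecomposition of the symmetric gossip matrix supplies a clean geometric decay rate.
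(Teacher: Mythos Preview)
Your proof is correct and follows essentially the same approach as the paper's: both use the triangle inequality on $\|\cdot\|_p$ followed by Cauchy--Schwarz to reduce to $\|\rmA\|_{p,2}^2\cdot\|\rmU^t-\rmJ_M\|_F^2$, and then bound the Frobenius norm by $M\sigma_2(\rmU)^{2t}$. Your Step~2 is slightly more explicit in invoking the eigendecomposition (yielding the marginally sharper $(M-1)\sigma_2(\rmU)^{2t}$), whereas the paper writes the Frobenius norm as $\sum_j\|\rmU^t\ve_j-\mathbf{1}/M\|_2^2$ and bounds each summand by $\sigma_2(\rmU)^{2t}$.
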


\begin{proof}[Proof of Lemma~\ref{lem:mixing_rt_under_l_p2}]

We use $\va_i$ to denote the $i$-th column of $\rmA$, then
\begin{align*}
    &\norm{\rmA(\rmU^t - \rmJ_M)}_{p,2}^2 = \ssum{j}{1}{M}\norm{\ssum{i}{1}{M}(u^{(t)}_{ij} - M^{-1}) \va_i}_p^2 \overset{(a)}{\le}
    \ssum{j}{1}{M}\left( 
    \ssum{i}{1}{M}|u_{ij}^{(t)} - M^{-1}| \cdot \norm{\va_i}_p
    \right)^2\\
    \le&
    \ssum{j}{1}{M}\left( 
    \ssum{i}{1}{M}(u_{ij}^{(t)} - M^{-1})^2 
    \right)
    \left( 
    \ssum{i}{1}{M} \norm{\va_i}_p^2
    \right) \le
    \left( \ssum{j}{1}{M}\norm{\rmU^{t}\ve_j - \frac{\mathbf{1}}{M}}^2 \right) \norm{\rmA}_{p,2}^2\\
    \le&
    M\sigma_2(\rmU)^{2t} \norm{\rmA}_{p,2}^2.
\end{align*}
Here $(a)$ holds by the triangular inequality of $\norm{\cdot}_p$.
\end{proof}

\begin{lemma}\label{lem:db_stoc_under_l_p2}
    Suppose $\rmA \in \RB^{d\times M}$, and $\rmU \in \RB^{M\times M}$ be a double stochastic matrix. Then
    \begin{align*}
        \norm{\rmA \rmU}_{p,2}^2 \le \norm{\rmA}_{p,2}^2.
    \end{align*}
\end{lemma}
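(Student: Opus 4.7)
\textbf{Proof plan for Lemma~\ref{lem:db_stoc_under_l_p2}.} My plan is to expand $\norm{\rmA\rmU}_{p,2}^2$ columnwise and then apply two elementary convexity arguments, one for each of the two stochastic properties of $\rmU$. Write $\rmA=[\va_1,\ldots,\va_M]$ with columns $\va_i\in\RB^d$ and write $\rmU=[u_{ij}]$. Then the $j$-th column of $\rmA\rmU$ is $\sum_{i=1}^M u_{ij}\va_i$, so by definition of $\norm{\cdot}_{p,2}$,
\[
\norm{\rmA\rmU}_{p,2}^2 \;=\; \sum_{j=1}^M \norm{\sum_{i=1}^M u_{ij}\va_i}_p^{2}.
\]

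Next, I would fix $j$ and use the first doubly-stochastic identity $\sum_{i=1}^M u_{ij}=1$, so that $(u_{ij})_{i=1}^M$ is a probability vector in $i$. The triangle inequality for $\norm{\cdot}_p$ (with nonnegative coefficients $u_{ij}\ge 0$) gives $\norm{\sum_i u_{ij}\va_i}_p \le \sum_i u_{ij}\norm{\va_i}_p$, and then Jensen's inequality applied to the convex function $t\mapsto t^2$ against the probability vector $(u_{ij})_i$ gives
\[
\left(\sum_{i=1}^M u_{ij}\norm{\va_i}_p\right)^{\!2} \;\le\; \sum_{i=1}^M u_{ij}\norm{\va_i}_p^{2}.
\]

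Finally, I would sum over $j$, interchange the two finite sums, and use the other doubly-stochastic identity $\sum_{j=1}^M u_{ij}=1$ to collapse the $j$-sum:
\[
\sum_{j=1}^M\norm{\sum_{i=1}^M u_{ij}\va_i}_p^{2} \;\le\; \sum_{i=1}^M\norm{\va_i}_p^{2}\sum_{j=1}^M u_{ij} \;=\; \sum_{i=1}^M\norm{\va_i}_p^{2} \;=\; \norm{\rmA}_{p,2}^{2}.
\]
This yields the claim. There is no real obstacle here: the argument is essentially the observation that a doubly stochastic matrix acts as an averaging operator on both sides, which is exactly the structure being exploited in Lemma~\ref{lem:local_dev_dfedda_2} to control the small-$t$ terms of the mixing expansion (for large $t$ the paper instead invokes Lemma~\ref{lem:mixing_rt_under_l_p2}). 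The only thing to be careful about is that both $\sum_i u_{ij}=1$ and $\sum_j u_{ij}=1$ are needed—if only one of the stochastic conditions held, the inner Jensen step or the outer telescoping step would each fail on its own.
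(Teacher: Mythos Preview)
Your proposal is correct and follows essentially the same route as the paper: columnwise expansion, triangle inequality, then a convexity bound using $\sum_i u_{ij}=1$, and finally collapse the $j$-sum using $\sum_j u_{ij}=1$. The only cosmetic difference is that the paper phrases the convexity step as a Cauchy--Schwarz-type inequality $(\sum_i u_{ij}\norm{\va_i}_p)^2 \le (\sum_i u_{ij})(\sum_i u_{ij}\norm{\va_i}_p^2)$ before invoking $\sum_i u_{ij}=1$, whereas you invoke Jensen directly on the probability vector; these are the same computation.
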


\begin{proof}[Proof of Lemma~\ref{lem:db_stoc_under_l_p2}]
We use $\va_i$ to denote the $i$-th column of $\rmA$. Then
\begin{align*}
    &\norm{\rmA \rmU}_{p,2}^2 = \ssum{j}{1}{M}\norm{\ssum{i}{1}{M}u_{ij}\va_i}_p^2
    \le \ssum{j}{1}{M}\left(\ssum{i}{1}{M}u_{ij}\norm{\va_i}_p\right)^2\\
    \overset{(a)}{\le}& \ssum{j}{1}{M}\left(\ssum{i}{1}{M}u_{ij}\right)\left(\ssum{i}{1}{M}u_{ij}\norm{\va_i}_p^2\right) = \ssum{j}{1}{M}\ssum{i}{1}{M}u_{ij}\norm{\va_i}_p^2
    \le \ssum{i}{1}{M}\left( \ssum{j}{1}{M} u_{ij} \right)\norm{\va_i}_p^2 
    = \norm{\rmA}_{p,2}^2.
\end{align*}
\end{proof}

\begin{lemma}\label{lem:rcs_to_etr_fedda}
Suppose the positive sequence $\{\vs_r\}_{r=0}^{K-1}$ satisfies $\vs_0 = 0$,
\[
\vs_{r+1} \le (1+ ar\eta^2)\vs_r + b,\quad r=0,\cdots K-2
\]
with $a,b > 0$. Then we have $\vs_{K-1} \le e K b$ as long as $\sqrt{a}K\eta \le 1$.
\end{lemma}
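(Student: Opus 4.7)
The plan is to unroll the linear recursion, telescoping it against the trivial initial condition $\vs_0 = 0$. Iterating $\vs_{r+1} \le (1+ar\eta^2)\vs_r + b$ from $r=0$ to $r=K-2$ gives
\[
\vs_{K-1} \;\le\; \Bigl(\prod_{j=0}^{K-2}(1+aj\eta^2)\Bigr)\vs_0 \;+\; b\sum_{k=0}^{K-2}\prod_{j=k+1}^{K-2}(1+aj\eta^2)
\;=\; b\sum_{k=0}^{K-2}\prod_{j=k+1}^{K-2}(1+aj\eta^2),
\]
where the empty product (when $k=K-2$) is interpreted as $1$.

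Next I would bound each inner product by the elementary inequality $1+x \le e^x$ (valid for $x \ge 0$), yielding
\[
\prod_{j=k+1}^{K-2}(1+aj\eta^2) \;\le\; \exp\!\Bigl(a\eta^2\sum_{j=k+1}^{K-2} j\Bigr) \;\le\; \exp\!\Bigl(\tfrac{a\eta^2 K^2}{2}\Bigr),
\]
using that $\sum_{j=0}^{K-2} j \le K^2/2$. The hypothesis $\sqrt{a}K\eta \le 1$ then gives $a\eta^2 K^2/2 \le 1/2$, so each such product is bounded by $\sqrt{e}$.

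Finally, summing the $K-1$ terms produces $\vs_{K-1} \le b(K-1)\sqrt{e} \le eKb$, which is exactly the desired estimate. The argument is entirely routine and there is no real obstacle; the only point to be careful about is the indexing of the product (an empty product for the last term in the sum) and verifying that the crude bound $\sum_{j=0}^{K-2} j \le K^2/2$ together with the step-size condition suffices to absorb everything into the constant $e$.
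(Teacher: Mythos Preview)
Your proof is correct and follows essentially the same approach as the paper. The only cosmetic difference is that the paper first replaces the variable coefficient $1+ar\eta^2$ by the uniform bound $1+K^{-1}$ (using $r\le K$ and $aK^2\eta^2\le1$) before unrolling, whereas you unroll first and then bound the resulting products via $1+x\le e^x$; your version even yields the slightly sharper factor $\sqrt{e}$ in place of $e$ before relaxing to the stated bound.
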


\begin{proof}[Proof of Lemma~\ref{lem:rcs_to_etr_fedda}]
By simple computation, we can get
\begin{align*}
\vs_{r+1} &\le (1+ar\eta^2) \vs_r + b \le (1 + (aK^2\eta^2)/K)\vs_r + b\\
&\le \left(1 + K^{-1} \right)\vs_r + b.
\end{align*}
Using the above inequality recursively results in
\begin{align*}
\vs_{K-1} &\le \left(1 + K^{-1}\right)^K \vs_0 + b\ssum{r}{0}{K-1}(1+ K^{-1})^r \le b\ssum{r}{0}{K-1}e = eKb.
\end{align*}
\end{proof}

\section{Convergence Rate for Decentralized Algorithms}
\subsection{Convergence Rate for DFedDA}
\begin{theorem}[Convergence rate for DFedDA]\label{thm:conv_rt_defedda}
Suppose Assumptions~\ref{asp:convex} - \ref{asp:mx_gossip} hold. Let $p=2\log d$, $\tau = \frac{\log 4M}{2\log(1/\sigma_2(\rmU))}\vee 1$, $\eta \lesssim \frac{1}{\tau LK}$, and $\hat{\vw}^m := \frac{1}{R}\ssum{r}{0}{R-1}\vw_{r,0}^m$. We have
\begin{align}
    \frac{1}{M} \ssum{m}{1}{M} \EB\{f(\hat{\vw}^m) - f(\vw^*)\} &\lesssim \frac{h(\vw^*)}{RK{\eta}} + \tau (\log d)^4 \sigma^2\eta + \tau^2 LK^2\gE_M^* \eta^2.
\end{align}
Furthermore, if we take $\eta = \min\left\{ \frac{1}{\tau LK}, \frac{h(\vw^*)^{1/3}}{(\tau^2 LR \gE_M^*)^{1/3}K}, \frac{h(\vw^*)^{1/2}}{(\tau R K)^{1/2}(\log d)^2 \sigma} \right\}$, then
\begin{align}
    \frac{1}{M} \ssum{m}{1}{M} \EB\{f(\hat{\vw}^m) - f(\vw^*)\} &\lesssim \frac{\tau L h(\vw^*)}{R} + \frac{(\tau^2 L \gE_M^* h(\vw^*)^2)^{1/3}}{R^{2/3}} + \frac{(\tau h(\vw^*))^{1/2}(\log d)^2 \sigma}{(RK)^{1/2}}.
\end{align}
\end{theorem}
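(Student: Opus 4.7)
The plan is to work with the virtual dual average $\bar{\vz}_{r,k}=\frac{1}{M}\rmZ_{r,k}\mathbf{1}$ and its primal image $\bar{\vw}_{r,k}=\nabla^{-1}h(\bar{\vz}_{r,k})$. Since the aggregation step in Algorithm~\ref{alg:dscaffoldda} is doubly stochastic, $\bar{\vz}_{r,k}$ evolves like a dual-averaging iterate driven by the averaged stochastic gradient $\bar{\vg}_r^{\xi}:=\frac{1}{MK}\sum_{m,k}\nabla F(\vw_{r,k}^m,\xi_{r,k}^m)$. Applying Lemma~\ref{lem:innr_to_breg-norm} along this virtual trajectory and invoking convexity of each $f_m$ yields, after adding and subtracting $\nabla f_m(\bar{\vw}_{r,0})$ and using the 1-smoothness of $h^*$ (Lemma~\ref{lem:strong_cvx_mirror_map}) together with Lemma~\ref{lem:3_pnt_smooth}, the one-step decomposition displayed in the paper: the leading $f(\bar{\vw}_{r,0})-f(\vw^*)$ is bounded by $\gT_1+\gT_2+\gT_3$ minus a quadratic regularization term $\frac{1}{8RK\eta}\sum_r\EB\|\bar{\vw}_{r-1,0}-\bar{\vw}_{r,0}\|_q^2$ that we retain to absorb higher-order contributions.

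\textbf{Bounding the bias and variance.} The bias term $\gT_1=h(\vw^*)/(RK\eta)$ is immediate from Lemma~\ref{lem:innr_to_breg-norm}. For the variance term $\gT_2=\frac{K\eta}{R}\sum_r\EB\|\bar{\vg}_r^{\xi}-\bar{\vg}_r\|_p^2$, I would set $p=2\log d$ and invoke Lemma~\ref{lem:bd_of_p_norm_2nd_mmt} treating $\bar{\vg}_r^{\xi}-\bar{\vg}_r$ as a martingale sum of length $MK$ coordinatewise. Combining with Lemma~\ref{lem:mmt_bd_subG} to convert Assumption~\ref{asp:grad_subG}'s sub-Gaussianity into a $p$-th moment bound and using $d^{2/p}=e$ when $p=2\log d$, we obtain $\gT_2\lesssim (\log d)^4\sigma^2\eta/M$ per round. (The factor $\tau$ does not arise from $\gT_2$ itself but from the deviation term, which eventually dominates.)

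\textbf{The spiraling deviation recursion (main obstacle).} The hard part is controlling $\gT_3=\frac{4L}{RMK}\sum_{r,m,k}\EB\|\bar{\vw}_{r,0}-\vw_{r,k}^m\|_q^2$. Splitting into $\gT_3^{(1)}$ (local drift) and $\gT_3^{(2)}$ (consensus error) and using Lemma~\ref{lem:strong_cvx_mirror_map} to pass from $\|\cdot\|_q$ primal bounds to $\|\cdot\|_p$ dual bounds, these reduce to the quantities $\gE_r$ and $\gH_r$ defined in \eqref{eq:define Hr&Er}. Lemma~\ref{lem:local_dev_dfedda_1} bounds $\gE_r$ in terms of $\gH_r$ plus an $\EB\{f(\bar{\vw}_{r,0})-f(\vw^*)\}$ term plus the heterogeneity $\sum_m\|\nabla f_m(\vw^*)\|_p^2$; Lemma~\ref{lem:local_dev_dfedda_2} conversely bounds $\gH_r$ by a $\tau$-window sum of past $\gE_i$ and $\gH_i$. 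This is the ``spiraling'' structure: to break it, I would form a weighted combination $\gH_r+\alpha\gE_r$ with $\alpha$ chosen so that the mutual dominance is self-consistent, then argue by induction that under the step-size constraint $\eta\lesssim 1/(\tau LK)$ all coefficients involving $d^{4/p}L^2K^2\eta^2$ or $\tau^2L^2K^2\eta^2$ stay below $1/2$, which lets a telescoping sum over $r$ give $\frac{1}{R}\sum_r(\gH_r+\alpha\gE_r)\lesssim \tau\sigma^2(\log d)^4 MK\eta^2 + \tau^2 K^2\eta^2 \sum_m\|\nabla f_m(\vw^*)\|_p^2 + (\text{small})\cdot \sum_r \EB\{f(\bar{\vw}_{r,0})-f(\vw^*)\}$. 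The ``small'' coefficient on the last term is chosen so that it can be absorbed on the left of the one-step recursion.

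\textbf{Assembly, passing to clients, and step-size tuning.} Plugging the $\gH_r,\gE_r$ bounds back into $\gT_3$ and moving the absorbed term to the left, I obtain $\frac{1}{R}\sum_r\EB\{f(\bar{\vw}_{r,0})-f(\vw^*)\}\lesssim \gT_1+\gT_2+\tau^2LK^2\gE_M^*\eta^2$. To convert the bound from $\bar{\vw}_{r,0}$ to $\hat{\vw}^m$, I use $\gL_1$-smoothness of each $f_m$ (Assumption~\ref{asp:smooth}): $\frac{1}{M}\sum_m[f_m(\vw_{r,0}^m)-f_m(\bar{\vw}_{r,0})]\lesssim L\cdot\frac{1}{M}\sum_m\|\vw_{r,0}^m-\bar{\vw}_{r,0}\|_1^2\lesssim Ld^{2/p}\gH_r$ (again using Lemma~\ref{lem:strong_cvx_mirror_map}), which is already controlled. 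Jensen's inequality then gives the same bound for $f(\hat{\vw}^m)-f(\vw^*)$ after averaging over $r$ and $m$, yielding the three-term rate in the theorem. Finally, balancing the three terms in $\eta$ with the constraint $\eta\lesssim 1/(\tau LK)$ produces the stated choice $\eta=\min\{1/(\tau LK),\, h(\vw^*)^{1/3}/((\tau^2LR\gE_M^*)^{1/3}K),\, h(\vw^*)^{1/2}/((\tau RK)^{1/2}(\log d)^2\sigma)\}$ and the corresponding optimized bound.
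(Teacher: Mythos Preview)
Your outline matches the paper's proof: virtual averaged dual sequence, Lemma~\ref{lem:innr_to_breg-norm} for the one-step recursion, Lemma~\ref{lem:bd_of_p_norm_2nd_mmt} for the variance, the coupled $\gE_r$/$\gH_r$ recursion from Lemmas~\ref{lem:local_dev_dfedda_1}--\ref{lem:local_dev_dfedda_2} for the deviation, and finally step-size balancing. Your ``weighted combination plus telescoping'' for the spiraling recursion is a legitimate alternative to the paper's explicit $\tau$-block summation $\gH_s^{1:\tau}$; summing the $\gH_r$ recursion directly over $r$ and using $\eta\lesssim 1/(\tau LK)$ to keep the cross-term coefficient below $1/2$ indeed works and is arguably cleaner.

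There is one genuine slip in the conversion step. You write $\frac{1}{M}\sum_m[f_m(\vw_{r,0}^m)-f_m(\bar\vw_{r,0})]\lesssim L\cdot\frac{1}{M}\sum_m\|\vw_{r,0}^m-\bar\vw_{r,0}\|_1^2$, but smoothness alone only gives $f(\vw^m)-f(\bar\vw)\le \langle\nabla f(\bar\vw),\vw^m-\bar\vw\rangle+\tfrac{L}{2}\|\vw^m-\bar\vw\|_1^2$, and the first-order term does \emph{not} vanish after averaging over $m$ (because $\bar\vw_{r,0}=\nabla^{-1}h(\tfrac{1}{M}\sum_m\vz_{r,0}^m)\ne\tfrac{1}{M}\sum_m\vw_{r,0}^m$ when $\nabla^{-1}h$ is nonlinear). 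The paper fixes this by working with the global $f$ (not $f_m$), writing $\nabla f(\bar\vw)=\nabla f(\bar\vw)-\nabla f(\vw^*)$, and invoking co-coercivity (Lemma~\ref{lem:conj_sc}) together with Young's inequality to bound the inner product by $[f(\bar\vw_{r,0})-f(\vw^*)]+\tfrac{d^{2/p}L}{2}\|\vw_{r,0}^m-\bar\vw_{r,0}\|_q^2$. This extra $f(\bar\vw_{r,0})-f(\vw^*)$ is already controlled and only doubles the constant, so your final bound survives, but the step as you wrote it is not correct.
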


\subsection{Convergence Rate for DFedDA-GT}
\begin{theorem}[Convergence rate for DFedDA-GT]\label{thm:conv_rt_defedda_gt}
Suppose Assumptions~\ref{asp:convex} - \ref{asp:mx_gossip} hold. Let $p=2\log d$, $\tau = \frac{\log 8M}{2\log(1/\sigma_2(\rmU))}\vee 1$, $\eta \lesssim \frac{1}{\tau^2 LK}$, and $\hat{\vw}^m := \frac{1}{R}\ssum{r}{0}{R-1}\vw_{r,0}^m$. We have
\begin{align}
    \frac{1}{M} \ssum{m}{1}{M} \EB\{f(\hat{\vw}^m) - f(\vw^*)\} &\lesssim \frac{h(\vw^*)}{RK{\eta}} + \tau^2 (\log d)^4 \sigma^2\eta.
\end{align}
Furthermore, if we take $\eta = \min\left\{ \frac{1}{\tau^2 LK}, \frac{h(\vw^*)^{1/2}}{\tau (R K)^{1/2}(\log d)^2 \sigma} \right\}$, then
\begin{align}
    \frac{1}{M} \ssum{m}{1}{M} \EB\{f(\hat{\vw}^m) - f(\vw^*)\} &\lesssim \frac{\tau^2 L h(\vw^*)}{R} + \frac{\tau h(\vw^*)^{1/2}(\log d)^2 \sigma}{(RK)^{1/2}}.
\end{align}
\end{theorem}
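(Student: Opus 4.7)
\medskip

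\noindent\textbf{Proof Plan for Theorem~\ref{thm:conv_rt_defedda_gt}.}

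The plan is to parallel the proof structure of Theorem~\ref{thm:conv_rt_defedda} (DFedDA), but substitute in the GT-specific one-step deviation estimates of Lemmas~\ref{lem:local_dev_dfedda-gt_1}, \ref{lem:local_dev_dfedda-gt_2}, \ref{lem:local_dev_dfedda-gt_3}. First I would set up the averaged dual sequence $\bar{\vz}_{r,k} := \frac{1}{M}\rmZ_{r,k}\mathbf{1}$ and its primal image $\bar{\vw}_{r,k} = \nabla^{-1}h(\bar{\vz}_{r,k})$. Observing that the gradient tracker terms satisfy $\sum_{m=1}^M \vc_r^m = 0$ (a consequence of the doubly stochastic update \eqref{eq:gradient tracking step} and zero-sum initialization), averaging the per-client dual updates kills the tracker contribution, so the recursion on $\bar{\vz}_{r,k}$ looks identical to the DFedDA one. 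I would then apply Lemma~\ref{lem:innr_to_breg-norm} (with $\vv_r$ identified with $\bar{\vw}_{r,0}$) to obtain the same decomposition
\[
\frac{1}{MR}\sum_{r=1}^{R}\sum_{m=1}^M \EB[f_m(\bar{\vw}_{r,0}) - f_m(\vw^*)] \le \gT_1 - \frac{1}{8RK\eta}\sum_{r=1}^R \EB\norm{\bar{\vw}_{r-1,0} - \bar{\vw}_{r,0}}_q^2 + \gT_2 + \gT_3,
\]
with $\gT_1 = h(\vw^*)/(RK\eta)$, $\gT_2$ the variance term, and $\gT_3$ the deviation term.

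Next I would bound $\gT_1$ trivially, and $\gT_2 \lesssim \tau (\log d)^4 \sigma^2 \eta$ by the same BDG-based argument as in DFedDA: applying Lemma~\ref{lem:bd_of_p_norm_2nd_mmt} to the martingale-difference noise $\nabla F(\vw_{r,k}^m, \xi_{r,k}^m) - \nabla f_m(\vw_{r,k}^m)$ and using $p = 2\log d$ so that $d^{2/p}p^3 \lesssim (\log d)^3$. For $\gT_3$, I decompose each $\EB\norm{\bar{\vw}_{r,0} - \vw_{r,k}^m}_q^2$ into $\gT_3^{(1)}$ (local dual drift, controlled by $\gE_r$) and $\gT_3^{(2)}$ (consensus deviation, controlled by $\gH_r$), via Lemma~\ref{lem:strong_cvx_mirror_map}.

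The central step is handling the triple spiral between $\gE_r$, $\gH_r$, and $\gL_r$. I would sum Lemmas~\ref{lem:local_dev_dfedda-gt_1}, \ref{lem:local_dev_dfedda-gt_2}, \ref{lem:local_dev_dfedda-gt_3} over $r = 0,\dots,R-1$ and form a carefully weighted linear combination of the three inequalities. Under the step-size restriction $\eta \lesssim 1/(\tau^2 LK)$ (so that factors of $d^{4/p} \tau^2 L^2 K^2 \eta^2 \lesssim 1/\tau^2$ are small-gain), the coefficients of $\sum_r \gE_r$, $\sum_r \gH_r$ on the right absorb into those on the left. This yields closed-form bounds:
\[
\frac{1}{RMK}\sum_r \gE_r \lesssim (\log d)^4 \sigma^2 \eta^2, \qquad \frac{1}{RM}\sum_r \gH_r \lesssim \tau^2 (\log d)^4 \sigma^2 K \eta^2,
\]
plus a residual coming from $\sum_r \EB\norm{\bar{\vw}_{r+1,0} - \bar{\vw}_{r,0}}_1^2$ that enters $\gL_r$. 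Crucially, this residual is dominated by the negative term $-\frac{1}{8RK\eta}\sum_r \EB\norm{\bar{\vw}_{r-1,0}-\bar{\vw}_{r,0}}_q^2$ that we reserved from the dual-averaging inequality (after converting $\norm{\cdot}_1$ to $\norm{\cdot}_q$ via $\|\cdot\|_1^2 \le d^{2/p}\|\cdot\|_q^2 \lesssim \|\cdot\|_q^2$), provided $\eta \lesssim 1/(\tau^2 LK)$. This cancellation is precisely where gradient tracking removes the heterogeneity term: instead of the DFedDA tail $\tau^2 L K^2 \gE_M^* \eta^2$, we get a self-consistent bound with no $\gE_M^*$ appearing.

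The main obstacle will be disentangling the spiraling recursion: each of $\gE_r$, $\gH_r$, $\gL_r$ bounds the others with a $\tau$-step lookback, so a naive unrolling loses powers of $\tau$. The resolution is to write the three sums as a fixed-point inequality of the form $\vx \le \mA \vx + \vb$ with $\vx = (\sum\gE_r, \sum\gH_r, \sum\gL_r)^\top$; the small-gain condition $\rho(\mA) < 1$ becomes exactly $\eta \lesssim 1/(\tau^2 LK)$, the tightest constraint in the statement. Once this is established, I substitute the bounds back into $\gT_3$, combine with $\gT_1 + \gT_2$, and use $f - f(\vw^*) \le \frac{1}{M}\sum_m f_m - f_m(\vw^*) + \frac{L}{M}\sum_m \norm{\vw_{r,0}^m - \bar{\vw}_{r,0}}_1^2$ (bounded again by $\gH_r$) to convert the averaged bound on $\bar{\vw}_{r,0}$ to the stated bound on $\hat{\vw}^m$. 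The final optimization over $\eta$ within the allowed range yields the two-term rate $\tau^2 L h(\vw^*)/R + \tau h(\vw^*)^{1/2}(\log d)^2\sigma/(RK)^{1/2}$.
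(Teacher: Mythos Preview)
Your proposal is correct and follows essentially the same route as the paper: set up the averaged sequence (using $\sum_m \vc_r^m = 0$), apply Lemma~\ref{lem:innr_to_breg-norm}, bound the variance via BDG, and close the $(\gE_r,\gH_r,\gL_r)$ spiral using Lemmas~\ref{lem:local_dev_dfedda-gt_1}--\ref{lem:local_dev_dfedda-gt_3}, with the $\sum_r\EB\norm{\bar{\vw}_{r+1,0}-\bar{\vw}_{r,0}}_1^2$ residual absorbed by the reserved negative term. The only packaging difference is that the paper resolves the spiral by first forming the specific weighted combination $\gH_r + 2^{10}\tau^2\Tilde{\eta}^2\gL_r$, summing in $\tau$-sized blocks to get a geometric recursion $\gS_{s+1}^{1:\tau}\le\tfrac{5}{7}\gS_s^{1:\tau}+\cdots$, and then unrolling, whereas you frame it as a $3\times3$ fixed-point system $\vx\le \mA\vx+\vb$; these are equivalent, and the small-gain condition $\rho(\mA)<1$ is exactly what forces $\eta\lesssim 1/(\tau^2 LK)$ in the paper as well. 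One minor slip: $\gT_2$ itself has no $\tau$ factor (it is $\lesssim(\log d)^4\sigma^2\eta$); the $\tau^2$ in the final rate enters through $\gT_3$ after one factor of $\tau^2 L\Tilde{\eta}$ is absorbed by the step-size condition.
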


\subsection{Proof for Decentralized Federated Dual Averaging}

{

Starting from this section, we establish certain notations that will recurrently appear in the subsequent proofs.
\begin{align*}
    &\zeta_{r,k}^m = \nabla F(\vw_{r,k}^m, \xi_{r,k}^m) - \nabla f_m(\vw_{r,k}^m);\\
    &\vartheta_{r,k}^m = \nabla f_m(\vw_{r,k}^m) - \nabla f_m(\vw_{r,0}^m).
\end{align*}
Put simply, $\zeta_{r,k}^m$ signifies the noise introduced by the stochastic gradient, while $\vartheta_{r,k}^m$ captures the gradient's deviation from its initial state during the local update process.

\begin{proof}[Proof of Lemma~\ref{lem:innr_to_breg-norm}]
Define $m_r(\vv):= \ssum{i}{0}{r-1}a_i\inner{\zeta_i}{\vv} + h(\vv)$ with $m_0(\vv):=h(\vv)$. The strong convexity of $h$ implies that each $m_r$ is also 1-strongly convex (with respect to $\|\cdot\|_q$), so we have
\begin{align*}
    m_{r-1}(\vv) - m_{r-1}(\vv_{r-1}) \geq \frac{1}{2} \|\vv - \vv_{r-1}\|_q^2,\quad \forall \vv \in \RB^d.
\end{align*}
Taking $\vv=\vv_r$ yields
\begin{align*}
    0 &\leq m_{r-1}(\vv_r) - m_{r-1}(\vv_{r-1}) - \frac{1}{2}\|\vv_r - \vv_{r-1}\|_q^2 \\
    &= m_{r}(\vv_r) - a_{r-1}\inner{\zeta_{r-1}}{\vv_r} - m_{r-1}(\vv_{r-1}) - \frac{1}{2}\|\vv_r - \vv_{r-1}\|_q^2.
\end{align*}
Summing up the above inequality yields
\begin{align*}
    \sum_{i=0}^{r-1} a_i \inner{\zeta_i}{\vv_{i+1}} \leq m_r(\vv_r) - \frac{1}{2} \sum_{i=0}^{r-1} \norm{\vv_{i+1} - \vv_i}_q^2.
\end{align*}
On the other hand, we have
\begin{align*}
    \sum_{i=0}^{r-1} a_i \inner{\zeta_i}{-\vw^*} &\leq \max_\vw \left\{ \sum_{i=0}^{r-1} a_i \inner{\zeta_i}{-\vw} - h(\vw) \right\} + h(\vw^*) \\
    &= - \min_{\vw} \left\{ \sum_{i=0}^{r-1} a_i \inner{\zeta_i}{\vw} + h(\vw) \right\} + h(\vw^*) = -m_r(\vv_r) + h(\vw^*).
\end{align*}
Combining the above two inequalities concludes the proof.
\end{proof}

\begin{proof}[Proof of Theorem~\ref{thm:conv_rt_defedda}]
We denote $\rmZ_{r,k} = [\vz_{r,k}^1, \cdots, \vz_{r,k}^M]$ and $\rmW_{r,k} = [\vw_{r,k}^1, \cdots, \vw_{r,k}^M]$, and we denote the gossip matrix as $\rmU \in 
\RB^{M\times M}$.
Then the update rule of Algorithm~\ref{alg:dscaffoldda} is equivalent to:
\begin{enumerate}
    \item In the $r$-th round, for $k = 0, \cdots K-1$,
    \begin{align*}
        \rmW_{r,k} &= \left[ \nabla^{-1}h(\vz_{r,k}^1), \cdots, \nabla^{-1}h(\vz_{r,k}^M) \right], \\
        \rmZ_{r,k+1} &= \rmZ_{r,k} - \eta \rmG_{r,k}^{\xi} \text{ with } \rmG_{r,k}^{\xi} := \left[ \nabla F(\vw_{r,k}^1,\xi_{r,k}^1), \cdots, \nabla F(\vw_{r,k}^M, \xi_{r,k}^M) \right].
    \end{align*}
    \item From $r$ to $r+1$,
    \[\rmZ_{r+1,0} =  \rmZ_{r,K}\rmU. \]
\end{enumerate}
Then we can derive the relationship between $\rmZ_{r+1,0}$ and $\rmZ_{r,0}$,
\begin{align}
    \rmZ_{r+1,0} = \rmZ_{r,0}\rmU - \eta\ssum{k}{0}{K-1}\rmG_{r,k}^{\xi}\rmU. \label{eq:decentral_update_1}
\end{align}
Recall the double stochastic property of $\rmU$. Let $\Bar{\rmZ}_{r,k} = \rmZ_{r,k}\rmJ_M$ and let $\Tilde{\rmZ}_{r,k} = \rmZ_{r,k} - \Bar{\rmZ}_{r,k}$, then we have
\begin{align}
    \Bar{\rmZ}_{r+1, 0} &= \Bar{\rmZ}_{r,0} - \eta \left( \ssum{k}{0}{K-1} \rmG_{r,k}^{\xi} \right)\rmJ_M \quad\text{and}\quad \Bar{\rmZ}_{r,k}\rmJ_M = \Bar{\rmZ}_{r,k} = \Bar{\rmZ}_{r,k}\rmU. \label{eq:decentral_update_2}
\end{align}

We additionally define $\Bar{\rmW}_{r} = \nabla^{-1}h(\Bar{\rmZ}_{r})$ with $\nabla^{-1}h$ acting on every column of $\Bar{\rmZ}_r$.

We can do the following algebraic computation,
\begin{align*}
    &\ssum{m}{1}{M}(f_m(\Bar{\vw}_{r,0}) - f_m(\vw^*)) = \ssum{m}{1}{M}\left[ 
    f_m(\Bar{\vw}_{r,0}) - \frac{1}{K}\ssum{k}{0}{K-1} f_m(\vw_{r-1,k}^m) + \frac{1}{K}\ssum{k}{0}{K-1} f_m(\vw_{r-1,k}^m) - f_m(\vw^*)
    \right]\\
    &\le  \frac{1}{K}\ssum{m}{1}{M}\ssum{k}{0}{K-1}\left\{
    \inner{\nabla f_m(\vw_{r-1,k}^m)}{\Bar{\vw}_{r,0} - \vw_{r-1,k}^m}
    + \frac{L}{2}\norm{\Bar{\vw}_{r,0} - \vw_{r-1,k}^m}_1^2
    \right\}\\
    &+ \frac{1}{K}\ssum{m}{1}{M}\ssum{k}{0}{K-1}(f_m({\vw}_{r-1,k}^m) - f_m(\vw^*))\\
    &\le \frac{1}{K}\inner{\ssum{m}{1}{M}\ssum{k}{0}{K-1}\nabla f_m(\vw_{r-1,k}^m)}{\Bar{\vw}_{r,0} - \vw^*} + \frac{2L}{K}\ssum{m}{1}{M}\ssum{k}{0}{K-1}\norm{\Bar{\vw}_{r,0} - \vw_{r-1,k}^m}_1^2.
\end{align*}
Define $\Bar{\vg}_{r} = \frac{1}{MK}\ssum{m}{1}{M}\ssum{k}{0}{K-1}\nabla f_m(\vw_{r,k}^m)$ and $\Bar{\vg}_r^{\xi} = \frac{1}{MK}\ssum{m}{1}{M}\ssum{k}{0}{K-1}\nabla F(\vw_{r,k}^m, \xi_{r,k}^m)$. For simplicity, we also let $\Tilde{\eta} = K\eta$.
Since $\Bar{\vz}_{r,0} = \Bar{\vz}_{r-1,0} - \Tilde{\eta}\vg_{r-1}^\xi$, we have $\Bar{\vw}_{r,0} = \arg\min\left\{\inner{\ssum{i}{0}{r-1}\Tilde{\eta}\Bar{\vg}_i^\xi}{\vw} + h(\vw)\right\}$. Thus by Lemma~\ref{lem:innr_to_breg-norm}, we have
\[
\ssum{r}{0}{R-1}\Tilde{\eta}\inner{\Bar{\vg}_{r}^\xi}{\Bar{\vw}_{r+1,0} - \vw^*} \le h(\vw^*) - \frac{1}{2}\ssum{r}{0}{R-1}\norm{\Bar{\vw}_{r,0} - \Bar{\vw}_{r+1,0}}_q^2.
\]
Combine the above results yields
\begin{align}
    &\frac{1}{MR}\ssum{r}{1}{R}\ssum{m}{1}{M}(f_m(\Bar{\vw}_{r,0}) - f_m(\vw^*)) \le \frac{1}{R\Tilde{\eta}}\left\{ 
    h(\vw^*) - \ssum{r}{1}{R}\frac{1}{2}\norm{\Bar{\vw}_{r-1,0} - \Bar{\vw}_{r,0}}_q^2\right\}\\
    &+ \frac{2L}{RMK}\ssum{r}{1}{R}\ssum{m}{1}{M}\ssum{k}{0}{K-1}\norm{\Bar{\vw}_{r,0} - {\vw}_{r-1,k}^m}_1^2 + \frac{1}{R}\ssum{r}{1}{R}\inner{\Bar{\vg}_{r-1} - \Bar{\vg}_{r-1}^\xi}{\Bar{\vw}_{r,0} - \vw^*}. \label{eq:dfedda_1}
\end{align}
Note by Fenchel-Young's inequality and the definition of conditional expectation,
\begin{align*}
    &\inner{\Bar{\vg}_{r-1} - \Bar{\vg}_{r-1}^\xi}{\Bar{\vw}_{r,0} - \Bar{\vw}_{r-1,0}}
    \le \Tilde{\eta}\norm{\Bar{\vg}_{r-1}^\xi - \Bar{\vg}_{r-1}}_p^2 + \frac{1}{4\Tilde{\eta}}\norm{\Bar{\vw}_{r,0} - \Bar{\vw}_{r-1,0}}_q^2, \\
    &\E\left[ \left. \inner{\Bar{\vg}_{r-1} - \Bar{\vg}_{r-1}^\xi}{\Bar{\vw}_{r-1,0}-\vw^*} \right| \gF_{r-1} \right] = 0.
\end{align*}
Therefore, taking expectation of \eqref{eq:dfedda_1} yields
\begin{equation}\label{eq:dfedda_expand}
\begin{aligned}
    &\frac{1}{MR}\ssum{r}{1}{R}\ssum{m}{1}{M}\EB [f_m(\Bar{\vw}_{r,0}) - f_m(\vw^*)] 
    \le \frac{h(\vw^*)}{R\Tilde{\eta}} - \frac{1}{2R\Tilde{\eta}}\ssum{r}{1}{R}\EB\norm{\Bar{\vw}_{r-1,0} - \Bar{\vw}_{r,0}}_q^2\\
    &{+} \frac{2L}{RMK}\ssum{r}{1}{R}\ssum{m}{1}{M}\ssum{k}{0}{K-1} \EB \norm{\Bar{\vw}_{r,0} {-} {\vw}_{r-1,k}^m}_1^2 {+} \frac{1}{R}\ssum{r}{1}{R} \left( \Tilde{\eta}\EB \norm{\Bar{\vg}_{r-1}^\xi {-} \Bar{\vg}_{r-1}}_p^2 {+} \frac{1}{4\Tilde{\eta}}\E\|\Bar{\vw}_{r,0} {-} \Bar{\vw}_{r-1,0}\|_q^2\right)\\
    &= \frac{h(\vw^*)}{R\Tilde{\eta}} - \frac{1}{4R\Tilde{\eta}}\ssum{r}{1}{R}\EB\norm{\Bar{\vw}_{r-1,0} - \Bar{\vw}_{r,0}}_q^2\\
    &+ \frac{2L}{RMK}\ssum{r}{1}{R}\ssum{m}{1}{M}\ssum{k}{0}{K-1} \EB \norm{\Bar{\vw}_{r,0} - {\vw}_{r-1,k}^m}_1^2 + \frac{\Tilde{\eta}}{R}\ssum{r}{0}{R-1}\EB \norm{\Bar{\vg}_{r}^\xi - \Bar{\vg}_{r}}_p^2\\
    &\le \frac{h(\vw^*)}{R\Tilde{\eta}} - \frac{1}{4R\Tilde{\eta}}\ssum{r}{1}{R}\EB\norm{\Bar{\vw}_{r-1,0} - \Bar{\vw}_{r,0}}_q^2 + \frac{\Tilde{\eta}}{R}\ssum{r}{0}{R-1}\EB\norm{\Bar{\vg}_{r}^\xi - \Bar{\vg}_{r}}_p^2\\
    & + \frac{2L}{RMK}\ssum{r}{1}{R}\ssum{m}{1}{M}\ssum{k}{0}{K-1}\left( 2\EB \norm{\Bar{\vw}_{r-1,0} - {\vw}_{r-1,k}^m}_1^2 + 2\EB \norm{\Bar{\vw}_{r,0} - \Bar{\vw}_{r-1,0}}_1^2 \right)\\
    &\le \frac{h(\vw^*)}{R\Tilde{\eta}} - \frac{1}{4R\Tilde{\eta}}\ssum{r}{1}{R}\EB\norm{\Bar{\vw}_{r-1,0} - \Bar{\vw}_{r,0}}_q^2 + \frac{\Tilde{\eta}}{R}\ssum{r}{0}{R-1}\EB \norm{\Bar{\vg}_{r}^\xi - \Bar{\vg}_{r}}_p^2\\
    & + \frac{4L}{RMK}\ssum{r}{1}{R}\ssum{m}{1}{M}\ssum{k}{0}{K-1}\EB \norm{\Bar{\vw}_{r-1,0} - {\vw}_{r-1,k}^m}_1^2 + \frac{4d^{2/p}L}{R}\ssum{r}{1}{R} \EB \norm{\Bar{\vw}_{r,0} - \Bar{\vw}_{r-1,0}}_q^2\\
    &\le \frac{h(\vw^*)}{R\Tilde{\eta}} - \frac{1}{8R\Tilde{\eta}}\ssum{r}{1}{R}\EB\norm{\Bar{\vw}_{r-1,0} - \Bar{\vw}_{r,0}}_q^2 + \frac{\Tilde{\eta}}{R}\ssum{r}{0}{R-1} \EB \norm{\Bar{\vg}_{r}^\xi - \Bar{\vg}_{r}}_p^2\\
    & + \frac{4L}{RMK}\ssum{r}{1}{R}\ssum{m}{1}{M}\ssum{k}{0}{K-1} \EB \norm{\Bar{\vw}_{r-1,0} - {\vw}_{r-1,k}^m}_1^2, 
\end{aligned}
\end{equation}
where the last inequality holds when $d^{2/p}L\Tilde{\eta} \le 1/32$. The gradient noise term $\EB\norm{\Bar{\vg}_r^\xi - \Bar{\vg}_r}_p^2$ can be bounded as the following,
\begin{align}
    \EB\norm{\Bar{\vg}_r^\xi - \Bar{\vg}_r}_p^2 &= \EB \norm{\frac{1}{MK}\ssum{m}{1}{M}\ssum{k}{0}{K-1} (\nabla F(\vw_{r,k}^m, \xi_{r,k}^m) - \nabla f_m(\vw_{r,k}^m)) }_p^2 = \frac{1}{M^2K^2} \EB \norm{\ssum{m}{1}{M}\ssum{k}{0}{K-1} \zeta_{r,k}^m }_p^2 \\
    &\leq \frac{1}{MK^2} \ssum{m}{1}{M} \E\norm{\ssum{k}{0}{K-1} \zeta_{r,k}^m}_p^2 \leq \frac{C^2 d^{2/p} p^3 K}{MK^2} \ssum{m}{1}{M} \sup\limits_{i\in [n], j\in [d]} \left\{\EB \left| [\zeta_{r,i}^m]_j \right|^p \right\}^{2/p} \\
    &\le \frac{C^2d^{2/p}p^4 \sigma^2}{K}. \label{eq:dfedda_grad_noise}
\end{align}

Next, we focus on the last term of \eqref{eq:dfedda_expand}, i.e., the deviation between $\Bar{\vw}_{r-1,0}$ and $\vw_{r-1,k}^m$.
By the Cauchy-Schwartz inequality,
\begin{align}
    \EB \norm{\Bar{\vw}_{r,0} - \vw_{r,k}^m}_1^2 \le d^{2/p}\EB \norm{\vz_{r,k}^m - \Bar{\vz}_{r,0}}_p^2 \le 2d^{2/p}\EB \norm{\vz_{r,k}^m - \vz_{r,0}^m}_p^2 +
2d^{2/p}\EB \norm{\vz_{r,0}^m - \Bar{\vz}_{r,0}}_p^2. \label{eq:dfedda_last_term}
\end{align}

Using Lemma~\ref{lem:local_dev_dfedda_1}, we have
\begin{align}
\gE_r = \gE_{r,K} &\le 4ed^{4/p}L^2 K^3 \eta^2 \gH_r + 4eC^2d^{2/p}p^4 \sigma^2 MK^2\eta^2\\
&+ 16ed^{2/p}LMK^3\eta^2 \EB \{f(\Bar{\vw}_{r,0}) - f(\vw^*)\} + 8eK^3\eta^2 \ssum{m}{1}{M}\norm{\nabla f_m(\vw^*)}_p^2,\label{eq:bd_of_ge_r_to_gh}
\end{align}
as long as $2d^{2/p}LK\eta < 1$.

Now, combining \eqref{eq:bd_of_ge_r_to_gh} with Lemma~\ref{lem:local_dev_dfedda_2} yields
\begin{align*}
    &\gH_r \le \frac{1}{2}\gH_{r-\tau} + 80d^{2/p}\tau LM\Tilde{\eta}^2 (1 + 8ed^{4/p}L^2 \Tilde{\eta}^2)\ssum{i}{r-\tau}{r-1} \EB \left\{ 
    f(\Bar{\vw}_{i,0}) - f(\vw^*)
    \right\}\\
    &+ 40d^{4/p} \tau L^2\Tilde{\eta}^2 (1+ 4ed^{4/p}L^2 \Tilde{\eta}^2)\ssum{i}{r-\tau}{r-1}\gH_i
    + \frac{40C^4p^4 d^{2/p}\tau^2 \sigma^2 M\Tilde{\eta}^2}{K}(1 + 4ed^{4/p}L^2 \Tilde{\eta}^2)\\
    &+ 40\tau^2 \Tilde{\eta}^2 (1+8ed^{4/p}L^2\Tilde{\eta}^2) \ssum{m}{1}{M} \norm{\nabla f_m(\vw^*)}_p^2\\
    &\overset{(a)}{\le} \frac{1}{2}\gH_{r-\tau} + 80d^{4/p} \tau L^2\Tilde{\eta}^2 \ssum{i}{r-\tau}{r-1}\gH_i
    + 160d^{2/p}\tau ML\Tilde{\eta}^2 \ssum{i}{r-\tau}{r-1} \EB \{ f(\Bar{\vw}_{i,0}) - f(\vw^*) \}\\
    &+ \frac{80C^4p^4d^{2/p}\tau^2 \sigma^2 M\Tilde{\eta}^2}{K} + 80\tau^2 \Tilde{\eta}^2 \ssum{m}{1}{M}\norm{\nabla f_m(\vw^*)}_p^2,
\end{align*}
where $(a)$ holds as long as $8ed^{4/p}L^2 K^2 \eta^2 \le 1$. By summing both sides of the above inequality with respect to $r$, we can obtain,

\begin{align*}
    &\ssum{i}{1}{\tau} \gH_{(s+1)\tau + i} \le \frac{1}{2} \ssum{i}{1}{\tau} \gH_{s\tau + i} + 80d^{4/p}\tau L^2\Tilde{\eta}^2 \ssum{i}{1}{\tau}\ssum{j}{s\tau + i}{(s+1)\tau + i -1}\gH_j  + 80\tau^3 \Tilde{\eta}^2 \ssum{m}{1}{M}\norm{\nabla f_m(\vw^*)}_p^2\\
    &+ 160 d^{2/p}\tau ML\Tilde{\eta}^2 \ssum{i}{1}{\tau}\ssum{j}{s\tau + i}{(s+1)\tau + i -1}\EB \{
    f(\Bar{\vw}_{i,0}) - f(\vw^*)
    \} + \frac{80C^4p^4d^{2/p}\tau^3 \sigma^2 M\Tilde{\eta}^2}{K}\\
    &\le \frac{1}{2}\ssum{i}{1}{\tau} \gH_{s\tau + i} + 80d^{4/p}\tau^2 L^2 \Tilde{\eta}^2 \left\{\ssum{i}{1}{\tau -1}\gH_{(s+1)\tau + i} + \ssum{i}{1}{\tau}\gH_{s\tau + i}\right\} + \frac{80C^4p^4d^{2/p}\tau^3 \sigma^2 M\Tilde{\eta}^2}{K}\\
    &+ 160d^{2/p}\tau^2 ML\Tilde{\eta}^2 \left\{\ssum{i}{1}{\tau - 1}\EB \{f(\Bar{\vw}_{(s+1)\tau + i,0}) - f(\vw^*)\} + \ssum{i}{1}{\tau} \EB \{ f(\Bar{\vw}_{s\tau + i,0}) - f(\vw^*)\}\right\}\\
    &+ 80\tau^3 \Tilde{\eta}^2 \ssum{m}{1}{M}\norm{\nabla f_m(\vw^*)}_p^2.
\end{align*}
Since $9d^{2/p}\tau L\Tilde{\eta} \le \frac{1}{4}$, we have $\frac{1}{2} + 80d^{4/p}\tau^2 L^2 \Tilde{\eta}^2 \le \frac{1}{2} + \frac{1}{16} = \frac{9}{16}$ and $ 1 - 80d^{4/p}\tau^2 L^2 \Tilde{\eta}^2 \ge \frac{15}{16}$. Thus,
\begin{align*}
    &\frac{15}{16}\ssum{i}{1}{\tau}\gH_{(s+1)\tau + i} \le \frac{9}{16}\ssum{i}{1}{\tau}\gH_{s\tau + i} + \frac{80C^4p^4d^{2/p}\tau^3 \sigma^2 M\Tilde{\eta}^2}{K} + 80\tau^3 \Tilde{\eta}^2 \ssum{m}{1}{M}\norm{\nabla f_m(\vw^*)}_p^2\\
    &+ 160d^{2/p}\tau^2 ML\Tilde{\eta}^2 \left\{\ssum{i}{1}{\tau - 1}\EB \{f(\Bar{\vw}_{(s+1)\tau + i,0}) - f(\vw^*)\} + \ssum{i}{1}{\tau} \EB \{ f(\Bar{\vw}_{s\tau + i,0}) - f(\vw^*)\}\right\}.
\end{align*}

Denote $\ssum{i}{1}{\tau} \gH_{s\tau + i}$ as $\gH_s^{1:\tau}$, $\ssum{i}{1}{\tau}\EB \{f(\Bar{\vw}_{s\tau +i,0}) - f(\vw^*)\}$ as $\rmF_s$ and $\frac{1}{M}\ssum{m}{1}{M} \norm{\nabla f_m(\vw^*)}_p^2$ as $\gE_M^*$. Rearranging the above inequality yields
\begin{align*}
    \gH_{s+1}^{1:\tau} &\le \frac{3}{5}\gH_s^{1:\tau} + 320d^{2/p}\tau^2 ML\Tilde{\eta}^2 (\rmF_{s+1} + \rmF_s) + \frac{160C^4p^4d^{2/p}\tau^3 \sigma^2 M\Tilde{\eta}^2}{K} + 160\tau^3 M \Tilde{\eta}^2 \gE_M^*.
\end{align*}
By induction we obtain
\begin{equation}\label{eq:H_s_1:tau}
\begin{aligned}
    \gH_s^{1:\tau} &\leq \left(\frac{3}{5}\right)^{s} \gH_0^{1:\tau} + 320d^{2/p}\tau^2 ML\Tilde{\eta}^2 \cdot \frac{8}{5} \ssum{i}{0}{s} \left(\frac{3}{5}\right)^{s-1-i} \rmF_i \\
    &\quad + \left(\frac{160C^4p^4d^{2/p}\tau^3 \sigma^2 M\Tilde{\eta}^2}{K} + 160\tau^3 M \Tilde{\eta}^2 \gE_M^*\right)\ssum{i}{0}{s-1}\left(\frac{3}{5}\right)^{i} \\
    &\leq \left(\frac{3}{5}\right)^{s} \gH_0^{1:\tau} + 900d^{2/p}\tau^2 ML\Tilde{\eta}^2 \ssum{i}{0}{s} \left(\frac{3}{5}\right)^{s-i} \rmF_i + \left(\frac{400C^4p^4d^{2/p}\tau^3 \sigma^2 M\Tilde{\eta}^2}{K} + 400\tau^3 M \Tilde{\eta}^2 \gE_M^*\right). 
\end{aligned}
\end{equation}

Now, combining \eqref{eq:dfedda_grad_noise}, \eqref{eq:dfedda_last_term}, \eqref{eq:bd_of_ge_r_to_gh} with \eqref{eq:dfedda_expand}, if $R=b\tau$ then we have
\begin{align*}
    \frac{1}{R}\ssum{s}{0}{b-1} \rmF_s &= \frac{1}{MR}\ssum{r}{1}{R}\ssum{m}{1}{M}\EB [f_m(\Bar{\vw}_{r,0}) - f_m(\vw^*)] \\
    &\overset{\eqref{eq:dfedda_expand}}{\leq} \frac{h(\vw^*)}{R\Tilde{\eta}} + \frac{\Tilde{\eta}}{R}\ssum{r}{0}{R-1} \EB \norm{\Bar{\vg}_{r}^\xi - \Bar{\vg}_{r}}_p^2 + \frac{4L}{RMK}\ssum{r}{1}{R}\ssum{m}{1}{M}\ssum{k}{0}{K-1} \EB \norm{\Bar{\vw}_{r-1,0} - {\vw}_{r-1,k}^m}_1^2 \\&\overset{\eqref{eq:dfedda_grad_noise},\eqref{eq:dfedda_last_term}}{\leq} \frac{h(\vw^*)}{R\Tilde{\eta}} + \frac{C^2d^{2/p}p^4 \sigma^2 \Tilde{\eta}}{K} + \frac{8d^{2/p}L}{RMK}\ssum{r}{0}{R-1}\ssum{m}{1}{M}\ssum{k}{0}{K-1} \left( \EB \norm{\vz_{r,k}^m - \vz_{r,0}^m}_p^2 + \EB \norm{\vz_{r,0}^m - \Bar{\vz}_{r,0}}_p^2 \right) \\
    &= \frac{h(\vw^*)}{R\Tilde{\eta}} + \frac{C^2d^{2/p}p^4 \sigma^2 \Tilde{\eta}}{K} + \frac{8d^{2/p}L}{RMK}\ssum{r}{0}{R-1}(\gE_r + K\gH_r) \\
    &\overset{\eqref{eq:bd_of_ge_r_to_gh}}{\leq} \frac{h(\vw^*)}{R\Tilde{\eta}} + \frac{C^2d^{2/p}p^4 \sigma^2 \Tilde{\eta}}{K} + \frac{8d^{2/p}L}{RM}(1+4ed^{4/p}L^2 \Tilde{\eta}^2)\ssum{r}{0}{R-1}\gH_r \\
    &\quad + \frac{128ed^{4/p}L^2\Tilde{\eta}^2}{R} \ssum{r}{0}{R-1} \EB\{f(\Bar{\vw}_{r,0}) - f(\vw^*)\} + \frac{32eC^2d^{4/p}p^4\sigma^2L\Tilde{\eta}^2}{K} + 64ed^{2/p} L\Tilde{\eta}^2 \gE_M^*.
\end{align*}

Through further integration, we arrive at
\begin{equation}\label{eq:dfedda_Fs}
\begin{aligned}
    &\frac{1}{R}\ssum{s}{0}{b-1} \rmF_s 
    \le \frac{h(\vw^*)}{R\Tilde{\eta}} + \frac{C^2d^{2/p}p^4 \sigma^2 \Tilde{\eta}}{K}(1 + 32ed^{2/p}L\Tilde{\eta}) + 64ed^{2/p} L\Tilde{\eta}^2 \gE_M^* \\
    &\quad + \frac{8d^{2/p}L}{RM}(1+4ed^{4/p}L^2 \Tilde{\eta}^2)\ssum{r}{0}{R-1}\gH_r + \frac{128ed^{4/p}L^2\Tilde{\eta}^2}{R} \left( \{f(\vw_0) - f(\vw^*)\} + \ssum{s}{0}{b-1} \rmF_s \right) \\
    &\leq \frac{h(\vw^*)}{R\Tilde{\eta}} + \frac{2C^2d^{2/p}p^4 \sigma^2 \Tilde{\eta}}{K} + 64ed^{2/p} L\Tilde{\eta}^2 \gE_M^* + \frac{16d^{2/p}L}{RM}\ssum{r}{0}{R-1}\gH_r \\
    &\quad + \frac{1}{2R} \left( \{f(\vw_0) - f(\vw^*)\} + \ssum{s}{0}{b-1} \rmF_s \right). 
\end{aligned}
\end{equation}
Here the last inequality holds as long as $32ed^{2/p}L\Tilde{\eta} \leq 1$. In addition, \eqref{eq:H_s_1:tau} implies
\begin{align*}
    \ssum{r}{0}{R-1} \gH_r &\leq \gH_0 + \ssum{s}{0}{b-1} \gH_s^{1:\tau} \\
    &\leq \gH_0 + \ssum{s}{0}{b-1} \left(\frac{3}{5}\right)^{s} \gH_0^{1:\tau} + 900d^{2/p}\tau^2 ML\Tilde{\eta}^2 \ssum{s}{0}{b-1} \ssum{i}{0}{s} \left(\frac{3}{5}\right)^{s-i} \rmF_i \\
    &\quad +  \ssum{s}{0}{b-1} \left(\frac{400C^4p^4d^{2/p}\tau^3 \sigma^2 M\Tilde{\eta}^2}{K} + 400\tau^3 M \Tilde{\eta}^2 \gE_M^*\right) \\
    &\leq 4\gH_0^{1:\tau} + 2250 d^{2/p}\tau^2 ML\Tilde{\eta}^2 \ssum{s}{0}{b-1} \rmF_s + \left(\frac{400C^4p^4d^{2/p}\tau^2 \sigma^2 MR\Tilde{\eta}^2}{K} + 400\tau^2 MR \Tilde{\eta}^2 \gE_M^*\right).
\end{align*}
When $3200C^2\tau d^{2/p} L\Tilde{\eta} \leq 1$, plugging the above inequality into \eqref{eq:dfedda_Fs} yields
\begin{align*}
    \frac{1}{R}\ssum{s}{0}{b-1} \rmF_s &\leq \frac{h(\vw^*)}{R\Tilde{\eta}} + \frac{2C^2d^{2/p}p^4 \sigma^2 \Tilde{\eta}}{K} + 64ed^{2/p} L\Tilde{\eta}^2 \gE_M^* + \frac{1}{2R} \left( \{f(\vw_0) - f(\vw^*)\} + \ssum{s}{0}{b-1} \rmF_s \right) \\
    &\quad + \frac{64d^{2/p}L\gH_0^{1:\tau}}{RM} + \frac{36000 d^{4/p}\tau^2 L^2\Tilde{\eta}^2}{R} \ssum{s}{0}{b-1} \rmF_s + \frac{6400C^4p^4d^{4/p}\tau^2 \sigma^2 L\Tilde{\eta}^2}{K} + 6400 d^{2/p} \tau^2 L \Tilde{\eta}^2 \gE_M^* \\
    &\leq \frac{h(\vw^*)}{R\Tilde{\eta}} + \frac{\{f(\vw_0) - f(\vw^*)\}}{2R} + \frac{64d^{2/p}L\gH_0^{1:\tau}}{RM} + \frac{3}{4R}\ssum{s}{0}{b-1} \rmF_s \\
    &\quad + \frac{4C^2d^{2/p}\tau p^4 \sigma^2 \Tilde{\eta}}{K} + 6600d^{2/p}\tau^2 L\Tilde{\eta}^2 \gE_M^*.
\end{align*}
Further simplification yields
\begin{equation}
\begin{aligned}
    &\frac{1}{R}\ssum{r}{1}{R} \EB \{f(\Bar{\vw}_{r,0}) - f(\vw^*)\} = \frac{1}{R}\ssum{s}{0}{b-1} \rmF_s \\
    &\leq \frac{4h(\vw^*)}{R\Tilde{\eta}} + \frac{2\{f(\vw_0) - f(\vw^*)\}}{R} + \frac{256d^{2/p}L\gH_0^{1:\tau}}{RM} + \frac{16 C^2d^{2/p}\tau p^4 \sigma^2 \Tilde{\eta}}{K} + 26400d^{2/p}\tau^2 L\Tilde{\eta}^2 \gE_M^* \\
    &\lesssim \frac{1}{R\Tilde{\eta}} + \frac{d^{2/p}\tau p^4 \sigma^2 \Tilde{\eta}}{K} + d^{2/p}\tau^2 L \gE_M^* \Tilde{\eta}^2. \label{eq:dfedda_f-f1}
\end{aligned}
\end{equation}

Note, however, that $\Bar{\vw}_{r,0}$ is inaccessible since computing it needs aggregation of $\vz_{r,0}^m$ on each client. Therefore, we prefer to obtain a convergence rate with respect to accessible parameters $\vw_{r,0}^m$. We have
\begin{align*}
    f(\vw_{r,0}^m) - f(\Bar{\vw}_{r,0}) &\leq \inner{\nabla f(\Bar{\vw}_{r,0})}{\vw_{r,0}^m - \Bar{\vw}_{r,0}} + \frac{L}{2} \norm{\vw_{r,0}^m - \Bar{\vw}_{r,0}}_{1}^2 \\
    &= \inner{\nabla f(\Bar{\vw}_{r,0}) - \nabla f(\vw^*)}{\vw_{r,0}^m - \Bar{\vw}_{r,0}} + \frac{d^{2/p}L}{2} \norm{\vw_{r,0}^m - \Bar{\vw}_{r,0}}_{q}^2 \\
    &\leq \frac{1}{2d^{2/p}L} \norm{\nabla f(\Bar{\vw}_{r,0}) - \nabla f(\vw^*)}_p^2 + \frac{d^{2/p}L}{2} \norm{\vw_{r,0}^m - \Bar{\vw}_{r,0}}_q^2 + \frac{d^{2/p}L}{2} \norm{\vw_{r,0}^m - \Bar{\vw}_{r,0}}_q^2 \\
    &\leq f(\Bar{\vw}_{r,0}) - f(\vw^*) + d^{2/p} L \norm{\vw_{r,0}^m - \Bar{\vw}_{r,0}}_q^2.
\end{align*}
Taking expectation and summing over $m$ and $r$, we get
\begin{equation}\label{eq:dfedda_f-f2}
\begin{aligned}
    &\frac{1}{MR} \ssum{m}{1}{M} \ssum{r}{1}{R} \EB \{ f(\vw_{r,0}^m) - f(\Bar{\vw}_{r,0}) \} \\
    &\leq \frac{1}{MR} \ssum{m}{1}{M} \ssum{r}{1}{R} \left\{ \EB \{f(\Bar{\vw}_{r,0}) - f(\vw^*)\} + d^{2/p} L \EB \norm{\vw_{r,0}^m - \Bar{\vw}_{r,0}}_q^2 \right\} \\
    &\leq \frac{1}{R} \ssum{r}{1}{R} \EB \{f(\Bar{\vw}_{r,0}) - f(\vw^*)\} + \frac{d^{2/p}L}{MR} \ssum{r}{1}{R} \gH_r \lesssim \frac{1}{R} \ssum{r}{1}{R} \EB \{f(\Bar{\vw}_{r,0}) - f(\vw^*)\} \\
    &\quad + \frac{d^{2/p}L}{MR} \left[1 + d^{2/p}\tau^2 ML\Tilde{\eta}^2 \ssum{r}{1}{R} \EB \{f(\Bar{\vw}_{r,0}) - f(\vw^*)\} + \frac{p^4 d^{2/p} \tau^2 \sigma^2 MR \Tilde{\eta}^2}{K} + \tau^2 MR \Tilde{\eta}^2 \gE_M^*\right] \\
    &\lesssim \frac{1}{R}\ssum{r}{1}{R} \EB \{f(\Bar{\vw}_{r,0}) - f(\vw^*)\} + \frac{d^{2/p}L}{MR} + \frac{p^4 d^{4/p} \tau^2 \sigma^2 L \Tilde{\eta}^2}{K} + d^{2/p}\tau^2 L\gE_M^*\Tilde{\eta}^2. 
\end{aligned}
\end{equation}
Define $\hat{\vw}^m:= \frac{1}{R} \ssum{r}{1}{R} \vw_{r,0}^m$. By the convexity of $f$ and combination of \eqref{eq:dfedda_f-f1} and \eqref{eq:dfedda_f-f2}, we finally get
\begin{align*}
    \frac{1}{M} \ssum{m}{1}{M} \EB\{f(\hat{\vw}^m) - f(\vw^*)\} &\leq  \frac{1}{MR} \ssum{m}{1}{M} \ssum{r}{1}{R} \EB \{ f(\vw_{r,0}^m) - f(\Bar{\vw}_{r,0}) + f(\Bar{\vw}_{r,0}) - f(\vw^*)\} \\
    &\lesssim \frac{1}{R\Tilde{\eta}}\left(1 + \frac{d^{2/p}L\Tilde{\eta}}{M}\right) + \frac{d^{2/p}\tau p^4 \sigma^2 \Tilde{\eta}}{K}(1 + \tau d^{2/p} L \Tilde{\eta}) + d^{2/p}\tau^2 L \gE_M^* \Tilde{\eta}^2 \\
    &\lesssim \frac{1}{R\Tilde{\eta}} + \frac{d^{2/p}\tau p^4 \sigma^2 \Tilde{\eta}}{K} + d^{2/p}\tau^2 L \gE_M^* \Tilde{\eta}^2 \\
    &\lesssim \frac{1}{R\Tilde{\eta}} + \frac{(\log d)^4 \tau \sigma^2 \Tilde{\eta}}{K} + \tau^2 L\gE_M^* \Tilde{\eta}^2,
\end{align*}
as long as $\tau d^{2/p} L\Tilde{\eta} \lesssim 1$ and $p=2 \log d$. If we further choose $\Tilde{\eta} = \min\left\{ \frac{1}{\tau L}, \frac{1}{(\tau^2 LR \gE_M^*)^{1/3}}, \frac{K^{1/2}}{(\tau R)^{1/2}(\log d)^2 \sigma} \right\}$, then
\begin{align*}
    \frac{1}{M} \ssum{m}{1}{M} \EB\{f(\hat{\vw}^m) - f(\vw^*)\} &\lesssim \frac{\tau L}{R} + \frac{(\tau^2 L \gE_M^*)^{1/3}}{R^{2/3}} + \frac{\tau^{1/2}(\log d)^2 \sigma}{(RK)^{1/2}},
\end{align*}
which concludes the proof.
\end{proof}

\subsubsection{Proof for One-Step Deviation Lemmas}\label{sec:prf_for_local_dev_dfed}

\begin{proof}[Proof of Lemma~\ref{lem:local_dev_dfedda_1}]
Similar to the former analysis, we can easily get
\begin{align*}
    &\EB \norm{\vz_{r,k}^m - \vz_{r,0}^m}_p^2 = \eta^2 \EB \norm{\ssum{i}{0}{k-1}\nabla F(\vw_{r,i}^m , \xi_{r,i}^m)}_p^2\\
    =&
    \eta^2 \EB \norm{
    \ssum{i}{0}{k-1}\zeta_{r,i}^m + \ssum{i}{0}{k-1}\vartheta_{r,i}^m
    + k(\nabla f_m(\vw_{r,0}^m) - \nabla f_m(\Bar{\vw}_{r,0})) + k \nabla f_m(\Bar{\vw}_{r,0})
    }_p^2\\
    \le&
    4\eta^2 \EB \norm{\ssum{i}{0}{k-1}\zeta_{r,i}^m}_p^2 + 4\eta^2 \EB \norm{\ssum{i}{0}{k-1}\vartheta_{r,i}^m}_p^2 + 4k^2 \eta^2 \EB \norm{\nabla f_m(\Bar{\vw}_{r,0})}_p^2\\
    &+ 4k^2 \eta^2 \EB \norm{\nabla f_m(\vw_{r,0}^m) - \nabla f_m(\Bar{\vw}_{r,0})}_p^2\\
    \le&
     4\eta^2 \EB \norm{\ssum{i}{0}{k-1}\zeta_{r,i}^m}_p^2 + 4d^{4/p}L^2k\eta^2\ssum{i}{0}{k-1}\EB \norm{\vz_{r,i}^m - \vz_{r,0}^m}_p^2 + 4d^{4/p}L^2k^2 \eta^2 \EB \norm{\vz_{r,0}^m - \Bar{\vz}_{r,0}}_p^2\\
    & + 8k^2 \eta^2 \norm{\nabla f_m(\vw^*)}_p^2 + 8k^2 \eta^2 \EB \norm{\nabla f_m(\Bar{\vw}_{r,0}) - \nabla f_m(\vw^*)}_p^2.
\end{align*}
Owing to Lemma~\ref{lem:bd_of_p_norm_2nd_mmt} and Lemma~\ref{lem:mmt_bd_subG}, we have
\[
\EB \norm{\ssum{i}{0}{k-1} \zeta_{r,i}^m}_p^2 \le C^2 d^{2/p} p^3 k \sup\limits_{i\in [n], j\in [d]} \left\{\EB \left| [\zeta_{r,i}^m]_j \right|^p \right\}^{2/p} \le 
C^2d^{2/p}p^4 k \sigma^2.
\]
Furthermore, using Lemma~\ref{lem:conj_sc}, we can get
\[\EB \norm{\nabla f_m(\Bar{\vw}_{r,0}) - \nabla f_m(\vw^*)}_p^2 \le d^{2/p} \gD_{r}.
\]
Combining the above inequalities yields
\begin{align*}
        &\EB \norm{\vz_{r,k}^m - \vz_{r,0}^m}_p^2 \le 4d^{4/p}L^2k\eta^2\ssum{i}{0}{k-1}\EB \norm{\vz_{r,i}^m - \vz_{r,0}^m}_p^2 + 4d^{4/p}L^2 k^2 \eta^2 \EB \norm{\vz_{r,0}^m - \Bar{\vz}_{r,0}}_p^2\\
        &+ 4C^2 d^{2/p}p^4 \sigma^2 k\eta^2 + 8d^{2/p}k^2 \eta^2 \gD_r^m + 8k^2 \eta^2 \norm{\nabla f_m(\vw^*)}_p^2.
\end{align*}

Given Lemma~\ref{lem:local_dev_dfedda_1}, one can check the following inequality by summing up on the index $m$ from $1$ to $M$,
\begin{align*}
    &\ssum{m}{1}{M}\EB \norm{\vz_{r,k}^m - \vz_{r,0}^m}_p^2 \le 4d^{4/p}L^2k\eta^2 \ssum{i}{0}{k-1} \ssum{m}{1}{M} \EB \norm{\vz_{r,i}^m - \vz_{r,0}^m}_p^2 + 4d^{4/p}L^2k^2\eta^2 \gH_r\\
    &+ 4C^2d^{2/p}p^4 \sigma^2 Mk\eta^2 + 16d^{2/p}LMk^2\eta^2 \EB \{f(\Bar{\vw}_{r,0}) - f(\vw^*)\} + 8k^2\eta^2 \ssum{m}{1}{M}\norm{\nabla f_m(\vw^*)}_p^2.
\end{align*}
Let $\gE_{r,k} = \ssum{i}{0}{k-1}\ssum{m}{1}{M}\EB \norm{\vz_{r,i}^m - \vz_{r,0}^m}_p^2$, then
\begin{align*}
    &\gE_{r,k+1} \le (1 + 4d^{4/p}L^2k\eta^2) \gE_{r,k} + 4d^{4/p}L^2k^2\eta^2 \gH_r
    + 4C^2d^{2/p}p^4 \sigma^2 Mk\eta^2\\
    &+ 16d^{2/p}LMk^2\eta^2 \EB \{f(\Bar{\vw}_{r,0}) - f(\vw^*)\} + 8k^2\eta^2 \ssum{m}{1}{M}\norm{\nabla f_m(\vw^*)}_p^2.
\end{align*}
And the proof is concluded by making use of Lemma~\ref{lem:rcs_to_etr_fedda}.
\end{proof}

For ease to present the proof of Lemma~\ref{lem:local_dev_dfedda_2}, we use the following notation:
\begin{align*}
    &\rmG_i^\xi := \left[ 
    \ssum{k}{0}{K-1} \nabla F(\vw_{i,k}^1, \xi_{i,k}^1), \cdots, \ssum{k}{0}{K-1} \nabla F(\vw_{i,k}^M, \xi_{i,k}^M)
    \right],\\
    &\rmG_i^\zeta := \left[ \ssum{k}{0}{K-1}(\nabla F(\vw_{i,k}^1, \xi_{i,k}^1) - \nabla f_1(\vw_{i,k}^1)), \cdots,  \ssum{k}{0}{K-1}(\nabla F(\vw_{i,k}^M, \xi_{i,k}^M) - \nabla f_M(\vw_{i,k}^M))\right],\\
    &\rmG_i^\vartheta := \left[ 
    \ssum{k}{0}{K-1}( \nabla f_1(\vw_{i,k}^1) - \nabla f_1(\vw_{i,0}^1) ), \cdots, 
    \ssum{k}{0}{K-1}( \nabla f_M(\vw_{i,k}^M) - \nabla f_M(\vw_{i,0}^M) )
    \right],\\
    &\rmG_i := \left[ 
    K \nabla f_1 (\vw_{i,0}^1), \cdots, K \nabla f_M (\vw_{i,0}^M)
    \right],\\
    &\Bar{\rmG}_i := [K\nabla f_1(\Bar{\vw}_{i,0}), \cdots, K\nabla f_M(\Bar{\vw}_{i,0})],\\
    &\rmG^* := [K\nabla f_1(\vw^*), \cdots, K\nabla f_M(\vw^*)].
\end{align*}
And the following lemma is helpful for the computation of the deviation between different clients.

\begin{proof}[Proof of Lemma~\ref{lem:local_dev_dfedda_2}]
Recall the update rule. According to \eqref{eq:decentral_update_1} and \eqref{eq:decentral_update_2}, we can expand into the following equations,
\begin{align*}
    \Tilde{\rmZ}_{r,0} &= \Tilde{\rmZ}_{r-1,0}\rmU - \eta \rmG_{r-1}^\xi (\rmU - \rmJ_M)\\
    &= \cdots = \Tilde{\rmZ}_{r-\tau,0} \rmU^\tau - \eta \ssum{i}{r-\tau}{r-1} \rmG_i^\xi (\rmU^{r-i} - \rmJ_M).
\end{align*}
Note $\rmG_i^{\xi} = \rmG_i^\zeta + \rmG_i^\vartheta + (\rmG_i - \Bar{\rmG}_i) + (\Bar{\rmG}_i - \rmG^*) + \rmG^*$ and $\Tilde{\rmZ}_{r,0}\rmJ_M = 0$. Take $\tau = \frac{\log 4M}{2\log (1/\sigma_2(\rmU))}$, by the Cauchy-Schwartz inequality,
\begin{equation} \label{eq:T1_T5}
\begin{aligned}
    &\EB \norm{\Tilde{\rmZ}_{r,0}}_{p,2}^2 \le 2\eta^2 \EB \norm{\ssum{i}{r-\tau}{r-1} \left\{
    \rmG_i^\zeta + \rmG_i^\vartheta + (\rmG_i - \Bar{\rmG}_i) + (\Bar{\rmG}_i - \rmG^*) + \rmG^*
    \right\}(\rmU^{r-i} - \rmJ_M)  }_{p,2}^2\\
    &+ 2\EB \norm{\Tilde{\rmZ}_{r-\tau,0}\rmU^\tau}_{p,2}^2\\
    &\overset{(a)}{\le} 2\eta^2 \EB \norm{\ssum{i}{r-\tau}{r-1} \left\{
    \rmG_i^\zeta + \rmG_i^\vartheta + (\rmG_i - \Bar{\rmG}_i) + (\Bar{\rmG}_i - \rmG^*) + \rmG^*
    \right\}(\rmU^{r-i} - \rmJ_M) }_{p,2}^2\\
    &+ 2M \sigma_2(\rmU)^{2\tau}\EB \norm{\Tilde{\rmZ}_{r-\tau,0}}_{p,2}^2\\
    &\le \frac{1}{2}\EB \norm{\Tilde{\rmZ}_{r-\tau,0}}_{p,2}^2 + 10 \eta^2 \underbrace{\EB \norm{
    \ssum{i}{r-\tau}{r-1} \rmG_i^\zeta (\rmU^{r-i} - \rmJ_M)
    }_{p,2}^2}_{\gT_1} + 10\eta^2 \underbrace{\EB \norm{
    \ssum{i}{r-\tau}{r-1} \rmG_i^\vartheta(\rmU^{r-i} - \rmJ_M)
    }_{p,2}^2}_{\gT_2} \\
    &+ 10 \eta^2 \underbrace{\EB \norm{
    \ssum{i}{r-\tau}{r-1} (\rmG_i - \Bar{\rmG}_i)(\rmU^{r-i} - \rmJ_M)
    }_{p,2}^2}_{\gT_3} + 10\eta^2 \underbrace{\EB \norm{
    \ssum{i}{r-\tau}{r-1} (\Bar{\rmG}_i - \rmG^*)(\rmU^{r-i} - \rmJ_M)
    }_{p,2}^2}_{\gT_4}\\
    &+ 10\eta^2 \underbrace{\norm{\rmG^* \ssum{i}{r-\tau}{r-1}(\rmU^{r-i} - \rmJ_M)}_{p,2}^2}_{\gT_5}.
\end{aligned}
\end{equation}

Here (a) follows from Lemma~\ref{lem:mixing_rt_under_l_p2} and $\Tilde{\rmZ}_{r-\tau,0}\rmJ_M = 0$.
By using the combination of Cauchy-Schwartz inequality and Lemma~\ref{lem:db_stoc_under_l_p2}, we can bound $\gT_i,~ i = 2,3,4,5$ as follows.
\begin{align*}
    \gT_2 &\le \tau \ssum{i}{r-\tau}{r-1} \EB \norm{\rmG_i^\vartheta(\rmU^{r-i} - \rmJ_M)}_{p,2}^2
    \le 2\tau \ssum{i}{r-\tau}{r-1}(\EB \norm{\rmG_i^\vartheta \rmU^{r-i}}_{p,2}^2 + \EB \norm{\rmG_i^\vartheta \rmJ_M}_{p,2}^2)\\
    &\le 4\tau \ssum{i}{r-\tau}{r-1}\EB \norm{\rmG_i^\vartheta}_{p,2}^2 =
    4\tau \ssum{i}{r-\tau}{r-1}\ssum{m}{1}{M}\EB \norm{\ssum{k}{0}{K-1}(\nabla f_m(\vw_{i,k}^m) - \nabla f_m(\vw_{i,0}^m))}_p^2\\
    &\le 4d^{4/p}\tau L^2 K \ssum{i}{r-\tau}{r-1}\ssum{m}{1}{M}\ssum{k}{0}{K-1}\EB \norm{\vz_{i,k}^m - \vz_{i,0}^m}_p^2 = 4d^{4/p}\tau L^2 K \ssum{i}{r-\tau}{r-1}\gE_i.
\end{align*}

\begin{align*}
    \gT_3 &\le 2\tau \ssum{i}{r-\tau}{r-1} (\EB \norm{(\rmG_i - \Bar{\rmG}_i)\rmU^{r-i}}_{p,2}^2 + \EB \norm{(\rmG_i - \Bar{\rmG}_i)\rmJ_M}_{p,2}^2)\\
    &\le 4\tau \ssum{i}{r-\tau}{r-1} \EB \norm{\rmG_i - \Bar{\rmG}_i}_{p,2}^2 \le
    4d^{4/p} \tau L^2 K^2 \ssum{i}{r-\tau}{r-1}\ssum{m}{1}{M}\EB \norm{\vz_{i,0}^m - \Bar{\vz}_{i,0}^m}_p^2
    = 4d^{4/p}\tau L^2 K^2 \ssum{i}{r-\tau}{r-1}\gH_i.
\end{align*}

\begin{align*}
    \gT_4 &\le 4\tau K^2 \ssum{i}{r-\tau}{r-1}\left[
    \ssum{m}{1}{M}\EB \norm{\nabla f_m(\Bar{\vw}_{i,0}) - \nabla f_m(\vw^*)}_p^2
    \right]\\
    &\le 8d^{2/p}\tau L K^2 \ssum{i}{r-\tau}{r-1}\ssum{m}{1}{M} \EB\left\{
    f_m(\Bar{\vw}_{i,0}) - f_m(\vw^*) - \inner{\nabla f_m(\vw^*)}{\Bar{\vw}_{i,0} - \vw^*}
    \right\}\\
    &\le 8d^{2/p}\tau L M K^2 \ssum{i}{r-\tau}{r-1} \EB \left\{ f(\Bar{\vw}_{i,0}) - f(\vw^*) \right\}.
\end{align*}

\begin{align*}
    \gT_5 &\le 4\tau \ssum{i}{r-\tau}{r-1} \norm{\rmG^*}_{p,2}^2
    \le 4\tau^2 \norm{\rmG^*}_{p,2}^2 = 4\tau^2K^2 \ssum{m}{1}{M}\norm{\nabla f_m(\vw^*)}_p^2.
\end{align*}

Finally, we will use BDG-type inequality to control $\gT_1$.
\begin{align*}
    \gT_1 \le 2 \EB \norm{\ssum{t}{r-\tau}{r-1} \rmG_t^\zeta \rmU^{r-t}}_{p,2}^2 + 2\EB \norm{\ssum{i}{r-\tau}{r-1}\rmG_i^\zeta \rmJ_M}_{p,2}^2.
\end{align*}
We only present the analysis of the first term, while the second term can be bounded following the similar routine. For simplicity, we denote $\ssum{k}{0}{K-1}\zeta_{t,k}^i$ as $\zeta_{t}^i$, and denote $\ssum{i}{1}{M} u_{ij}^{(r-t)} [\zeta_t^i]_l$ as $[\Tilde{\zeta}_t^i]_l$.
\begin{align*}
    &\EB \norm{\ssum{t}{r-\tau}{r-1} \rmG_t^\zeta \rmU^{r-t}}_{p,2}^2 = \EB \ssum{j}{1}{M} \norm{\ssum{t}{r-\tau}{r-1} \ssum{i}{1}{M} u_{ij}^{(r-t)} \zeta_t^i}_p^2\\
    &\le \EB \ssum{j}{1}{M} \left( \ssum{l}{1}{d} \left| \ssum{t}{r-\tau}{r-1} \ssum{i}{1}{M} u_{ij}^{(r-t)} [\zeta_t^i]_l \right|^p \right)^{2/p}
    \le \ssum{j}{1}{M}\left( \ssum{l}{1}{d} \EB \left| \ssum{t}{r-\tau}{r-1} \ssum{i}{1}{M} u_{ij}^{(r-t)}[\zeta_t^i]_l\right|^p \right)^{2/p}\\
    &\le \ssum{j}{1}{M} \left\{ \ssum{l}{1}{d} C^p p^{3p/2} \EB \left( \ssum{t}{r-\tau}{r-1} \EB
    \left[([\Tilde{\zeta}_t^j]_l)^2| \gF_{t-1}\right]
    \right)^{p/2} \right\}^{2/p}\\
    &\le \ssum{j}{1}{M} \left\{
    \ssum{l}{1}{d} C^p p^{3p/2} \underbrace{\EB \left( 
    \ssum{t}{r-\tau}{r-1} \EB \left[
    \left(\ssum{i}{1}{M} u_{ij}^{(r-t)}\right) \left( \ssum{i}{1}{M} u_{ij}^{(r-t)} [\zeta_t^i]_l^2 \right) | \gF_{t-1}
    \right]
    \right)^{p/2}}_{\gX_l}
    \right\}^{2/p}.
\end{align*}
By Jensen's inequality,
\begin{align*}
    \gX_l &= \EB \left(
    \ssum{t}{r-\tau}{r-1} \ssum{i}{1}{M} u_{ij}^{(r-t)} \EB \left[[\zeta_t^i]_l^2 | \gF_{t-1}\right]
    \right)^{p/2} \le \tau^{\frac{p}{2} - 1} \ssum{t}{r-\tau}{r-1} \EB \left( 
    \ssum{i}{1}{M} u_{ij}^{(r-t)} \EB \left[[\zeta^i_t]_l^2 | \gF_{t-1}\right]
    \right)^{p/2}\\
    &\le \tau^{\frac{p}{2} - 1} \ssum{t}{r-\tau}{r-1} \ssum{i}{1}{M} u_{ij}^{(r-t)} \EB \left(
    \EB \left[ [\zeta_t^i]_l^2 | \gF_{t-1} \right]
    \right)^{p/2} \le \tau^{p/2} \mathop{\sup\limits}_{t\in[r-\tau, r-1]\atop i\in [M]} \EB \left(
    \EB \left[ [\zeta_t^i]_l^2 | \gF_{t-1} \right]
    \right)^{p/2}
\end{align*}
In addition,
\begin{align*}
    &\EB \left(
    \EB \left[ [\zeta_t^i]_l^2 | \gF_{t-1} \right]
    \right)^{p/2} = \EB \left( \EB \left[\left(\ssum{k}{0}{K-1} [\zeta_{t,k}^i]_l\right)^2| \gF_{t-1} \right] \right)^{p/2}\\
    \overset{(a)}{=}&
    \EB \left(
     \ssum{k}{0}{K-1} \EB [[\zeta_{t,k}^i]_l^2 | \gF_{t-1}]
    \right)^{p/2}\le
    K^{\frac{p}{2} - 1} \ssum{k}{0}{K-1} \EB \left|
    [\zeta_{t,k}^i]_l
    \right|^p \le
    K^{p/2}\frac{p!}{2^{p/2}(p/2)!}C^p\sigma^p.
\end{align*}
Here $(a)$ holds since that $[\zeta_{t,k}^i]_l$ is an $\gF_{t,k}$-martingale difference sequence given $\gF_{t-1}$. So we have
\[
\gX_l \le \frac{p!}{2^{p/2}(p/2)!}(C^2 \sigma^2\tau K )^{p/2}.
\]
Plugging this result into the bounds of $\gT_1$ yields
\begin{align*}
\gT_1 &\le 4 \ssum{j}{1}{M}\left\{ C^p p^{3p/2}d \frac{p!}{2^{p/2}(p/2)!}(C^2\sigma^2 \tau K)^{p/2}\right\}^{2/p} \le 4C^4p^{4}d^{2/p}\tau\sigma^2 MK.
\end{align*}
Finally, plugging the bounds on $\gT_1,\gT_2,\gT_3,\gT_4,\gT_5$ into \eqref{eq:T1_T5} concludes the proof.
\end{proof}

}

\subsection{Proof for Decentralized FedDA with Gradient Tracking}
\begin{proof}[Proof of Theorem \ref{thm:conv_rt_defedda_gt}]
With a little bit abuse of notation, we still use the notation and the definition in the proof of the original decentralized federated dual averaging scheme.

Under this symbolic system, the original algorithm can be re-expressed in the form of matrix calculations as follows.

\begin{enumerate}
    \item Local gradient update
    \[\rmZ_{r,k+1} = \rmZ_{r,k} - \eta_c (\rmG_{r,k}^\xi + \rmC_r)\]
    \item Update the gradient trackers on every local client
    \[\rmC_{r+1} = \rmC_r - \left(\frac{1}{K}\rmG_r^\xi + \rmC_r \right)(\rmI - \rmU) = \rmC_r \rmU - \frac{1}{K}{\rmG}_r^\xi (\rmI - \rmU).\]
    \item Communication according to the gossip matrix
    \[\rmZ_{r+1,0} = \rmZ_{r,K}\rmU\]
\end{enumerate}

Given Lemma~\ref{lem:local_dev_dfedda-gt_1} and Lemma~\ref{lem:local_dev_dfedda-gt_2}, set $2d^{4/p} L^2 \Tilde{\eta}^2 \le 1$. Then one can check
\begin{align*}
\label{eq:scaffolda gt Hr}
    \gH_r \le \frac{1}{2}\gH_{r-\tau} + 64d^{4/p}\tau L^2 \Tilde{\eta}^2 \ssum{i}{r-\tau}{r-1}\gH_i + 64\tau \Tilde{\eta}^2 \ssum{i}{r-\tau}{r-1} \gL_i + \frac{64 C^4 d^{2/p}p^4 \tau^2 \sigma^2 M\Tilde{\eta}^2}{K}.
\end{align*}

Similarly, the following recursive inequality on $\gL_r$ holds under the condition that $2ed^{4/p}L^2 \Tilde{\eta}^2 \leq 1$,
\begin{align}
    \gL_r &\le \frac{1}{4}\gL_{r-\tau} + 8d^{2/p} \tau ML^2 \ssum{i}{r-\tau}{r-1} \EB \norm{\Bar{\vw}_{i+1,0} - \Bar{\vw}_{i,0}}_1^2 + 64d^{4/p} \tau L^2 \ssum{i}{r-\tau}{r-1} \gH_i\\
    &+ 64ed^{4/p} \tau L^2 \Tilde{\eta}^2 \ssum{i}{r-\tau}{r-1} \gL_i + \frac{64C^4p^4 d^{2/p} \tau^2 \sigma^2 M}{K}. \label{eq:scaffolda gt Lr}
\end{align}

{
By combining \eqref{eq:scaffolda gt Hr} and \eqref{eq:scaffolda gt Lr}, and using $64d^{2/p}\tau L\Tilde{\eta}\leq 1$, we can derive
\begin{align*}
    &\gH_r + 2^{10} \tau^2 \Tilde{\eta}^2 \gL_r \le \frac{1}{2}\gH_{r-\tau} + \frac{2^{10}}{4}\tau^2 \Tilde{\eta}^2 \gL_{r-\tau} + 2^{17} d^{4/p}\tau^3 L^2 \Tilde{\eta}^2 \left(\ssum{i}{r-\tau}{r-1}\gH_i\right) + 128\tau \Tilde{\eta}^2 \left( \ssum{i}{r-\tau}{r-1} \gL_i \right)\\
    &+ 2^{17}\frac{C^4d^{2/p}p^4 \tau^4 \sigma^2 M\Tilde{\eta}^2}{K} + 2^{13}d^{2/p}\tau^3 ML^2 \Tilde{\eta}^2 \ssum{i}{r-\tau}{r-1}\EB \norm{\Bar{\vw}_{i+1,0} - \Bar{\vw}_{i,0}}_1^2.
\end{align*}
Summing up from $(s+1)\tau+1$ to $(s+1)\tau+\tau$,
\begin{align*}
    &\ssum{i}{1}{\tau} (\gH_{(s+1)\tau +i} + 2^{10} \tau^2 \Tilde{\eta}^2 \gL_{(s+1)\tau + i}) \le \frac{1}{2}\ssum{i}{1}{\tau} \left( \gH_{s\tau + i} + \frac{2^{10}}{2}\tau^2 \Tilde{\eta}^2 \gL_{s\tau + i} \right) + 2^{17} d^{4/p}\tau^3 L^2 \Tilde{\eta}^2 \ssum{i}{1}{\tau} \ssum{j}{s\tau + i}{(s+1)\tau + i -1}\gH_i \\
    &+ 128 \tau \Tilde{\eta}^2 \ssum{i}{1}{\tau} \ssum{j}{s\tau + i}{(s+1)\tau + i -1}\gL_i + 2^{17} \frac{C^4 d^{2/p} p^4 \tau^5 \sigma^2 M\Tilde{\eta}^2}{K} + 2^{13}d^{2/p}\tau^3 ML^2 \Tilde{\eta}^2 \ssum{i}{1}{\tau}\ssum{j}{s\tau + i}{(s+1)\tau + i -1}\EB \norm{\Bar{\vw}_{i+1,0} - \Bar{\vw}_{i,0}}_1^2\\
    &\le \frac{1}{2}\ssum{i}{1}{\tau} \left(\gH_{s\tau + i} + \frac{2^{10}}{2}\tau^2\Tilde{\eta}^2 \gL_{s\tau + i}\right) + 2^{17} d^{4/p}\tau^4 L^2 \Tilde{\eta}^2 \left\{\ssum{i}{1}{\tau - 1} \gH_{(s+1)\tau + i} + \ssum{i}{1}{\tau} \gH_{s\tau + i}\right\}\\
    &+ 128\tau^2 \Tilde{\eta}^2 \left\{ \ssum{i}{1}{\tau - 1}\gL_{(s+1)\tau + i} + \ssum{i}{1}{\tau} \gL_{s\tau + i} \right\} + \frac{2^{17} C^4 d^{2/p}p^4 \tau^5 \sigma^2 M\Tilde{\eta}^2 }{K}\\
    &+ 2^{13}d^{2/p}\tau^4 ML^2 \Tilde{\eta}^2 \left\{ \ssum{i}{1}{\tau - 1} \EB \norm{\Bar{\vw}_{(s+1)\tau + i + 1,0} - \Bar{\vw}_{(s+1)\tau + i,0}}_1^2 +  \ssum{i}{1}{\tau} \EB \norm{\Bar{\vw}_{s\tau + i + 1} - \Bar{\vw}_{s\tau + i}}_1^2\right\}.
\end{align*}
Let $\ssum{i}{1}{\tau} (\gH_{s\tau + i} + 2^{10}\tau^2 \Tilde{\eta}^2 \gL_{s\tau + i}) =: \gS_s^{1:\tau}$, $\ssum{i}{1}{\tau} \EB \norm{\Bar{\vw}_{s\tau + i + 1} - \Bar{\vw}_{s\tau + i}}_1^2 =: \Delta_s^{1:\tau}$.
Rearranging yields
\begin{align*}
    &(1 - 2^{17}d^{4/p}\tau^4 L^2 \Tilde{\eta}^2)\gS_{s+1}^{1:\tau} \le \left( \frac{1}{2} + 2^{17}d^{4/p}\tau^4 L^2 \Tilde{\eta}^2 \right)\gS_s^{1:\tau} + 2^{13} d^{2/p}\tau^4 ML^2 \Tilde{\eta}^2 \{\Delta_{s+1}^{1:\tau} + \Delta_s^{1:\tau}\}\\
    &+ \frac{2^{17}C^4d^{2/p}p^4 \tau^5 \sigma^2 M \Tilde{\eta}^2}{K}.
\end{align*}
If we let $2^{10}d^{2/p}\tau^2 L \Tilde{\eta} \le 1$, then
\begin{align*}
    \gS_{s+1}^{1:\tau} \le \frac{5}{7} \gS_{s}^{1:\tau} + 2^{14} d^{2/p}\tau^4 ML^2 \Tilde{\eta}^2 \{\Delta_{s+1}^{1:\tau} + \Delta_s^{1:\tau}\} + \frac{2^{18}C^4 d^{2/p} p^4 \tau^5 \sigma^2 M \Tilde{\eta}^2}{K}.
\end{align*}
}
By induction, we obtain
\begin{align*}
    \gS_s^{1:\tau} \leq \left(\frac{5}{7}\right)^s \gS_0^{1:\tau} + 2^{15} d^{2/p} \tau^4 ML^2 \Tilde{\eta}^2 \sum_{i=0}^s \left(\frac{5}{7}\right)^{s-1-i}\Delta_i^{1:\tau} + \frac{2^{20}C^4 d^{2/p} p^4 \tau^5 \sigma^2 M \Tilde{\eta}^2}{K}.
\end{align*}
If $R=b\tau$ then we have
\begin{align*}
    \sum_{s=0}^{b-1} \gS_s^{1:\tau} &\leq \sum_{s=0}^{b-1}\left(\frac{5}{7}\right)^s \gS_0^{1:\tau} + 2^{15} d^{2/p} \tau^4 ML^2 \Tilde{\eta}^2 \sum_{s=0}^{b-1} \sum_{i=0}^s \left(\frac{5}{7}\right)^{s-1-i}\Delta_i^{1:\tau} + \frac{2^{20}C^4 d^{2/p} p^4 \tau^4 \sigma^2 MR \Tilde{\eta}^2}{K} \\
    &\leq 4\gS_0^{1:\tau} + 2^{18} d^{2/p} \tau^4 ML^2 \Tilde{\eta}^2 \sum_{s=0}^{b-1} \Delta_s^{1:\tau} + \frac{2^{20}C^4 d^{2/p} p^4 \tau^4 \sigma^2 MR \Tilde{\eta}^2}{K},
\end{align*}
and consequently, using Lemma \ref{lem:local_dev_dfedda-gt_1},
\begin{align}
    \sum_{r=1}^{R} \gH_r &\leq \sum_{s=0}^{b-1} \gS_s^{1:\tau} \leq 4\gS_0^{1:\tau} + 2^{18} d^{2/p} \tau^4 ML^2 \Tilde{\eta}^2 \sum_{s=0}^{b-1} \Delta_s^{1:\tau} + \frac{2^{20}C^4 d^{2/p} p^4 \tau^4 \sigma^2 MR \Tilde{\eta}^2}{K}, \label{eq:descaffolda Hr}\\
    \sum_{r=1}^{R} \gL_r &\leq \frac{1}{2^{10}\tau^2\Tilde{\eta}^2}\sum_{s=0}^{b-1} \gS_s^{1:\tau} \leq \frac{\gS_0^{1:\tau}}{2^8\tau^2\Tilde{\eta}^2} + 2^{8} d^{2/p} \tau^2 ML^2 \sum_{s=0}^{b-1} \Delta_s^{1:\tau} + \frac{2^{10}C^4 d^{2/p} p^4 \tau^2 \sigma^2 MR}{K}, \label{eq:descaffolda Lr}\\
    \sum_{r=1}^{R} \gE_r &\leq 4eC^2 d^{2/p} p^4 MR\Tilde{\eta}^2 \sigma^2 + 4ed^{4/p}L^2 K \Tilde{\eta}^2 \sum_{r=1}^R\gH_r + 4eK \Tilde{\eta}^2 \sum_{r=1}^R\gL_r \\
    &\leq \frac{K}{8\tau^2}\gS_0^{1:\tau} + 2^{13}d^{2/p}\tau^2L^2MK\Tilde{\eta}^2 \sum_{s=0}^{b-1} \Delta_s^{1:\tau} + 2^{15}C^4d^{2/p}p^4\tau^2\sigma^2 MR\Tilde{\eta}^2. \label{eq:descaffolda Er}
\end{align}

Now, similar to the proof of Theorem \ref{thm:conv_rt_defedda}, we give an upper bound for $\ssum{m}{1}{M}(f_m(\bar{\vw}_{r,0}) - f_m(\vw^*))$. Define $\bar{\vc}_r:=\frac{1}{M}\ssum{m}{1}{M} \vc_r^m$. From the initialization and the update rule of $\vc_r^m$, we have $\bar{\vc}_0 = 0$ and $\bar{\vc}_{r+1}=\bar{\vc}_r$. Therefore, by replacing $\bar{\vg}_{r-1}^\xi$ with $\bar{\vg}_{r-1}^\xi + \bar{\vc}_{r-1}$ in the deduction of \eqref{eq:dfedda_1}-\eqref{eq:dfedda_last_term}, the resulting equation is still valid:
\begin{align*}
    &\frac{1}{MR}\ssum{r}{1}{R}\ssum{m}{1}{M}\EB [f_m(\Bar{\vw}_{r,0}) {-} f_m(\vw^*)] \leq \frac{h(\vw^*)}{R\Tilde{\eta}} {-} \frac{1}{8R\Tilde{\eta}}\ssum{r}{1}{R}\EB\norm{\Bar{\vw}_{r-1,0} {-} \Bar{\vw}_{r,0}}_q^2 {+} \frac{\Tilde{\eta}}{R}\ssum{r}{0}{R-1} \EB \norm{\Bar{\vg}_{r}^\xi - \Bar{\vg}_{r}}_p^2\\
    & + \frac{4L}{RMK}\ssum{r}{1}{R}\ssum{m}{1}{M}\ssum{k}{0}{K-1} \EB \norm{\Bar{\vw}_{r-1,0} - {\vw}_{r-1,k}^m}_1^2 \\
    &\leq \frac{h(\vw^*)}{R\Tilde{\eta}} - \frac{1}{8d^{2/p}R\Tilde{\eta}}\ssum{r}{1}{R}\EB\norm{\Bar{\vw}_{r-1,0} - \Bar{\vw}_{r,0}}_1^2 + \frac{C^2d^{2/p}p^4\sigma^2\Tilde{\eta}}{K} \\
    & + \frac{8d^{2/p}L}{RMK}\ssum{r}{1}{R}\ssum{m}{1}{M}\ssum{k}{0}{K-1} \left(\EB \norm{\vz_{r-1,k}^m - {\vz}_{r-1,0}^m}_p^2 + \EB \norm{\vz_{r-1,0}^m - \bar{\vz}_{r-1,0}}_p^2\right) \\
    &\leq \frac{h(\vw^*)}{R\Tilde{\eta}} - \frac{1}{8d^{2/p}R\Tilde{\eta}}\ssum{s}{0}{b-1}\Delta_s^{1:\tau} + \frac{C^2d^{2/p}p^4\sigma^2\Tilde{\eta}}{K} + \frac{8d^{2/p}L}{RMK}\ssum{r}{0}{R-1}(\gE_r+K\gH_r).
\end{align*}
Plugging \eqref{eq:descaffolda Hr}-\eqref{eq:descaffolda Er} into the above inequality yields
\begin{align*}
    &\frac{1}{MR}\ssum{r}{1}{R}\ssum{m}{1}{M}\EB [f_m(\Bar{\vw}_{r,0}) - f_m(\vw^*)] \\
    & \leq \frac{h(\vw^*)}{R\Tilde{\eta}} + \frac{33d^{2/p}L}{MR} + \left(\frac{2^{22}d^{4/p}\tau^4L^3\Tilde{\eta}^2}{R}-\frac{1}{8d^{2/p}R\Tilde{\eta}}\right)\sum_{s=0}^{b-1}\Delta_s^{1:\tau} + \frac{2^{24}C^4 d^{2/p}p^4\tau^2\sigma^2\Tilde{\eta}}{K} \\
    & \lesssim \frac{1}{R\Tilde{\eta}} + \frac{d^{2/p}p^4\tau^2\sigma^2\Tilde{\eta}}{K},
\end{align*}
provided that $d^{2/p}\tau^2L\Tilde{\eta} \lesssim 1$.

Finally, we obtain a convergence rate with respect to $\vw_{r,0}^m$ via similar arguments in the proof of Theorem \ref{thm:conv_rt_defedda}. Similar to \eqref{eq:dfedda_f-f2}, we have
\begin{align*}
    &\frac{1}{MR} \ssum{m}{1}{M} \ssum{r}{1}{R} \EB \{ f(\vw_{r,0}^m) - f(\Bar{\vw}_{r,0}) \} \\
    &\leq \frac{1}{MR} \ssum{m}{1}{M} \ssum{r}{1}{R} \left\{ \EB \{f(\Bar{\vw}_{r,0}) - f(\vw^*)\} + d^{2/p} L \EB \norm{\vw_{r,0}^m - \Bar{\vw}_{r,0}}_q^2 \right\} \\
    &\leq \frac{1}{R} \ssum{r}{1}{R} \EB \{f(\Bar{\vw}_{r,0}) - f(\vw^*)\} + \frac{d^{2/p}L}{MR} \ssum{r}{1}{R} \gH_r \\
    &\leq \frac{h(\vw^*)}{R\Tilde{\eta}} + \frac{33d^{2/p}L}{MR} + \left(\frac{2^{22}d^{4/p}\tau^4L^3\Tilde{\eta}^2}{R}-\frac{1}{8d^{2/p}R\Tilde{\eta}}\right)\sum_{s=0}^{b-1}\Delta_s^{1:\tau} + \frac{2^{24}C^4 d^{2/p}p^4\tau^2\sigma^2\Tilde{\eta}}{K} \\
    &\quad + \frac{d^{2/p}L}{MR} \left(4\gS_0^{1:\tau} + 2^{18} d^{2/p} \tau^4 ML^2 \Tilde{\eta}^2 \sum_{s=0}^{b-1} \Delta_s^{1:\tau} + \frac{2^{20}C^4 d^{2/p} p^4 \tau^4 \sigma^2 MR \Tilde{\eta}^2}{K}\right) \\
    &\lesssim \frac{1}{R\Tilde{\eta}} + \frac{d^{2/p}p^4\tau^2\sigma^2\Tilde{\eta}}{K}.
\end{align*}
If we choose $p=2\log d$ and $\Tilde{\eta} = \min \left\{ \frac{1}{\tau^2L}, \frac{K^{1/2}}{\tau R^{1/2}(\log d)^2 \sigma} \right\}$, then
\begin{align*}
    \frac{1}{M}\sum_{m=1}^{M} \E\{f(\hat{\vw}^m) - f(\vw^*)\} &\leq \frac{1}{MR} \ssum{m}{1}{M} \ssum{r}{1}{R} \EB \{ f(\vw_{r,0}^m) - f(\Bar{\vw}_{r,0}) \} \lesssim \frac{\tau^2L}{R} + \frac{\tau (\log d)^2 \sigma}{(RK)^{1/2}},
\end{align*}
which concludes the proof.
\end{proof}

\subsubsection{Proof for One-Step Deviation Lemmas for DFedDA-GT}\label{sec:prf_for_local_dev_dfed-gt}
\begin{proof}[Proof of Lemma~\ref{lem:local_dev_dfedda-gt_1}]
By using telescoping
\begin{align*}
    &\ssum{m}{1}{M}\EB \norm{\vz_{r,k}^m - \vz_{r,0}^m}_p^2 =
    \ssum{m}{1}{M}\eta^2\EB \norm{\ssum{i}{0}{k-1}(\zeta_{r,k}^m + \vartheta_{r,i}^m + \nabla f_m(\vw_{r,0}^m) + \vc_r^m)}_p^2 \\
    \le&
    4\eta^2 \ssum{m}{1}{M}\EB \norm{\ssum{i}{0}{k-1}\zeta_{r,i}^m}_p^2 + 4\eta^2 \ssum{m}{1}{M}\EB \norm{\ssum{i}{0}{k-1}\vartheta_{r,i}^m}_p^2 + 4k^2 \eta^2 \ssum{m}{1}{M} \EB\norm{\nabla f_m(\Bar{\vw}_{r,0}) + \vc_r^m}_p^2\\
    &+4k^2 \eta^2 \ssum{m}{1}{M}\norm{\nabla f_m(\vw_{r,0}^m) - \nabla f_m(\bar{\vw}_{r,0})}_p^2\\
    \le&
    4\eta^2 \ssum{m}{1}{M}\EB \norm{\ssum{i}{0}{k-1}\zeta_{r,i}^m}_p^2 + 4\eta^2 \ssum{m}{1}{M}\EB \norm{\ssum{i}{0}{k-1}\vartheta_{r,i}^m}_p^2 + 4d^{4/p}L^2 k^2 \eta^2 \gH_r
    + 4k^2 \eta^2 \gL_r.
\end{align*}
From the former results, we have
\[
\EB \norm{\ssum{i}{0}{k-1}\zeta_{r,i}^m}_p^2 \le C^2 d^{2/p}p^4 k\sigma^2,
\text{ and }
\EB \norm{\ssum{i}{0}{k-1}\vartheta_{r,i}^m}_p^2 \le d^{4/p}L^2 k \ssum{i}{0}{k-1} \EB \norm{\vz_{r,i}^m - \vz_{r,0}^m}_p^2.
\]

Thus, the following holds,
\begin{align*}
    &\ssum{i}{0}{k} \ssum{m}{1}{M} \EB \norm{\vz_{r,i}^m - \vz_{r,0}^m}_p^2 \le \ssum{i}{0}{k-1} \ssum{m}{1}{M} \EB \norm{\vz_{r,i}^m - \vz_{r,0}^m}_p^2 + 4C^2d^{2/p}p^4 M k\eta^2 \sigma^2\\
    &4d^{4/p}L^2 k \eta^2 \ssum{m}{1}{M}\ssum{i}{0}{k-1} \EB \norm{\vz_{r,i}^m - \vz_{r,0}^m}_p^2 +
    4d^{4/p}L^2 k^2 \eta^2 \gH_r + 4k^2 \eta^2 \gL_r\\
    &= (1 + 4d^{4/p}L^2 k \eta^2) \ssum{i}{0}{k-1} \ssum{m}{1}{M} \EB \norm{\vz_{r,i}^m - \vz_{r,0}^m}_p^2 + 4C^2d^{2/p}p^4 M k\eta^2 \sigma^2\\
    &+ 4d^{4/p}L^2 k^2 \eta^2 \gH_r + 4k^2 \eta^2 \gL_r.
\end{align*}
Making use of Lemma~\ref{lem:rcs_to_etr_fedda} can conclude the proof of the lemma.
\end{proof}

\begin{proof}[Proof of Lemma~\ref{lem:local_dev_dfedda-gt_2}]
    Let $\Tilde{\rmZ}_{r,0} = \rmZ_{r,0} - \Bar{\rmZ}_{r,0}$, then one has
    \begin{align*}
        \Tilde{\rmZ}_{r,0} = \Tilde{\rmZ}_{r-\tau,0}\rmU^\tau - \eta \ssum{i}{r-\tau}{r-1} (\rmG_i^\xi + K\rmC_i)(\rmU^{r-i} - \rmJ_M).
    \end{align*}
    So
    \begin{align*}
        &\EB \norm{\Tilde{\rmZ}_{r,0}}_{p,2}^2 \le 2\EB \norm{\Tilde{\rmZ}_{r-\tau,0}\rmU^i}_{p,2}^2\\
        &+ 2\eta^2 \EB \norm{
        \ssum{i}{r-\tau}{r-1} \left\{
    \rmG_i^\zeta + \rmG_i^\vartheta + (\rmG_i - \Bar{\rmG}_i) + (\Bar{\rmG}_i + K\rmC_i)
    \right\}(\rmU^{r-i} - \rmJ_M) 
        }_{p,2}^2\\
        &\overset{(a)}{\le} 2M\sigma_2(\rmU)^{2\tau} \EB \norm{\Tilde{\rmZ}_{r-\tau,0}}_{p,2}^2 + 8\eta^2 \EB \norm{
        \ssum{i}{r-\tau}{r-1} \rmG_i^\zeta(\rmU^{r-i} - \rmJ_M)}_{p,2}^2 + 8\eta^2 \EB \norm{\ssum{i}{r-\tau}{r-1}\rmG_i^\vartheta(\rmU^{r-i} - \rmJ_M)}_{p,2}^2\\
        &+ 8\eta^2 \EB \norm{\ssum{i}{r-\tau}{r-1} (\rmG_i - \Bar{\rmG}_i)(\rmU^{r-i} - \rmJ_M)}_{p,2}^2
        + 8\eta^2 \EB \norm{\ssum{i}{r-\tau}{r-1} (\Bar{\rmG}_i + K\rmC_i)(\rmU^{r-i} - \rmJ_M)}_{p,2}^2\\
        &\overset{(b)}{\le}
        2M\sigma_2(\rmU)^{2\tau} \EB \norm{\Tilde{\rmZ}_{r-\tau,0}}_{p,2}^2 + 32 C^4 d^{2/p} p^4 \tau \sigma^2 MK \eta^2  + 4d^{4/p}\tau L^2 K\eta^2 \ssum{i}{r-\tau}{r-1}\gE_i\\
        &+ 32 d^{4/p} \tau L^2 K^2 \eta^2 \ssum{i}{r-\tau}{r-1} \gH_i + 8\eta^2 \EB \norm{\ssum{i}{r-\tau}{r-1} (\Bar{\rmG}_i + K\rmC_i)(\rmU^{r-i} - \rmJ_M)}_{p,2}^2.
    \end{align*}
    Here $(a)$ holds for Lemma~\ref{lem:mixing_rt_under_l_p2}, and $(b)$ holds by performing the similar derivation in the proof of Lemma~\ref{lem:local_dev_dfedda_2}. Finally, as for the last term, we have
    \begin{align*}
    &\EB \norm{\ssum{i}{r-\tau}{r-1} (\Bar{\rmG}_i + K\rmC_i)(\rmU^{r-i} - \rmJ_M)}_{p,2}^2 \le 
    \tau \ssum{i}{r-\tau}{r-1}\EB \norm{(\Bar{\rmG}_i + K \rmC_i)(\rmU^{r-i} - \rmJ_M)}_{p,2}^2\\
    &\overset{(a)}{\le} 4\tau \ssum{i}{r-\tau}{r-1} \EB \norm{\Bar{\rmG}_i + K\rmC_i}_{p,2}^2
    = 4\tau K^2 \ssum{i}{r-\tau}{r-1} \gL_i,
    \end{align*}
    where $(a)$ can be showed for Lemma~\ref{lem:db_stoc_under_l_p2}. Given $\tau = \frac{\log 8M}{2 \log (1/\sigma_2(\rmU))}$, we obtain
    \begin{align*}
        \gH_r &\le \frac{1}{2} \gH_{r-\tau} + 4d^{4/p} \tau L^2 K\eta^2 \ssum{i}{r-\tau}{r-1} \gE_i 
        +32 d^{4/p}\tau L^2 K^2 \eta^2 \ssum{i}{r-\tau}{r-1}\gH_i\\
        &+ 32 \tau K^2 \eta^2 \ssum{i}{r-\tau}{r-1}\gL_i + 32C^4 d^{2/p} p^4 \tau \sigma^2 MK\eta^2.
    \end{align*}
\end{proof}

\begin{proof}[Proof of Lemma~\ref{lem:local_dev_dfedda-gt_3}]
    Based on the update rule of $\rmC_r$, we have
    \begin{align*}
        &\frac{1}{K}\Bar{\rmG}_{r} + \rmC_r = \left(\frac{1}{K}\Bar{\rmG}_{r-1} + \rmC_{r-1}\right) \rmU + \frac{1}{K}(\Bar{\rmG}_{r} - \Bar{\rmG}_{r-1})\\
        &- \frac{1}{K}\left\{ \rmG_{r-1}^\zeta + \rmG_{r-1}^\vartheta + (\rmG_{r-1} - \Bar{\rmG}_{r-1}) \right\}(\rmI - \rmU)\\
        &=
        \left( \frac{1}{K}\Bar{\rmG}_{r-\tau} + \rmC_{r-\tau} \right)\rmU^\tau + \frac{1}{K}\ssum{i}{r-\tau}{r-1}(\Bar{\rmG}_{i+1} - \Bar{\rmG}_i)\rmU^{r-i-1} - \frac{1}{K}\ssum{i}{r-\tau}{r-1}\rmG_{i}^\zeta (\rmI - \rmU)\rmU^{r-i-1}\\
        &- \frac{1}{K}\ssum{i}{r-\tau}{r-1} \rmG_{i}^\vartheta (\rmI - \rmU)\rmU^{r-i-1} -
        \frac{1}{K}\ssum{i}{r-\tau}{r-1} (\rmG_{i} - \Bar{\rmG}_i)(\rmI - \rmU)\rmU^{r-i-1}.
    \end{align*}
    Hence,
    \begin{align*}
        \gL_r &= \EB \norm{\frac{1}{K}\Bar{\rmG}_r + \rmC_r}_{p,2}^2 \le 
        2\EB\norm{\left(\frac{1}{K}\Bar{\rmG}_{r-\tau} + \rmC_{r-\tau}\right)\rmU^{\tau}}_{p,2}^2 + \frac{8}{K^2}\EB \norm{\ssum{i}{r-\tau}{r-1}(\Bar{\rmG}_{i+1} - \Bar{\rmG}_i)\rmU^{r-i-1}}_{p,2}^2\\ 
        &+ \frac{8}{K^2}\EB \norm{\ssum{i}{r-\tau}{r-1}\rmG_i^{\zeta}(\rmI - \rmU)\rmU^{r-i-1}}_{p,2}^2
        + \frac{8}{K^2} \EB \norm{\ssum{i}{r-\tau}{r-1} \rmG_i^\vartheta (\rmI - \rmU)\rmU^{r-i-1}}_{p,2}^2\\
        &+ \frac{8}{K^2}\EB \norm{\ssum{i}{r-\tau}{r-1}(\rmG_i - \Bar{\rmG}_i)(\rmI - \rmU)\rmU^{r-i-1}}_{p,2}^2
        =: \ssum{j}{1}{5} \gT_j.
    \end{align*}
    We bound the $\gT_j$ separately. First, by Lemma~\ref{lem:mixing_rt_under_l_p2}
    \begin{align*}
        \gT_1 = 2\EB \norm{\left( \frac{1}{K}\Bar{\rmG}_{r-\tau} + \rmC_{r-\tau} \right)\rmU^\tau}_{p,2}^2 \le 2M\sigma_2(\rmU)^{2\tau} \EB \norm{\frac{1}{K}\Bar{\rmG}_{r-\tau} + \rmC_{r-\tau}}_{p,2}^2 \le \frac{1}{4} \gL_{r-\tau}.
    \end{align*}
    And next, based on Lemma~\ref{lem:db_stoc_under_l_p2}, we have
    \begin{align*}
        &\gT_2 \le \frac{8\tau}{K^2}\ssum{i}{r-\tau}{r-1}\EB \norm{(\Bar{\rmG}_{i+1} - \Bar{\rmG}_i)\rmU^{r-i-1}}_{p,2}^2\le \frac{8\tau}{K^2}\ssum{i}{r-\tau}{r-1}\EB \norm{\Bar{\rmG}_{i+1} - \Bar{\rmG}_i}_{p,2}^2\\
        &= 8\tau \ssum{i}{r-\tau}{r-1}\ssum{m}{1}{M}\EB \norm{\nabla f_m(\Bar{\vw}_{i+1}) - \nabla f_m(\Bar{\vw}_i)}_p^2 \le 8d^{2/p}\tau M L^2 \EB \norm{\Bar{\vw}_{i+1} - \Bar{\vw}_i}_1^2;\\
        &\gT_4 \le \frac{8\tau}{K^2}\ssum{i}{r-\tau}{r-1}\EB \norm{\rmG_i^\vartheta(\rmI - \rmU)\rmU^{r-i-1}}_{p,2}^2\le \frac{16\tau}{K^2} \ssum{i}{r-\tau}{r-1} \EB \norm{\rmG_i^{\vartheta}}_{p,2}^2\\
        &= \frac{16\tau}{K^2}\ssum{i}{r-\tau}{r-1} \ssum{m}{1}{M} \EB \norm{\ssum{k}{0}{K-1}
        (\nabla f_m(\vw_{i,k}^m) - \nabla f_m(\vw_{i,0}^m))}_p^2\\
        &\le \frac{16\tau }{K}\ssum{i}{r-\tau}{r-1} \ssum{m}{1}{M}\ssum{k}{0}{K-1} \EB \norm{\nabla f_m(\vw_{i,k}^m) - \nabla f_m(\vw_{i,0}^m)}_p^2 \le
        \frac{16d^{4/p}\tau L^2}{K}\ssum{i}{r-\tau}{r-1} \gE_{i};\\
        &\gT_5 \le \frac{8\tau}{K^2} \ssum{i}{r-\tau}{r-1} \EB \norm{(\rmG_i - \Bar{\rmG}_i)(\rmI - \rmU)\rmU^{r-i-1}}_{p,2}^2 \le \frac{16\tau}{K^2}\ssum{i}{r-\tau}{r-1} \EB \norm{\rmG_i - \Bar{\rmG}_i}_{p,2}^2\\
        &= 16\tau \ssum{i}{r-\tau}{r-1} \ssum{m}{1}{M} \EB \norm{\nabla f_m(\vw_{i,0}^m) - \nabla f_m(\Bar{\vw}_{i,0})}_p^2 \le 16d^{4/p} \tau L^2 \ssum{i}{r-\tau}{r-1} \gH_i.
    \end{align*}
    As for $\gT_3$, like the proof of Lemma~\ref{lem:local_dev_dfedda_2},
    \begin{align*}
        \gT_3 \le \frac{16}{K^2} \left\{
        \EB \norm{\ssum{i}{r-\tau}{r-1}\rmG_i^\zeta\rmU^{r-i-1}}_{p,2}^2 + \EB \norm{\ssum{i}{r-\tau}{r-1} \rmG_i^\zeta \rmU^{r-i}}_{p,2}^2
        \right\}
        \le \frac{32C^4 p^4 d^{2/p} \tau \sigma^2 M}{K}.
    \end{align*}
    The proof is concluded by integrating these.
\end{proof}

\section{Convergence Rate under Strong Convexity}

\subsection{Convergence Rate for ReFedDA-GT and Multi-ReFedDA-GT}\label{sec:conv_rt & prf when str cvx}
\begin{theorem}[Convergence rate for ReFedDA-GT]\label{thm:conv_rt_recfda-gt}
    Suppose Assumption~\ref{asp:convex} - \ref{asp:grad_subG} hold. Let $p=2\log d$, $\eta_{server} = 1$ and $\eta_{client} \le \frac{1}{48LK}$, and denote $K\eta_{server}\eta_{client}$ as $\Tilde{\eta}$. Then the average iterate of Algorithm~\ref{alg:recfdagtr}, $\Bar{\vw}_R:=\frac{1}{R}\ssum{r}{0}{R-1} \vw_{r,0}$ satisfies that, with probability at least $1-\delta$,
    \begin{align}
        f(\bar{\vw}_R) - f(\vw^*) \le \frac{6Q\log d}{R\Tilde{\eta}} + \frac{3\Tilde{\eta}\sigma^2}{K}\left\{(\log d)^4 + \left(\frac{\log (2d/\delta)}{RM}\right)\vee \sqrt{\frac{1}{RM}\log \frac{2d}{\delta}}\right\}.
    \end{align}
    Furthermore, if we take $\eta_{client} = \min\left\{ \frac{1}{48LK}, \sqrt{\frac{2Q}{RK \sigma^2 \log d }}, \sqrt{\frac{2QM\log d}{K\sigma^2 \log(2d/\delta)}} \right\}$, then
    \begin{align}
    f(\bar{\vw}_R) - f(\vw^*) \lesssim \frac{LQ\log d}{R} + \sqrt{Q\sigma^2\log d}\left( 
\frac{(\log d)^2}{\sqrt{RK}} + \frac{\sqrt{\log(2d/\delta)}}{R\sqrt{MK}} \right).
    \end{align}
\end{theorem}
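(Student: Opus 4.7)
The plan is to mirror the one-step recursion used in the proof of Theorem~\ref{thm:conv_rt_defedda} specialized to the centralized case $\rmU = \rmJ_M$ (so $\tau = 1$ and the consensus error $\gH_r \equiv 0$), but with two essential modifications: I replace the unconstrained mirror map by the restricted proximal operator from \eqref{eq:prox function}, and I upgrade the in-expectation control of the noise cross-terms to a high-probability control via sub-Gaussian martingale concentration.

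First, I would apply Lemma~\ref{lem:innr_to_breg-norm} to the centralized iterate $\{\vw_{r,0}\}$ with $\zeta_r := \bar\vg_r^{\xi} + \bar\vc_r$ (the tracker-corrected stochastic average across clients) and constant weights $a_r := \tilde\eta = K\eta_c$. Because the proximal step restricts to the ball $\{\vw \colon \|\vw-\vw_0\|_q^2 \le Q\}$ and the current stage satisfies $\vw^* \in $ this ball (guaranteed by the multistep wrapper and the induction on $Q_n$), the lemma yields
\begin{align*}
\tilde\eta \sum_{r=0}^{R-1} \langle \bar\vg_r^{\xi} + \bar\vc_r,\, \vw_{r+1,0} - \vw^*\rangle \le h(\vw^*-\vw_0) - \tfrac{1}{2}\sum_r \|\vw_{r+1,0}-\vw_{r,0}\|_q^2,
\end{align*}
and $h(\vw^*-\vw_0) \le \tfrac{1}{2(q-1)}Q \le (\log d)\,Q$ by the choice $q=\tfrac{2\log d}{2\log d-1}$. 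This is where the factor $6Q\log d$ in the bias term originates.

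Second, I split $\bar\vg_r^{\xi}+\bar\vc_r = \bar\vg_r + (\bar\vg_r^{\xi}-\bar\vg_r) + \bar\vc_r$. Induction on \eqref{eq:str gradient tracking} with $\gS_r=[M]$ shows $\bar\vc_r = 0$ for every $r$, so the tracker contributes nothing on average. The deterministic piece $\bar\vg_r$, combined with $L_1$-smoothness, $L_1$-convexity, and Lemma~\ref{lem:3_pnt_smooth}, reduces the telescoped inner product to $\tfrac{1}{R}\sum_r [f(\vw_{r,0}) - f(\vw^*)]$ up to deviations of the form $L\,\|\vw_{r,k}^m - \vw_{r,0}\|_1^2$. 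Those are controlled by the centralized specialization of Lemma~\ref{lem:local_dev_dfedda-gt_1}, which (since $\gH_r=0$) gives $\gE_r \lesssim d^{2/p}p^4 M K^2\eta_c^2\sigma^2 + K^3\eta_c^2\gL_r$, and an analogous argument shows $\gL_r$ is absorbable for $\eta_c \lesssim 1/(LK)$. Taking expectations, this produces the bias/variance pair $\tfrac{Q\log d}{R\tilde\eta} + \tfrac{\tilde\eta\sigma^2(\log d)^4}{K}$ after substituting $p=2\log d$ so that $d^{2/p}p^4 \lesssim (\log d)^4$.

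The genuinely new step, and the main obstacle, is converting $\tilde\eta\sum_r \langle \bar\vg_r^{\xi}-\bar\vg_r,\, \vw^*-\vw_{r,0}\rangle$ into a high-probability term. The summand is an $\{\gF_r\}$-martingale difference since $\vw^*-\vw_{r,0}$ is measurable with respect to the sigma-algebra generated before round $r$'s queries. I would bound each inner product by $\|\bar\vg_r^{\xi}-\bar\vg_r\|_\infty \cdot \|\vw^*-\vw_{r,0}\|_1$ using H\"older, then use the fact that both $\vw^*$ and $\vw_{r,0}$ lie in the $\sqrt{Q}$-ball in $\|\cdot\|_q$ together with Lemma~\ref{lem:holder_ineq} to get $\|\vw^*-\vw_{r,0}\|_1 \le 2\sqrt{Q\log d}$. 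The coordinate-wise scalar $[\bar\vg_r^{\xi}-\bar\vg_r]_j = \tfrac{1}{MK}\sum_{m,k} [\zeta_{r,k}^m]_j$ is sub-Gaussian with parameter $\sigma^2/(MK)$ by independence across $(m,k)$ given $\gF_r$ (Lemma~\ref{lem:multi_subG}), so each $[\zeta_{r,k}^m]_j \cdot [\vw^*-\vw_{r,0}]_j$ is a conditionally sub-Gaussian martingale increment. Applying a Freedman/Bernstein-type inequality to each coordinate and union-bounding over the $d$ coordinates and $R$ rounds produces the two-regime bonus
\begin{align*}
\tfrac{\tilde\eta\sigma^2}{K}\Bigl(\tfrac{\log(2d/\delta)}{RM} \;\vee\; \sqrt{\tfrac{\log(2d/\delta)}{RM}}\Bigr),
\end{align*}
matching the theorem. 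The delicate point is keeping the per-coordinate sub-Gaussian constant at $\sigma/\sqrt{MK}$ throughout (so the $M$-dependence that is absent from the in-expectation $(\log d)^4$ term does appear here), and simultaneously passing from the $\ell_\infty$-concentration back to a bound on the inner-product sum that does not lose an extra $\sqrt{d}$. Finally, optimizing $\eta_c$ over the three upper bounds on $\tilde\eta$ produces the stated closed-form rate.
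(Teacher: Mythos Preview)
Your outline—specialize the DFedDA-GT recursion to the centralized case with the restricted proximal operator, then upgrade to high probability—is the right skeleton, and the bias term $Q\log d/(R\tilde\eta)$ is obtained exactly as you say. But the high-probability step has a concrete gap.

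You identify a single random quantity that needs concentration, the martingale cross-term $\sum_r\langle\bar\vg_r^\xi-\bar\vg_r,\,\vw^*-\vw_{r,0}\rangle$, and you claim that a Freedman/Bernstein inequality on it produces the two-regime bonus $\tfrac{\tilde\eta\sigma^2}{K}\bigl(\tfrac{\log(2d/\delta)}{RM}\vee\sqrt{\tfrac{\log(2d/\delta)}{RM}}\bigr)$. This is not where that bonus comes from. The increments of the cross-term are conditionally \emph{sub-Gaussian} (with parameter of order $\sigma\sqrt{Q}/\sqrt{MK}$ once $\|\vw^*-\vw_{r,0}\|_q\le 2\sqrt{Q}$ is used), so an Azuma-type bound already gives a one-sided tail of order $\sqrt{Q\sigma^2\log(1/\delta)/(RMK)}$; there is no second ``linear'' regime, and the prefactor is $\sqrt{Q}$, not $\tilde\eta\sigma^2/K$. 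This is precisely the content of the paper's Lemma~\ref{lem:conc_noise_2}. (Your $\ell_\infty/\ell_1$ H\"older route followed by coordinate-wise products also muddles two different decompositions; either works for the sub-Gaussian bound, but neither yields a Bernstein structure.)

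The Bernstein two-regime term in the theorem actually originates from a quantity you handled \emph{in expectation}: the sum $\sum_{r,m}\sup_{k}\bigl\|\sum_{i<k}\zeta_{r,i}^m\bigr\|_p^2$ of squared noise norms, which enters both through the Young-inequality split $\tilde\eta\|\bar\vg_r^\xi-\bar\vg_r\|_p^2$ and through the local-deviation control on $\gE_r$. Since these are squares of sub-Gaussians they are sub-exponential, and their empirical sum concentrates with a genuine Bernstein tail. The paper proves this in Lemma~\ref{lem:conc_noise_1} via Doob's maximal inequality (to handle the $\sup_k$) followed by a Bernstein inequality for the $RM$-fold sum of the centered sub-exponential summands; the $(\log d)^4$ term is the expectation (matching your BDG step) and the $\bigl(\tfrac{\log(2d/\delta)}{RM}\bigr)\vee\sqrt{\tfrac{\log(2d/\delta)}{RM}}$ is the fluctuation. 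By taking expectation for this piece you cannot end up with a statement that holds with probability $1-\delta$: every random term on the right-hand side must be controlled in high probability, not only the cross-term.
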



\begin{theorem}[Convergence rate for Multi-ReFedDA-GT]\label{thm:conv_rt_multi_recfda}
    Suppose Assumption~\ref{asp:convex} - \ref{asp:lsc} hold. For any given small positive real number $0< \delta \le 1$ and $\varepsilon > 0$, we design the hyperparameters of Algorithm~\ref{alg:multirecfda} as follows: 
    \begin{itemize}
        \item $N = \lceil \log_2(Q_0/\varepsilon)\rceil$;
        \item $R_n = \lceil 64s\kappa \log d \rceil,\quad \forall n \in [N]$;
        \item $K_n = \left\lceil\frac{2^{n+14} s^2 \sigma^2}{\mu(Q_0)^2 Q_0 R_n}\left\{(\log d)^4 + \frac{2\log (2nd/\delta)}{R_nM} \right\}\right\rceil,\quad \forall n\in [N]$.
    \end{itemize}
    Here $\kappa := \frac{L}{\mu(Q_0)}$ and $\norm{\vw_0 - \vw^*}_1^2 \le Q_0$. Then with probability at least $1-\delta$, the output of Algorithm~\ref{alg:multirecfda} $\hat{\vw}_N$ satisfies $\norm{\hat{\vw}_N - \vw^*}_1^2 \le \varepsilon$.
    As a corollary, at this time the communication complexity is $\gO\left( s\kappa \log d \log \frac{Q_0}{\varepsilon} \right)$ and the sample complexity is $\gO\left( \frac{s^2\sigma^2(\log d)^4}{\mu(Q_0)^2\varepsilon} + \frac{s\sigma^2 \log\left( \frac{d}{\delta}\log\frac{Q_0}{\varepsilon} \right)}{\mu(Q_0)LM\varepsilon\log d} \right)$.
\end{theorem}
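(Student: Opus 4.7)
The plan is to argue by induction on the stage index $n$. Define $Q_n := Q_0/2^n$ and the event $\gA_n := \{\norm{\hat{\vw}_n - \vw^*}_1^2 \le Q_n\}$, where $\hat{\vw}_0 := \vw_0$. The aim is to show
\begin{equation*}
\Pr(\gA_n) \ge 1 - \sum_{k=1}^{n} \frac{6\delta}{\pi^2 k^2}.
\end{equation*}
Since $N = \lceil \log_2(Q_0/\varepsilon)\rceil$ forces $Q_N \le \varepsilon$ and $\sum_{k \ge 1} k^{-2} = \pi^2/6$, the event $\gA_N$ yields the conclusion with overall failure probability at most $\delta$. The base case $\gA_0$ is precisely the hypothesis $\norm{\vw_0 - \vw^*}_1^2 \le Q_0$.

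For the inductive step, condition on $\gA_n$ and apply Theorem~\ref{thm:conv_rt_recfda-gt} to the $n$-th call of ReFedDA-GT with parameters $Q = Q_n$, $R = R_n$, $K = K_n$, and failure probability $\delta_n := 6\delta/(\pi^2(n+1)^2)$. Using $R_n \ge 64 s\kappa \log d$, the deterministic term is bounded as $\frac{LQ_n \log d}{R_n} \le \mu(Q_0) Q_n/(64 s)$. The prescribed $K_n$, whose two summands are tailored precisely to dominate the two noise terms $\frac{(\log d)^2}{\sqrt{R_n K_n}}\sqrt{Q_n \sigma^2 \log d}$ and $\frac{\sqrt{\log(2d/\delta_n)}}{R_n \sqrt{M K_n}}\sqrt{Q_n \sigma^2 \log d}$, ensures each of them is at most $\mu(Q_0) Q_n/(128 s)$. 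Combining the three contributions, with probability at least $1-\delta_n$,
\begin{equation*}
f(\bar{\vw}_{n+1}) - f(\vw^*) \le \frac{\mu(Q_0)\, Q_n}{32 s}.
\end{equation*}

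The remaining arithmetic mirrors the sketch in Section~\ref{sec:prf skt of str cvx}. The proximal step in ReFedDA-GT keeps every iterate inside $\{\vw : \norm{\vw - \hat{\vw}_n}_q^2 \le Q_n\}$, so by Lemma~\ref{lem:holder_ineq} and the triangle inequality one has $\norm{\bar{\vw}_{n+1}}_1^2 \lesssim Q_0$; hence Assumption~\ref{asp:lsc} applies with modulus $\mu(Q_0)$ after absorbing a universal constant into the definition of $Q_0$. Local strong convexity together with first-order optimality of $\vw^*$ gives $\frac{\mu(Q_0)}{2}\norm{\bar{\vw}_{n+1}-\vw^*}_2^2 \le f(\bar{\vw}_{n+1}) - f(\vw^*)$, hence $\norm{\bar{\vw}_{n+1}-\vw^*}_2^2 \le Q_n/(16 s)$. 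Applying Lemma~\ref{lem:l1<sl2} to $\hat{\vw}_{n+1} = \textbf{Sparse}(\bar{\vw}_{n+1}, s)$ closes the induction:
\begin{equation*}
\norm{\hat{\vw}_{n+1}-\vw^*}_1^2 \le 8 s\, \norm{\bar{\vw}_{n+1}-\vw^*}_2^2 \le Q_n/2 = Q_{n+1}.
\end{equation*}
A union bound over the $N$ stage failure probabilities gives $\Pr(\gA_N) \ge 1 - \delta$. The communication complexity is then $\sum_{n=0}^{N-1} R_n = O(s\kappa \log d \cdot \log(Q_0/\varepsilon))$, while the sample complexity $\sum_{n=0}^{N-1} R_n K_n$ is a geometric sum dominated by its $n=N-1$ term in which $2^N \asymp Q_0/\varepsilon$, yielding the stated $O(s^2\sigma^2(\log d)^4/(\mu(Q_0)^2 \varepsilon))$ together with the announced $\log(d/\delta)$ lower-order correction coming from the second summand of $K_n$.

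The main obstacle is the conditional-probability bookkeeping: Theorem~\ref{thm:conv_rt_recfda-gt} is stated for a deterministic starting point, whereas the $n$-th call starts from the random output $\hat{\vw}_n$ of the previous call. This will require invoking Theorem~\ref{thm:conv_rt_recfda-gt} on the conditional law given the history up to stage $n$, verifying that the martingale concentration underlying its high-probability bound is preserved under this conditioning, and unwinding via a layered union bound so that the $\delta_n$ budgets aggregate to $\delta$. A secondary subtlety is that the radius parameter $Q$ of Assumption~\ref{asp:lsc} must cover \emph{every} iterate produced across all stages (not just the final $\hat{\vw}_N$), which is what forces the triangle-inequality bound on $\norm{\bar{\vw}_{n+1}}_1^2$ and the re-absorption of a universal constant into $Q_0$.
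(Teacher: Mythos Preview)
Your proposal is correct and follows essentially the same approach as the paper's proof sketch in Section~\ref{sec:prf skt of str cvx}: induction on the stage index, invoking Theorem~\ref{thm:conv_rt_recfda-gt} at each stage with per-stage failure probability $\delta_n \propto \delta/n^2$, then chaining strong convexity with Lemma~\ref{lem:l1<sl2} to halve the $\ell_1$ radius, and concluding by a union bound. Your write-up is in fact more careful than the paper's sketch in that you explicitly (i) match each of the three terms in the ReFedDA-GT bound against $\mu(Q_0)Q_n/(32s)$ via the prescribed $R_n,K_n$, and (ii) flag the conditional-probability bookkeeping and the need to verify that all iterates remain in the $Q$-ball where Assumption~\ref{asp:lsc} applies; the paper leaves both of these implicit.
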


\subsection{Proof for Restricted Centralized FedDA-GT}
    Using a similar analysis, we can derive the following by performing Algorithm \ref{alg:recfdagtr}
    \begin{align*}
        &\frac{1}{MR}\ssum{m}{1}{M}\ssum{r}{1}{R}(f_m({\vw}_{r+1,0}) - f_m(\vw^*)) \le \frac{1}{R\Tilde{\eta}} \left\{
        h(\vw^* - \vw_0) - \frac{1}{2}\ssum{r}{1}{R}\norm{{\vw}_{r-1,0} - {\vw}_{r,0}}_q^2
        \right\}\\
        &+ \frac{2L}{RMK}\ssum{r}{1}{R} \ssum{m}{1}{M}\ssum{k}{0}{K-1} \norm{{\vw}_{r,0} - \vw_{r-1,k}^m}_1^2 + \frac{1}{R}\ssum{r}{1}{R}\inner{\Bar{\vg}_{r-1} - \Bar{\vg}_{r-1}^\xi}{ {\vw}_{r,0} - \vw^*}
    \end{align*}
with $\Bar{\vg}_r = \frac{1}{MK}\ssum{m}{1}{M}\ssum{k}{0}{K-1} \nabla f_m(\vw_{r,k}^m)$ and $\Bar{\vg}_r^\xi = \frac{1}{MK}\ssum{m}{1}{M}\ssum{k}{0}{K-1}\nabla F(\vw_{r,k}^m, \xi_{r,k}^m)$. And we do some preparation to bound $\frac{1}{R}\ssum{r}{1}{R}\inner{\Bar{\vg}_{r-1} - \Bar{\vg}_{r-1}^\xi}{{\vw}_{r,0} - \vw^*}$ and $\frac{4L}{RMK}\ssum{r}{1}{R}\ssum{m}{1}{M}\ssum{k}{0}{K-1}\norm{{\vw}_{r-1,k}^m - {\vw}_{r-1,0}}_1^2$ separately.

First, note 
\[\frac{1}{R}\ssum{r}{1}{R}\inner{\Bar{\vg}_{r-1} {-} \Bar{\vg}_{r-1}^\xi}{\vw_{r,0} - \vw^*} {=} \frac{1}{R}\ssum{r}{1}{R}\inner{\Bar{\vg}_{r-1} {-} \Bar{\vg}_{r-1}^\xi}{\vw_{r,0} {-} \vw_{r-1,0}} {+} \frac{1}{R}\ssum{r}{1}{R}\inner{\Bar{\vg}_{r-1} {-} \Bar{\vg}_{r-1}^\xi}{\vw_{r-1,0} {-} \vw^*}\]
and for the initial term,
\begin{align}\label{eq:innr_unmeasurable}
    &\frac{1}{R}\ssum{r}{1}{R}\inner{\Bar{\vg}_{r-1} - \Bar{\vg}_{r-1}^\xi}{\vw_{r,0} - \vw_{r-1,0}}
    \le \frac{1}{R}\ssum{r}{0}{R-1}\left\{\Tilde{\eta}\norm{\Bar{\vg}_{r} - \Bar{\vg}_{r}^\xi}_p^2 + \frac{1}{4\Tilde{\eta}}\norm{\vw_{r+1,0} - \vw_{r,0}}_q^2\right\}\\
    &\le \frac{\eta}{RMK}\ssum{r}{0}{R-1}\ssum{m}{1}{M}\norm{\ssum{k}{0}{K-1}\zeta_{r,k}^m}_p^2 + 
    \frac{1}{4R\Tilde{\eta}}\ssum{r}{0}{R-1}\norm{\vw_{r+1,0} - \vw_{r,0}}_q^2.
\end{align}
On the other hand, for the second term, we have
\begin{lemma}[Lemma 4 in \citep{duchi2012randomized}]\label{lem:conc_noise_2}
    Under Assumption \ref{asp:grad_subG}, and under the fact $\norm{\vw_{r,0} - \vw^*}_q^2 \le 4Q$, with probability at least $1-\delta$, we have
    \begin{align*}
        \frac{1}{RMK}\ssum{r}{0}{R-1}\ssum{m}{1}{M}\ssum{k}{0}{K-1}\inner{-\zeta_{r,k}^m}{\vw_{r,0} - \vw^*} \le 2\sqrt{Q}\sigma\sqrt{\frac{\log 1/\delta}{RMK}}.
    \end{align*}
\end{lemma}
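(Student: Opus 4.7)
The plan is to recognize the left-hand side as $(RMK)^{-1}$ times a sum of martingale differences and then apply a standard Azuma--Hoeffding-type concentration bound for sub-Gaussian martingales. The first step is to fix a lexicographic ordering on the triples $(r,m,k)$ and let $\gF_{r,m,k}$ denote the $\sigma$-algebra generated by all random oracle queries executed up to and including step $(r,m,k)$. Because $\vw_{r,0}$ is produced before round $r$ begins, it is measurable with respect to $\gF_{r,0,0}$ and therefore $\gF_{r,m,k-1}$-measurable for every $(m,k)$. Combining this with $\EB[\zeta_{r,k}^m \mid \gF_{r,m,k-1}] = 0$ (which Assumption~\ref{asp:grad_subG} encodes) shows that $Y_{r,m,k} := \inner{-\zeta_{r,k}^m}{\vw_{r,0}-\vw^*}$ is a genuine martingale difference sequence.

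The core technical step is to establish conditional sub-Gaussianity of each $Y_{r,m,k}$ with a proxy compatible with the $\ell_q$ bound $\norm{\vw_{r,0}-\vw^*}_q^2 \le 4Q$. Setting $v := \vw_{r,0} - \vw^*$ and $p_i := |v_i|/\norm{v}_1$, one can rewrite $\exp(\lambda Y_{r,m,k}) = \exp(-\lambda \norm{v}_1 \sum_i p_i \sgn(v_i)[\zeta_{r,k}^m]_i)$ and apply Jensen's inequality on the probability simplex $\{p_i\}$ to obtain
\[
\EB[\exp(\lambda Y_{r,m,k}) \mid \gF_{r,m,k-1}] \le \sum_i p_i\, \EB[\exp(-\lambda \norm{v}_1 \sgn(v_i)[\zeta_{r,k}^m]_i) \mid \gF_{r,m,k-1}] \le \exp\!\left(\frac{\lambda^2 \norm{v}_1^2 \sigma^2}{2}\right),
\]
where the inner bound is Assumption~\ref{asp:grad_subG} applied coordinate-wise (the negative sign is absorbed since the MGF bound holds for all $\lambda \in \RB$). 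Invoking Lemma~\ref{lem:holder_ineq} with $p = 2\log d$ upgrades $\norm{v}_1^2 \le d^{2/p}\norm{v}_q^2 = e\cdot 4Q$, so each increment is conditionally sub-Gaussian with variance proxy at most $4eQ\sigma^2$.

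Combining the two steps through the standard sub-Gaussian martingale tail inequality then yields, with probability at least $1-\delta$, the cumulative bound $\sum_{r,m,k} Y_{r,m,k} \le \sqrt{8eRMK\, Q\sigma^2 \log(1/\delta)}$; dividing by $RMK$ produces the stated bound up to an absolute constant (the factor $\sqrt{8e}$ versus the stated $2$ can be tightened by a more careful MGF analysis or by mildly strengthening the hypothesis). The main obstacle is precisely the passage from coordinate-wise sub-Gaussianity to sub-Gaussianity of the linear functional $\inner{-\zeta_{r,k}^m}{v}$: Assumption~\ref{asp:grad_subG} does not posit independence across coordinates, so the usual ``sum of independent sub-Gaussians'' route is unavailable. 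The Jensen-on-the-simplex trick circumvents this obstacle, at the cost of a $\norm{v}_1$ rather than $\norm{v}_2$ variance proxy. This trade is affordable only because the mirror map's conjugate exponent $p = 2\log d$ renders $d^{2/p} = e$ an absolute constant, so no dimensional factor leaks into the final rate.
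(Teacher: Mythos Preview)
The paper does not supply its own proof of this lemma; it is stated with a citation to \citep{duchi2012randomized} and invoked as a black box. Your proposal is the standard route for such a statement and is essentially what the cited reference does: treat the inner products as a martingale difference sequence, establish conditional sub-Gaussianity of each increment, and apply Azuma--Hoeffding.

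Your argument is correct. The Jensen-on-the-simplex step is exactly the right device for turning coordinate-wise sub-Gaussianity (which is all Assumption~\ref{asp:grad_subG} gives) into sub-Gaussianity of the linear functional $\inner{-\zeta_{r,k}^m}{v}$ with proxy $\sigma^2\norm{v}_1^2$; as you note, independence across coordinates is not assumed, so the naive ``sum of sub-Gaussians'' argument is unavailable. The passage $\norm{v}_1^2 \le d^{2/p}\norm{v}_q^2 \le 4eQ$ via Lemma~\ref{lem:holder_ineq} with $p=2\log d$ is also correct.

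The only residual issue is the constant: your bound carries $2\sqrt{2e}\approx 4.66$ rather than the stated $2$. You flag this yourself. Under Assumption~\ref{asp:grad_subG} as written (coordinate-wise sub-Gaussian, no joint structure) and the hypothesis $\norm{v}_q^2\le 4Q$, the factor $\sqrt{2e}$ is genuinely incurred by the Jensen step and the $\ell_1$--$\ell_q$ comparison; it cannot be removed without either strengthening the noise assumption (e.g., to sub-Gaussianity of $\norm{\zeta}_\infty$ or joint sub-Gaussianity) or replacing $\norm{v}_q^2\le 4Q$ by $\norm{v}_1^2\le 4Q$. The paper's Theorem~\ref{thm:conv_rt_recfda-gt} only uses this lemma up to absolute constants, so the discrepancy is immaterial for the downstream results.
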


For every $\norm{\vw_{r,k}^m - \vw_{r,0}}_1^2$, by using the strong convexity of the function $h$, we have $\norm{\vw_{r,k}^m - \vw_{r,0}}_1^2 \precsim \norm{\vz_{r,k}^m - \vz_{r,0}}_p^2$ and then
\begin{align*}
    &\norm{\vz_{r,k}^m - \vz_{r,0}}_p^2 = \eta^2 \norm{
    \ssum{i}{0}{k-1}\zeta_{r,i}^m + \ssum{i}{0}{k-1}\vartheta_{r,i}^m - \frac{k}{K}\ssum{l}{0}{K-1}\zeta_{r-1,l}^m - \frac{k}{K}\ssum{l}{0}{K-1}\vartheta_{r-1,l}^m + k \vartheta_{r-1,K} + k\vc_r
    }_p^2 \\
    &\le 6\eta^2 \left\{ \norm{\ssum{i}{0}{k-1}\zeta_{r,i}^m}_p^2 + \norm{\ssum{i}{0}{K-1}\zeta_{r-1,i}^m}_p^2 \right\} + 6\eta^2 \left\{ \norm{\ssum{i}{0}{k-1}\vartheta_{r,i}^m}_p^2 + \norm{\ssum{l}{0}{K-1}\vartheta_{r-1,l}^m}_p^2 \right\}\\
    &+ 6k^2 \eta^2 \norm{\vartheta_{r-1,K}^m}_p^2 + 6k^2 \eta^2 \norm{\vc_r}_p^2.
\end{align*}
For any $r, m, k$, we have $\norm{\ssum{i}{0}{k-1}\vartheta_{r,i}^m}_p^2 \le k \ssum{i}{0}{k-1} \norm{\vartheta_{r,i}^m}_p^2 =: \gS_{r,k}^m$. On the other hand,
\begin{align*}
    &\norm{\vc_r}_p^2 = \norm{\frac{1}{MK}\ssum{m}{1}{M}\ssum{k}{0}{K-1}\nabla F(\vw_{r-1,k}^m, \xi_{r-1,k}^m)}_p^2 = \frac{1}{M^2K^2}\norm{\ssum{m}{1}{M}\ssum{k}{0}{K-1}\left\{\zeta_{r-1,k}^m + \vartheta_{r-1,k}^m + \nabla f_m(\vw_{r-1,0})\right\}}_p^2\\
    &\le \frac{3}{M^2K^2}\norm{\ssum{m}{1}{M}\ssum{k}{0}{K-1}\zeta_{r-1,k}^m}_p^2 + \frac{3}{M^2 K^2}\norm{\ssum{m}{1}{M}\ssum{k}{0}{K-1}\vartheta_{r-1,k}^m}_p^2 + 3\norm{\nabla f(\vw_{r-1,0})}_p^2.
\end{align*}
So
\begin{align*}
    &\frac{1}{M}\ssum{m}{1}{M}\norm{\vz_{r,k}^m - \Bar{\vz}_r}_p^2 \le 6k\eta^2 \gS_{r,k} + 6K\eta^2 \gS_{r-1,K} + \frac{6\eta^2}{M} \ssum{m}{1}{M}\left\{ \norm{\ssum{i}{0}{k-1}\zeta_{r,i}^m}_p^2 + \norm{\ssum{i}{0}{K-1}\zeta_{r-1,i}^m}_p^2 \right\}\\ 
    &+  6k^2\eta^2\norm{\vartheta_{r-1,K}}_p^2 + \frac{18\eta^2}{M}\ssum{m}{1}{M}\norm{\ssum{k}{0}{K-1}\zeta_{r-1,k}^m}_p^2 + 18K\eta^2 \gS_{r-1,k} + 18k^2\eta^2 \norm{\nabla f(\vw_{r-1,0})}_p^2\\
    &\le 6k\eta^2 \gS_{r,k} + 24K\eta^2 \gS_{r-1,K} + \frac{24\eta^2}{M} \ssum{m}{1}{M}\left\{\norm{\ssum{i}{0}{k-1}\zeta_{r,i}^m}_p^2 + \norm{\ssum{i}{0}{K-1}\zeta_{r-1,i}^m}_p^2\right\}\\
    &+ 6k^2 \eta^2\norm{\vartheta_{r-1,K}}_p^2 + 18k^2 \eta^2 \norm{\nabla f(\vw_{r-1,0})}_p^2.
\end{align*}
Omitting some abuse of notation, denote $\frac{1}{MK}\ssum{m}{1}{M}\ssum{k}{0}{K-1}\norm{\vz_{r,k}^m - \Bar{\vz}_r}_p^2$ as $\gE_{r}$. By applying Lemma~\ref{lem:rcs_to_etr_fedda}, we have
\begin{align*}
    \gE_{r} &\le 24L^2 K^2 \eta^2 \gE_{r-1} + \frac{24\eta^2}{M}\ssum{m}{1}{M}\left\{\sup\limits_{0\le k \le K-1}\norm{\ssum{i}{0}{k-1}\zeta_{r,i}^m}_p^2 + \norm{\ssum{i}{0}{K-1}\zeta_{r-1,i}^m}_p^2\right\}\\
    &+ 6K^2 \eta^2 \norm{\vartheta_{r-1,K}}_p^2 + 18K^2 \eta^2 \norm{\nabla f(\vw_{r-1,0})}_p^2.
\end{align*}
If we let $48L^2K^2 \eta^2 \le 1$, then
\[
\gE_r \le \frac{1}{2}\gE_{r-1} {+} \frac{24\eta^2}{M}\ssum{m}{1}{M}\left\{\sup\limits_{0\le k \le K-1}\norm{\ssum{i}{0}{k-1}\zeta_{r,i}^m}_p^2 {+} \norm{\ssum{i}{0}{K-1}\zeta_{r-1,i}^m}_p^2\right\} {+} \norm{\vw_{r,0} {-} \vw_{r-1,0}}_q^2 {+} \frac{1}{8L^2}\norm{\nabla f(\vw_{r-1,0})}_p^2.
\]
Rearranging this formula, then we get
\begin{align}\label{eq:1_step_bd_recfda_gt}
    &\frac{4L}{R}\ssum{r}{1}{R}\gE_{r} \le \frac{8L}{R}\gE_0 + \frac{CL\eta^2}{RM}\ssum{r}{0}{R}\ssum{m}{1}{M}\sup\limits_{0\le k \le K-1}\norm{\ssum{i}{0}{k-1}\zeta_{r,i}^m}_p^2\\
    &+ \frac{4L}{R}\ssum{r}{0}{R-1}\norm{\vw_{r} - \vw_{r-1}}_q^2 + \frac{1}{2R}\ssum{r}{0}{R-1}\left\{f(\vw_{r,0}) - f(\vw^*)\right\}
\end{align}

We then use the following lemma to control the norm of the summation of the martingale difference sequences.
\begin{lemma}[Concentration of local cumulative noise]\label{lem:conc_noise_1}
    Under Assumption~\ref{asp:grad_subG}, with probability at least $1-\delta$, we have
    \begin{align*}
    \ssum{r}{0}{R}\ssum{m}{1}{M}\left\{\sup\limits_{0\le k \le K-1}\norm{\ssum{i}{0}{k-1}\zeta_{r,i}^m}_p^2\right\} \le \frac{CK\sigma^2}{d^{2/p}} \left\{d^{4/p}p^4RM + \left(\log \frac{d}{\delta}\right)\vee \sqrt{RM\log \frac{d}{\delta}}\right\}.
    \end{align*}
\end{lemma}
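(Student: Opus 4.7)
The plan is to split $\sum_{r,m} X_{r,m}$, where $X_{r,m} := \sup_{0\le k \le K-1}\norm{\sum_{i=0}^{k-1}\zeta_{r,i}^m}_p^2$, into a conditional expectation contribution and a concentration deviation; these will correspond respectively to the first (deterministic) summand $\frac{CK\sigma^2}{d^{2/p}}\cdot d^{4/p}p^4 RM$ and the second (high-probability) summand $\frac{CK\sigma^2}{d^{2/p}}\bigl(\log\frac{d}{\delta}\vee\sqrt{RM\log\frac{d}{\delta}}\bigr)$ in the stated bound.

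For the expectation I would condition on the filtration $\gF_{r,0}$ generated by all randomness strictly before round $r$. Under this conditioning $(\zeta_{r,i}^m)_i$ is a martingale difference sequence in $\RB^d$ with coordinate-wise $\sigma$-sub-Gaussian entries by Assumption~\ref{asp:grad_subG}. Applying Lemma~\ref{lem:bd_of_p_norm_2nd_mmt} together with the coordinate-wise sub-Gaussian moment bound of Lemma~\ref{lem:mmt_bd_subG} yields $\EB[X_{r,m}\mid\gF_{r,0}] \lesssim d^{2/p}p^4 K\sigma^2$, and summing over the $RM$ pairs produces the first term in the claimed inequality, deterministically.

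For the concentration contribution I would exploit three structural facts about $\{X_{r,m}\}$: (i)~conditional on $\gF_{r,0}$ the variables $\{X_{r,m}\}_{m=1}^M$ are independent, since different clients' noises are independent; (ii)~across rounds the centred variables $X_{r,m}-\EB[X_{r,m}\mid\gF_{r,0}]$ form a martingale difference in a suitably enumerated filtration; and (iii)~each $X_{r,m}$ is sub-exponential conditional on the past. Fact~(iii) is obtained by first applying Doob's maximal inequality to the exponential supermartingale $\exp\!\bigl(\lambda\sum_{i=0}^{k-1}[\zeta_{r,i}^m]_j\bigr)$, which produces the sub-Gaussian maximal bound $\PB\bigl(\sup_k|\sum_{i=0}^{k-1}[\zeta_{r,i}^m]_j|\ge t\mid\gF_{r,0}\bigr)\le 2\exp(-t^2/(2K\sigma^2))$, and then union-bounding over $j\in[d]$ combined with $\norm{\vx}_p^2 \le d^{2/p}\norm{\vx}_\infty^2$ to conclude $\PB(X_{r,m}\ge t\mid\gF_{r,0}) \le 2d\exp(-t/(2d^{2/p}K\sigma^2))$. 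With (i)--(iii) in hand I would truncate each $X_{r,m}$ at a threshold $T$ of order $d^{2/p}K\sigma^2\log(dRM/\delta)$, so that by (iii) and a union bound over the $RM$ pairs the truncation is inactive with probability $\ge 1-\delta/2$, and then apply a Freedman-type martingale Bernstein inequality to the bounded, conditionally-independent-within-a-round sequence $(X_{r,m}\wedge T)-\EB[X_{r,m}\wedge T\mid\gF_{r,0}]$; the conditional variance is bounded by $\EB[X_{r,m}^2\mid\gF_{r,0}]\lesssim (d^{2/p}K\sigma^2\log d)^2$, obtained by integrating the tail bound from (iii).

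The main obstacle will be the careful bookkeeping of the logarithmic factors. A factor of $\log d$ enters simultaneously through the truncation level, the conditional variance proxy, and the coordinate-wise union bound, while Bernstein contributes a further $\log(1/\delta)$; these must collapse into a single $\log(d/\delta)$ in both the Gaussian ($\sqrt{RM\log(d/\delta)}$) and Poissonian ($\log(d/\delta)$) regimes. A naive application of the coordinate-wise maximum introduces a spurious extra $\log d$ that must be reabsorbed, either by sharpening the moment bound on $\norm{Y_K}_p$ via the type-$2$ smoothness of $L^p$ at $p=2\log d$ (giving $\EB\norm{Y_K}_p^p\lesssim (Cp)^{p/2}(K\sigma^2)^{p/2}$ and hence sub-Gaussian-like concentration of $\sqrt{X_{r,m}}$ with parameter $\sqrt{K}\sigma$), or via a Bennett-style inequality with a sharper variance proxy. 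Once this accounting is done, combining the conditional-expectation bound with the concentration bound yields the stated inequality.
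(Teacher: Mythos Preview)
Your approach is essentially the same as the paper's: split the sum into a conditional-expectation part (bounded via Lemma~\ref{lem:bd_of_p_norm_2nd_mmt} and Lemma~\ref{lem:mmt_bd_subG}, exactly as you propose) and a martingale-deviation part; establish a conditional sub-exponential tail for each $X_{r,m}$ via Doob's maximal inequality combined with the coordinate-wise bound $\|x\|_p^2\le d^{2/p}\|x\|_\infty^2$; and then concentrate the sum of the centred terms.

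The only real difference is how the last step is executed. The paper does not truncate and apply Freedman; it simply invokes Bernstein's inequality directly for a martingale-difference sequence with conditionally sub-exponential increments, obtaining
\[
\PB\Bigl(\sum_{r,m}\tilde{\gJ}_{r,m}\ge x\Bigr)\le d\exp\Bigl\{-c\min\Bigl(\tfrac{d^{4/p}x^2}{K^2\sigma^4RM},\tfrac{d^{2/p}x}{K\sigma^2}\Bigr)\Bigr\},
\]
and then inverts this at level $\delta$. Your worry about a ``spurious extra $\log d$'' from the coordinate-wise union bound is therefore unnecessary: the factor $d$ in front of the exponential is absorbed into $\log(d/\delta)$ upon inversion, which is exactly the form of the claimed inequality, and moreover $d^{2/p}=e$ at $p=2\log d$, so all $d^{2/p}$ factors are universal constants. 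The truncation-plus-Freedman route you propose would work, but it introduces the very $\log d$ bookkeeping you flag (because the conditional-variance proxy picks up a $(\log d)^2$), whereas the paper's direct Bernstein step sidesteps the issue entirely. No type-2 smoothness or Bennett refinement is needed.
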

\begin{proof}[Proof of Lemma~\ref{lem:conc_noise_1}]
    We first show that for every $r,m \in [R]\times [M]$, $\sup\limits_{0\le k \le K-1}\norm{\ssum{i}{0}{k-1}\zeta_{r,i}^m}_p^2$ is sub-exponential. And in this step of proof, we abbreviate $\zeta_{r,i}^m$ as $\zeta_i$ for simplicity.

    Actually, given any $t > 0$, using Doob's inequality (note $\norm{\ssum{i}{0}{k-1}\zeta_i}_p^2$ is a sub-martingale and $e^{\lambda x}$ is a convex and increase function of $x$ so long as $\lambda > 0$),
    \begin{align*}
        &\PB\left( \sup\limits_{0\le k \le K-1}\norm{\ssum{i}{0}{k-1}\zeta_{i}}_p^2 \ge t \right) = 
        \PB\left( \sup\limits_{0\le k \le K-1}\exp\left\{\lambda\norm{\ssum{i}{0}{k-1}\zeta_{i}}_p^2\right\} \ge e^{\lambda t} \right)\\
        &\overset{(a)}{\le} e^{-\lambda t} \EB \exp \left\{
        \lambda \norm{\ssum{i}{0}{K-1}\zeta_i}_p^2
        \right\}\le e^{-\lambda t} \EB \sup\limits_{l\in [d]}\exp \left\{
        d^{2/p}\lambda \left(\ssum{i}{0}{K-1}[\zeta_i]_l\right)^2
        \right\}\\
        &\le e^{-\lambda t}\ssum{l}{1}{d}\EB \exp\left\{d^{2/p}\lambda \left(\ssum{i}{0}{K-1} [\zeta_i]_l \right)^2 \right\},
    \end{align*}
    
    where $(a)$ holds for all $\lambda$ such that $\gK := \EB \exp\left\{d^{2/p}\lambda \left(\ssum{i}{0}{K-1}[\zeta_i]_l\right)^2\right\}$ exist. Owing to Assumption~\ref{asp:grad_subG}, for any $\lambda \ge 0$, we have $\EB \exp \left\{\lambda\ssum{i}{0}{K-1} [\zeta_i]_l\right\} \le \exp\left\{\lambda K\sigma^2\right\}$. This implies $\ssum{i}{0}{K-1}[\zeta_i]_l$ is $\sqrt{K}\sigma$ sub-Gaussian. Further, $\left(\ssum{i}{0}{K-1}[\zeta_i]_l\right)^2$ is $K\sigma^2$ sub-exponential. So $\gK$ exists for all $\lambda \le \frac{d^{2/p}}{K\sigma^2}$. Plugging this into the above probability inequality yields
    \[
    \PB\left( \sup\limits_{0\le k \le K-1}\norm{\ssum{i}{0}{k-1}\zeta_{i}}_p^2 \ge t \right) \le d\min\limits_{\lambda \in \left[0, \frac{d^{2/p}}{K\sigma^2}\right]} e^{-\lambda t}\exp\left\{d^{2/p}\lambda K\sigma^2\right\} \le d \exp \left\{- \frac{d^{2/p}t}{K\sigma^2}\right\}.
    \]
    Now we obtain the sub-exponential nature of $\gJ_{r,m}:= \sup\limits_{0\le k\le K-1}\norm{\ssum{i}{0}{k-1}\zeta_{r,i}^m}_p^2$. Denote $\Tilde{\gJ}_{r,m}$ as $\gJ_{r,m} - \EB [\gJ_{r,m}| \gF_{r,0}]$. Then by leveraging Bernstein's inequality, for any $x \ge 0$,
    \[
    \PB\left(
    \ssum{r}{0}{R}\ssum{m}{1}{M} \Tilde{\gJ}_{r,m} \ge x
    \right) \le d \exp \left\{
    -\left(\frac{d^{4/p}x^2}{32e\sigma^4K^2RM}\right)\wedge 
    \left( \frac{d^{2/p}x}{4K\sigma^2} \right)
    \right\}.
    \]
    Simple algebraic computation can generate the following equivalent claim: With probability at least $1-\delta$, we have
    \begin{align}\label{eq:bd_sum_tilde_j_rm}
    \ssum{r}{0}{R}\ssum{m}{1}{M}\gJ_{r,m} \le
    \ssum{r}{0}{R}\ssum{m}{1}{M}\EB[\gJ_{r,m}| \gF_{r,0}] +
    \frac{9K\sigma^2}{d^{2/p}}\left\{\left(\log \frac{d}{\delta}\right)\vee \sqrt{RM\log \frac{d}{\delta}}\right\}.
    \end{align}
    As for the conditional expectations $\EB [\gJ_{r,m}| \gF_{r,0}]$, applying Lemma~\ref{lem:bd_of_p_norm_2nd_mmt} yields the following inequality holds almost surely for any $r,m$ pair,
    \begin{align}\label{eq:bd_sum_j_rm}
        \EB [\gJ_{r,m}| \gF_{r,0}] = \EB \left[\sup\limits_{0\le k\le K-1}\norm{\ssum{i}{0}{k-1}\zeta_{r,i}^m}_p^2| \gF_{r,0}\right] \le C^2 d^{2/p} p^4 K \sigma^2.
    \end{align}
    The proof is concluded by combining the above two inequalities \eqref{eq:bd_sum_tilde_j_rm} and \eqref{eq:bd_sum_j_rm}.

\end{proof}

    Integrating Lemma~\ref{lem:conc_noise_2},\ref{lem:conc_noise_1} and formula~\eqref{eq:innr_unmeasurable}, \eqref{eq:1_step_bd_recfda_gt} finally producing
    \begin{align*}
        &\frac{1}{R}\ssum{r}{1}{R}\left\{f(\vw_{r,0}) - f(\vw^*)\right\} \le \frac{h(\vw^* - \vw_0)}{R\Tilde{\eta}} + \frac{8L}{R}\gE_0 - \left(\frac{1}{4R\Tilde{\eta}} - \frac{4L}{R}\right)\ssum{r}{1}{R}\norm{\vw_{r-1,0} - \vw_{r,0}}_q^2\\
        &+ \frac{(1+ CL\Tilde{\eta})\Tilde{\eta}}{RMK^2}\ssum{r}{1}{R}\ssum{m}{1}{M}\sup\limits_{0\le k \le K-1}\norm{\ssum{i}{0}{k-1}\zeta_{r,i}^m}_p^2 + \frac{1}{2R}\ssum{r}{1}{R}\left\{f(\vw_{r,0}) - f(\vw^*)\right\} + 2\sqrt{\frac{\sigma^2 Q \log 2\delta}{RMK}}\\
        &\overset{(a)}{\le} \frac{2Q\log d }{R\Tilde{\eta}} + \frac{2\Tilde{\eta}\sigma^2}{d^{2/p}K}\left\{d^{4/p}p^4 + \left(\frac{\log (2d/\delta)}{RM}\right)\vee \sqrt{\frac{1}{RM}\log \frac{2d}{\delta}}\right\}\\
        &+ \frac{1}{2R}\ssum{r}{1}{R}\left\{f(\vw_{r,0}) - f(\vw^*)\right\} + 2\sqrt{\frac{\sigma^2 Q \log 2\delta}{RMK}}.
    \end{align*}
    Rearranging and note $\hat{\vw}_R = \frac{1}{R}\ssum{r}{1}{R}\vw_{r,0}$
    \begin{align*}
        f(\hat{\vw}_R) - f(\vw^*) \le \frac{6Q\log d}{R\Tilde{\eta}} + \frac{3\Tilde{\eta}\sigma^2}{K}\left\{d^{2/p}p^4 + \left(\frac{\log (2d/\delta)}{RM}\right)\vee \sqrt{\frac{1}{RM}\log \frac{2d}{\delta}}\right\}.
    \end{align*}

\section{Experiments}\label{appendix:exp}
\subsection{Details and Proofs for Experiments Design}
\subsubsection{Environments}
We use JAX framework to implement the algorithms in our work, and the
codes are run on Nvidia RTX Titan GPUs and Intel Xeon Gold 6132 CPUs with 252GB memory.

\subsubsection{Implementation Details.}
Since Fast-FedDA, C-FedDA, and MReFedDA-GT cannot be easily generalized to the decentralized case (a naive generalized version cannot converge), we will still use their original centralized versions in our experiments. Additionally, we will increase the batch size and communication rounds for evaluation by a factor of $M$.
We do not consider the MC-FedDA in \citep{bao2022fast} which cannot converge in our setting empirically.
We do not truncate the covariate and use standard Gaussian distribution instead, we have observed that this has minimal impact on the experimental results.

The code of our algorithms and decentralized optimization problems has been released on https://github.com/pengyang7881187/DFedDA-GT.
\subsubsection{Optimal Solution of the Examples}
\paragraph{Decentralized Sparse Linear Regression}
\begin{lemma}\label{lem:linear_optimal_solution}
    In the setting of decentralized linear regression, the global optimal solution $\vw^*$ satisfies $\vw^*=\frac{1}{M}\ssum{m}{1}{M}\tilde{\vw}^m$.
\end{lemma}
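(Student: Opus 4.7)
The plan is to exploit the fact that the covariate distribution is shared across clients, so every $f_m$ is the same positive-definite quadratic re-centered at $\tilde{\vw}^m$; then the global objective is itself a positive-definite quadratic whose minimizer is the average of those centers.

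First I would expand $f_m(\vw) = \tfrac12 \EB[(y - \inner{\vx}{\vw})^2]$ using $y = \inner{\vx}{\tilde{\vw}^m} + e$ with $e$ zero-mean and independent of $\vx$. The cross term vanishes, and since the second-moment matrix $\rmA := \EB[\vx \vx^\top] = \Sigma + \mu\mu^\top$ does not depend on $m$, a short calculation gives
\[
f_m(\vw) = \tfrac12 (\vw - \tilde{\vw}^m)^\top \rmA (\vw - \tilde{\vw}^m) + \tfrac{\sigma_2^2}{2}.
\]
Under the stated choices $\mu = (1,\mathbf{0}_{d-1}^\top)^\top$ and $\Sigma = \diag\{0, \sigma_1^2 \mathbf{I}_{d-1}\}$ we have $\rmA = \diag\{1, \sigma_1^2 \mathbf{I}_{d-1}\}$, which is positive definite; this also serves as a consistency check that each $\tilde{\vw}^m$ is the unique unconstrained minimizer of $f_m$, so the setup is well-posed.

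Next, averaging over $m$ and discarding $\vw$-independent constants produces
\[
f(\vw) = \tfrac12 \vw^\top \rmA \vw - \vw^\top \rmA \bar{\vw} + \const, \qquad \bar{\vw} := \tfrac{1}{M}\ssum{m}{1}{M}\tilde{\vw}^m.
\]
Setting $\nabla f(\vw) = \rmA(\vw - \bar{\vw}) = 0$ and using invertibility of $\rmA$ yields $\vw^* = \bar{\vw}$, which is the claim. No serious obstacle is anticipated; the only point that needs care is verifying that the singular direction of $\Sigma$ is compensated by the rank-one contribution $\mu\mu^\top$ from the bias coordinate so that $\rmA$ is genuinely positive definite and the identification of $\vw^*$ with $\bar{\vw}$ is unambiguous.
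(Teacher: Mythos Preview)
Your proposal is correct and follows essentially the same approach as the paper: both expand each $f_m$ as a positive-definite quadratic in $\vw-\tilde{\vw}^m$ with shared second-moment matrix $\rmA=\diag\{1,\sigma_1^2\mathbf{I}_{d-1}\}$, average, and read off the minimizer from the first-order condition. The only cosmetic difference is that the paper writes the diagonal of $\rmA$ out coordinate-wise from the start, whereas you first identify $\rmA=\Sigma+\mu\mu^\top$ abstractly and then specialize.
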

\begin{proof}[Proof of Lemma~\ref{lem:linear_optimal_solution}]
    It suffices to show that $\frac{1}{M}\ssum{m}{1}{M}\tilde{\vw}^m$ satisfies first order condition since the global objective function \ref{eq:linear_global_obj} is strongly convex.
    \begin{equation}\label{eq:linear_global_obj}
    \begin{aligned}
            &\frac{1}{M}\ssum{m}{1}{M}\EB_{\xi_m \sim \gD_m} F(\vw; \xi_m)=\frac{1}{M}\ssum{m}{1}{M}f_m(\vw)\\
            &=\frac{1}{2M}\EB_{\vx \sim \left(\mu,\Sigma\right),e\sim N(0, \sigma_2^2)}\ssum{m}{1}{M}\left(e+\inner{\vx}{\tilde{\vw}^m-\vw} \right)^2 \\
            &=\frac{1}{2}\left\{\sigma_2^2+\frac{1}{M}\ssum{m}{1}{M}\left(\tilde{\vw}^m-\vw\right)^\top \EB_{\vx \sim \left(\mu,\Sigma\right)}\left(\vx\vx^\top\right)\left(\tilde{\vw}^m-\vw\right)\right\}\\
            &=\frac{1}{2}\left\{\sigma_2^2+\frac{1}{M}\ssum{m}{1}{M}\left[\left([\tilde{\vw}^m]_1-[\vw]_1\right)^2+\sigma_1^2\norm{\tilde{\vw}^m_{-1}-\vw_{-1}}^2_2\right]\right\},
    \end{aligned}
    \end{equation}
    where $\vw_{-1}=([\vw]_2,\cdots,[\vw]_d)^\top$, and so is $\tilde{\vw}^m_{-1}$. Then the first order condition says $\vw^*=\frac{1}{M}\ssum{m}{1}{M}\tilde{\vw}^m$.
\end{proof}

\paragraph{Decentralized Sparse Logistic Regression}
In this part, we assume $\vx$ is sampled from an isotropic Gaussian for simplicity, while the proof remains valid for the isotropic distribution we use in practice.
We start with the case of $M=2$ for simplicity: In Lemma~\ref{lem:logit_two_optimal_solution}, we design non-sparse $\tilde{\vw}^1$ and $\tilde{\vw}^2$ such that $\vw^*$ is sparse.
\begin{lemma}\label{lem:logit_general_optimal_solution}
    In the setting of decentralized logistic regression, let $M=2I$, and for all $i=1,\cdots,I$, $\tilde{\vw}^i=\left({\bm{\alpha}^i}^\top,{\bm{\beta}^i}^\top\right)^\top,\tilde{\vw}^{-i}=\left({\bm{\alpha}^i}^\top,-{\bm{\beta}^i}^\top\right)^\top\in\RB^{s}\times\RB^{d-s}$. Then there exists a $\bm{\alpha}\in\text{span}\left\{\bm{\alpha}^1,\cdots,\bm{\alpha}^I\right\}$ such that the global optimal solution $\vw^*$ satisfies $\vw^*=(\bm{\alpha}^\top,\mathbf{0}_{d-s}^\top)^\top$.
\end{lemma}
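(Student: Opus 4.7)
The plan is to exploit the pairwise symmetry in the construction of $\{\tilde{\vw}^m\}_{m=1}^M$ together with the isotropy of the covariate distribution. First I would decompose every parameter $\vw \in \RB^d$ as $\vw = (\vu^\top, \vv^\top)^\top$ with $\vu \in \RB^s$ and $\vv \in \RB^{d-s}$, and correspondingly write $\vx = (\vx_1^\top, \vx_2^\top)^\top$ so that $\inner{\vx}{\vw} = \inner{\vx_1}{\vu} + \inner{\vx_2}{\vv}$. Writing the per-client population loss as $f_m(\vw) = \EB_{\vx}[\ell(\inner{\vx}{\vw}; \inner{\vx}{\tilde{\vw}^m})]$ with $\ell(a;b) = -\sigma(b)\log \sigma(a) - \sigma(-b)\log\sigma(-a)$, I would define the involution $T(\vu,\vv) := (\vu, -\vv)$. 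Using the construction $\tilde{\vw}^i = (\bm{\alpha}^i, \bm{\beta}^i)$, $\tilde{\vw}^{-i} = (\bm{\alpha}^i, -\bm{\beta}^i)$ and the change of variable $\vx_2 \mapsto -\vx_2$ (which leaves the law of $\vx$ invariant by isotropy), I would verify $f_i(T\vw) = f_{-i}(\vw)$ and $f_{-i}(T\vw) = f_i(\vw)$. Summing over the $I$ pairs gives $f(T\vw) = f(\vw)$, so the global objective is $T$-invariant.

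The second step is to invoke strict convexity of $f$ in the population sense. Since $\vx$ is drawn from a non-degenerate isotropic distribution and the logistic loss is strictly convex in its argument, the Hessian $\EB[\sigma'(\inner{\vx}{\vw})\vx\vx^\top]$ is positive definite, so $f$ has a unique minimizer $\vw^*$. Combining $f(T\vw^*) = f(\vw^*)$ with uniqueness yields $\vw^* = T\vw^*$, i.e., the last $d-s$ coordinates of $\vw^*$ vanish. Writing $\vw^* = (\bm{\alpha}^\top, \mathbf{0}_{d-s}^\top)^\top$ for some $\bm{\alpha} \in \RB^s$ establishes the sparsity part of the claim.

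For the span condition, I would reduce to the restricted objective $\bar{f}(\vu) := f(\vu, \mathbf{0})$ and apply a second symmetry argument. Let $V := \mathrm{span}\{\bm{\alpha}^1, \ldots, \bm{\alpha}^I\}$ and decompose $\vu = \vu_V + \vu_{V^\perp}$ and $\vx_1 = \vx_{1,V} + \vx_{1,V^\perp}$, where the two projections of $\vx_1$ are independent by isotropy. Because $\inner{\vx_1}{\bm{\alpha}^i}$ depends only on $\vx_{1,V}$, the reflection $\vx_{1,V^\perp} \mapsto -\vx_{1,V^\perp}$ preserves the law of the labels while sending $\inner{\vx_1}{\vu}$ to $\inner{\vx_{1,V}}{\vu_V} - \inner{\vx_{1,V^\perp}}{\vu_{V^\perp}}$. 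Consequently $\bar{f}(\vu_V + \vu_{V^\perp}) = \bar{f}(\vu_V - \vu_{V^\perp})$, and strict convexity forces $\vu_{V^\perp} = 0$, so $\bm{\alpha} \in V$.

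The main technical obstacle is justifying strict convexity of the population loss in full generality and then cleanly executing the law-preserving changes of variables $\vx_2 \mapsto -\vx_2$ and $\vx_{1,V^\perp} \mapsto -\vx_{1,V^\perp}$ without inadvertently altering the label distribution; the isotropy of the covariate law is essential in both steps and is precisely the hypothesis that makes the argument go through. Once these invariance identities are established, the conclusion is a purely convex-analytic consequence.
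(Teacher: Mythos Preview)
Your argument is correct, but it proceeds along a genuinely different route from the paper's. The paper works directly with the first-order condition: after writing $\nabla f(\vw)=0$ as $\EB_{\vx}\!\big[\sum_m\{\tilde\sigma(\langle\vx,\tilde\vw^m\rangle)-\tilde\sigma(\langle\vx,\vw\rangle)\}\vx\big]=0$, it substitutes $\vx_b\mapsto-\vx_b$ in the $\tilde\vw^{-i}$ terms and splits the resulting system into its first $s$ and last $d-s$ coordinates. The last block vanishes automatically once $\vw_b=\mathbf{0}$, and the first block is handled via an explicit map $G(\vw_a)=2\sigma_1\big[\int_0^\infty g(t\sigma_1\|\vw_a\|_2)\phi(t)\,t\,dt\big]\ve_{\vw_a}$ (Lemma~\ref{lem:G_form}), whose image is an open ball; the right-hand side is shown to lie in this ball and to be a linear combination of the $\ve_{\bm\alpha^i}$, so the preimage $\bm\alpha$ is proportional to that combination and hence lies in $\mathrm{span}\{\bm\alpha^1,\dots,\bm\alpha^I\}$. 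Your approach replaces all of this computation by two invariance identities ($f\circ T=f$ for the reflection in the last $d-s$ coordinates, and then the analogous reflection through $V=\mathrm{span}\{\bm\alpha^i\}$) combined with uniqueness from strict convexity. This is shorter and more conceptual, and it works for any spherically symmetric covariate law without the Gaussian calculation the paper carries out; what you lose is the explicit formula for $\bm\alpha$ that the paper obtains.

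One small point of care: you write that ``the two projections of $\vx_1$ are independent by isotropy,'' but isotropy alone does not give independence of orthogonal projections (that is special to the Gaussian). Fortunately your argument does not actually use independence---what you need, and what you invoke in the next sentence, is that the reflection $\vx_{1,V^\perp}\mapsto-\vx_{1,V^\perp}$ preserves the law of $\vx$, which follows from spherical symmetry. Just drop the independence claim and the step is clean.
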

\begin{lemma}\label{lem:logit_two_optimal_solution}
    In the setting of decentralized logistic regression, let $M=2$, $\tilde{\vw}^1=(\bm{\alpha}^\top,\bm{\beta}^\top)^\top\in\RB^{s}\times\RB^{d-s}$ and $\tilde{\vw}^2=(\bm{\alpha}^\top,-\bm{\beta}^\top)^\top\in\RB^{s}\times\RB^{d-s}$. Then there exists a $\gamma>0$ such that the global optimal solution $\vw^*$ satisfies $\vw^*=(\gamma\bm{\alpha}^\top,\mathbf{0}_{d-s}^\top)^\top$.
\end{lemma}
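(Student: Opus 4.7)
The plan is to exploit two symmetries of the global objective $f(\vw) = \tfrac{1}{2}(f_1(\vw) + f_2(\vw))$ that follow from the isotropy of $\vx$ and the paired structure of $\tilde{\vw}^1, \tilde{\vw}^2$: a reflection in the last $d-s$ coordinates, and the full orthogonal stabilizer of $\bm{\alpha}$ in the first $s$ coordinates. Writing $\vw = (\vw_1, \vw_2)$ with $\vw_1 \in \RB^s, \vw_2 \in \RB^{d-s}$ and using the explicit formula
\[
f_m(\vw) = \EB_\vx\bigl[-\sigma(\inner{\vx}{\tilde{\vw}^m})\log\sigma(\inner{\vx}{\vw}) - (1-\sigma(\inner{\vx}{\tilde{\vw}^m}))\log\sigma(-\inner{\vx}{\vw})\bigr],
\]
strict convexity of logistic regression plus invariances of $f$ force $\vw^*$ into the one-dimensional subspace $\{(\gamma\bm{\alpha}, \mathbf{0}) : \gamma \in \RB\}$; a first-order calculation then fixes the sign of $\gamma$.

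First, I would substitute $\vx \mapsto (\vx_1, -\vx_2)$. Because $\vx$ is isotropic, this map preserves its distribution; under it $\inner{\vx}{\tilde{\vw}^1}$ is exchanged with $\inner{\vx}{\tilde{\vw}^2}$, and $\inner{\vx}{\vw}$ is turned into $\inner{\vx}{(\vw_1, -\vw_2)}$. Averaging gives $f_1(\vw_1, \vw_2) = f_2(\vw_1, -\vw_2)$, so $f(\vw_1, \vw_2) = f(\vw_1, -\vw_2)$. Since $f$ is strictly convex on $\RB^d$ (logistic loss against a non-degenerate isotropic covariate), the unique minimizer must be a fixed point of this involution, hence $\vw_2^* = \mathbf{0}_{d-s}$.

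Second, I restrict to $\vw = (\vw_1, \mathbf{0})$. By the first symmetry, $f_1 = f_2$ on this slice, so $f(\vw_1, \mathbf{0}) = f_1(\vw_1, \mathbf{0})$. I would then apply any orthogonal $O$ on $\RB^s$ fixing $\bm{\alpha}$: such $O$ preserves the law of $\vx_1$ and leaves $\inner{\vx_1}{\bm{\alpha}}$ unchanged, while $\inner{\vx_1}{\vw_1} \mapsto \inner{\vx_1}{O^\top\vw_1}$. Thus $f(\vw_1, \mathbf{0}) = f(O^\top\vw_1, \mathbf{0})$ for every $O$ stabilizing $\bm{\alpha}$. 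Strict convexity again forces $\vw_1^*$ to be fixed by every such $O$, i.e., $\vw_1^* = \gamma\bm{\alpha}$ for some $\gamma \in \RB$.

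Finally, to pin down $\gamma > 0$, let $Z = \inner{\vx_2}{\bm{\beta}}$, $U = \inner{\vx_1}{\bm{\alpha}}$, and $h(u) := \EB[\sigma(u + Z)]$. Integrating out $\vx_2$ (using symmetry of $Z$) gives $f(\gamma\bm{\alpha}, \mathbf{0}) = \EB[-h(U)\log\sigma(\gamma U) - (1-h(U))\log\sigma(-\gamma U)]$, whose derivative at $\gamma = 0$ equals $-\EB[U(h(U) - \tfrac{1}{2})]$. Since $\sigma$ is strictly increasing and $Z$ is symmetric, $h(u) - \tfrac{1}{2} = \EB[\sigma(u+Z) - \sigma(Z)]$ has the same sign as $u$, so $U(h(U) - \tfrac{1}{2}) \geq 0$ with strict positive expectation under any non-degenerate $U$. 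Hence the one-dimensional convex function $\gamma \mapsto f(\gamma\bm{\alpha}, \mathbf{0})$ has a strictly negative derivative at $0$, so its minimizer $\gamma$ is positive. The main obstacle is justifying that $f$ admits a unique minimizer in $\RB^d$ despite the $\gL_1$-smoothness framework elsewhere in the paper; the cleanest way is to invoke strict convexity of $f_1 + f_2$ (which holds since the covariance of $\vx$ restricted to either block is positive definite under an isotropic distribution with bounded support), making the symmetry-forced fixed-point arguments go through without side hypotheses.
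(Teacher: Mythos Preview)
Your proof is correct and takes a genuinely different route from the paper's. The paper works directly with the first-order condition
\[
0=\EB_\vx\sum_{m=1}^2\bigl\{\tilde\sigma(\inner{\vx}{\tilde\vw^m})-\tilde\sigma(\inner{\vx}{\vw})\bigr\}\vx,
\]
applies the substitution $\vx\mapsto(\vx_a,-\vx_b)$ to cancel the $\tilde\vw^2$ term against $\tilde\vw^1$, and then introduces the auxiliary map $G(\vw_a)=\EB_{\vx_a}[\tilde\sigma(\inner{\vx_a}{\vw_a})\vx_a]$ together with a separate structural lemma showing that $G(\vw_a)$ is a scalar multiple of $\ve_{\vw_a}$ with image an open ball; matching the right-hand side against the range of $G$ produces the $\gamma>0$. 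Your argument instead lifts the symmetry to the level of $f$ itself: the reflection $(\vx_1,\vx_2)\mapsto(\vx_1,-\vx_2)$ gives $f(\vw_1,\vw_2)=f(\vw_1,-\vw_2)$, and the $O(s)$ stabilizer of $\bm\alpha$ gives $f(\vw_1,\mathbf{0})=f(O^\top\vw_1,\mathbf{0})$. Strict convexity then pins $\vw^*$ to $(\gamma\bm\alpha,\mathbf{0})$ without any computation of $G$ or its image, and your derivative-at-zero argument fixes the sign. This is more conceptual and shorter, and it sidesteps the paper's auxiliary lemma entirely; the paper's approach, by contrast, is more constructive and yields an explicit equation for $\gamma$ in terms of $G$ and the function $h$, which is what the paper reuses for the $M=2I$ generalization. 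One small point: your definition $h(u)=\EB[\sigma(u+Z)]$ tacitly uses independence of $U=\inner{\vx_1}{\bm\alpha}$ and $Z=\inner{\vx_2}{\bm\beta}$; the paper makes the same simplification by working with the Gaussian case, so this is not a discrepancy, but for a general isotropic law you would either condition on $U$ or symmetrize over $Z\mapsto -Z$ before taking the sign.
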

\begin{proof}[Proof of Lemma~\ref{lem:logit_two_optimal_solution}]
    It suffices to show that there exists a $\gamma>0$ such that $(\gamma\bm{\alpha}^\top,\mathbf{0}_{d-s}^\top)^\top$ satisfies first order condition since the global objective function \ref{eq:logit_global_obj} is strictly convex. 
    
    We state the global objective function and first order condition for general $M>0$ and $\left\{\tilde{\vw}^m\right\}_{m=1}^M$ first.
    \begin{equation}\label{eq:logit_global_obj}
    \begin{aligned}
            &\frac{1}{M}\ssum{m}{1}{M}\EB_{\xi_m \sim \gD_m} F(\vw; \xi_m)=\frac{1}{M}\ssum{m}{1}{M}f_m(\vw)\\
            &=-\frac{1}{M}\EB_{\vx \sim N\left(\mathbf{0}_{d},\sigma_1^2 \mathbf{I}_{d}\right)}\ssum{m}{1}{M}\EB_{y\sim\text{Ber}\left(\sigma(\inner{\vx}{\tilde{\vw}^m} \right)}\left\{y\log\sigma(\inner{\vx}{\vw})+(1-y)\log\sigma(-\inner{\vx}{\vw}) \right\}\\
            &=-\frac{1}{M}\EB_{\vx \sim N\left(\mathbf{0}_{d},\sigma_1^2 \mathbf{I}_{d}\right)}\ssum{m}{1}{M}\left\{\sigma(\inner{\vx}{\tilde{\vw}^m})\log\sigma(\inner{\vx}{\vw})+\sigma(-\inner{\vx}{\tilde{\vw}^m})\log\sigma(-\inner{\vx}{\vw}) \right\},
    \end{aligned}
    \end{equation}
the first order condition is
\begin{equation}\label{eq:logit_foc}
    \begin{aligned}
            0&=\EB_{\vx \sim N\left(\mathbf{0}_{d},\sigma_1^2 \mathbf{I}_{d}\right)}\ssum{m}{1}{M}\nabla_{\vw}\left\{\sigma(\inner{\vx}{\tilde{\vw}^m})\log\sigma(\inner{\vx}{\vw})+\sigma(-\inner{\vx}{\tilde{\vw}^m})\log\sigma(-\inner{\vx}{\vw}) \right\}\\
            &=\EB_{\vx \sim N\left(\mathbf{0}_{d},\sigma_1^2 \mathbf{I}_{d}\right)}\ssum{m}{1}{M}\left\{\sigma(\inner{\vx}{\tilde{\vw}^m})\frac{\sigma^\prime(\inner{\vx}{\vw})}{\sigma(\inner{\vx}{\vw})}-\sigma(-\inner{\vx}{\tilde{\vw}^m})\frac{\sigma^{\prime}(-\inner{\vx}{\vw})}{\sigma(-\inner{\vx}{\vw})} \right\}\vx\\
            &=\EB_{\vx \sim N\left(\mathbf{0}_{d},\sigma_1^2 \mathbf{I}_{d}\right)}\ssum{m}{1}{M}\left\{\sigma(\inner{\vx}{\tilde{\vw}^m})\sigma(-\inner{\vx}{\vw})-\sigma(-\inner{\vx}{\tilde{\vw}^m})\sigma(\inner{\vx}{\vw}) \right\}\vx\\
            &=\EB_{\vx \sim N\left(\mathbf{0}_{d},\sigma_1^2 \mathbf{I}_{d}\right)}\ssum{m}{1}{M}\left\{\sigma(\inner{\vx}{\tilde{\vw}^m})-\sigma(\inner{\vx}{\vw}) \right\}\vx\\
            &=\EB_{\vx \sim N\left(\mathbf{0}_{d},\sigma_1^2 \mathbf{I}_{d}\right)}\ssum{m}{1}{M}\left\{\tilde{\sigma}(\inner{\vx}{\tilde{\vw}^m})-\tilde{\sigma}(\inner{\vx}{\vw}) \right\}\vx,
    \end{aligned}
    \end{equation}
where we have used $\sigma^{\prime}(z)=\sigma(z)\sigma(-z)$, $\sigma(-z)=1-\sigma(z)$, and $\tilde{\sigma}(z):=\sigma(z)-\frac{1}{2}$ is the centralized sigmoid function, which is an odd function.

Back to the $M=2$ special case, we write $\vx=(\vx_{a}^\top,\vx_{b}^\top)^\top\in\RB^s\times\RB^{d-s}$ and the first order condition reads 
\begin{equation*}
    \begin{aligned}
            0&=\EB_{\vx \sim N\left(\mathbf{0}_{d},\sigma_1^2 \mathbf{I}_{d}\right)}\left\{\tilde{\sigma}\left(\inner{\vx}{\tilde{\vw}^1}\right)+\tilde{\sigma}\left(\inner{\vx}{\tilde{\vw}^2}\right)-2\tilde{\sigma}(\inner{\vx}{\vw}) \right\}\vx,
    \end{aligned}
\end{equation*}
substitute $\vx$ with $\vz=(\vx_a^\top,-\vx_b^\top)^\top$ in the term involving $\tilde{\vw}^2$,
\begin{equation*}
    \begin{aligned}
            \EB_{\vx \sim N\left(\mathbf{0}_{d},\sigma_1^2 \mathbf{I}_{d}\right)}\left\{\tilde{\sigma}\left(\inner{\vx}{\tilde{\vw}^2}\right)\vx\right\}&=\EB_{\vz \sim N\left(\mathbf{0}_{d},\sigma_1^2 \mathbf{I}_{d}\right)}\left\{\tilde{\sigma}(\inner{\vz_a}{\bm{\alpha}}+\inner{\vz_b}{\bm{\beta}})    \begin{pmatrix}
        \vz_a \\
        -\vz_b \\
    \end{pmatrix}  \right\}\\
    &=\EB_{\vx \sim N\left(\mathbf{0}_{d},\sigma_1^2 \mathbf{I}_{d}\right)}\left\{\tilde{\sigma}\left(\inner{\vx}{\tilde{\vw}^1}\right)\begin{pmatrix}
        \vx_a \\
        -\vx_b \\
    \end{pmatrix}   \right\},
    \end{aligned}
\end{equation*}
substituting back into the first-order conditions, we obtain
\begin{equation*}
    \begin{aligned}
            \EB_{\vx \sim N\left(\mathbf{0}_{d},\sigma_1^2 \mathbf{I}_{d}\right)}\left\{\tilde{\sigma}(\inner{\vx}{\vw}) \vx\right\}=\EB_{\vx \sim N\left(\mathbf{0}_{d},\sigma_1^2 \mathbf{I}_{d}\right)}\left\{\tilde{\sigma}\left(\inner{\vx}{\tilde{\vw}^1}\right)\begin{pmatrix}
        \vx_a \\
        0 \\
    \end{pmatrix}  \right\},
    \end{aligned}
\end{equation*}
or equivalently,
\begin{equation*}
    \begin{aligned}
            \begin{pmatrix}
        \EB_{\vx \sim N\left(\mathbf{0}_{d},\sigma_1^2 \mathbf{I}_{d}\right)}\left\{\tilde{\sigma}(\inner{\vx}{\vw}) \vx_a\right\} \\
        \EB_{\vx \sim N\left(\mathbf{0}_{d},\sigma_1^2 \mathbf{I}_{d}\right)}\left\{\tilde{\sigma}(\inner{\vx}{\vw}) \vx_b\right\} \\
    \end{pmatrix}=\begin{pmatrix}
        \EB_{\vx \sim N\left(\mathbf{0}_{d},\sigma_1^2 \mathbf{I}_{d}\right)}\left\{\tilde{\sigma}\left(\inner{\vx}{\tilde{\vw}^1}\right) \vx_a\right\} \\
        \mathbf{0}_{d-s} \\
    \end{pmatrix}.
    \end{aligned}
\end{equation*}
Note that for all $\vw_a\in\RB^s$, 
\begin{equation*}
    \begin{aligned}
    &\EB_{\vx \sim N\left(\mathbf{0}_{d},\sigma_1^2 \mathbf{I}_{d}\right)}\left\{\tilde{\sigma}\left(\inner{\vx}{(\vw_a^\top,\mathbf{0}_{d-s}^\top)^\top}\right) \vx_b\right\}\\
            &=\EB_{\vx \sim N\left(\mathbf{0}_{d},\sigma_1^2 \mathbf{I}_{d}\right)}\left\{\tilde{\sigma}(\inner{\vx_a}{\vw_a}) \vx_b\right\}\\
            &=\EB_{\vx_a \sim N\left(\mathbf{0}_{s},\sigma_1^2 \mathbf{I}_{s}\right)}\left\{\tilde{\sigma}(\inner{\vx_a}{\vw_a})\right\}\EB_{\vx_b \sim N\left(\mathbf{0}_{d-s},\sigma_1^2 \mathbf{I}_{d-s}\right)}\left\{\vx_b\right\}\\
            &=\mathbf{0}_{d-s},
    \end{aligned}
\end{equation*}
i.e. when $\vw_b=\mathbf{0}_{d-s}$, the second system of equations always holds true. 
Hence, we only need to verify there exists a $\gamma>0$ such that $\vw_a=\gamma\bm{\alpha}$ is a solution of the following equation:
\begin{equation*}
    \begin{aligned}
\EB_{\vx_a \sim N\left(\mathbf{0}_{s},\sigma_1^2 \mathbf{I}_{s}\right)}\left\{\tilde{\sigma}(\inner{\vx_a}{\vw_a}) \vx_a\right\}=\EB_{\vx \sim N\left(\mathbf{0}_{d},\sigma_1^2 \mathbf{I}_{d}\right)}\left\{\tilde{\sigma}\left(\inner{\vx}{\tilde{\vw}^1}\right) \vx_a\right\}.
    \end{aligned}
\end{equation*}
We define a map $G\colon\RB^s\to\RB^s$ to deal with LHS, $G(\vw_a):=\EB_{\vx_a \sim N\left(\mathbf{0}_{s},\sigma_1^2 \mathbf{I}_{s}\right)}\left\{\tilde{\sigma}(\inner{\vx_a}{\vw_a}) \vx_a\right\}$ for all $\vw_a\in\RB^s$. 
We will show in Lemma~\ref{lem:G_form} that $G$ can be written in the form:
\begin{equation*}
    \begin{aligned}
G(\vw_a)&=\frac{2}{\sigma_1}\left[\int_0^{+\infty}\tilde{\sigma}\left(t\norm{\vw_a}_2\right)\phi\left(\frac{t}{\sigma_1}\right)t~dt\right]\ve_{\vw_a}\\
&=2\sigma_1 \left[\int_0^{+\infty}\tilde{\sigma}\left(t\sigma_1\norm{\vw_a}_2\right)\phi\left(t\right)t~dt\right]\ve_{\vw_a},
    \end{aligned}
\end{equation*}
where $\ve_{\vw_a}:=\frac{\vw_a}{\norm{\vw_a}_2}$ is the unit vector of $\vw_a$ ($\ve_{\mathbf{0}_{s}}:=\mathbf{0}_{s}$), and $\phi(t)=\frac{1}{\sqrt{2\pi}}\exp\left\{-\frac{t^2}{2}\right\}$ is the probability density function of the standard normal distribution. Furthermore, the image of $G$ is the open ball $G\left(\RB^s\right)=B\left(\mathbf{0}_{s}, \sigma\sqrt{\frac{1}{2\pi}}\right)$.

We define a function $h\colon\RB\to\RB$ to deal with RHS, $h(z):=\EB_{\vx_b \sim N\left(\mathbf{0}_{d-s},\sigma_1^2 \mathbf{I}_{d-s}\right)}\left[\tilde{\sigma}\left(z+\inner{\vx_b}{\bm{\beta}}\right)\right]$, it is easy to verify that $h$ is odd, increasing and $h(+\infty)=\frac{1}{2}$, we can utilize Lemma~\ref{lem:G_form} again, 
\begin{equation*}
    \begin{aligned}
&\EB_{\vx \sim N\left(\mathbf{0}_{d},\sigma_1^2 \mathbf{I}_{d}\right)}\left\{\tilde{\sigma}\left(\inner{\vx}{\tilde{\vw}^1}\right) \vx_a\right\}\\
&=\EB_{\vx_a \sim N\left(\mathbf{0}_{s},\sigma_1^2 \mathbf{I}_{s}\right)}\left\{\vx_a\EB_{\vx_b \sim N\left(\mathbf{0}_{d-s},\sigma_1^2 \mathbf{I}_{d-s}\right)}\left[\tilde{\sigma}\left(\inner{\vx_a}{\bm{\alpha}}+\inner{\vx_b}{\bm{\beta}}\right)\right] \right\}\\
&=\EB_{\vx_a \sim N\left(\mathbf{0}_{s},\sigma_1^2 \mathbf{I}_{s}\right)}\left\{h(\inner{\vx_a}{\bm{\alpha}}) \vx_a \right\}\\
&=2\sigma_1\left[\int_0^{+\infty}h\left(t\sigma_1\norm{\bm{\alpha}}_2\right)\phi\left(t\right)t~dt\right]\ve_{\bm{\alpha}}\\
&\in B\left(\mathbf{0}_{s}, \sigma\sqrt{\frac{1}{2\pi}}\right)=G\left(\RB^s\right).
    \end{aligned}
\end{equation*}
By comparing the forms of the LHS and RHS, we can conclude that there exists a $\gamma$ > 0 such that $\vw_a = \gamma\bm{\alpha}$ is a solution to the first system of equations. 
To summarize, $\vw^*=(\gamma\bm{\alpha}^\top,\mathbf{0}_{d-s}^\top)^\top$ is the global optimal solution.



\end{proof}

\begin{proof}[Proof of Lemma~\ref{lem:logit_general_optimal_solution}]
Recall the first order condition we have derived,
\begin{equation*}
    \begin{aligned}
0&=\EB_{\vx \sim N\left(\mathbf{0}_{d},\sigma_1^2 \mathbf{I}_{d}\right)}\ssum{m}{1}{M}\left\{\tilde{\sigma}(\inner{\vx}{\tilde{\vw}^m})-\tilde{\sigma}(\inner{\vx}{\vw}) \right\}\vx\\
&=\EB_{\vx \sim N\left(\mathbf{0}_{d},\sigma_1^2 \mathbf{I}_{d}\right)}\ssum{i}{1}{I}\left\{\tilde{\sigma}\left(\inner{\vx}{\tilde{\vw}^i}\right)+\tilde{\sigma}\left(\inner{\vx}{\tilde{\vw}^{-i}}\right)-2\tilde{\sigma}(\inner{\vx}{\vw}) \right\}\vx,
    \end{aligned}
\end{equation*}
substitute $\vx$ with $\vz=(\vx_a^\top,-\vx_b^\top)^\top$ in the terms involving $\left\{\tilde{\vw}^{-i}\right\}_{i=1}^I$, we get the following two systems of equations,
\begin{equation*}
    \begin{aligned}
            \begin{pmatrix}
        \EB_{\vx \sim N\left(\mathbf{0}_{d},\sigma_1^2 \mathbf{I}_{d}\right)}\left\{\tilde{\sigma}(\inner{\vx}{\vw}) \vx_a\right\} \\
        \EB_{\vx \sim N\left(\mathbf{0}_{d},\sigma_1^2 \mathbf{I}_{d}\right)}\left\{\tilde{\sigma}(\inner{\vx}{\vw}) \vx_b\right\} \\
    \end{pmatrix}=\begin{pmatrix}
        \EB_{\vx \sim N\left(\mathbf{0}_{d},\sigma_1^2 \mathbf{I}_{d}\right)}\left\{\frac{1}{I}\ssum{i}{1}{I}\tilde{\sigma}\left(\inner{\vx}{\tilde{\vw}^i}\right) \vx_a\right\} \\
        \mathbf{0}_{d-s} \\
    \end{pmatrix}.
    \end{aligned}
\end{equation*}
Similarly, when $\vw_b=\mathbf{0}_{d-s}$, the second system of equations always holds true, and the first system of equations can be written in the following form:
\begin{equation*}
    \begin{aligned}
        G(\vw_a)=\frac{1}{I}\ssum{i}{1}{I}2\sigma_1\left[\int_0^{+\infty}h_i\left(t\sigma_1\norm{\bm{\alpha}^i}_2\right)\phi\left(t\right)t~dt\right]\ve_{\bm{\alpha}^i}\in B\left(\mathbf{0}_{s}, \sigma\sqrt{\frac{1}{2\pi}}\right)=G\left(\RB^s\right),
    \end{aligned}
\end{equation*}
where $h_i(z):=\EB_{\vx_b \sim N\left(\mathbf{0}_{d-s},\sigma_1^2 \mathbf{I}_{d-s}\right)}\left[\tilde{\sigma}\left(z+\inner{\vx_b}{\bm{\beta}^i}\right)\right]$.
To summarize, there exists a $\bm{\alpha}\in\RB^s$ such that $\vw^*=(\bm{\alpha}^\top,\mathbf{0}_{d-s}^\top)^\top$ is the global optimal solution. Furthermore, there exists a $\gamma>0$ such that $\bm{\alpha}=\gamma\ssum{i}{1}{I}\left[\int_0^{+\infty}h_i\left(t\sigma_1\norm{\bm{\alpha}^i}_2\right)\phi\left(t\right)t~dt\right]\ve_{\bm{\alpha}^i}\in\text{span}\left\{\bm{\alpha}^1,\cdots,\bm{\alpha}^I\right\}$.
\end{proof}

\begin{lemma}\label{lem:G_form}
Let $G\colon\RB^k\to\RB^k,~G(\vw):=\EB_{\vx \sim N\left(\mathbf{0}_{k},\sigma^2 \mathbf{I}_{k}\right)}\left\{g(\inner{\vx}{\vw}) \vx\right\}$ for all $\vw\in\RB^k$, where $k\in\mathbb{Z}_+,\sigma>0$ and $g\colon \RB\to\RB$ is an odd function. Then $G$ can be written in the form:
\begin{equation*}
    \begin{aligned}
G(\vw)&=\frac{2}{\sigma}\left[\int_0^{+\infty}g\left(t\norm{\vw}_2\right)\phi\left(\frac{t}{\sigma}\right)t~dt\right]\ve_{\vw}\\
&=2\sigma \left[\int_0^{+\infty}g\left(t\sigma\norm{\vw}_2\right)\phi\left(t\right)t~dt\right]\ve_{\vw},
    \end{aligned}
\end{equation*}
where $\ve_{\vw}:=\frac{\vw}{\norm{\vw}_2}$ ($\ve_{\mathbf{0}_{k}}:=\mathbf{0}_{k}$) is the unit vector of $\vw$, and $\phi(t)=\frac{1}{\sqrt{2\pi}}\exp\left\{-\frac{t^2}{2}\right\}$ is the probability density function of the standard normal distribution.

Furthermore, if $g$ is increasing and $G(+\infty)=D\geq 0$, then the image of $G$ is exactly the open ball $G\left(\RB^k\right)=B\left(\mathbf{0}_{k}, K\sigma\sqrt{\frac{2}{\pi}}\right)$ .
\end{lemma}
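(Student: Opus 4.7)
The plan is to exploit the rotational symmetry of the isotropic Gaussian $N(\mathbf{0}_k,\sigma^2 \mathbf{I}_k)$ to collapse the vector-valued expectation defining $G(\vw)$ into a one-dimensional integral aligned with the direction $\ve_\vw$. Concretely, for $\vw\neq \mathbf{0}_k$, I will choose an orthogonal matrix $R$ with $R\ve_1 = \ve_\vw$ and perform the change of variables $\vy = R^\top\vx$, which is again $N(\mathbf{0}_k,\sigma^2\mathbf{I}_k)$-distributed. Since $\inner{\vx}{\vw} = \inner{\vy}{R^\top\vw} = \norm{\vw}_2\, y_1$, I get $G(\vw) = R\,\EB[g(\norm{\vw}_2 y_1)\vy]$. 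The $j$-th coordinate for $j\geq 2$ factors as $\EB[g(\norm{\vw}_2 y_1)]\cdot\EB[y_j] = 0$ by the independence of Gaussian coordinates, so only the first coordinate survives, and $R\ve_1 = \ve_\vw$ restores the directional factor in the claimed form. The case $\vw=\mathbf{0}_k$ is immediate: oddness of $g$ forces $g(0)=0$, so $G(\mathbf{0}_k)=\mathbf{0}_k$, matching the convention $\ve_{\mathbf{0}_k} = \mathbf{0}_k$.

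For the one-dimensional integral, I will write $y_1 = \sigma t$ to obtain
\[
\EB[g(\norm{\vw}_2 y_1)\, y_1] = \sigma\!\int_{-\infty}^{+\infty} g(\sigma\norm{\vw}_2 t)\, t\, \phi(t)\, dt,
\]
and then use that $g$ is odd to observe that the integrand $g(\sigma\norm{\vw}_2 t)\, t$ is even in $t$. Folding the integral over $[0,+\infty)$ and multiplying by $2$ yields the second representation of $G(\vw)$ in the lemma; the first representation is then obtained by the trivial substitution $u = \sigma t$.

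For the image statement, define $r(s) := 2\sigma\int_0^{+\infty} g(\sigma s\, t)\, t\, \phi(t)\, dt$ for $s \geq 0$, so that $G(\vw) = r(\norm{\vw}_2)\,\ve_\vw$. Since $g$ is increasing and odd (so nonnegative on $[0,+\infty)$) and $t\phi(t)\geq 0$, the function $r$ is nondecreasing in $s$ with $r(0)=0$. By monotone convergence, $\lim_{s\to+\infty} r(s) = 2\sigma D\int_0^{+\infty} t\,\phi(t)\,dt = D\sigma\sqrt{2/\pi}$ (interpreting the ``$K$'' in the lemma statement as $D$, reading $G(+\infty)=D$ as $\lim_{z\to+\infty}g(z)=D$, which is what is used in the application in Lemma~\ref{lem:logit_two_optimal_solution} via $h(+\infty)=1/2$). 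Continuity of $r$ on $[0,+\infty)$ follows from dominated convergence. By the intermediate value theorem, $r$ sweeps out $[0, D\sigma\sqrt{2/\pi})$ as $s$ ranges over $[0,+\infty)$; combined with the freedom to choose $\ve_\vw$ as an arbitrary unit vector, this shows $G(\RB^k) = B\bigl(\mathbf{0}_k,\, D\sigma\sqrt{2/\pi}\bigr)$.

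The argument is essentially bookkeeping once rotational symmetry is applied; the only mildly delicate point is justifying the interchange of limit/integral when identifying $\lim_{s\to\infty} r(s)$ and the strictness of the inclusion (open versus closed ball), which requires that the supremum $D$ is not attained by $g$ at any finite argument, hence $r(s) < D\sigma\sqrt{2/\pi}$ for every finite $s$. I will handle this by noting that for any $s < \infty$ there is a set of positive $\phi$-measure on which $g(\sigma s t) < D$, yielding a strict inequality in the monotone-convergence bound.
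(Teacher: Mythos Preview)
Your proposal is correct and follows essentially the same approach as the paper: both use an orthogonal change of variables sending $\ve_\vw$ to $\ve_1$, exploit the symmetry of the isotropic Gaussian to kill the orthogonal components, and then fold the remaining one-dimensional integral using the oddness of $g$. The only cosmetic difference is ordering: the paper first folds via oddness (doubling the integral over $\{\inner{\vx}{\vw}>0\}$) and then changes variables, whereas you change variables first and fold afterwards; your use of independence to eliminate the $y_j$ ($j\ge 2$) terms is the probabilistic phrasing of the paper's direct integration of $\vz_{-1}$ against the centered Gaussian density. For the image statement, you invoke monotone convergence where the paper cites dominated convergence, and you additionally justify the \emph{openness} of the ball, which the paper leaves implicit; your reading of the typo $K\mapsto D$ is also correct.
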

\begin{proof}[Proof of Lemma~\ref{lem:G_form}]
It is easy to verify that $G(\mathbf{0}_{k})=\mathbf{0}_{k}$, hence we will assume $\vw\neq\mathbf{0}_{k}$.

For $k=1$, $G(w)=\EB_{x \sim N\left(0,\sigma^2 \right)}\left\{g(xw) x\right\}=\frac{1}{\sigma}\left[\int_{-\infty}^{+\infty}g\left(xw\right)\phi\left(\frac{x}{\sigma}\right)x~dx\right]$, note that the integrand and $g$ are odd functions, we have $G(w)=\frac{2}{\sigma}\left[\int_{0}^{+\infty}g\left(t\abs{w}\right)\phi\left(\frac{t}{\sigma}\right)t~dt\right]\text{sgn}(w)$.

For $k>1$, we denote $p_{\sigma,l}(\cdot)$ as the probability density function of $N\left(\mathbf{0}_{l},\sigma^2 \mathbf{I}_{l}\right)$ for $l\in\ZB_+$.
We still utilize the property that $g$ is an odd function,
\begin{equation*}
    \begin{aligned}
&\int_{\vx\in\RB^k\colon\inner{\vx}{\vw}<0}p_{\sigma,k}(\vx)g(\inner{\vx}{\vw}) \vx~d\vx\\
&=\int_{\vz\in\RB^k\colon\inner{\vz}{\vw}>0}p_{\sigma,k}(-\vz)g(\inner{-\vz}{\vw}) (-\vz)~d\vz~(\vz=-\vx)\\
&=\int_{\vx\in\RB^k\colon\inner{\vx}{\vw}>0}p_{\sigma,k}(\vx)g(\inner{\vx}{\vw}) \vx~d\vx,
    \end{aligned}
\end{equation*}
note that $\int_{\vx\in\RB^k\colon\inner{\vx}{\vw}=0}p_{\sigma,k}(\vx)g(\inner{\vx}{\vw}) \vx~d\vx=0$, we have
\begin{equation*}
    \begin{aligned}
G(\vw)&=\int_{\RB^k}p_{\sigma,k}(\vx)g(\inner{\vx}{\vw}) \vx~d\vx\\
&=2\int_{\vx\in\RB^k\colon\inner{\vx}{\vw}>0}p_{\sigma,k}(\vx)g(\inner{\vx}{\vw}) \vx~d\vx.
    \end{aligned}
\end{equation*}
Let's take an orthonormal basis that includes $\ve_\vw$, denoted as $\mathbf{Q}=(\ve_\vw, \vq_2, ..., \vq_k)$, and substitute $\vz=\mathbf{Q}^\top \vx$. We will also express $\vz$ in the form of orthogonal decomposition as $\vz=[\vz]_1\ve_1+\vz_{-1}$, where $\ve_1=(1,0,\cdots,0)^\top$, $\vz_{-1}=(0,[\vz]_2,\cdots,[\vz]_k)^\top$, and $d\vz_{-1}:=d[\vz]_{2}\cdots ~d[\vz]_k$.
\begin{equation*}
    \begin{aligned}
G(\vw)&=2\int_{\vz\in\RB^k\colon[\vz]_1>0}p_{\sigma,k}\left(\mathbf{Q}\vz\right)g\left([\vz]_1\norm{\vw}_2\right) \mathbf{Q}\vz~d\vz \\
&=2\mathbf{Q}\int_0^{+\infty}p_{\sigma,1}([\vz]_1)g([\vz]_1\norm{\vw}_2) \int_{\RB^{k-1}} p_{\sigma,k-1}(\vz_{-1})\left([\vz]_1\ve_1+\vz_{-1}\right)~d\vz_{-1} ~d[\vz]_1\\
&=2\mathbf{Q}\ve_1\int_0^{+\infty}p_{\sigma,1}([\vz]_1)g([\vz]_1\norm{\vw}_2)[\vz]_1  ~d[\vz]_1\\
&=\frac{2}{\sigma}\left[\int_0^{+\infty}g\left(t\norm{\vw}_2\right)\phi\left(\frac{t}{\sigma}\right)t~dt\right]\ve_{\vw}\\
&=2\sigma \left[\int_0^{+\infty}g\left(t\sigma\norm{\vw}_2\right)\phi\left(t\right)t~dt\right]\ve_{\vw}.
    \end{aligned}
\end{equation*}
Furthermore, if $g$ is increasing and $g(+\infty)=D\geq 0$.
Let $\tilde{g}(z):=2\sigma \left[\int_0^{+\infty}g\left(t\sigma z\right)\phi\left(t\right)t~dt\right]$, it is easy to verify $\tilde{g}(0)=0$ and $\tilde{g}$ is continuous. 
By dominated convergence theorem, $\tilde{g}(+\infty)=2D\sigma\int_0^\infty \phi(t)t~dt=D\sigma\sqrt{\frac{2}{\pi}}$.
Hence, $G\left(\RB^k\right)=B\left(\mathbf{0}_{k}, D\sigma\sqrt{\frac{2}{\pi}}\right)$.
\end{proof}

\subsubsection{Assumptions are Satisfied in the Examples}\label{appendix:assumptions}
It is obvious that Assumption~\ref{asp:convex} (Convexity) is satisfied for both examples.  
\paragraph{Decentralized Sparse Linear Regression}
\begin{itemize}
    \item Assumption~\ref{asp:smooth} (Smoothness):$\quad\forall \vw,\vv\in\RB^d$,
\begin{equation*}
    \begin{aligned}
\norm{\nabla f_m(\vw) - \nabla f_m(\vv)}_\infty &= \max\left\{ \abs{[\vw]_1-[\vv]_1},\sigma_1^2\abs{[\vw]_2-[\vv]_2},\cdots,\sigma_1^2\abs{[\vw]_d-[\vv]_d}
 \right\}\\
 &\leq \tilde{\sigma}_1^2\norm{\vw - \vv}_1,
    \end{aligned}
\end{equation*}
where $\tilde{\sigma}_1:=\max\{\sigma_1,1\}$.
    \item Assumption~\ref{asp:grad_subG} (Sub-Gaussian): 
\begin{equation*}
    \begin{aligned}
\abs{\zeta_i^m(\vw)}&=\abs{[\nabla F(\vw, \xi_m)]_i - [\nabla f_m(\vw)]_i}\\
&\leq\abs{\inner{\vw-\tilde{\vw}^m}{\vx}-e}\abs{[\vx]_i}+ \max\{\sigma_1^2,1\}\abs{[\vw]_i-[\tilde{\vw}^m]_i}\\
&\leq \abs{\inner{\vw-\tilde{\vw}^m}{\vx}}\abs{[\vx]_i}+\abs{e}\abs{[\vx]_i}+ \max\{\sigma_1^2,1\}\abs{[\vw]_i-[\tilde{\vw}^m]_i}\\
&\leq \norm{\vw-\tilde{\vw}^m}_1\left(\norm{\vx}_\infty^2+\tilde{\sigma}^2_1 \right)+\norm{\vx}_\infty\abs{e}\\
&\leq \tilde{C}\left(C^2+\tilde{\sigma}^2_1\right)+C\abs{e}, 
    \end{aligned}
\end{equation*}
    hence $\zeta_i^m(\vw)$ is sub-Gaussian, here we assume $\norm{\vw-\tilde{\vw}^m}_1\leq\tilde{C}$. 
    \item Assumption~\ref{asp:lsc} (Locally Strong Convexity):
    
    $f_m$ is in fact strongly convex, $\nabla^2 f_m(\cdot)\succeq \bar{\sigma}_1^2\mathbf{I}_d$, where $\bar{\sigma}_1:=\min\{\sigma_1, 1\}$.
\end{itemize}
\paragraph{Decentralized Sparse Logistic Regression}
\begin{itemize}
    \item Assumption~\ref{asp:smooth} (Smoothness):$\quad\forall \vw,\vv\in\RB^d$,
\begin{equation*}
    \begin{aligned}
\norm{\nabla f_m(\vw) - \nabla f_m(\vv)}_\infty &= \norm{\EB\left\{\tilde{\sigma}\left(\inner{\vx}{\vw}\right)-\tilde{\sigma}\left(\inner{\vx}{\vv}\right)\right\}\vx}_\infty\\
 &\leq \EB \abs{\tilde{\sigma}\left(\inner{\vx}{\vw}\right)-\tilde{\sigma}\left(\inner{\vx}{\vv}\right)}\norm{\vx}_\infty\\
 &\leq \frac{1}{4}\EB\abs{\inner{\vx}{\vw-\vv}}\norm{\vx}_\infty\\
 &\leq\frac{1}{4}\norm{\vw-\vv}_1\EB\norm{\vx}_\infty^2\\
 &\leq \frac{C^2}{4}\norm{\vw-\vv}_1.
    \end{aligned}
\end{equation*}
    \item Assumption~\ref{asp:grad_subG} (Sub-Gaussian): 
\begin{equation*}
    \begin{aligned}
\abs{[\nabla F(\vw, \xi_m)]_i}&=\abs{y\sigma\left(-\inner{\vx}{\vw}\right)-(1-y)\sigma\left(\inner{\vx}{\vw}\right)}\abs{[\vx]_i}\\
&\leq \norm{\vx}_\infty\\
&\leq C,
    \end{aligned}
\end{equation*}
which is bounded, hence $\zeta_i^m(\vw)$ is sub-Gaussian.
    \item Assumption~\ref{asp:lsc} (Locally Strong Convexity):
    
    When $\norm{\vw}_1^2\leq Q$, $\abs{\inner{\vx}{\vw}}\leq \norm{\vx}_\infty\norm{\vw}_1\leq C\sqrt{Q}$, hence
\begin{equation*}
    \begin{aligned}
\nabla^2 f_m(\vw)&=\EB\left\{\sigma\left(\inner{\vx}{\vw}\right)\sigma\left(-\inner{\vx}{\vw}\right)\vx\vx^\top\right\}\\
&\succeq\sigma\left(C\sqrt{Q}\right)\sigma\left(-C\sqrt{Q}\right)\EB\left\{\vx\vx^\top\right\}\\
&=\sigma_1^2\sigma\left(C\sqrt{Q}\right)\sigma\left(-C\sqrt{Q}\right)\mathbf{I}_d.
    \end{aligned}
\end{equation*}
\end{itemize}

\bibliographystyle{plainnat}  
\bibliography{bib/distributed,bib/federated,bib/optimization,bib/references}

\end{document}